\let\emph\relax
\DeclareTextFontCommand{\emph}{\bfseries}
\theoremstyle{definition}
\newtheorem{Definition}{Definition}[section]
\newtheorem{Remark}[Definition]{Remark}
\theoremstyle{theorem}
\newtheorem{Theorem}[Definition]{Theorem}
\newtheorem{Lemma}[Definition]{Lemma}
\newtheorem{Proposition}[Definition]{Proposition}
\numberwithin{equation}{section}
\DeclareMathOperator{\support}{supp}
\DeclareMathOperator{\dive}{div}
\DeclareMathOperator{\curle}{curl}
\newcommand{\NN}{\mathbb{N}}
\newcommand{\RR}{\mathbb{R}}
\renewcommand{\SS}{\mathbb{S}}
\newcommand{\cA}{\mathcal{A}}
\newcommand{\cB}{\mathcal{B}}
\newcommand{\cC}{\mathcal{C}}
\newcommand{\cF}{\mathcal{F}}
\newcommand{\cG}{\mathcal{G}}
\newcommand{\cH}{\mathcal{H}}
\newcommand{\cI}{\mathcal{I}}
\newcommand{\cM}{\mathcal{M}}
\newcommand{\cP}{\mathcal{P}}
\newcommand{\cY}{\mathcal{Y}}
\newcommand{\cZ}{\mathcal{Z}}
\theoremstyle{definition}
\theoremstyle{definition}
\newcommand{\wsconverge}{\xrightharpoonup{w^*}}
\newcommand{\wconverge}{\xrightharpoonup{\,\,w\,}}
\newcommand{\sconverge}{\xrightarrow{\,\,s\,\,}}
\newcommand\abs[1]{\left|#1\right|}
\newcommand\norm[1]{\left\|#1\right\|}
\newcommand\normf[1]{\|#1\|}
\title{Well-Posedness for 2D Combustion Model in Bounded Domains and Serrin-Type Blowup Criterion}
\author{Jiawen Zhang\thanks{Email address: zhangjiawen@amss.ac.cn}}
\affil{\normalsize School of Mathematical Sciences, \\
University of Chinese Academy of Sciences, Bejing 100049, P.R. China}
\date{}
\begin{document}

\maketitle

\begin{abstract}
We investigate the 2D combustion model with Dirichlet boundary conditions and slip boundary conditions in bounded domains. The global existence of weak and strong solutions and the uniqueness of strong solutions are obtained provided the initial density is small in some precise sense. Using the energy method  and the estimates of boundary integrals, we obtain the a priori bounds of the density and velocity field. In addition, we prove the local existence of the strong solutions via iterative method and the contraction mapping theorem. Finally, we extend the well-known Serrin's blowup criterion to the 2D combustion model. Under the suitable boundary conditions, the Serrin's condition on the velocity can be removed in this criteria.
\\
\par\textbf{Keywords: }combustion model; Dirichlet boundary conditions; slip boundary conditions; strong solutions; weak solutions; Serrin's condition
\end{abstract}

\section{Intrduction and main results}\label{Section1}

In this paper, we will study the following combustion model:
\begin{equation}\label{equation1.1}
\begin{cases}
\rho_t+\dive (\rho u)=0,\,\,\rho\geq 0,\\
(\rho u)_t+\dive(\rho u\otimes u)-\dive(2\mu D)+\nabla \pi=0,\\
\dive u=c_0\Delta\psi(\rho),\,\,\psi(\rho):=\rho^{-1},
\end{cases}
\end{equation}
for $t>0$ and $x\in\Omega$, where $\Omega\subset\RR^2$ is a bounded simply connected domain with smooth boundary. Here $u=(u_1,u_2)$, $\rho$ and $\pi$ stand for the unknown velocity field, density and pressure respectively, $c_0>0$ is a fixed constant and 
$$0<\mu=\mu(s)\in C^\infty[0,\infty).$$ 
We denote 
\begin{equation*}
D=D(u) = \frac{1}{2}(\nabla u+(\nabla u)^t)=\frac{1}{2}(\partial_iu_j+\partial_ju_i),
\end{equation*}
for $1\leq i,j\leq 2$.

From the physical viewpoint, combustion model is the low Mach number limit of the fully compressible Navier-Stokes equations 
\begin{equation}\label{FCNS}
\begin{cases}
\rho_t+\dive (\rho u)=0,\\
(\rho u)_t+\dive(\rho u\otimes u)-\dive\SS+\nabla p=0,\\
(\rho e)_t+\dive(\rho u e)-\dive(k\nabla \theta)+p\dive u=\SS:D(u),
\end{cases}\tag{FCNS}
\end{equation}
where $e,\, \theta,\,p$ stands for the internal enery, temperature and pressure respectively and $A:B$ strands for the inner product of matrices
\begin{equation*}
A:B:=\text{tr}(AB^t)=\sum_{i,j=1}^2a_{ij}b_{ji}.
\end{equation*}
$\SS$ is the viscous strain tensor given by
$$\SS=2\mu D(u)+\lambda\dive u I_{n\times n},$$
where $I_{n\times n}$ is the $n\times n$ indentity matrix. The thermal conductivity $k$ and the viscosity coefficients $\mu,\,\lambda$ are functions of $\rho$ and $\theta$. From Lions's book \cite{lions1}, if we define the Mach number $\epsilon$ as $|u|/\sqrt{p'(\rho)}$ and let $(\rho, u,\theta)$ be smooth solution of system \eqref{FCNS} corresponding to the small $\epsilon$, after rescaling the time variable by
$$\rho^\epsilon(x,t)=\rho\left(x,\frac{t}{\epsilon}\right),\quad u^\epsilon(x,t)=\frac{1}{\epsilon}\rho\left(x,\frac{t}{\epsilon}\right),\quad\theta^\epsilon(x,t)=\theta\left(x,\frac{t}{\epsilon}\right),$$
then, $(\rho^\epsilon,u^\epsilon,\theta^\epsilon)$ satisfies
\begin{equation}\label{FCNS2}
\begin{cases}
\rho^\epsilon_t+\dive (\rho^\epsilon u^\epsilon)=0,\\
(\rho^\epsilon u^\epsilon)_t+\dive(\rho u^\epsilon\otimes u^\epsilon)-\dive\SS^\epsilon+\epsilon^{-2}\nabla p^\epsilon=0,\\
(\rho^\epsilon e^\epsilon)_t+\dive(\rho^\epsilon u^\epsilon e^\epsilon)-\dive(k^\epsilon\nabla \theta^\epsilon)+p^\epsilon\dive u^\epsilon=\epsilon^2\SS^\epsilon\cdot D(u^\epsilon),
\end{cases}\tag{FCNS'}
\end{equation}
where 
\begin{equation*}
\begin{aligned}
\SS^{\epsilon}=2 \mu^\epsilon D(u^\epsilon)+\lambda^{\epsilon} \operatorname{div} u^{\epsilon} I_{n\times n}, 
\end{aligned}
\end{equation*}
and
\begin{equation*}
\begin{aligned}
p^{\epsilon}=p\left(x,\frac{t}{\epsilon}\right),\quad\mu^{\epsilon}=\frac{1}{\epsilon}\mu\left(x,\frac{t}{\epsilon}\right), \quad \lambda^{\epsilon}=\frac{1}{\epsilon}\lambda\left(x,\frac{t}{\epsilon}\right), \quad k^{\epsilon}=\frac{1}{\epsilon}k\left(x,\frac{t}{\epsilon}\right).
\end{aligned}
\end{equation*}
Considerig the ideal gas laws: 
\begin{equation}\label{air}
p=R\rho \theta, \quad e=C_V\theta,
\end{equation}
where $R,\, C_V$ denote the ideal gas constant and the specific heat constant, respectively. Then, letting the Mach number $\epsilon$ go to 0, the momentum equation $\eqref{FCNS2}_2$ implies that
$$
p^{\epsilon}=P(t)+\pi(t, x) \epsilon^2+o\left(\epsilon^2\right) .
$$
Plugging this formula into the energy equation $\eqref{FCNS2}_3$ entails that $P(t)$ is independent of $t$, provided $u^\epsilon$ and $\nabla \theta^\epsilon$ vanish at infinity. From now on, we shall denote this constant by $P_0$. Therefore, denoting $C_P=\gamma C_V=\gamma R /(\gamma-1)$, the low Mach number limit system reads
\begin{equation}\label{equation}
\begin{cases}
\rho C_P\left(\partial_t \theta+u \cdot \nabla \theta\right)-\operatorname{div}(k \nabla \theta)=0,\\
\rho u_t+\rho u \cdot \nabla u-\operatorname{div} \SS+\nabla \pi=0, \\
\gamma P_0 \operatorname{div} u=(\gamma-1) \operatorname{div}(k \nabla \theta).
\end{cases}
\end{equation}
Plugging \eqref{air} into \eqref{equation} with constant heat conductivity coefficient $k$ implies the following system
\begin{equation}\label{equationn}
\begin{cases}
\partial_t \rho+\dive(\rho u)=0, \\
\rho u_t+\rho u \cdot \nabla u-\operatorname{div} \SS+\nabla \pi=0, \\
\dive u={k(\gamma-1)}(R\gamma)^{-1} \Delta\rho^{-1},
\end{cases}
\end{equation}
which is exactly the equations \eqref{equation1.1}.

If we particularly take the diffusion coefficient $c_0=0$, \eqref{equation1.1} will become the classical non-homogeneous incompressible Navier-Stokes equations
\begin{equation}\label{equationN}
\begin{cases}
\rho_t+\dive (\rho u)=0,\,\,\rho\geq 0,\\
(\rho u)_t+\dive(\rho u\otimes u)-\dive(2\mu D)+\nabla \pi=0,\\
\dive u=0.
\end{cases}\tag{N}
\end{equation}

The study of the combustion model may date back to the 1980s. It has been introduced by A. Majda \cite{majda} and studied in particular by P. Embid \cite{embid} who has proved the local-in-time well-posedness of the system \eqref{equation1.1}. For the system \eqref{equation1.1} replacing $\eqref{equation1.1}_3$ by Fick's law with $\psi(\rho)=\log \rho$, the local well-posedness was considered by H. B. da Veiga \cite{daveiga}. Danchin-Liao \cite{danchin}  established the local existence and uniqueness of a solution in critical homogeneous Besov spaces provided the density is closed to a positive constant and they proved the local well-posedness in non-homogeneous Besov space arbitrarily large data.

On the other hand, there are also a large number of works investigating the global-in-time existence of weak and strong solutions for the combustion model. P. Secchi \cite{secchi} proved that there exists a unique global strong solution in the two-dimensional domain under Fick's law providing the diffusion coefficient $c_0$ is small enough. They also considered the limiting behavior of the solutions when $c_0\to 0$ for dimensions 2 and 3 and the convergence towards the corresponding solutions of \eqref{equationN}. Under the small initial data assumption, P. Lions \cite{lions2} showed, in $\RR^2$ or periodic boundary condition, that a small perturbation of a constant density gives a global existence of weak solutions without any restriction on the initial velocity. Danchin-Liao \cite{danchin} proved the existence of solutions in critical homogeneous Besov spaces by assuming the initial density is close to a constant and the initial velocity is small enough. Recently, W. Tan \cite{tan} proved the global existence of the weak and strong solutions of the system \eqref{equation1.1} with general viscosity coefficient $\mu(\rho)$ in $\eqref{equation1.1}_2$ and $\psi(\rho)$ in $\eqref{equation1.1}_3$ provided the density is closed to a positive constant in some precise sense. For large data, Bresch-Essoufi-Sy \cite{bresch2} showed the global existence of the weak solutions for the combustion model in dimensions 2 and 3 by taking $\mu(\rho)=\frac{c_0}{2}\log \rho$. In \cite{bresch1}, Bresch-Giovangigli-Zatorska relaxed the restriction on $\mu(\rho)$ by using the idea of the renormalized solution. 

If one takes the decomposition $u=v+c_0\nabla\rho^{-1}$ with $\dive v=0$ and converts the system \eqref{equation1.1} to the equations for $(\rho,v)$,  then \eqref{equation1.1} will be reduced to the Kazhikhov-Smagulov type model, see \eqref{equation3.2}. In \cite{caixiaoyun,aaa}, Cai-Liao-Sun established the global-in-time existence of strong solutions to the initial-boundary value problem of a 2D Kazhikhov-Smagulov type model for incompressible non-homogeneous fluids with mass diffusion for the arbitrary size of initial data. For works on the classical Kazhikhov-Smagulov's model, we refer the reader to \cite{antontsev,beirao}.

For the general non-homogeneous incompressible Navier-Stokes equations \eqref{equationN} with the viscosity coefficient $\mu(\rho)$ depending on $\rho$, global weak solutions were derived by P. Lions \cite{lions1}. Abidi-Zhang \cite{abidi} obtained the global strong solutions strictly away from vacuum whenever $\norm{u_0}_{L^2}\norm{\nabla u_0}_{L^2}$ and $\norm{\mu(\rho_0)-1}_{L^\infty}$ are small enough. For the initial density containing vacuum, Cho-Kim \cite{cho} established the existence of the local strong solutions under compatibility conditions similar to \cite{jun}. In addition, Huang-Wang \cite{huang}, J. Zhang \cite{zhang2015} established the global strong solutions with small $\norm{\nabla u_0}_{L^2}$ in 3D bounded domains. For the Cauchy problem, He-Li-L$\mathrm{\ddot{u}}$ \cite{he2021} obtained the global strong ones to  with small $\norm{u_0}_{\dot H^\beta}$ for some $\beta\in (1/2,1]$ and some extra restrictions on $\mu(\rho)$ via the exponential decay-in-time estimates. More recently, Cai-L$\mathrm{\ddot{u}}$-Peng \cite{cai2} studied the global existence of strong solutions in 3D exterior domains with nonslip or slip boundary conditions provided that the gradient of the initial velocity is suitably small.

Finally, for the study of the mechanism of blowup and structure of possible singularities of strong (or smooth) solutions to the Navier-Stokes system can be traced to Serrin's criterion \cite{serrin1962} on the Leray-Hopf weak solutions to the 3D incompressible homogeneous Navier-Stokes equations, which can showed that if a weak solution $u$ satisfies
\begin{equation}\label{serrin1.5}
u\in L^s(0,T;L^r),\quad \frac{2}{s}+\frac{3}{r}\leq 1,\quad 3<r\leq \infty,
\end{equation}
then it is regular. Later, He-Xin \cite{he2005} showed that the Serrin's criterion \eqref{serrin1.5} still holds even for the strong solution to the incompressible MHD equations. For non-homogeneous incompressible Navier–Stokes equations \eqref{equationN}, H. Kim \cite{kim2006} established the Serrin-type blowup criterion. They showed that if $(\rho,u)$ blows up at $T^*$, then
\begin{equation}
\lim_{t\to T^*}\norm{u}_{L^s(0,T;L^r_w)}=\infty\quad \text{for all}\quad\frac{2}{s}+\frac{3}{r}\leq 1,\quad 3<r\leq \infty.
\end{equation}
Recently, X. Zhong \cite{zhong2017} obtained a blowup criterion \eqref{serrin1.5} to the non-homogeneous incompressible heat conducting Navier–Stokes flows with non-negative density in bounded domain of $\RR^3$. For the compressible fluids, Huang-Li-Xin \cite{HLX} first extend Serrin's blow-up criterion to the barotropic compressible Navier-Stokes equations. Later, Xu-Zhang \cite{xu2012blow} extended the results of \cite{HLX} to the isentropic compressible MHD system
and Huang-Li-Wang \cite{huang2013serrin} improve the all previous blowup criterion results to the full compressible Navier--Stokes system.

However, for the general viscosity coefficient,  the theory of the combustion model in the bounded domain is still blank. Therefore, our goal is obtaining the global existence of solutions with small initial data and the local existence for \eqref{equation1.1} in the general domain under different initial-boundary conditions and  trying to extend Serrin's blow-up criterion to \eqref{equation1.1}.

More precisely, we impose the initial data
\begin{equation}\label{initial}
u_0(x):=u(x,0),\quad 0<\alpha\leq \rho_0(x):=\rho(x,0)\leq \beta<\infty,\quad x\in \Omega
\end{equation}
and one of the following boundary conditions:
\begin{enumerate}
\item[(1)] 
$\rho$ satisfies the Neumann condition and $u$ satisfies the slip boundary condition, that is,
\begin{equation}\label{A}
n\cdot\nabla \rho=0,\quad u\cdot n=0 \text{ and } \curle u=-n^\perp\cdot B\cdot u \quad\mathrm{on}\,\, \partial \Omega\times(0,T),\tag{A}
\end{equation}
where $n = (n_1,n_2)$ denotes the unit outer normal vector of the boundary $\partial \Omega$, $n^\perp = (n_2, -n_1)$ is the unit tangential vector on the boundary and  $B=B(x)$ is a bounded smooth symmetric matrix which is positive semi-definite;
\item[(2)]
$(\rho, u)$ satisfies the non-homogeneous Dirichlet condition, that is,
\begin{equation}\label{B}
\rho=\tilde\rho,\quad u=c_0\nabla\rho^{-1}\quad \mathrm{on}\,\,\partial\Omega\times(0,T),\tag{B}
\end{equation}
where $\tilde\rho$ is a positive constant such that $\alpha\leq\tilde\rho\leq \beta$;
\item[(3)]
$\rho$ satisfies the Neumann condition and $u$ satisfies the non-slip condition, that is,
\begin{equation}\label{C}
n\cdot\nabla \rho=0,\quad u=0 \quad\mathrm{on}\,\, \partial \Omega\times(0,T).\tag{C}
\end{equation}
\end{enumerate}

Before giving the main results, we explain some notations and conventions used throughout the paper.  For simplicity, we set
\begin{equation*}
\int f:=\int_\Omega f\,dx,\,\,\,\int_\partial f:=\int_{\partial\Omega} f\,dS,\,\,\,\iint f:=\iint_{Q_T} f\,dxdt,
\end{equation*}
where $Q_T:=\Omega\times(0,T)$, and
\begin{equation*}
f_\Omega:=\frac{1}{|\Omega|}\int f,
\end{equation*}
where $|E|$ stands for the Lebesgue measure of the measurable set $E$.

Also, for all integer $k$ and $1\leq p<\infty$, $W^{k,p}$ is the standard Sobolev spaces as defined as follows:
\begin{equation*}
\begin{cases}
L^p:=L^p(\Omega),\,\,W^{k,p}=W^{k,p}(\Omega),\\
H^k:=W^{k,2},\,\,H^\infty:=\bigcap_{k\geq 1} H^k,\\
W^{k,p}_0=\overline{C_0^\infty}\,\,\mathrm{closure\,\,in\,\,the\,\,norm \,\,of\,\,} W^{k,p},\\
\norm{\cdot}_{B_1\cap B_2}:=\norm{\cdot}_{B_1}+\norm{\cdot}_{B_2}\text{ for two Banach spaces }B_1\text{ and }B_2,\\
H^1_\omega:=\{u\in H^1: u\cdot n=0,\,\curle u=-n^\perp\cdot B\cdot u\text{ on }\partial\Omega\},\\
H^1_{nd}:=\{u\in H^1: u=c_0\nabla\rho^{-1}\text{ on }\partial\Omega\},\\
V^{0,2}:=\{u\in L^2:\dive u=0,\,u\cdot n=0\text{ on }\partial\Omega\},\\
V_0^{1,2}:=\{u\in H^1_0:\dive u=0\},\quad V_0^{-1,2}:=[V_0^{1,2}]^*.
\end{cases}
\end{equation*}

For $0<\gamma<1$, we denote by $C^\gamma(\overline\Omega)$ the standard H$\mathrm{\ddot{o}}$lder space and $\rho\in C^{\gamma,\frac{\gamma}{2}}(\overline Q_T)$ the parabolic one, that is,
\begin{equation*}
C^{\gamma,\frac{\gamma}{2}}(\overline Q_T):=\left\{f\in C(\overline Q_T): \sup_{(x,t),(x',t')\in \overline Q_T\atop (x,t)\neq(x',t')} \frac{|f(x,t)-f(x',t')|}{|x-x'|^\gamma+|t-t'|^\frac{\gamma}{2}}<\infty\right\}.
\end{equation*}
The weak, weak* and strong convergence of a sequence $\{f^n\}$ are respectively denoted by
$$f^n\wconverge f,\quad f^n\wsconverge f,\quad f^n\sconverge f.$$
Finally, the transpose gradient is given by
\begin{align*}
\nabla^\perp := (\partial_2,-\partial_1).
\end{align*}
With this notion, one can write
\begin{equation*}
\begin{cases}
\curle u=\nabla^\perp \cdot u,\\
\Delta u=\nabla\dive u+\nabla^\perp \curle u.
\end{cases}
\end{equation*}

Now, we give the definitions of weak solutions and strong ones.
\begin{Definition}[Weak Solutions]\label{definition1.1}
$(\rho,u)$ is called a global weak solution, if the following regularity properties hold:
\begin{equation}
\begin{cases}
\alpha\leq \rho\leq \beta,\\
\rho\in C([0,T];H^1)\cap L^2(0,T;H^2),\quad
\rho_t\in L^2(0,T;L^2),\\
\begin{cases}
u\in L^\infty(0,T;L^2)\cap L^2(0,T;H^1_\omega),\quad\,\,\text{case }\eqref{A},\\
u\in L^\infty(0,T;L^2)\cap L^2(0,T;H^1_{nd}),\quad\text{case }\eqref{B},
\end{cases}
\end{cases}
\end{equation}
and $(\rho,u)$ statisfies \eqref{equation1.1} in the sense of distributions for all $T\in(0,\infty)$. More precisely, $\eqref{equation1.1}_1$, $\eqref{equation1.1}_3$ hold almost everywhere in $\Omega\times(0,T)$ and $\eqref{equation1.1}_2$ is satisfied in the following sense:
\begin{equation}
\begin{aligned}
\iint \rho u\cdot\phi_t+\rho u\otimes u:\nabla\phi-2\mu D(u):D(\phi)=-\int \rho_0u_0\cdot\phi(x,0),
\end{aligned}
\end{equation}
for $\phi\in C^\infty(\overline Q_T)$ with $\dive \phi=0$, $\phi(x,T)=0$, $x\in \Omega$ and $\phi=0$ on $\partial\Omega\times(0,T)$.
\end{Definition}

\begin{Definition}[Strong Solutions]
If $(\rho,u,\pi)$ is a solution such that \eqref{equation1.1} holds almost everywhere in $\Omega\times(0,T)$, $T\in(0,\infty)$, such that
\begin{equation}\label{strong}
\begin{cases}
\alpha\leq \rho \leq \beta,\\
\rho\in C([0,T];H^2)\cap L^2(0,T;H^3),\\
\rho_t\in C([0,T];L^2)\cap L^2(0,T;H^1),\\
u\in C([0,T];H^1)\cap L^2(0,T;H^2),\\
 u_t\in L^2(0,T;L^2),\\
 \pi\in L^2(0,T;H^1),
\end{cases}
\end{equation}
we call $(\rho,u,\pi)$ the strong solution on $\Omega\times(0,T)$. In particular, if $(\rho,u,\pi)$ satisfies \eqref{strong} for all $T\in (0,\infty)$, we say that $(\rho,u,\pi)$ is a global strong solution of the system \eqref{equation1.1}.
\end{Definition}

Our main results sate as following. The first two theorems concern with the existence results for $(\rho,u)$ satisfying \eqref{A} or \eqref{B}.
\begin{Theorem}\label{theorem1.2}
Suppose that $u_0\in L^2$ and $(\rho_0,u_0)$ satisfies the following compatibility condition
\begin{equation}\label{compat}
\begin{cases}
\dive u_0=c_0\Delta\rho_0^{-1},&x\in \Omega\\
u_0\cdot n=n\cdot\nabla\rho_0^{-1},&x\in \partial\Omega
\end{cases}
\end{equation}
Assume that $(\rho,u)$ satisfies the condition $\eqref{A}$ or $\eqref{B}$, then there exist a positive constant $\delta$ which only depends on $\Omega$, $\alpha$, $\beta$, $c_0$ and $\norm{v_0}_{L^2}$ such that, if $\norm{\nabla\rho_0}_{L^2}\le\delta$, problem \eqref{equation1.1} and \eqref{initial} admits at least one gobal weak solution.
\end{Theorem}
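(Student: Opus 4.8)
The plan is to construct the global weak solution via a decomposition that reduces \eqref{equation1.1} to a Kazhikhov–Smagulov type system, derive uniform a priori bounds under the smallness assumption $\norm{\nabla\rho_0}_{L^2}\le\delta$, and pass to the limit in an approximation scheme. The first step is to set $v := u - c_0\nabla\rho^{-1}$, so that $\dive v = 0$ and (using $\eqref{equation1.1}_3$ and the compatibility condition \eqref{compat}) $v\cdot n = 0$ on $\partial\Omega$ in case \eqref{A}, respectively $v = 0$ on $\partial\Omega$ in case \eqref{B}. Rewriting the momentum equation $\eqref{equation1.1}_2$ in terms of $(\rho,v)$ produces the system \eqref{equation3.2} referred to in the introduction, in which the troublesome transport term $\rho u\cdot\nabla u$ is replaced by lower-order terms involving $\rho$, $\nabla\rho$, $\nabla^2\rho$ and $v$; the key point is that all the "bad" terms carry a factor of $c_0$ or a derivative of $\rho$, so they can be absorbed once $\nabla\rho$ is shown to stay small.

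The heart of the argument is the a priori estimate. First, the density satisfies a parabolic equation: combining $\eqref{equation1.1}_1$ and $\eqref{equation1.1}_3$ gives $\rho_t + u\cdot\nabla\rho = -\rho\dive u = -c_0\rho\Delta\rho^{-1}$, which after expanding $\Delta\rho^{-1} = -\rho^{-2}\Delta\rho + 2\rho^{-3}|\nabla\rho|^2$ becomes a uniformly parabolic equation for $\rho$ with the maximum principle giving $\alpha\le\rho\le\beta$ propagated from \eqref{initial}. I would then run the basic energy estimate for $v$: testing the momentum equation with $v$ yields
\begin{equation*}
\frac{d}{dt}\int\rho|v|^2 + c_1\int\mu|\nabla v|^2 \le C\bigl(\norm{\nabla\rho}_{L^2}^2 + \norm{\nabla^2\rho}_{L^2}^2\bigr)\bigl(1 + \norm{v}_{H^1}^2\bigr),
\end{equation*}
where in case \eqref{A} the boundary term $\int_\partial n^\perp\cdot B\cdot u\,(\cdots)$ is controlled using that $B$ is positive semi-definite (so it has a favorable sign) together with a trace/interpolation estimate, and in case \eqref{B} the Dirichlet data is absorbed into the $\rho$-dependent right-hand side. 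Simultaneously, testing the $\rho$-equation with $-\Delta\rho$ (and using $n\cdot\nabla\rho=0$ or the constant Dirichlet datum to kill the boundary terms) gives
\begin{equation*}
\frac{d}{dt}\int|\nabla\rho|^2 + c_2\int|\nabla^2\rho|^2 \le C\norm{\nabla\rho}_{L^4}^4 + C\norm{\nabla v}_{L^2}^2,
\end{equation*}
and the Ladyzhenskaya inequality $\norm{\nabla\rho}_{L^4}^2\le C\norm{\nabla\rho}_{L^2}\norm{\nabla\rho}_{H^1}$ in 2D lets the quartic term be absorbed by the left side \emph{provided} $\norm{\nabla\rho}_{L^2}$ is small. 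Adding the two estimates with a suitable weight and choosing $\delta$ small — depending only on $\Omega,\alpha,\beta,c_0$ and $\norm{v_0}_{L^2}$ — closes a Gronwall loop showing $\norm{\nabla\rho(t)}_{L^2}$ stays below a fixed small threshold for all time and $\sup_t\int\rho|v|^2 + \iint\mu|\nabla v|^2 + \iint|\nabla^2\rho|^2$ is bounded globally. These are exactly the regularity classes demanded in Definition~\ref{definition1.1}.

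Finally I would set up an approximation scheme — for instance a Galerkin/Faedo scheme for $v$ coupled with the parabolic solve for $\rho$, or a standard mollification/linearization iteration — producing approximate solutions $(\rho^n,v^n)$ to which all of the above bounds apply uniformly. The uniform estimates give weak-$*$ limits $\rho^n\to\rho$ in $L^\infty(0,T;H^1)\cap L^2(0,T;H^2)$ with $\rho^n_t$ bounded in $L^2(Q_T)$, hence by Aubin–Lions $\rho^n\to\rho$ strongly in $C([0,T];L^2)$ and a.e., which suffices to pass to the limit in the nonlinear $\rho$-dependent coefficients; and $v^n\rightharpoonup v$ weakly in $L^2(0,T;H^1)$ with $v^n\to v$ strongly in $L^2(Q_T)$, enough for the quadratic term $\rho v\otimes v$. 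Undoing the substitution $u = v + c_0\nabla\rho^{-1}$ recovers a weak solution of the original system in the sense of Definition~\ref{definition1.1}. The main obstacle I anticipate is the boundary analysis: in case \eqref{A} one must carefully handle the curvature term $\curle u = -n^\perp\cdot B\cdot u$ when integrating by parts, verifying that the resulting boundary integrals either have the right sign (from $B\ge 0$) or are of lower order and absorbable, and in case \eqref{B} one must check that the non-homogeneous Dirichlet trace $u = c_0\nabla\rho^{-1}$ is compatible with the divergence constraint and that lifting it introduces only $\rho$-controlled error terms; getting the constant $\delta$ to depend only on the stated quantities hinges on doing these boundary estimates without losing track of the dependence.
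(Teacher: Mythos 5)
You take the same route as the paper: decompose $u=v+c_0\nabla\rho^{-1}$ to reduce \eqref{equation1.1} to the Kazhikhov--Smagulov system \eqref{equation3.2}, obtain $\alpha\le\rho\le\beta$ by the maximum principle, couple the $L^2$ estimate of $\nabla\rho$ (testing $\eqref{equation3.2}_1$ with $-\Delta\rho$, as in Lemma~\ref{Lemma4.3}) with an energy estimate for $v$ (Lemma~\ref{Lemma3.4}), propagate smallness of $\|\nabla\rho\|_{L^2}$ by a bootstrap/continuation argument (Proposition~\ref{theorem4.5}), pass to the limit via Aubin--Lions/Simon compactness (Lemma~\ref{lemma5.1}), and undo the substitution. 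The construction of approximants is the ``linearization iteration'' alternative you mention: the paper does not set up a Galerkin scheme but applies Theorem~\ref{theorem1.3} (global strong solutions for smooth data, built from the local existence Theorem~\ref{theo:Serrin-type}) to mollified data and then passes to the limit.

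The one place where your sketch would not close as written is the energy estimate for $v$. You propose
\[
\frac{d}{dt}\int\rho|v|^2+c_1\int\mu|\nabla v|^2\le C\bigl(\|\nabla\rho\|_{L^2}^2+\|\nabla^2\rho\|_{L^2}^2\bigr)\bigl(1+\|v\|_{H^1}^2\bigr),
\]
and to handle the case~\eqref{A} boundary integrals via the sign of $B$ ``together with a trace/interpolation estimate.'' With $\|\nabla v\|_{L^2}^2$ on the right multiplied by $\|\nabla^2\rho\|_{L^2}^2$ --- a quantity which under your own bounds is only $L^1$ in time, not pointwise small --- this is not Gronwall-closeable: the dissipation $\int\mu|\nabla v|^2$ cannot absorb a term whose coefficient is not uniformly small. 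The estimate must come out in the form
\[
\frac{d}{dt}\|\sqrt\rho v\|_{L^2}^2+\nu\|\nabla v\|_{L^2}^2\le C\bigl(\|\nabla\rho\|_{L^4}^4+\|\Delta\rho\|_{L^2}^2\bigr)\bigl(1+\|\sqrt\rho v\|_{L^2}^2\bigr),
\]
i.e.\ \eqref{eq3.28}, with only $\|\sqrt\rho v\|_{L^2}^2$ (no derivative of $v$) on the right. Reaching this form is the real technical work of Lemma~\ref{Lemma3.4}: the boundary integrals generated by $c_0\dive(2\mu\nabla^2\rho^{-1})$ and by the slip condition are rewritten using $v=(v\cdot n^\perp)n^\perp$ on $\partial\Omega$ and tangential integration by parts, converting $\int_{\partial\Omega}(\cdots)$ into bulk integrals pairing $\nabla\rho$, $\nabla^2\rho$ against $v$ or $\nabla v$; a direct trace inequality, which is what you propose, would bring in higher derivatives and spoil exactly this closure (the paper warns against it at the end of Section~\ref{Section1}). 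A smaller inaccuracy: in case~\eqref{B} the decomposition already gives $v=0$ on $\partial\Omega$ (since $u=c_0\nabla\rho^{-1}$ there), so there is no nonhomogeneous Dirichlet lifting for $v$ --- that case is the easier one, not a second source of difficulty.
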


\begin{Theorem}\label{theorem1.3}
Suppose that $(\rho_0,u_0)$ satisfies \eqref{compat} and $(\rho,u)$ satisfies the condition $\eqref{A}$ or $\eqref{B}$. Let $u_0\in H^1_\omega$ provided $u$ satisfying the condition $\eqref{A}$; $u_0\in H^1_{nd}$ provided $u$ satisfying the condition $\eqref{B}$. In addition, let $\pi$ satisfy the normalized condition
\begin{equation}\label{normal}
\int \pi=0.
\end{equation}
 Then, if $\norm{\nabla\rho_0}_{L^2}\le\delta$ with the same $\delta$ obtained in Theorem \ref{theorem1.2}, the problem \eqref{equation1.1} and \eqref{initial} admits a unique global strong solution $(\rho,u,\pi)$. 
\end{Theorem}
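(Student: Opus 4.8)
The plan is to obtain the global strong solution in two stages: first establish \emph{local-in-time} existence and uniqueness of a strong solution via an iteration scheme together with the contraction mapping theorem (this is announced in the abstract), and then \emph{extend the solution globally} by combining the local theory with the a priori bounds on $\rho$ and $u$ that are produced by the energy method under the smallness hypothesis $\norm{\nabla\rho_0}_{L^2}\le\delta$. Since the same $\delta$ as in Theorem \ref{theorem1.2} is used, the weak-solution estimates already provide control of $\alpha\le\rho\le\beta$, $\rho\in C([0,T];H^1)\cap L^2(0,T;H^2)$, $\rho_t\in L^2(0,T;L^2)$ and $u\in L^\infty(0,T;L^2)\cap L^2(0,T;H^1_\omega)$ (or $H^1_{nd}$); the extra work for Theorem \ref{theorem1.3} is to upgrade these to the $H^2$-level regularity for $\rho$ and the $H^1$-level regularity for $u$ listed in \eqref{strong}, uniformly in $T$.

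First I would reformulate the system using the decomposition $u=v+c_0\nabla\rho^{-1}$ with $\dive v=0$, converting \eqref{equation1.1} to a Kazhikhov--Smagulov type system for $(\rho,v)$ (the system \eqref{equation3.2} referred to in the introduction), which removes the awkward non-standard divergence constraint and turns the density equation into a parabolic equation for $\rho^{-1}$ (or $\rho$) with drift $v$. Working with $(\rho,v)$, the pressure $\pi$ is recovered from an elliptic (Neumann or Dirichlet) problem, and the normalization $\int\pi=0$ fixes it uniquely; standard elliptic estimates then give $\pi\in L^2(0,T;H^1)$ once the momentum terms are controlled in $L^2(0,T;L^2)$. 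For the \emph{local} existence, I would set up a linearized iteration: given $(\rho^{k},v^{k})$, solve the linear parabolic equation for $\rho^{k+1}$, then the linear Stokes-type problem for $v^{k+1}$ with the appropriate boundary condition (\eqref{A} or \eqref{B}), derive uniform bounds on a short time interval $[0,T_0]$ using the compatibility condition \eqref{compat} to control $\norm{u_t(0)}_{L^2}$-type quantities, and show the map is a contraction in a suitable lower-order norm (e.g.\ $L^\infty(0,T_0;L^2)$ for $v$ plus $L^\infty(0,T_0;H^1)$ for $\rho$). Uniqueness of the strong solution follows from the same contraction estimate applied to the difference of two solutions.

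For the \emph{globalization}, the key is that the a priori estimates behind Theorem \ref{theorem1.2}, when $\norm{\nabla\rho_0}_{L^2}\le\delta$, are strong enough to bound $\norm{\nabla\rho}_{L^\infty(0,T;L^2)}$ and $\iint|\nabla^2\rho|^2$ uniformly in $T$; I would then run a higher-order energy estimate, testing the momentum equation (in the $(\rho,v)$ form) with $v_t$ and with $-\Delta v$ (adapted to the boundary conditions via the identities $\Delta u=\nabla\dive u+\nabla^\perp\curle u$ and the boundary relation $\curle u=-n^\perp\cdot B\cdot u$), to close a Gr\"onwall inequality for $\norm{\nabla v}_{L^2}^2+\int_0^t\norm{v_t}_{L^2}^2$ whose right-hand side is absorbed by the smallness of $\nabla\rho$ and the already-known lower-order bounds. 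Parabolic regularity for the $\rho$-equation then bootstraps $\rho$ to $C([0,T];H^2)\cap L^2(0,T;H^3)$ and $\rho_t$ to $C([0,T];L^2)\cap L^2(0,T;H^1)$. A standard continuation argument (the local existence time depends only on the norms that are being bounded uniformly) then gives $T=\infty$.

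The main obstacle I expect is the treatment of the \emph{boundary terms} in the higher-order estimates: for the slip condition \eqref{A} the curl relation $\curle u=-n^\perp\cdot B\cdot u$ produces boundary integrals when integrating by parts against $-\Delta v$ or $v_t$, and these must be controlled using trace inequalities and the positive semi-definiteness of $B$ (as is done for the weak-solution bounds); for the Dirichlet-type condition \eqref{B}, the inhomogeneity $u=c_0\nabla\rho^{-1}$ on $\partial\Omega$ couples the velocity estimate to $\nabla\rho$ on the boundary, so one must carefully keep track of $\norm{\nabla\rho}_{H^{1/2}(\partial\Omega)}$-type quantities and lift the boundary data. A secondary difficulty is ensuring the constant in the Gr\"onwall argument genuinely depends only on $\Omega,\alpha,\beta,c_0,\norm{v_0}_{L^2}$ and not on $T$, which is exactly where the smallness $\norm{\nabla\rho_0}_{L^2}\le\delta$ is consumed.
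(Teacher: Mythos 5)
Your plan coincides with the paper's in essentially every structural respect: the same change of unknown $u=v+c_0\nabla\rho^{-1}$ to the Kazhikhov--Smagulov form \eqref{equation3.2}, the same two-stage argument (local existence from Theorem~\ref{theo:Serrin-type} via iteration and contraction, then globalization), and the same mechanism of propagating the smallness of $\norm{\nabla\rho_0}_{L^2}$ through Proposition~\ref{theorem4.5} to close a Gr\"onwall inequality on $\norm{\nabla v}_{L^2}^2+\norm{\Delta\rho}_{L^2}^2+\norm{\rho_t}_{L^2}^2$ (Proposition~\ref{theorem6.3}).

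Two of the technical difficulties you flag are handled differently in the paper, and it is worth recording the contrast. First, for the boundary terms under the slip condition~\eqref{A}, you propose trace inequalities; the paper deliberately avoids these, because the trace route forces second derivatives of $\rho$ or $v$ at a point in the argument where they are not yet under control. Instead it exploits $v=(v\cdot n^\perp)n^\perp$ on $\partial\Omega$ to convert boundary integrals such as $\int_\partial v\cdot\nabla f$ into interior integrals of the form $\int\nabla f\cdot\nabla^\perp(v\cdot n^\perp)$, which need only one derivative; this appears throughout Lemma~\ref{Lemma3.4} and Lemma~\ref{lemma6.5}. Second, under~\eqref{B} your worry about lifting the inhomogeneous data $u=c_0\nabla\rho^{-1}$ on $\partial\Omega$ does not actually arise: the decomposition gives $v|_{\partial\Omega}=0$ exactly (condition~\eqref{B'}), so there is no boundary-lifting step for $v$. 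The genuine extra subtlety in case~\eqref{B} is elsewhere --- the paper works with $\log\rho$ rather than $\rho$ in the higher-order estimate (Lemma~\ref{lemma6.4}, Remark~\ref{remark53}) to avoid a $|\nabla\rho|^2$ nonlinearity that would require a further smallness assumption, and it also needs the consistency condition $\rho_0|_{\partial\Omega}=\tilde\rho$ from Remark~1.7 to apply the H\"older-continuity Lemma~\ref{lemma61} (which in turn feeds into the Stokes regularity Lemma~\ref{Lemma5.2}). Your outline would still reach the same result, but would have to discover and incorporate these specific devices to close the estimates.
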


Next, for the case when $(\rho,u)$ satisfying the condition \eqref{C}, we have
\begin{Theorem}\label{theorem1.6}
Suppose that $u_0\in H^1_0$ and $(\rho_0, u_0)$ satisfies \eqref{compat}. Suppose that $(\rho,u)$ satisfies the condition $\eqref{C}$ and $\pi$ satisfies \eqref{normal}. There exists a positive constant $\delta$ which only depends on $\Omega$, $\alpha$, $\beta$, $c_0$ such that, if $\norm{\nabla u_0}_{L^2}\le\delta$, then the problem \eqref{equation1.1} and \eqref{initial} admits a unique global strong solution $(\rho,u,\pi)$.
\end{Theorem}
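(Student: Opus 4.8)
The plan is to run the same three-step scheme as for Theorems~\ref{theorem1.2}--\ref{theorem1.3} --- local existence of a unique strong solution (via the iteration and contraction-mapping argument established earlier), uniform-in-time a priori bounds, and a continuation argument --- the one genuinely new ingredient being to convert the hypothesis $\norm{\nabla u_0}_{L^2}\le\delta$ into the smallness of $\norm{\nabla\rho_0}_{L^2}$ that actually drives the estimates. Indeed, the compatibility condition \eqref{compat} gives $\dive u_0=c_0\Delta\rho_0^{-1}$ in $\Omega$, while $u_0\in H^1_0$ together with \eqref{compat} forces $n\cdot\nabla\rho_0^{-1}=u_0\cdot n=0$ on $\partial\Omega$; applying the Neumann elliptic estimate to $\rho_0^{-1}-(\rho_0^{-1})_\Omega$ and using $\alpha\le\rho_0\le\beta$ together with $\nabla\rho_0=-\rho_0^2\nabla\rho_0^{-1}$ yields $\norm{\nabla\rho_0}_{L^2}\le C(\Omega,\alpha,\beta,c_0)\norm{\dive u_0}_{L^2}\le C(\Omega,\alpha,\beta,c_0)\norm{\nabla u_0}_{L^2}$, so choosing $\delta$ small makes $\norm{\nabla\rho_0}_{L^2}$ as small as we need. (Here $u|_{\partial\Omega}=0$ will throughout make the boundary integrals simpler than in cases \eqref{A}, \eqref{B}.)

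Let $(\rho,u,\pi)$ be the local strong solution on a maximal interval $[0,T^*)$. The density is handled first: combining $\eqref{equation1.1}_1$ and $\eqref{equation1.1}_3$ gives $\rho_t+(u+2c_0\rho^{-2}\nabla\rho)\cdot\nabla\rho=c_0\rho^{-1}\Delta\rho$, a parabolic equation without zeroth-order term, so the maximum principle with the Neumann condition and the Hopf lemma preserves $\alpha\le\rho\le\beta$, and parabolic regularity propagates $\rho_0\in H^2$ to $\rho\in C([0,T];H^2)\cap L^2(0,T;H^3)$, $\rho_t\in C([0,T];L^2)\cap L^2(0,T;H^1)$. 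Next comes the basic energy estimate: testing $\eqref{equation1.1}_2$ with $u$, or equivalently with $v:=u-c_0\nabla\rho^{-1}$ (divergence free, $v\cdot n=0$) to kill $\nabla\pi$, and using $u|_{\partial\Omega}=0$ to drop the boundary terms, produces a differential inequality for $\norm{\sqrt\rho\,u}_{L^2}^2$ whose right-hand side consists of $c_0$-weighted products of first and second derivatives of $\rho$ with $\nabla u$; in 2D these are absorbed into $\int\mu\abs{D(u)}^2$ by means of $\norm{f}_{L^4}^2\le C\norm{f}_{L^2}\norm{\nabla f}_{L^2}$, the inequality $\norm{\nabla u}_{L^2}^2\le2\norm{D(u)}_{L^2}^2$ valid on $H^1_0$, and the smallness of $\norm{\nabla\rho}_{L^2}$, giving $\sup_t\norm{\sqrt\rho\,u}_{L^2}^2+\int_0^t\norm{\nabla u}_{L^2}^2\le C$. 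Then the first-order estimate: testing $\eqref{equation1.1}_2$ with $u_t$ gives $\frac{d}{dt}\int\mu\abs{D(u)}^2+c\int\rho\abs{u_t}^2\le$ (a convective term, the variable-viscosity term $\int\mu'(\rho)\rho_t\abs{D(u)}^2$, and the combustion coupling terms), all closed in 2D via the bounds already obtained and the smallness; finally, reading $\eqref{equation1.1}_2$ as the Stokes system $-\dive(2\mu D(u))+\nabla\pi=-\rho u_t-\rho u\cdot\nabla u$ with $\dive u=c_0\Delta\rho^{-1}$, $u|_{\partial\Omega}=0$, $\int\pi=0$, the $H^2\times H^1$ regularity theory recovers $\norm{u}_{H^2}+\norm{\pi}_{H^1}$ in terms of quantities now integrable in $t$.

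A continuity/bootstrap argument then fixes $\delta=\delta(\Omega,\alpha,\beta,c_0)$ small enough that all the smallness requirements hold on all of $[0,T^*)$ whenever $\norm{\nabla u_0}_{L^2}\le\delta$; the bounds so obtained being uniform in time, the local existence theorem can be applied past $T^*$, so $T^*=\infty$ and we obtain the global strong solution. Uniqueness follows as in Theorem~\ref{theorem1.3}: for two strong solutions with the same data one derives a Gr\"onwall inequality for $\norm{u_1-u_2}_{L^2}^2+\norm{\rho_1-\rho_2}_{H^1}^2$, the boundary contributions vanishing again because both velocities are zero on $\partial\Omega$, whence coincidence on $[0,\infty)$.

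I expect the main obstacle to be closing the two velocity estimates: after the substitution $u=v+c_0\nabla\rho^{-1}$ the system becomes of Kazhikhov--Smagulov type (cf.~\eqref{equation3.2}), and its coupling generates terms carrying up to three derivatives of $\rho$ as well as products of $\nabla u$ with $\nabla^2\rho^{-1}$, so making them absorbable forces one to exploit the parabolic smoothing of $\rho$ and the propagated smallness at the same time, while the merely smooth, density-dependent viscosity $\mu=\mu(\rho)$ rules out the algebraic cancellations available for constant $\mu$. Controlling the pressure against the nonzero divergence $c_0\Delta\rho^{-1}$ in the Stokes step is the remaining technical point.
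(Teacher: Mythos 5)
Your reduction of the smallness hypothesis — transferring $\norm{\nabla u_0}_{L^2}\le\delta$ to a bound on $\norm{\nabla\rho_0}_{L^2}$ via the Neumann problem for $\rho_0^{-1}$ from \eqref{compat} — is exactly what the paper uses (Remark~\ref{remark18}), and the overall three-step skeleton (local existence, a priori bounds, continuation) is also right. However, your treatment of the velocity estimates has a genuine gap, and it is precisely the gap the paper identifies as the reason Case~\eqref{C} needs a \emph{different} decomposition.

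You propose to test $\eqref{equation1.1}_2$ ``with $u$, or equivalently with $v:=u-c_0\nabla\rho^{-1}$ to kill $\nabla\pi$, and using $u|_{\partial\Omega}=0$ to drop the boundary terms.'' These two moves are incompatible for Case~\eqref{C}. If you test with $v$, then $\dive v=0$ and $v\cdot n=0$ do kill the pressure, but $v$ is \emph{not} zero on $\partial\Omega$: since $u|_{\partial\Omega}=0$ and $n\cdot\nabla\rho=0$, one has $v|_{\partial\Omega}=-c_0\nabla\rho^{-1}$, a purely tangential but nonvanishing trace involving $\nabla\rho$. Integrating $-\int\dive(2\mu D(u))\cdot v$ by parts therefore leaves a boundary integral $\int_\partial 2\mu\,(D(u)n)\cdot v$ that carries derivatives of $u$ and $\rho$ on the boundary with no good control; in the slip cases~\eqref{A},~\eqref{B} these were tamed by the $\nabla^\perp(v\cdot n^\perp)$ trick, but that device is unavailable here. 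Conversely, if you test with $u$, the boundary terms vanish, but $\dive u=c_0\Delta\rho^{-1}\ne0$, so $\int\nabla\pi\cdot u=-c_0\int\pi\,\Delta\rho^{-1}$ does not vanish and reintroduces the pressure in a way you cannot a priori absorb. There is no test function in the set $\{u,v\}$ that simultaneously annihilates $\nabla\pi$ and the boundary terms. The paper's resolution (Section~\ref{Section8}) is to set $Q=\cB[c_0\Delta\rho^{-1}]$ via the Bogovski\v{\i} operator of Lemma~\ref{lemma27} and test with $w:=u-Q$, which satisfies both $\dive w=0$ and $w|_{\partial\Omega}=0$, at the price of extra terms in $Q$, $Q_t$ that are then controlled through $\nabla\rho$. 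Your later remark that ``after the substitution $u=v+c_0\nabla\rho^{-1}$ the system becomes of Kazhikhov--Smagulov type'' confirms you intended to reuse the $v$-decomposition; the paper explicitly abandons it for Case~\eqref{C}.

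A secondary point: when closing the $H^2$ estimate for $u$ via the Stokes system, you invoke ``the $H^2\times H^1$ regularity theory.'' For Cases~\eqref{A},~\eqref{B} the paper first proves $\rho$ H\"older continuous (Lemma~\ref{lemma61}, which requires a Serrin bound on $v$ already closed) and then uses Lemma~\ref{Lemma5.2}; but in Case~\eqref{C} the lower bounds on $v$ are not yet available at this stage, so continuity of $\mu(\rho)$ cannot be assumed and the paper instead uses Lemma~\ref{lemma8.4}, which trades continuity of $\mu$ for $\nabla\mu\in L^4$ and produces the extra $\norm{\nabla\mu}_{L^4}^2\norm{\nabla v}_{L^2}$ term that must be handled under the smallness bootstrap \eqref{8.4}. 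Your plan does not account for why ordinary variable-coefficient Stokes regularity is not directly applicable here.
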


At last, we give the  local existence result and the corresponding Serrin-type blowup criterion.
\begin{Theorem}\label{theo:Serrin-type}
Assume that $(\rho_0,u_0)$ satisfies \eqref{compat} and $u_0\in H^1_\omega,\,H^1_{nd}$ provided $u$ satisfying the condition \eqref{A} and \eqref{B}, respectively. Let $\pi$ saitisfies the condition \eqref{normal}. Then there exists a positive time $T_1<\infty$ depending on $\Omega$, $c_0$, $\alpha$, $\beta$ and $\norm{u_0}_{H^1}$ so that the problem \eqref{equation1.1} and \eqref{initial} admits an unique strong solution $(\rho, u,\pi)$ on $\Omega\times(0,T_1)$.

Moreover, if $\mu(\rho)=\mu$ is a positive constant and $u_0\in H^1_0$, then, the same result holds for $(\rho,u)$ satisfying the condition \eqref{C}
\end{Theorem}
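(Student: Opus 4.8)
The plan is to pass to the Kazhikhov--Smagulov reformulation, run a linearized iteration, and close it by a uniform a priori bound on a short time interval followed by a contraction estimate. It is convenient to introduce the effective velocity $v := u - c_0\nabla\rho^{-1}$: then $\eqref{equation1.1}_3$ is exactly $\dive v = 0$, the continuity equation becomes the quasilinear parabolic equation
\[
\rho_t + v\cdot\nabla\rho = c_0\Delta\log\rho, \qquad\text{equivalently}\qquad \rho_t - c_0\rho^{-1}\Delta\rho = -\,v\cdot\nabla\rho - c_0\rho^{-2}|\nabla\rho|^2,
\]
and, after rewriting the inertial terms as $\rho(u_t + u\cdot\nabla u)$ by means of $\eqref{equation1.1}_1$, the momentum equation becomes a nonstationary Stokes system
\[
\rho v_t - \dive\!\big(2\mu(\rho)D(v)\big) + \nabla\pi = F(\rho,v), \qquad \dive v = 0,
\]
whose right-hand side $F$ gathers the convection term, the contributions of substituting $u = v + c_0\nabla\rho^{-1}$, the term $c_0\dive(2\mu(\rho)\nabla^2\rho^{-1})$, and $\rho\,\partial_t(c_0\nabla\rho^{-1})$; the last two carry three spatial derivatives of $\rho$, in accordance with the class $\rho\in L^2(0,T;H^3)$ in \eqref{strong}. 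Before starting, note that \eqref{compat} with $u_0\in H^1$ gives $\Delta\rho_0^{-1}=c_0^{-1}\dive u_0\in L^2$, so elliptic regularity on $\Omega$ --- using the Neumann datum $n\cdot\nabla\rho_0^{-1}=0$ (inherited from \eqref{compat} together with $u_0\in H^1_\omega$, resp.\ $u_0\in H^1_0$, in cases \eqref{A} and \eqref{C}) or $\rho_0^{-1}\equiv\tilde\rho^{-1}$ on $\partial\Omega$ in case \eqref{B} --- yields $\rho_0\in H^2(\Omega)$, and $v_0:=u_0-c_0\nabla\rho_0^{-1}$ is divergence free with the correct trace (zero in cases \eqref{B}, \eqref{C}; tangential in case \eqref{A}). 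Hence the data are admissible.

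Next I would build $\{(\rho^k,v^k,\pi^k)\}_{k\ge0}$ by alternating two linear solves. Given $v^k\in C([0,T];H^1)\cap L^2(0,T;H^2)$, solve a consistent linearization of the density equation (coefficients frozen at $\rho^k$, transport velocity $v^k$) with datum $\rho_0$ and the relevant boundary condition; parabolic $L^2$-theory then produces $\rho^{k+1}\in C([0,T];H^2)\cap L^2(0,T;H^3)$ with $\rho^{k+1}_t\in C([0,T];L^2)\cap L^2(0,T;H^1)$, while the maximum principle --- on a short time interval --- keeps $\tfrac\alpha2\le\rho^{k+1}\le2\beta$. Given $\rho^{k+1}$ and $v^k$, solve the linear Stokes system $\rho^{k+1}v^{k+1}_t-\dive(2\mu(\rho^{k+1})D(v^{k+1}))+\nabla\pi^{k+1}=F(\rho^{k+1},v^k)$, $\dive v^{k+1}=0$, with datum $v_0$, the normalization \eqref{normal}, and the boundary condition of \eqref{A}, \eqref{B} or (for constant $\mu$) \eqref{C} --- in case \eqref{A} this is a Navier-type condition carrying an $\rho$-dependent inhomogeneity produced by the decomposition; using the identities $\curle u=\nabla^\perp\cdot u$, $\Delta u=\nabla\dive u+\nabla^\perp\curle u$ and the regularity theory for $H^1_\omega$, this gives $v^{k+1}\in C([0,T];H^1)\cap L^2(0,T;H^2)$, $v^{k+1}_t\in L^2(0,T;L^2)$, $\pi^{k+1}\in L^2(0,T;H^1)$.

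The core step is to find $T_1=T_1(\Omega,c_0,\alpha,\beta,\|u_0\|_{H^1})>0$ and a ball $\mathcal B$ in the solution class such that the iteration maps $\mathcal B$ into itself on $(0,T_1)$. This is an energy argument of the same type as the a priori estimates underlying Theorems \ref{theorem1.2}--\ref{theorem1.6}: an $L^2$ bound for $v$; an $H^1$ bound for $v$ obtained by testing the Stokes system with $v_t$ (in case \eqref{A} the term $-\int\dive(2\mu D(v))\cdot v_t$ contributes a favorable boundary term plus a remainder controlled through the $H^1_\omega$-structure and the matrix $B$); parabolic energy estimates for $\rho$ up to order $H^3$; and a Gronwall closing in which every term carrying a top-order norm of the iterate appears with a positive power of $T$ (H\"older in time) or with $\|\nabla\rho^k-\nabla\rho_0\|$, which is $O(T^{1/2})$ on $\mathcal B$. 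For $T_1$ small the estimate is self-improving and uniform in $k$. I expect this uniform-in-$k$ closure to be the main obstacle --- in particular absorbing the third-order terms from $\partial_t\nabla\rho^{-1}$ and $\nabla^2\rho^{-1}$ without a derivative loss, and keeping the slip boundary integrals with $B$ under control; the rest is bookkeeping.

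With the uniform bound, I would show the map $(\rho^k,v^k)\mapsto(\rho^{k+1},v^{k+1})$ is a contraction in a weaker metric, say $v$ in $L^\infty(0,T_1;L^2)$ and $\rho$ in $L^\infty(0,T_1;H^1)\cap L^2(0,T_1;H^2)$: the differences solve linear problems whose forcing --- once the uniform bounds are used, all but one derivative being placed on the uniformly bounded iterate and the remaining one integrated by parts against the divergence-free velocity difference --- is dominated by $C\,T_1^{\theta}(\|\delta\rho^k\|+\|\delta v^k\|)$ for some $\theta>0$, so shrinking $T_1$ gives a contraction. Passing to the limit (the weak convergence being upgraded by interpolation against the uniform strong bound) yields a solution $(\rho,u,\pi)$, $u=v+c_0\nabla\rho^{-1}$, on $(0,T_1)$ in the class \eqref{strong}; uniqueness there follows from the same difference estimate applied to two solutions together with Gronwall's inequality. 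Finally, in case \eqref{C} the condition $u|_{\partial\Omega}=0$ is incompatible with giving $v$ a homogeneous trace, so one takes $\mu(\rho)\equiv\mu$ constant and works directly with $u$: the viscous term splits as $-\mu\Delta u=-\mu\nabla(c_0\Delta\rho^{-1})-\mu\nabla^\perp\curle u$, and the iteration runs with the classical Stokes/heat theory for the homogeneous Dirichlet problem and $u_0\in H^1_0$, exactly as above.
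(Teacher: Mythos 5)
Your overall strategy --- pass to the Kazhikhov--Smagulov variable $v=u-c_0\nabla\rho^{-1}$, solve a linearized problem by iteration, obtain uniform-in-$k$ bounds on a short time interval, then close by a contraction estimate --- is the same one the paper uses for cases \eqref{A} and \eqref{B}, so the conceptual outline is sound. Two points where the proposal diverges from what is actually carried out, one a genuine gap, one a notable technical difference.

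First, the genuine gap is in case \eqref{C}. You propose to ``work directly with $u$'' after writing $-\mu\Delta u=-\mu\nabla(c_0\Delta\rho^{-1})-\mu\nabla^\perp\curle u$. But when you test the momentum equation with $u$ (or with $u_t$), the pressure contribution
\[
\int\nabla\pi\cdot u \;=\; -\int\pi\,\dive u \;=\; -c_0\int\pi\,\Delta\rho^{-1}
\]
does \emph{not} vanish, because $u$ is not solenoidal, and there is no a priori control of $\pi$ at that stage to absorb it. Your curl-splitting of the Laplacian does not remove this term, and the ``classical Stokes/heat theory for the homogeneous Dirichlet problem'' is simply not the problem at hand ($\dive u\ne0$). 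The paper eliminates the pressure by a second decomposition $u=w+Q$ with $Q=\cB[c_0\Delta\rho^{-1}]$ the Bogovski\v i lift (Lemma~\ref{lemma27}): $w$ is then divergence-free with zero trace, so multiplying by $w$ (respectively $w_t$) kills the pressure, while $Q$ and $Q_t$ are controlled by $\nabla\rho$ and $\nabla\rho_t$. Without that idea, your case-\eqref{C} energy scheme does not close at the very first step. (Relatedly, the actual reason for requiring constant $\mu$ in case \eqref{C} is that the variable-coefficient Stokes estimate, Lemma~\ref{Lemma5.2}, carries a dependence on the modulus of continuity of $\mu(\rho)$, and the analogue that needs only $\nabla\mu\in L^4$, Lemma~\ref{lemma8.4}, is only available in the $v$-formulation used for cases \eqref{A},\eqref{B}; your ``$v$ cannot have a homogeneous trace'' observation is true and is exactly why the $v$-formulation is abandoned in case \eqref{C}, but it is not in itself why $\mu$ must be constant.)

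Second, a structural difference worth noting even though it is not, by itself, fatal: the paper does not iterate directly with the raw coefficients $\rho^k,v^k$. It first mollifies the coefficients (building $\rho_\eta^{n-1}$, $v_\eta^{n-1}$, $\mu_\epsilon=\mu(\rho_\epsilon)$ in \eqref{origin}--\eqref{regu}), proves existence of \emph{smooth} solutions of the regularized scheme by iteration and a Volterra-type recursion, and only then sends $\eta\to0$ and $\epsilon\to0$ using bounds uniform in those parameters. This two-parameter regularization is what lets the paper bootstrap to $C^\infty$ solutions so that all the energy identities are justified, and it also transparently isolates why case \eqref{C} is harder (the $\epsilon$-dependence, Remark~\ref{remark56}). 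Your frozen-coefficient iteration could plausibly be made to produce solutions in the class \eqref{strong} directly, but the higher regularity needed to run the a priori estimates rigorously is left unaddressed, and you flag (without resolving) precisely the terms --- $\partial_t\nabla\rho^{-1}$, $\nabla^2\rho^{-1}$, and the slip boundary integrals with $B$ --- that the paper handles via the tangential-integration-by-parts trick $\int_\partial v\cdot\nabla f=\int\nabla f\cdot\nabla^\perp(v\cdot n^\perp)$.
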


\begin{Theorem}\label{Theorem1.7}
If $(\rho, u,\pi)$ is a strong solution of \eqref{equation1.1} on $\Omega\times(0,T^*)$ and $T^*<\infty$ is the maximal time of existence, then, one has 
\begin{enumerate}
\item[(1)]
\begin{equation}\label{serrin1}
\lim_{T\to T^*}\norm{\nabla\rho}_{L^s(0,T;L^r)}=\infty
\end{equation}
provided $(\rho,u)$ satisfying the condition \eqref{A} or \eqref{B};
\item[(2)]
\begin{equation}\label{serrin2}
\lim_{T\to T^*}\norm{u}_{L^s(0,T;L^r)}=\infty
\end{equation}
provided $(\rho,u)$ satisfying the condition \eqref{C}.
\end{enumerate}
Here, $r$ and $s$ satisfy the relation
\begin{equation}\label{rs}
\frac{2}{s}+\frac{2}{r}\leq 1,\quad 2<r\leq \infty.
\end{equation}
\end{Theorem}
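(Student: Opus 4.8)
\textbf{Proof proposal for Theorem \ref{Theorem1.7}.}

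The plan is to argue by contradiction. Suppose $T^*<\infty$ is the maximal existence time but the relevant norm stays finite, i.e. in case \eqref{A} or \eqref{B} we assume $\norm{\nabla\rho}_{L^s(0,T^*;L^r)}\le M<\infty$, and in case \eqref{C} we assume $\norm{u}_{L^s(0,T^*;L^r)}\le M<\infty$, with $(r,s)$ in the Serrin range \eqref{rs}. The goal is then to show that $(\rho,u,\pi)$ can be bounded in the strong-solution norm \eqref{strong} uniformly up to $T^*$, so that the local existence result (Theorem \ref{theo:Serrin-type}) applied with data at a time close to $T^*$ extends the solution past $T^*$, contradicting maximality. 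So the whole content is a chain of \emph{a priori} estimates under the Serrin bound.

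The key steps, in order, would be: (i) Recall the basic energy identity and the pointwise bounds $\alpha\le\rho\le\beta$ (these hold for strong solutions independently of the criterion, from $\eqref{equation1.1}_1$ and $\eqref{equation1.1}_3$ via the maximum principle for $\rho^{-1}$). (ii) Use the decomposition $u=v+c_0\nabla\rho^{-1}$ with $\dive v=0$ to rewrite the system in Kazhikhov--Smagulov form \eqref{equation3.2}; the transport equation for $\rho$ then has $v$ (divergence-free) as its effective velocity, which is what makes the $\nabla\rho$ estimates tractable. (iii) Derive the first-level estimate: test the momentum equation with $u_t$ (or with $v_t$, or with $\Delta u$ depending on boundary conditions) to control $\norm{\nabla u}_{L^\infty(0,T;L^2)}$ and $\norm{u_t}_{L^2(Q_T)}$, $\norm{\nabla^2 u}_{L^2(Q_T)}$. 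This is where the Serrin norm enters: the troublesome convective term $\int \rho u\cdot\nabla u\cdot u_t$ (or the corresponding term involving $\nabla\rho$ in the combustion case) is handled by H\"older with exponents matched to \eqref{rs} and interpolation (Gagliardo--Nirenberg in 2D), closing via Gr\"onwall. For the density-dependent viscosity $\mu(\rho)$ cases \eqref{A}, \eqref{B}, the extra terms $\nabla\mu(\rho)\cdot(\ldots)$ are absorbed using the $\nabla\rho$ control already present. (iv) Bootstrap: once $\nabla u$ is in $L^\infty_tL^2_x$ and $L^2_tH^2_x$, propagate higher regularity of $\rho$ — differentiate the transport/diffusion equation for $\rho$ to get $\rho\in L^\infty(0,T^*;H^2)\cap L^2(0,T^*;H^3)$ and $\rho_t\in L^\infty_tL^2_x\cap L^2_tH^1_x$, using parabolic regularity for the $\rho^{-1}$ equation. (v) Recover $\pi$ from the momentum equation via the elliptic (Stokes-type) estimate with the normalization \eqref{normal}, obtaining $\pi\in L^2(0,T^*;H^1)$. (vi) Conclude all norms in \eqref{strong} are finite at $T^*$, hence the solution extends, contradiction.

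The main obstacle I expect is step (iii), specifically closing the velocity estimate using \emph{only} $\nabla\rho$ (not $u$) in cases \eqref{A} and \eqref{B}. The point of the theorem's remark is that the Serrin condition on $u$ can be dropped under these boundary conditions; the mechanism must be that in the Kazhikhov--Smagulov formulation the genuinely nonlinear obstruction is the mass-diffusion term, whose worst contribution is controlled by $\nabla\rho$, while the $v\cdot\nabla v$ term is handled by the $H^1$ energy together with the 2D logarithmic/Ladyzhenskaya inequalities and the smallness is not needed once $\nabla\rho\in L^s_tL^r_x$ — one needs to track carefully that the boundary integrals arising from integration by parts (which is why the boundary conditions \eqref{A}, \eqref{B} matter: they kill or sign-control these boundary terms, exactly as in the a priori estimates behind Theorems \ref{theorem1.2}--\ref{theorem1.3}) do not reintroduce a dependence on $\norm{u}_{L^s_tL^r_x}$. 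In case \eqref{C} the non-slip condition forces the classical route and one genuinely needs $u\in L^s_tL^r_x$, matching Kim's criterion. A secondary technical point is verifying the endpoint $r=\infty$ (so $s=2$) and checking that the interpolation constants remain uniform as $T\uparrow T^*$, but this is routine once the scheme is set up.
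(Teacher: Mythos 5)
Your overall contradiction scheme — maximum-principle bounds, Kazhikhov--Smagulov decomposition $u=v+c_0\nabla\rho^{-1}$, a priori estimates under the Serrin assumption, extension past $T^*$ via Theorem \ref{theo:Serrin-type} — matches the paper, and your treatment of case \eqref{C} is on the right track. But step (iii) has a genuine gap in cases \eqref{A} and \eqref{B}. You go directly to testing the momentum equation with $u_t$ (or $v_t$) and claim the convective term $\int\rho u\cdot\nabla u\cdot u_t$ is closed by H\"older with the Serrin indices plus Gagliardo--Nirenberg. That manipulation produces a Gr\"onwall coefficient $\norm{u}_{L^r}^{2r/(r-2)}$ with $2r/(r-2)\leq s$, i.e.\ it needs $u\in L^s(0,T;L^r)$ — exactly the quantity that \eqref{serrin1} is allowed to leave unbounded. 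You flag this worry yourself, but the heuristic you offer (2D Ladyzhenskaya plus boundary-term bookkeeping) does not supply the missing mechanism, and you misattribute the obstruction to boundary integrals: the real issue is the interior convective term at the $v_t$ level.

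What the paper does and your plan omits is an intermediate $L^2$-level coupled energy lemma (Lemma \ref{lemma44}): test the momentum equation with $v$, not $v_t$, and the density equation with $-\Delta\rho$. At this level the term $\int\rho u\cdot\nabla v\cdot v$ is absorbed exactly into $\tfrac{d}{dt}\tfrac{1}{2}\int\rho\abs{v}^2$ via $\dive v=0$, $\rho u\cdot n|_{\partial\Omega}=0$, and the continuity equation — there is no leftover — while every remaining forcing term (the $I_i$ in \eqref{4412} and the term $\int (v\cdot\nabla\rho)\Delta\rho$) carries at least one factor of $\nabla\rho$ or $\Delta\rho$, which is H\"older-paired against $\norm{\nabla\rho}_{L^r}$ and interpolated back to $\norm{\nabla\rho}_{L^2}$, $\norm{v}_{L^2}$. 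This is the precise mechanism by which $\norm{\nabla\rho}_{L^s(0,T;L^r)}$ alone controls $\sup_t(\norm{v}^2_{L^2}+\norm{\nabla\rho}^2_{L^2})$ and $\int_0^T(\norm{\nabla v}^2_{L^2}+\norm{\Delta\rho}^2_{L^2})\,dt$, and by 2D interpolation gives $\norm{v}_{L^4(Q_T)}^4\leq\tilde C$. Only with that bound in hand does the $v_t$-level estimate close: the convective term is then absorbed with the time-integrable Gr\"onwall coefficient $\norm{v}_{L^4}^4+\norm{\nabla\rho}_{L^4}^4$, as in Lemma \ref{lemma6.5}. A secondary omission: your steps (iv)--(v) implicitly use the variable-coefficient Stokes estimate of Lemma \ref{Lemma5.2}, which requires $\mu(\rho)$ continuous; the paper secures H\"older continuity of $\rho$ via the De Giorgi argument in Lemma \ref{lemma61}, applied to the transport-diffusion equation under the Serrin assumption on the transport velocity $v$, and your plan should invoke this rather than appealing to generic parabolic regularity for the $\rho^{-1}$ equation.
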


\begin{Remark}
The definition of $v_0$ in Theorem \ref{theorem1.2}--\ref{theorem1.3} will be given at the end of this section.
\end{Remark}

\begin{Remark}
Theorem \ref{theorem1.2}--\ref{theo:Serrin-type} are first results concerning with the weak and strong solutions for the combustion model in bounded domain. Theorem \ref{theorem1.6} can be seen as a kind of extension of the global existence results in \cite{huang, zhang2015}  with $\dive u=c_0\Delta\rho^{-1}$. Theorem \ref{Theorem1.7} can be regarded as an extension to the classical Serrin's condition.
\end{Remark}

\begin{Remark}
For some technical reasons, in the proof of Theorem \ref{theorem1.3}, we need the following consistency condition 
\begin{equation}\label{equation1.7}
\rho_0|_{\partial\Omega}=\tilde\rho
\end{equation}
to ensure the continuity of $\rho$, which is crucial to the higher order estimates of $v$, see subsection \ref{Section6} for details. On the other hand, one should notice that the restriction $\alpha\leq \tilde\rho\leq\beta$ and the condition \eqref{equation1.7} are not necessary for the proof of Theorem \ref{theorem1.2}. However, for simiplicity, we may always impose these requirements.
\end{Remark}

\begin{Remark}\label{remark18}
Noticing that, in Theorem \ref{theorem1.2}--\ref{theo:Serrin-type}, we only impose the regularity restrictions on $u_0$ for given initial data $(\rho_0,u_0)$. This is due to the compatiability condition \eqref{compat} from which one can find that the regularity of $\rho_0$ can be totally determinded by that of $u_0$. Indeed, for example, if $u_0\in H^1_0$ as we assumed in Theorem \ref{theorem1.6}, it follows from the following epllitic problem
\begin{equation*}
\begin{cases}
c_0\Delta\rho_0^{-1}=\dive u_0, &x\in \Omega,\\
n\cdot\nabla\rho_0^{-1}=0, &x\in \partial\Omega.
\end{cases}
\end{equation*}
that, for all $1<p<\infty$,
\begin{equation*}
\begin{cases}
\norm{\nabla\rho_0}_{L^p}\leq C(p)\norm{u_0}_{L^p},\\
\norm{\nabla\rho_0}_{H^1}\leq C\norm{\nabla u_0}_{L^2}.
\end{cases}
\end{equation*}
Thus, alonging with the fact that $\rho_0\in L^\infty$, $\rho_0\in H^2$. We will come to this point again many times in later sections.
\end{Remark}

At the end of this section, we make a short comment on the analysis of this paper. Formally speaking, we treat Theorem \ref{theorem1.2}--\ref{theorem1.6} via two different types of decomposition and the proofs of Theorem \ref{theo:Serrin-type}--\ref{Theorem1.7} are based on those of Theorem \ref{theorem1.3}--\ref{theorem1.6}.

 The proofs of  Theorem \ref{theorem1.2}--\ref{theorem1.3} are based on the decomposition $u=v+c_0\nabla\rho^{-1}$, which may convert system \eqref{equation1.1} into the Kazhikhov-Smagulov type model. In this case, one can find that $v$ satisfies either the Dirichlet boundary condition or the slip one. More precisely, we may first  write in view of $\eqref{equation1.1}_3$
\begin{equation}\label{equation3.1}
v = u-c_0\nabla\rho^{-1}.
\end{equation}
Of course, such $v$ can be found for given $(\rho,u)$ with the boundary condition \eqref{A} or \eqref{B}. Next, using \eqref{equation3.1}, we write
$$\rho u=\rho v+c_0\rho\nabla\rho^{-1}=\rho v-c_0\nabla\log\rho.$$
Therefore, combining this equality and \eqref{equation3.1}, the original system \eqref{equation1.1} can be changed into the following equivalent formulations:
\begin{equation}\label{equation3.2}
\begin{cases}
\rho_t +v\cdot\nabla\rho + c_0\rho^{-2}\abs{\nabla\rho}^2 - c_0\rho^{-1}\Delta\rho=0,\\
\quad\\
\begin{cases}
(\rho v)_t +\dive(\rho  v\otimes v) - \dive{[2\mu D(v)]}+ \nabla \pi_1=c_0\dive{\left(2\mu\nabla^2\rho^{-1}\right)}\\
- c_0 \dive{\left(\rho v\otimes\nabla\rho^{-1}\right)} -\dive\left(c_0\rho\nabla\rho^{-1}\otimes v\right)-c_0^2 \dive{\left(\rho \nabla\rho^{-1}\otimes\nabla\rho^{-1}\right)},
\end{cases}\\
\quad\\
\dive{v} = 0,
\end{cases}
\end{equation}
where $\pi_1=\pi-c_0(\log\rho)_t$ is a modified pressure. 

Next, we give a precise defintion for the initial-boundary value of $v$. For given initial data $(\rho_0,u_0)$ satisfying the initial conditions \eqref{compat}, one can deduce that there exists a unique $v_0$ satisfying
\begin{equation}
v(x,0):=v_0=u_0-c_0\nabla\rho_0^{-1},\quad\dive v_0=0,\quad v_0\cdot n=0\text{ on }\partial\Omega,
\end{equation}
sharing with the similar compatibility conditions of $u_0$, that is, $v_0|_{\partial\Omega}=0$ provided  $u_0\in H^1_{nd}$ and 
$$\curle v_0=-n^\perp\cdot B\cdot (v_0+c_0\nabla\rho_0^{-1})$$
provided $u_0\in H^1_\omega$. Furthermore, from the relation \eqref{equation3.1}, we can define the boundary conditions of $v$ as follows:
\begin{enumerate}
\item[$(1)$]
$v\cdot n=0$ and $\curle v=\curle u=-n^\perp\cdot B\cdot (v+c_0\nabla\rho^{-1})$ on $\partial\Omega\times(0,T)$, if $(\rho,u)$ satisfies the condition \eqref{A}. In this case, from Remark \ref{remark2.6} in Section \ref{Section2}, one has
$$\norm{v}_{H^2}\leq C(\norm{\Delta v}_{L^2}+\norm{\Delta\rho^{-1}}_{L^2}).$$
\item[$(2)$]
$v=0$ on $\partial\Omega\times(0,T)$, if $(\rho,u)$ satisfies the condition \eqref{B}. In this case, we have
$$\norm{v}_{H^2}\leq C\norm{\Delta v}_{L^2}.$$
\end{enumerate}

An interesting observation is that, once the solution $(\rho,v)$ of \eqref{equation3.2}, which is defined as in Definition \ref{definition1.1}, incorporating with the initial-boundary conditions given above, is established, one can expect to obtain $u$ from \eqref{equation3.1} and, consequently, $(\rho,u)$ becomes the solution of the original system \eqref{equation1.1}. Therefore, in Section \ref{section3}, we mainly establish the a priori estimates of $(\rho,v)$. The details for proving the existence of $(\rho,u)$ will be shown in Section \ref{proof6}.

To sum up, we may impose $(\rho,v)$ satisfying one of the following two boundary conditions
\begin{enumerate}
\item[(1)]
if $(\rho,u)$ satisfies \eqref{A}, we impose
\begin{equation}\label{A'}
n\cdot \nabla\rho=0,\quad v\cdot n=0\text{ and }\curle v=-n^\perp\cdot B\cdot (v+c_0\nabla\rho^{-1})\text{ on }\partial\Omega\times(0,T);\tag{A'}
\end{equation}
\item[(2)]
if $(\rho,u)$ satisfies \eqref{B}, we impose
\begin{equation}\label{B'}
\rho=\tilde\rho,\quad v=0\text{ on }\partial\Omega\times(0,T)\tag{B'}
\end{equation}
\end{enumerate}
and our strategy of the proof can be concluded as follows:
$$\mathrm{given}\,\,(\rho_0,u_0)\implies (\rho_0,v_0)\implies \exists\,(\rho,v)\implies\exists\,(\rho,u).$$

Unfortunately, for Theorem \ref{theorem1.6}, such decomposition may cause some serious problems when it comes to the boundary estimates, that is, if we extract $v$ as we did above, $v|_{\partial\Omega}=-c_0\nabla\rho^{-1}$, which will hinder us to integrate by parts. As a consequence, we  consider another type of decomposition $u=w+Q$ coming from Lemma \ref{lemma27}. In every case that follows, $w$ is divergence-free and enjoys vanished boundary condition and $Q$ can be dominanted by $\nabla\rho$, which allows us to overcome the bounardy integrals, see Section \ref{Section8} for details.

The scond part we are interested in is the local well-posedness for system \eqref{equation1.1}. To prove Theorem \ref{theo:Serrin-type}, we mainly follow the proof from Kim-Cho \cite{cho} by using the iterative appoarch. This method will be based on the linearized model associated with  \eqref{equation1.1} , we refer to Section \ref{Section7} for details.

To the proof of Theorem \ref{Theorem1.7}, at least for the case when $(\rho,v)$ satisfies condition \eqref{A'} or \eqref{B'}, the key obeservation is that, if $\rho$ is a weak solution of system \eqref{equation1.1} satisfying $\nabla\rho\in L^s(0,T;L^r)$, then $(\rho,v)$ is regular, since we can close the lower bounds for $(\rho,v)$ merely under the condition \eqref{serrin1}. Here is an interpretation for \eqref{serrin1} and \eqref{serrin2}: for $(\rho,u)$ satisfying the condition \eqref{A} or \eqref{B}, the $L^s(0,T;L^r)$-norm of $v$ does not blowup during the finite time $[0,T^*)$, which is parallel to the classical Serrin's condition for 2D non-homogeneous Navier-Stokes equations \eqref{equationN} (since, in such case, problem \eqref{equationN}, at least for $\rho$ away from the vacuum, automatically satisies the Serrin's condition and admits a unique global strong solution without any smallness assumption, here $v$ can be seen as the velocity field $u$ in \eqref{equationN}). However, for the case when $(\rho,u)$ satisfies the condition \eqref{C}, we can not get rid off the the blowup behavior of $v$, since, in this case, $v|_{\partial\Omega}=-c_0\nabla\rho^{-1}$, which leads to some issue on the boundary estimates, we will come to this point again in Section \ref{section9}.

At last, we explain some techniques used in Section \ref{section3} and \ref{section9}. Since our main difficulty arises from the boundary integrals, in order to overcome it, we adapt the ideas from Cai-Li \cite{cai}: observing that the condition $v\cdot n|_{\partial\Omega}=0$ leads to
$$v=(v\cdot n^\perp)n^\perp,$$
which implies that 
$$\int_\partial v\cdot \nabla f=\int_\partial (v\cdot n^\perp)n^\perp\cdot \nabla f=\int \nabla f\cdot \nabla^\perp(v\cdot n^\perp).$$
This observation can allow us to avoid some higher derivatives of $f$, which has advantages over directly using the trace inequality, since the latter needs the second order derivative of $f$.

The rest of this paper is organized as follows. In Section \ref{Section2}, we give some elementary results which will be used in later. Section \ref{section3} is devoted to the lower order estimates, compactness results for weak solutions and the higher order estimates for Theorem \ref{theorem1.2}--\ref{theorem1.3}, while Section \ref{Section8} is devoted to the a priori estimets for Theorem \ref{theorem1.6}. In Section \ref{Section7}, we will use the contraction mapping theorem to prove Theorem \ref{theo:Serrin-type} and, then, in Section \ref{proof6}, use this result to establish the global existence for Theorem \ref{theorem1.2}--\ref{theorem1.6}. At last, in Section \ref{section9}, we will give the proof of Theorem \ref{Theorem1.7}.

\section{Preliminaries}\label{Section2}
First, we give the following Gagliardo-Nirenberg's inequalities which will be frequently used throughout the whole paper.
\begin{Lemma}[Gagliardo-Nirenberg's inequality \cite{leoni,nirenberg}]\label{Lemma2.3}
For all $u_i\in H^1$, $i=1,2$, $q_1\in (2,\infty)$ and $q_2\in (4,\infty)$, there exist positive constants $C_i,\tilde C_i$ depending on $q_i,\Omega$, $i=1,2$, such that 
\begin{align*}
\norm{u_1}_{L^{q_1}}&\leq C_1\norm{u_1}_{L^2}^{2/{q_1}}\norm{\nabla u_1}_{L^2}^{1-2/{q_1}}+\tilde C_1\norm{u_1}_{L^2},\\
\norm{u_2}_{L^{q_2}}&\leq C_2\norm{u_2}_{L^{4}}^{4/{q_2}}\norm{\nabla u_2}_{L^2}^{1-4/{q_2}}+\tilde C_2\norm{u_2}_{L^2}.
\end{align*}
In particular, if $u_i$ satisifies $u_i\cdot n=0$ on $\partial\Omega$ or $(u_i)_\Omega=0$, then one can take $\tilde C_1=\tilde C_2=0$.
\end{Lemma}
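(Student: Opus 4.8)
The plan is to peel off the spatial mean and then transport the Euclidean Gagliardo--Nirenberg inequalities to $\Omega$ through a universal extension operator. Write $u=v+u_\Omega$ with $v:=u-u_\Omega$, so that $v$ has zero mean, $\nabla v=\nabla u$, $\norm{v}_{L^2}\le\norm{u}_{L^2}$ (by orthogonality of $u_\Omega$ and $u-u_\Omega$), $\norm{v}_{L^4}\le 2\norm{u}_{L^4}$ (since $|u_\Omega|\le|\Omega|^{-1/4}\norm{u}_{L^4}$ by H\"older), and $\norm{u_\Omega}_{L^q}=|\Omega|^{1/q}\,|u_\Omega|\le|\Omega|^{1/q-1/2}\norm{u}_{L^2}$ for every $q$. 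Hence it is enough to establish the scale-invariant estimates $\norm{v}_{L^{q_1}}\le C\norm{v}_{L^2}^{2/q_1}\norm{\nabla v}_{L^2}^{1-2/q_1}$ and $\norm{v}_{L^{q_2}}\le C\norm{v}_{L^4}^{4/q_2}\norm{\nabla v}_{L^2}^{1-4/q_2}$ for mean-zero $v\in H^1(\Omega)$; the triangle inequality together with the above bound on $\norm{u_\Omega}_{L^{q_i}}$ then yields the two displayed inequalities, with $\tilde C_i=|\Omega|^{1/q_i-1/2}$.

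For the mean-zero estimates I fix a universal (Calder\'on--Stein) extension operator $E\colon H^1(\Omega)\to H^1(\RR^2)$, which exists because $\partial\Omega$ is smooth and is bounded simultaneously $L^p(\Omega)\to L^p(\RR^2)$ for every $p$ and $H^1(\Omega)\to H^1(\RR^2)$. Then
\[
\norm{v}_{L^{q_1}(\Omega)}\le\norm{Ev}_{L^{q_1}(\RR^2)}\le C(q_1)\,\norm{Ev}_{L^2(\RR^2)}^{2/q_1}\norm{\nabla Ev}_{L^2(\RR^2)}^{1-2/q_1}\le C\,\norm{v}_{L^2(\Omega)}^{2/q_1}\norm{v}_{H^1(\Omega)}^{1-2/q_1},
\]
the middle step being the classical Gagliardo--Nirenberg inequality on $\RR^2$ \cite{nirenberg}, and the last using $\norm{Ev}_{L^2(\RR^2)}\le C\norm{v}_{L^2(\Omega)}$, $\norm{\nabla Ev}_{L^2(\RR^2)}\le C\norm{v}_{H^1(\Omega)}$. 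Since $v$ has zero mean, Poincar\'e--Wirtinger gives $\norm{v}_{H^1(\Omega)}\le C_P\norm{\nabla v}_{L^2(\Omega)}$, and the first mean-zero estimate follows. The second is identical after replacing the $\RR^2$ inequality by its base-$L^4$ member $\norm{w}_{L^{q_2}(\RR^2)}\le C\norm{w}_{L^4(\RR^2)}^{4/q_2}\norm{\nabla w}_{L^2(\RR^2)}^{1-4/q_2}$ ($q_2>4$) and invoking the $L^4$-boundedness of $E$; note $v\in L^4(\Omega)$ automatically, since $H^1(\Omega)\hookrightarrow L^p(\Omega)$ for all $p<\infty$ in dimension two.

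It remains to remove the additive term under the stated hypotheses. If $(u_i)_\Omega=0$ then $u_i=v$, and the scale-invariant estimates are exactly the assertions with $\tilde C_i=0$. If instead $u_i\cdot n=0$ on $\partial\Omega$ (so that $u_i$ is $\RR^2$-valued), $u_i$ need not have zero mean, so I cannot simply drop $u_\Omega$; instead I absorb $\tilde C_i\norm{u_i}_{L^2}$ using the Poincar\'e-type inequality $\norm{w}_{L^2(\Omega)}\le C_\Omega\norm{\nabla w}_{L^2(\Omega)}$, valid on $\{w\in H^1(\Omega;\RR^2):w\cdot n=0\text{ on }\partial\Omega\}$. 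This follows by the standard Rellich compactness argument, the only obstruction being a nonzero constant field $w$ with $w\cdot n\equiv0$ on $\partial\Omega$; and that is ruled out by testing the divergence theorem against $g(x):=w\cdot x$, which gives $0=\int_\partial(w\cdot n)(w\cdot x)=\int\dive\big((w\cdot x)\,w\big)=\int|w|^2=|\Omega|\,|w|^2$, hence $w=0$. Granting the inequality, $\tilde C_1\norm{u_1}_{L^2}=\tilde C_1\norm{u_1}_{L^2}^{2/q_1}\norm{u_1}_{L^2}^{1-2/q_1}\le\tilde C_1 C_\Omega^{1-2/q_1}\norm{u_1}_{L^2}^{2/q_1}\norm{\nabla u_1}_{L^2}^{1-2/q_1}$ is absorbed into the principal term, and similarly $\tilde C_2\norm{u_2}_{L^2}\le\tilde C_2\big(|\Omega|^{1/4}\norm{u_2}_{L^4}\big)^{4/q_2}\big(C_\Omega\norm{\nabla u_2}_{L^2}\big)^{1-4/q_2}$ (using in addition $\norm{u_2}_{L^2}\le|\Omega|^{1/4}\norm{u_2}_{L^4}$), which gives $\tilde C_2=0$.

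The main obstacle is conceptual rather than computational: one must recover the \emph{product} form $\norm{u}_{L^2}^{2/q}\norm{\nabla u}_{L^2}^{1-2/q}$, not merely $\norm{u}_{H^1}$. A crude extension only bounds $\norm{Ev}_{L^2(\RR^2)}$ by $\norm{v}_{H^1(\Omega)}$, which destroys the sharp balance; this is precisely why one needs an extension operator bounded on $L^2$ and on $H^1$ separately, combined with Poincar\'e to restore scale invariance. The secondary delicate point is the $u\cdot n=0$ case, where the relevant space is not the zero-mean space, so one genuinely needs the dedicated Poincar\'e inequality above, whose proof rests on the short divergence-theorem identity showing that constant tangential fields vanish on a bounded smooth domain.
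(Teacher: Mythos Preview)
The paper does not supply a proof of this lemma; it is simply quoted from \cite{leoni,nirenberg} as a known result. Your argument is a correct, self-contained derivation: the extension-to-$\RR^2$ plus Poincar\'e--Wirtinger route recovers the product-form estimate on bounded domains, and your divergence-theorem identity showing that no nonzero constant vector field can satisfy $w\cdot n\equiv0$ on $\partial\Omega$ cleanly yields the Poincar\'e inequality needed to absorb $\tilde C_i\norm{u_i}_{L^2}$ in the tangential-boundary case. Since there is no in-paper proof to compare against, there is nothing further to contrast.
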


\begin{Lemma}[\cite{aramaki,von}]\label{Lemma2.2}
Let $\Omega$ be a simply connected bounded domain in $\mathbb{R}^2$ with smooth boundary. Assume that $1<p<\infty$. There exists a positive constant $C=C(p,\Omega)$ such that
\begin{equation*}
\norm{\nabla u}_{L^p}\leq C\left(\norm{\dive{u}}_{L^p}+\norm{\curle{u}}_{L^p}\right),
\end{equation*}
for all $u\in W^{1,p}$ with $u\cdot n=0$ on $\partial \Omega$. Furthermore, for $u \in W^{2, p}$ with $u \cdot n=0$ on $\partial \Omega$, there exists a constant $C=C(p,\Omega)$ such that
\begin{equation*}
\norm{u}_{W^{2, p}} \leq C\left(\norm{\dive u}_{W^{1, p}}+\norm{\curle u}_{W^{1, p}}+\norm{u}_{L^p}\right) .
\end{equation*}
\end{Lemma}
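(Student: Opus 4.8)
The plan is to reduce this $L^p$ div-curl inequality to the standard Calder\'on--Zygmund ($L^p$) estimates for the Laplacian under homogeneous Neumann and Dirichlet boundary data, via the two-dimensional Helmholtz decomposition $u=\nabla\phi+\nabla^\perp\psi$. Put $f:=\dive u$ and $g:=\curle u$. Since $u\cdot n=0$ on $\partial\Omega$, the divergence theorem gives the compatibility condition $\int_\Omega f=\int_{\partial\Omega}u\cdot n=0$. First I would take $\phi$ to be the unique solution of $\Delta\phi=f$ in $\Omega$, $n\cdot\nabla\phi=0$ on $\partial\Omega$, normalized by $\int_\Omega\phi=0$, and $\psi$ the unique solution of $\Delta\psi=g$ in $\Omega$, $\psi=0$ on $\partial\Omega$. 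Elliptic $L^p$ regularity (Agmon--Douglis--Nirenberg) gives $\norm{\phi}_{W^{2,p}}\le C\norm{f}_{L^p}$ and $\norm{\psi}_{W^{2,p}}\le C\norm{g}_{L^p}$; for the second assertion, applying the regularity estimate one order higher together with the Poincar\'e inequality (valid since $\int_\Omega\phi=0$) yields $\norm{\phi}_{W^{3,p}}\le C\norm{f}_{W^{1,p}}$ and $\norm{\psi}_{W^{3,p}}\le C\norm{g}_{W^{1,p}}$.

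Next set $w:=\nabla\phi+\nabla^\perp\psi$. Using $\dive\nabla^\perp\psi=0$, $\curle\nabla\phi=0$ and $\curle\nabla^\perp\psi=\Delta\psi$, one checks that $\dive w=f$ and $\curle w=g$; moreover $w\cdot n=n\cdot\nabla\phi+(\nabla^\perp\psi)\cdot n=0-(n^\perp\cdot\nabla\psi)=0$ on $\partial\Omega$, because $\psi$ vanishes on $\partial\Omega$ and hence has zero tangential derivative there. Thus $h:=u-w$ satisfies $\dive h=0$, $\curle h=0$ in $\Omega$ and $h\cdot n=0$ on $\partial\Omega$.

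The crucial step is to show $h\equiv 0$. As $\Omega$ is simply connected and $\curle h=0$, there is a single-valued potential $\eta$ with $h=\nabla\eta$; then $\Delta\eta=\dive h=0$ and $n\cdot\nabla\eta=h\cdot n=0$ on $\partial\Omega$, so $\eta$ is constant and $h=0$, i.e.\ $u=w$. (The regularity needed to run this argument is supplied by $u,w\in W^{1,p}$; when $1<p<2$ one may first argue on smooth data and pass to the limit, or invoke the fact that a simply connected planar domain carries no nonzero $L^p$ vector field that is simultaneously divergence-free, curl-free and tangent to the boundary.) Consequently $\nabla u=\nabla^2\phi+\nabla(\nabla^\perp\psi)$, and since $|\nabla(\nabla^\perp\psi)|=|\nabla^2\psi|$ pointwise up to a permutation of the entries, $\norm{\nabla u}_{L^p}\le\norm{\phi}_{W^{2,p}}+\norm{\psi}_{W^{2,p}}\le C(\norm{f}_{L^p}+\norm{g}_{L^p})$, which is the first inequality. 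The second is obtained in the same way from $u=\nabla\phi+\nabla^\perp\psi$: $\norm{u}_{W^{2,p}}\le\norm{\phi}_{W^{3,p}}+\norm{\psi}_{W^{3,p}}\le C(\norm{\dive u}_{W^{1,p}}+\norm{\curle u}_{W^{1,p}})$, which is in fact slightly stronger than what is stated.

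The main obstacle I anticipate is the triviality of the boundary-tangent harmonic field $h$ on the simply connected domain, together with the low-regularity bookkeeping it entails: the existence and regularity of the potential $\eta$, and the validity up to the boundary of the $L^p$ elliptic estimates for the Neumann problem under the compatibility and normalization constraints. The remainder is careful tracking of the sign conventions for $\nabla^\perp$ and $\curle$ and routine appeals to standard elliptic $L^p$ theory.
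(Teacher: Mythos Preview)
The paper does not supply its own proof of this lemma; it is quoted from the cited references \cite{aramaki,von} and used as a black box throughout. Your argument via the Helmholtz decomposition $u=\nabla\phi+\nabla^\perp\psi$, with $\phi$ solving the Neumann problem and $\psi$ the Dirichlet problem, together with standard $L^p$ elliptic regularity and the vanishing of tangential harmonic fields on a simply connected domain, is a correct and standard route to these inequalities. Your observation that the decomposition actually yields the second estimate without the lower-order term $\norm{u}_{L^p}$ is also correct; the statement in the paper is simply the form most convenient for its later applications. The only point requiring a bit of care, which you already flag, is the existence of the scalar potential $\eta$ for $h$ when $p<2$; this is handled either by a density argument or by the De Rham-type characterization of curl-free $L^p$ fields on simply connected domains.
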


\begin{Remark}\label{remark2.4}
For case of use, we list the following equivalent norms for $\rho$ satisfying the Neumann or the non-homogeneous Dirichlet condition. Let $1<p<\infty$, using Lemma \ref{Lemma2.3}--\ref{Lemma2.2}, if $\rho$ satsifies the Neumann condition, one has, for all $t\geq 0$,
\begin{gather*}
\rho_\Omega=(\rho_0)_\Omega,\\
\norm{\nabla \rho}_{L^p}\leq C\normf{\nabla^2\rho}_{L^p}\leq C\norm{\Delta\rho}_{L^p}\leq C\norm{\nabla\Delta\rho}_{L^p},
\end{gather*}
and
\begin{align*}
C^{-1}(\norm{\nabla\rho}_{L^p}+(\rho_0)_\Omega)\leq \norm{\rho}_{W^{1,p}} &\leq C(\norm{\nabla\rho}_{L^p}+(\rho_0)_\Omega),\\
C^{-1}(\norm{\Delta\rho}_{L^p}+(\rho_0)_\Omega)\leq \norm{\rho}_{W^{2,p}} &\leq C(\norm{\Delta\rho}_{L^p}+(\rho_0)_\Omega),\\
C^{-1}(\norm{\nabla\Delta\rho}_{L^p}+(\rho_0)_\Omega)\leq\norm{\rho}_{W^{3,p}}&\leq C(\norm{\nabla\Delta \rho}_{L^p}+(\rho_0)_\Omega),
\end{align*}
for some positive constant $C=C(p,\Omega)$. 

If $\rho$ satisfies the the non-homogeneous Dirichlet condition, then there exists a positive constant $C=C(p,\Omega)$ such that
\begin{align*}
C^{-1}\norm{\nabla\rho}_{L^p}\leq \norm{\rho-\tilde\rho}_{W^{1,p}} &\leq C\norm{\nabla\rho}_{L^p},\\
C^{-1}\norm{\Delta\rho}_{L^p}\leq \norm{\rho-\tilde\rho}_{W^{2,p}} &\leq C\norm{\Delta\rho}_{L^p},\\
C^{-1}\norm{\nabla\Delta\rho}_{L^p}\leq\norm{\rho-\tilde\rho}_{W^{3,p}}&\leq C\norm{\nabla\Delta \rho}_{L^p}.
\end{align*}
In both cases, the following Gagliardo-Nirenberg's inequalities are established 
\begin{align*}
\norm{\nabla\rho}_{L^{q_1}}&\leq C\norm{\nabla\rho}_{L^2}^{2/{q_1}}\norm{\Delta\rho}_{L^2}^{1-2/{q_1}},\\
\norm{\nabla\rho}_{L^{q_2}}&\leq C\norm{\nabla\rho}_{L^{4}}^{4/{q_2}}\norm{\Delta\rho}_{L^2}^{1-4/{q_2}},\\
\norm{\Delta\rho}_{L^{q_1}}&\leq C\norm{\Delta\rho}_{L^2}^{2/{q_1}}\norm{\nabla\Delta\rho}_{L^2}^{1-2/{q_1}}.
\end{align*}
where $q_1,\,q_2$ as in Lemma \ref{Lemma2.3}.
\end{Remark}

Next, for the problem
\begin{equation}\label{25}
\begin{cases}
\dive v=f, & x \in \Omega, \\ 
v=\Phi, & x \in \partial \Omega\end{cases}
\end{equation}
one has the following conclusion which will be frequently used to eliminate the non-homogeneity of equations in Section \ref{Section8}.
\begin{Lemma}[\cite{galdi}, Theorem III.3.3]\label{lemma27}
Suppose that $\Phi\cdot n=0$ on $\partial\Omega$ and $f_\Omega =0$. Then,
\begin{enumerate}
\item[1)]
If $\Phi=0$, there exists a bounded linear operator $\mathcal{B}=\left[\mathcal{B}_1, \mathcal{B}_2\right]$,
\begin{equation*}
\mathcal{B}: \{f\in L^{p}:f_\Omega =0\} \mapsto \left[W_0^{1,p}\right]^2
\end{equation*}
such that
\begin{equation*}\label{225}
\|\mathcal{B}[f]\|_{W^{1, p}} \leq C(p)\|f\|_{L^{p}},
\end{equation*}
for all $p \in(1, \infty)$, and the function $Q=\mathcal{B}[f]$ solves the problem \eqref{25}. Moreover, if $f=\dive g$ with a certain $g \in L^r,\left.g \cdot n\right|_{\partial \Omega}=0$, then for any $r \in(1, \infty)$
\begin{equation*}\label{226}
\|\mathcal{B}[f]\|_{L^r} \leq C(r)\|g\|_{L^r} .
\end{equation*}
$\cB$ is so-called the Bogovski\v i operator.
\item[2)]
If $f=0$, there exists a bounded linear operator $\cC=[\cC_1,\cC_2]$,
$$\cC: \{\Phi: \Phi\cdot n|_{\partial\Omega}=0,\,\,\dive\Phi\in L^p\}\mapsto  \left[W^{1,p}\right]^2$$
such that
$$\norm{\cC[\Phi]}_{W^{1,p}}\leq C(p)\norm{\dive \Phi}_{L^p},$$
for all $p\in (1,\infty)$ and the function $R=\cC[\Phi]$ sovles the problem \eqref{25}.
\end{enumerate}
\end{Lemma}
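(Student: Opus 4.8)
This statement is \cite[Theorem III.3.3]{galdi}, so I will only outline how its proof goes. The plan is to build the operator $\cB$ first on a domain that is star-shaped with respect to a ball, then transfer to an arbitrary bounded domain with smooth boundary by a partition of unity, and finally deduce part~2) as a short corollary of part~1).

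\textbf{Part 1) on a star-shaped domain.} Suppose first that $\Omega$ is star-shaped with respect to a ball $B$ with $\overline B\subset\Omega$, and fix a nonnegative $\omega\in C_0^\infty(B)$ with $\int\omega=1$. For $f\in L^p$ with $f_\Omega=0$ I would set
\begin{equation*}
\cB[f](x):=\int_\Omega f(y)\,\frac{x-y}{|x-y|^2}\left(\int_{|x-y|}^\infty\omega\!\left(y+r\,\tfrac{x-y}{|x-y|}\right)r\,dr\right)dy,
\end{equation*}
the classical Bogovski\v i formula in dimension two. Differentiating under the integral sign one verifies the pointwise identity $\dive\cB[f]=f-\omega\int f=f$; a support analysis shows that $f\in C_0^\infty(\Omega)$ with $f_\Omega=0$ gives $\cB[f]\in C_0^\infty(\Omega)^2$, so after the estimate below $\cB$ extends to a bounded map into $[W_0^{1,p}]^2$. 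The analytic heart of the matter is that each $\partial_j\cB_i[f](x)$ decomposes as a constant multiple of $f(x)$, plus a principal-value integral against a Calder\'on--Zygmund kernel, plus an integral against a weakly singular (locally integrable) kernel; the Calder\'on--Zygmund theorem and Young's inequality then yield $\norm{\nabla\cB[f]}_{L^p}\leq C(p)\norm{f}_{L^p}$, and Poincar\'e's inequality (legitimate since $\cB[f]$ vanishes on $\partial\Omega$) upgrades this to the claimed $W^{1,p}$ bound. For the refined estimate when $f=\dive g$ with $g\cdot n|_{\partial\Omega}=0$, I would integrate by parts in the $y$-variable inside the kernel; the boundary contribution vanishes exactly because $g\cdot n=0$, and what remains is again a Calder\'on--Zygmund operator applied directly to $g$, giving $\norm{\cB[\dive g]}_{L^r}\leq C(r)\norm{g}_{L^r}$.

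\textbf{General $\Omega$ and part 2).} Since a bounded domain with smooth boundary is a finite union of subdomains $\Omega_1,\dots,\Omega_N$, each star-shaped with respect to a ball, I would pick a partition of unity $\{\chi_k\}$ subordinate to this cover, write $f=\sum_k\chi_k f$, correct each $\chi_k f$ by subtracting multiples of fixed bump functions supported in the overlaps so that the corrected pieces have zero mean on the respective $\Omega_k$ (the corrections cancel in pairs thanks to $f_\Omega=0$), apply the star-shaped construction on each $\Omega_k$, and sum; linearity and all the bounds are preserved, and the refined statement for $f=\dive g$ is handled piece by piece in the same way. For part~2), given $\Phi$ with $\Phi\cdot n|_{\partial\Omega}=0$ and $\dive\Phi\in L^p$, the divergence theorem gives $\int_\Omega\dive\Phi=\int_\partial\Phi\cdot n=0$, so $\cB[\dive\Phi]$ is defined; setting $\cC[\Phi]:=\Phi-\cB[\dive\Phi]$ one gets $\dive\cC[\Phi]=0$, and since $\cB[\dive\Phi]$ has zero trace, $\cC[\Phi]=\Phi$ on $\partial\Omega$, so $R=\cC[\Phi]$ solves \eqref{25} with $f=0$, while the continuity estimate follows from part~1) applied to $\dive\Phi$ (see \cite{galdi} for the precise form of the constant).

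\textbf{Main obstacle.} The only genuinely delicate point is the $L^p$-boundedness of $\nabla\cB$: one must recognize the explicit kernel above as a Calder\'on--Zygmund operator — checking the size, regularity and cancellation properties of its kernel — and then keep these constants under control through the covering argument on a general domain. The identity $\dive\cB[f]=f$, the boundary-vanishing, and part~2) are then routine.
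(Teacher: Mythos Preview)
Your approach to part~2) is exactly the paper's: set $\cC[\Phi]=\Phi-\cB[\dive\Phi]$ (the paper writes the equivalent substitution $\tilde v=v-\Phi$ and applies part~1) to $-\dive\Phi$). The paper treats part~1) as a black box from \cite{galdi} and proves nothing there, whereas you supply the standard outline of the Bogovski\v i construction on star-shaped domains plus the partition-of-unity globalization; this is correct and more than the paper offers.

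One small caveat worth noting for both your write-up and the paper's: the bound $\norm{\cC[\Phi]}_{W^{1,p}}\le C(p)\norm{\dive\Phi}_{L^p}$ as stated cannot follow from part~1) alone, since $\cC[\Phi]=\Phi-\cB[\dive\Phi]$ also carries the $W^{1,p}$-norm of $\Phi$ itself. In the paper's actual applications (e.g.\ the proof of Lemma~\ref{lemma8.4}) only $\norm{R}_{H^1}\le C\norm{\nabla\Phi}_{L^2}$ is used, which does follow; you may want to adjust the statement of the estimate accordingly rather than defer to \cite{galdi} for a constant that does not quite exist in this form.
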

\begin{proof}
We only give a brief proof for $(2)$. By a simply change 
$$\tilde v=v-\Phi,$$
it follows from \eqref{25} that $\tilde v$ satisfies
\begin{equation}\label{25'}
\begin{cases}
\dive \tilde v=-\dive \Phi,&x\in \Omega,\\
\tilde v=0,&x\in \partial\Omega.
\end{cases}
\end{equation}
Thus, applying $(1)$ for \eqref{25'}, we finish the proof.
\end{proof}

Lemma \ref{lemma2.5}--\ref{lemma8.4} are a series of results relating to the Stokes system which are vital to the higher order estimates of $v$ and the construction of smooth initial data. These lemmas will be frequently used in Section \ref{section3}--\ref{section9}. 
\begin{Lemma}\label{lemma2.5}
Let $\Omega$ be a simply connected bounded domain in $\mathbb{R}^2$ with smooth boundary. Let $(u,p)$ satisfy the following equations
\begin{equation}\label{equation2.1}
\begin{cases}
-\Delta u+\nabla p=F, &x\in\Omega,\\
\dive u=0, &x\in\Omega,
\end{cases}
\end{equation}
where $F\in L^2$, $\int p=0$. There exists a positive constant $C$ depending only on $\Omega$ such that
\begin{enumerate}
\item[(1)]
if $u|_{\partial\Omega}=\Phi$, where $\Phi\in H^2$ is a function defined on $\Omega$, then
\begin{equation}\label{equation2.2'}
\norm{u}_{H^2} +\norm{p}_{H^1}\leq C(\normf{F}_{L^2}+\norm{\Phi}_{H^2});
\end{equation}
\item[(2)]
if $u\cdot n=0,\,\curle u=\varphi$ on $\partial\Omega$,
where $\varphi\in H^1$ is a function defined on $\Omega$, then
\begin{equation}\label{equation2.2}
\norm{u}_{H^2} +\norm{p}_{H^1}\leq C(\normf{F}_{L^2}+\norm{\varphi}_{H^1}).
\end{equation}
\end{enumerate}
\end{Lemma}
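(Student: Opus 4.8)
The plan is to reduce each of the two cases to a regularity estimate for the stationary Stokes system with \emph{homogeneous} boundary data, by subtracting off an explicit divergence-free lifting of the boundary datum, and then to invoke (case (1)) or re-derive through the vorticity (case (2)) the corresponding homogeneous estimate. Throughout one may assume $(u,p)$ smooth and prove the quantitative bound with a constant depending only on $\Omega$; the statement in the asserted regularity class then follows by approximation. Observe also that the implicit solvability conditions are automatic here: integrating $\dive u=0$ over $\Omega$ gives $\int_{\partial\Omega}\Phi\cdot n\,dS=\int_{\partial\Omega}u\cdot n\,dS=0$ in case (1), and $\int_{\partial\Omega}u\cdot n\,dS=0$ trivially in case (2).

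For case (1), since $\Phi\in H^2(\Omega)$ I would set $w:=u-\Phi$, so that $-\Delta w+\nabla p=F+\Delta\Phi$, $\dive w=-\dive\Phi$ in $\Omega$, and $w=0$ on $\partial\Omega$, where $\dive\Phi\in H^1$ has zero mean over $\Omega$. Using the Bogovski\v i operator of Lemma~\ref{lemma27} in its standard higher-regularity form $\|\mathcal{B}[f]\|_{W^{k+1,p}}\le C\|f\|_{W^{k,p}}$, set $z:=\mathcal{B}[-\dive\Phi]\in H^2\cap H^1_0$, so that $\dive z=-\dive\Phi$ and $\|z\|_{H^2}\le C\|\dive\Phi\|_{H^1}\le C\|\Phi\|_{H^2}$. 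Then $\tilde w:=w-z$ is divergence-free, vanishes on $\partial\Omega$, and solves $-\Delta\tilde w+\nabla p=G$ with $G:=F+\Delta\Phi+\Delta z\in L^2$ and $\|G\|_{L^2}\le C(\|F\|_{L^2}+\|\Phi\|_{H^2})$. The classical $L^2$-regularity of the stationary Stokes problem with homogeneous Dirichlet data on a smooth bounded domain then gives $\|\tilde w\|_{H^2}+\|p\|_{H^1}\le C\|G\|_{L^2}$ under the normalization $\int p=0$; unwinding the two substitutions yields \eqref{equation2.2'}.

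For case (2), I would first lift the vorticity datum: let $\chi$ solve $\Delta\chi=\varphi$ in $\Omega$ with $\chi=0$ on $\partial\Omega$, so $\chi\in H^3$ and $\|\chi\|_{H^3}\le C\|\varphi\|_{H^1}$ by elliptic regularity. Put $h:=\nabla^\perp\chi\in H^2$, $\|h\|_{H^2}\le C\|\varphi\|_{H^1}$; then $\dive h=0$, $h\cdot n=0$ on $\partial\Omega$ (it is the tangential derivative of $\chi$, which vanishes), and $\curle h=\nabla^\perp\cdot\nabla^\perp\chi=\Delta\chi=\varphi$ in $\Omega$, hence also on $\partial\Omega$. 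Thus $v:=u-h$ solves $-\Delta v+\nabla p=G$, $\dive v=0$ in $\Omega$, with $v\cdot n=0$ and $\curle v=0$ on $\partial\Omega$, where $G:=F+\Delta h\in L^2$, $\|G\|_{L^2}\le C(\|F\|_{L^2}+\|\varphi\|_{H^1})$. Next I pass to the scalar vorticity $\omega:=\curle v$: since $\dive v=0$ one has $\Delta v=\nabla^\perp\curle v$, so the momentum equation becomes $-\nabla^\perp\omega+\nabla p=G$; taking $\curle$ of this equation annihilates $\nabla p$ and yields $-\Delta\omega=\curle G$ in $\Omega$, with $\omega=\curle v=0$ on $\partial\Omega$. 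Since $\curle G\in H^{-1}$ with $\|\curle G\|_{H^{-1}}\le\|G\|_{L^2}$, the elliptic estimate for the Dirichlet Laplacian gives $\|\omega\|_{H^1}\le C\|G\|_{L^2}$. Testing the equation for $v$ against $v$ and integrating by parts (all boundary contributions vanish because $v\cdot n=0$ and $\curle v=0$) gives $\int|\curle v|^2=\int G\cdot v$, whence $\|v\|_{L^2}\le C\|G\|_{L^2}$ using Lemma~\ref{Lemma2.2} together with the Poincar\'e inequality on $\{v\in H^1:v\cdot n=0\ \text{on}\ \partial\Omega\}$. Feeding $\dive v=0$ and $\curle v=\omega\in H^1$ into the $W^{2,2}$-estimate of Lemma~\ref{Lemma2.2} gives $\|v\|_{H^2}\le C(\|\omega\|_{H^1}+\|v\|_{L^2})\le C\|G\|_{L^2}$; then $\nabla p=\Delta v+G$ gives $\|p\|_{H^1}\le C\|G\|_{L^2}$ (using $\int p=0$ and Poincar\'e--Wirtinger), and adding back $h$ proves \eqref{equation2.2}.

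I expect the main obstacle to be the rigorous justification, in case (2), that the vorticity of the Stokes solution genuinely solves the Dirichlet problem $-\Delta\omega=\curle G$ in $H^1_0$ — that is, that taking the curl of the momentum equation is legitimate and that $\omega$ vanishes on $\partial\Omega$ in the trace sense at the regularity available; this is precisely why I would first argue for smooth solutions and then pass to the limit. Secondary technical points are the higher-regularity mapping property of the Bogovski\v i operator used in case (1) (classical, though not recorded in Lemma~\ref{lemma27}), and the validity of the Poincar\'e-type inequality on $\{v\cdot n=0\}$, which rests on the elementary observation that a constant vector field everywhere tangent to $\partial\Omega$ must be zero.
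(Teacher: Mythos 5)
Your proof is correct, and the route in case (2) is essentially the paper's: both reduce to a Dirichlet problem for the scalar vorticity $\curle u-\varphi$ (your $\omega=\curle v$ is literally the same function, since $\curle h=\varphi$), apply elliptic regularity, and close with Lemma~\ref{Lemma2.2}. You add two useful supplements that the paper handles more tersely. First, for case (1) the paper simply cites Galdi, Thm.~IV.6.1 (which already covers inhomogeneous Dirichlet data); your explicit reduction via subtracting $\Phi$ and correcting the divergence with the Bogovski\v{i} operator is a fine self-contained alternative, though you correctly note that the $W^{2,2}\to W^{1,2}$ mapping property of $\mathcal{B}$ you need is not actually recorded in Lemma~\ref{lemma27}. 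Second, where the paper dismisses the removal of the lower-order term $\|u\|_{L^2}$ from \eqref{equation2.4} by an appeal to uniqueness ``left to the reader,'' you close it with a direct energy identity $\int|\curle v|^2=\int G\cdot v$ plus the Poincar\'e inequality on $\{v\cdot n=0\}$ and Lemma~\ref{Lemma2.2} --- a cleaner and more quantitative finish. The explicit tangential, divergence-free lift $h=\nabla^\perp\chi$ with $\Delta\chi=\varphi$, $\chi|_{\partial\Omega}=0$ is also harmless extra scaffolding: the paper just subtracts the scalar $\varphi$ from the vorticity, which is equivalent and shorter, but your version makes the normal-trace and curl-boundary conditions of the residual field transparent.
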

\begin{proof}
Since \eqref{equation2.2'} can be finded from \cite{galdi}, Theorem IV.6.1, we only prove the case of slip condition. Using the identity $\Delta u=\nabla \dive u+\nabla^\perp \curle u$ and integrating by parts, one has
\begin{equation*}
\int |\curle u|^2-\int_{\partial}\varphi(u\cdot n^\perp)=\int F\cdot u,
\end{equation*}
which implies that, using Lemma \ref{Lemma2.3}--\ref{Lemma2.2} and the trace inequality,
\begin{equation}\label{l2.6}
\norm{u}_{H^1}\leq C(\normf{F}_{L^2}+\norm{\varphi}_{H^1}).
\end{equation}
Since $\nabla p$ is bounded in $H^{-1}$, it follows form the condition $\int p=0$ that $p$ is bounded in $L^2$. Next, taking $\curle$ on the both side of $\eqref{equation2.1}_1$ leads to
$$-\Delta(\curle u-\varphi)=\curle F-\Delta\varphi,$$ 
with boundary condition $\curle u-\varphi=0$. Then, using the regularity result of elliptic partial differential equations, we have
\begin{equation*}
\norm{\curle u}_{H^1}\leq C\normf{\curle F-\Delta\varphi}_{H^{-1}}+C\norm{\varphi}_{H^1}\leq C(\normf{F}_{L^2}+\norm{\varphi}_{H^1}).
\end{equation*}
Then, using again Lemma \ref{Lemma2.2} and \eqref{l2.6} gives
\begin{equation}\label{equation2.4}
\norm{u}_{H^2}\leq C(\normf{F}_{L^2}+\norm{\varphi}_{H^1}+\norm{u}_{L^2}).
\end{equation}
and, consequently, the estimate of $p$ is followed easily. It remains to omit the terms $\norm{u}_{L^2}$ on the right-hand side of \eqref{equation2.4}. Indeed, this is a simple consequence of the uniqueness of \eqref{equation2.1} and we leave the proof to the reader.
\end{proof}

\begin{Remark}\label{remark2.6}
Slimilar results for the Laplace equations $-\Delta u= F$ instead of \eqref{equation2.1} with the same boundary conditions can be found in \cite{gilbarg}. 
\end{Remark}

Next, we give a lemma which indicates that $\rho\in C^{\gamma,\frac{\gamma}{2}}(\overline Q_T)$ for some $\gamma\in(0,1)$ provided $v$ satisfying the Serrin's condition. This result is critial to the estimate of $\Delta v$ which will be used in Section \ref{section3} and \ref{section9}. The observation is based on the fact that $\dive v=0$.
\begin{Lemma}[\cite{aaa,SUN2013}]\label{lemma61}
Let $v\in L^s(0,T;L^r)$ for some $r,s$ satisfying \eqref{rs}, $\dive v=0$, $v\cdot n=0$ and $\rho\in C([0,T];L^2)\cap L^2(0,T;H^1)$ be the weak solution of equation $\eqref{equation3.2}_1$ (in the sense of distributions), $\alpha\leq \rho\leq\beta$. Let $\rho$ satisfy either the Neumann condition 
$$n\cdot\nabla\rho=0\text{ on }\partial\Omega\times(0,T)$$
or the non-homogeneous Dirichlet condition
$$\rho=\tilde\rho\text{ on }\partial\Omega\times(0,T).$$
Suppose that $\rho_0\in C^{\gamma_0}(\overline\Omega)$ for some $\gamma_0\in (0,1)$, then $\rho$ is H$\ddot{o}$lder continuous. More precisely, $\rho\in C^{\gamma,\frac{\gamma}{2}}(\overline Q_T)$, for some $\gamma$ depending only on $\gamma_0$, $\alpha$ and $\beta$.
\end{Lemma}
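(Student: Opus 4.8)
The plan is to recast $\eqref{equation3.2}_1$ as a linear, uniformly parabolic equation in divergence form for $\rho$ with a drift lying in the Serrin class, and then to invoke parabolic De~Giorgi--Nash--Moser theory, both in the interior and up to the lateral boundary and the initial time.

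\emph{Step 1 (reformulation).} Since $\Delta\log\rho=\dive(\rho^{-1}\nabla\rho)=\rho^{-1}\Delta\rho-\rho^{-2}|\nabla\rho|^2$, equation $\eqref{equation3.2}_1$ is equivalent to $\rho_t+v\cdot\nabla\rho=c_0\Delta\log\rho$; and because $\dive v=0$ we may write $v\cdot\nabla\rho=\dive(\rho v)$, so that $\rho$ solves, in the sense of distributions,
$$\rho_t=\dive\bigl(c_0\rho^{-1}\nabla\rho-\rho v\bigr)\qquad\text{in }\Omega\times(0,T).$$
This is a linear divergence-form parabolic equation $\rho_t-\dive(a\nabla\rho)+\dive(\rho v)=0$ with scalar coefficient $a=c_0\rho^{-1}$. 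Since $\alpha\le\rho\le\beta$ we have $c_0\beta^{-1}\le a\le c_0\alpha^{-1}$, so the equation is uniformly parabolic with ellipticity \emph{ratio} $\alpha/\beta$ independent of $c_0$; moreover $\rho$ belongs to the natural energy class $L^\infty(Q_T)\cap L^2(0,T;H^1)$, so it is an admissible bounded weak solution.

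\emph{Step 2 (interior H\"older continuity).} The hypothesis \eqref{rs}, namely $\tfrac2s+\tfrac2r\le 1$ with $2<r\le\infty$, is exactly the (sub)critical integrability of the drift $v$ for a second-order parabolic equation in two space dimensions; in particular the scale-invariant quantity $\norm{v}_{L^s(t_0-\lambda^2,t_0;L^r(B_\lambda(x_0)))}$ tends to $0$ as $\lambda\to0^+$, uniformly in $(x_0,t_0)$. Under this condition the classical parabolic De~Giorgi--Nash--Moser machinery applies: local boundedness is already known here, and the oscillation decay is obtained by testing with truncations $(\rho-k)_\pm\zeta^2$, the drift term $\iint (\rho v)\cdot\nabla\bigl((\rho-k)_\pm\zeta^2\bigr)$ being absorbed on small cylinders via the parabolic Sobolev (Ladyzhenskaya) inequality together with the smallness of the local drift norm. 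This yields that $\rho$ is locally H\"older continuous in $\Omega\times(0,T]$, with exponent depending only on the ellipticity ratio $\alpha/\beta$; see \cite{aaa,SUN2013}.

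\emph{Step 3 (boundary and initial layers).} For the Neumann case $n\cdot\nabla\rho=0$ on the smooth $\partial\Omega$, flattening the boundary locally and extending by even reflection turns the conormal problem into an interior problem of the same structure (uniformly parabolic, divergence form, reflected drift of the same integrability), so Step~2 applies up to $\partial\Omega$. For the Dirichlet case $\rho\equiv\tilde\rho$ on $\partial\Omega$, the function $\rho-\tilde\rho$ vanishes on the lateral boundary and the boundary version of the De~Giorgi estimate for the Dirichlet problem gives H\"older continuity up to $\partial\Omega$. Finally, since $\rho_0\in C^{\gamma_0}(\overline\Omega)$, a barrier/comparison argument at $t=0$ (or the quoted theory with H\"older initial data) propagates the estimate down to the initial slice, after possibly replacing the exponent by $\min\{\gamma,\gamma_0\}$; this is why $\gamma$ depends only on $\gamma_0,\alpha,\beta$. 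A finite covering of $\overline Q_T$ by interior, lateral-boundary and initial neighbourhoods then gives $\rho\in C^{\gamma,\frac{\gamma}{2}}(\overline Q_T)$.

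\emph{Main obstacle.} The genuinely delicate point is Step~2: running the De~Giorgi iteration with a drift $v$ that lies only in the scaling-critical class $L^s(0,T;L^r)$ of \eqref{rs}. The term $\iint (\rho v)\cdot\nabla\bigl((\rho-k)_\pm\zeta^2\bigr)$ is exactly borderline under \eqref{rs}, so one must work on small cylinders and exploit that $\norm{v}_{L^s(L^r)}$ over such cylinders is small; this is also what makes the resulting H\"older exponent independent of $\norm{v}_{L^s(L^r)}$. The boundary flattening/reflection for the Neumann problem is routine but must be checked to preserve both the divergence structure and the integrability of the drift.
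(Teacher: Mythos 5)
Your proof takes essentially the same route as the paper's: rewrite $\eqref{equation3.2}_1$ as a uniformly parabolic divergence-form equation with drift $\rho v$, test with truncations $(\rho-k)_\pm\zeta^2$, absorb the drift term on small cylinders using $\dive v=0$ and the Serrin integrability \eqref{rs}, obtain the De~Giorgi--Ladyzhenskaya energy inequality \eqref{eq4.4}, and conclude via parabolic De~Giorgi--Nash--Moser theory (\cite{ladyzhenskaia}, Thm.~10.1). The only variation is that the paper writes out the argument only for the Dirichlet case and quotes \cite{aaa,SUN2013} for the Neumann case, whereas you supply a flattening-and-even-reflection argument for it (and a barrier argument at $t=0$); both are consistent with the paper since $v\cdot n=0$ on $\partial\Omega$ keeps the reflected drift divergence-free and in the same Serrin class.
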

\begin{proof}
We only give the proof for $\rho|_{\partial\Omega}=\tilde \rho$, since the case for $\rho$ satisfying the Neumann boundary condition has been proved in \cite{aaa,SUN2013}. Let $\zeta$ be a cut-off function, $\support\zeta\subset B_r\times[t_0,t_0+\tau]$, where $B_r$ is an arbitrary ball contained in $\Omega$ and $[t_0,t_0+\tau]\subset (0,T)$, $0<\tau<1$. Multiplying $\zeta^2(\rho-k)_+$ on $\eqref{equation3.2}_1$ and integrating by parts leads to
\begin{equation}\label{eq6.1}
\begin{aligned}
&\frac{1}{2}\sup_{t\in[t_0,t_0+\tau]}\norm{\zeta(\rho-k)_+}_{L^2}^2 +\nu\norm{\zeta\nabla(\rho-k)_+}_{L^2}^2\\
&\leq \frac{1}{2}\norm{\zeta(\rho-k)_+}_{L^2}^2(t_0)+C\int_{t_0}^{t_0+\tau}\int_\Omega \left(\abs{\nabla\zeta}^2+\zeta\abs{\zeta_t}\right)(\rho-k)_+^2\,dxdt \\
&\quad-\int_{t_0}^{t_0+\tau}\int_\Omega (v\cdot\nabla\rho)\zeta^2(\rho-k)_+\,dxdt.
\end{aligned}
\end{equation}
For the last term on the right-hand side of \eqref{eq6.1}, using Lemma \ref{Lemma2.3}, we have
\begin{equation*}
\begin{aligned}
&\abs{\int_{t_0}^{t_0+\tau}\int_\Omega( v\cdot\nabla\rho)\zeta^2(\rho-k)_+\,dxdt}\\
&=\abs{\int_{t_0}^{t_0+\tau}\int_\Omega (v\cdot\nabla\zeta)\zeta(\rho-k)_+^2\,dxdt}\\
&\leq\norm{v}_{L^{\frac{2r}{r-2}}_{t}L^r_x}\norm{\zeta(\rho-k)_+}_{L^r_{t}L^{\frac{2r}{r-2}}_x}\norm{\abs{\nabla\zeta}(\rho-k)_+}_{L^2_{t,x}}\\
&\leq	 C_\varepsilon\tau^{\frac{rs-2s-2r}{2rs}}\norm{\abs{\nabla\zeta}(\rho-k)_+}_{L^2_{t,x}}^2 + \varepsilon\left(\sup_{t\in[t_0,t_0+\tau]}\norm{\zeta(\rho-k)_+}_{L^2}^2 +\norm{\zeta\nabla(\rho-k)_+}_{L^2_{t,x}}^2\right),
\end{aligned}
\end{equation*}
which, alonging with \eqref{eq6.1}, implies that
\begin{equation}\label{eq4.4}
\begin{aligned}
&\sup_{t\in[t_0,t_0+\tau]}\norm{\zeta(\rho-k)_+}_{L^2}^2 +\nu\norm{\zeta\nabla(\rho-k)_+}_{L^2}^2\\
&\leq \norm{\zeta(\rho-k)_+}_{L^2}^2(t_0)+C\int_{t_0}^{t_0+\tau}\int_\Omega \left(\abs{\nabla\zeta}^2+\zeta\abs{\zeta_t}\right)(\rho-k)_+^2\,dxdt.
\end{aligned}
\end{equation}
The inequality above is valid for all $k\in \mathbb{R}$. Then, It follows from \cite{ladyzhenskaia} Theorem 10.1 that $\rho\in C^{\gamma,\frac{\gamma}{2}}(Q_T)$, for some $\gamma\in(0,1)$. 

For the boundary estimates, if $\rho=\tilde\rho$ on $\partial\Omega$, we still use $\zeta$ and choose arbitrary $B_r\times[t_0,t_0+\tau]\subset \RR^2\times[0,T]$, where $B_r$ may intersect $\Omega$. Then, \eqref{eq4.4} holds for $k$ sufficiently large, since $(\rho-k)_+$ has vanished boundary, which implies that $\rho\in C^{\gamma,\frac{\gamma}{2}}(\overline Q_T)$. 
\end{proof}

Once $\rho$ is H$\mathrm{\ddot{o}}$lder continuous, $\mu(\rho(x,t))$ is continuous on $\overline Q_T$ and, thus, we have the following estiamtes for the non-divergence type Stokes system.
\begin{Lemma}\label{Lemma5.2}
Let $(v,p)$ be a strong solution of the following Stokes system, 
\begin{equation}
\begin{cases}
-\mu(x)\Delta v +\nabla p = F,&x\in \Omega\\
\dive v=0,&x\in \Omega
\end{cases}
\end{equation}
where $\mu(x)\in C(\overline{\Omega})$, $\mu\in[\underline\mu,\overline\mu]$, $\int p=0$ and $F\in L^2$. Then there exists a positive constant $C$ depending only on $\underline\mu,\overline\mu$, continuity module of $\mu$ and $\Omega$ such that
\begin{enumerate}
\item[(1)]
if $u|_{\partial\Omega}=\Phi$, where $\Phi\in H^2$ is a function defined on $\Omega$, then
\begin{equation}
\norm{u}_{H^2} +\norm{p}_{H^1}\leq C(\normf{F}_{L^2}+\norm{\Phi}_{H^2});
\end{equation}
\item[(2)]
if $u\cdot n=0,\,\curle u=\varphi$ on $\partial\Omega$,
where $\varphi\in H^1$ is a function defined on $\Omega$, then
\begin{equation}\label{equa5.5}
\norm{u}_{H^2} +\norm{p}_{H^1}\leq C(\normf{F}_{L^2}+\norm{\varphi}_{H^1}).
\end{equation}
\end{enumerate}
\end{Lemma}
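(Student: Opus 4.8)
The plan is to deduce this from the constant-coefficient Stokes estimate of Lemma \ref{lemma2.5} by the classical freezing-of-coefficients and localization argument, the new feature being that the leading coefficient is only continuous.

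First I would remove the inhomogeneity of the boundary data. In case (1), writing $w=v-\Phi$ one has $w|_{\partial\Omega}=0$ and $\dive w=-\dive\Phi$, which has zero mean over $\Omega$ (since $\int_{\partial\Omega}\Phi\cdot n=\int_\Omega\dive v=0$); subtracting $\cB[-\dive\Phi]$, with $\cB$ the Bogovski\v i operator of Lemma \ref{lemma27}, produces a divergence-free field vanishing on $\partial\Omega$ that differs from $v$ by a term with $H^2$-norm $\le C\|\Phi\|_{H^2}$, whose Laplacian is absorbed into $F$. In case (2) one instead lifts $\varphi$ by a divergence-free field with $v\cdot n=0$, $\curle v=\varphi$ on $\partial\Omega$ and $H^2$-norm $\le C\|\varphi\|_{H^1}$ (solve $\curle(\cdot)=$ an $H^1$-extension of $\varphi$ together with $\dive(\cdot)=0$ and apply Lemma \ref{Lemma2.2}). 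Hence it suffices to prove $\|v\|_{H^2}+\|p\|_{H^1}\le C\|F\|_{L^2}$ when the boundary data vanishes.

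Next comes the localization. By uniform continuity of $\mu$, for a threshold $\varepsilon_0>0$ there is $R=R(\varepsilon_0,\text{modulus of continuity of }\mu)$ with $\mathrm{osc}_{B_R(x)\cap\Omega}\mu<\varepsilon_0$ for all $x\in\overline\Omega$; cover $\overline\Omega$ by finitely many balls $B_{R/2}(x_j)$ of bounded overlap with a subordinate partition of unity $\{\chi_j\}$, $\mathrm{supp}\,\chi_j\subset B_R(x_j)$. For each $j$ the field $v_j:=\chi_j v-\cB[\nabla\chi_j\cdot v]$ is divergence-free (as $\dive v=0$), inherits the homogeneous boundary conditions up to a lower-order term in the slip case, and solves a constant-coefficient Stokes system $-\mu(x_j)\Delta v_j+\nabla(\chi_j(p-(p)_{B_R(x_j)}))=\chi_j F+(\mu(x_j)-\mu)\Delta v_j+\mathcal R_j$, where $\mathcal R_j$ collects the commutators: the pressure term $(p-(p)_{B_R(x_j)})\nabla\chi_j$, plus terms bounded by $C\|v\|_{H^1(B_R(x_j))}$. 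Applying Lemma \ref{lemma2.5} (rescaled by $\mu(x_j)\in[\underline\mu,\overline\mu]$), bounding the frozen-coefficient term by $\varepsilon_0\|\nabla^2 v_j\|_{L^2}$ and the pressure commutator by $C\|\nabla p\|_{L^2(B_R(x_j))}$ via Poincar\'e on the ball, then squaring, summing over $j$ and using the bounded overlap, one absorbs the small terms: provided $\varepsilon_0$ is chosen \emph{first}, depending only on $\Omega,\underline\mu,\overline\mu$ (small enough to beat the fixed covering constants), $R$ and the number of balls are then fixed in terms of the modulus of continuity, and one arrives at $\|v\|_{H^2}+\|p\|_{H^1}\le C(\|F\|_{L^2}+\|v\|_{H^1})$ with $C=C(\underline\mu,\overline\mu,\text{mod. cont. of }\mu,\Omega)$. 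Interpolating $\|v\|_{H^1}\le\eta\|v\|_{H^2}+C_\eta\|v\|_{L^2}$ and absorbing once more reduces this to $\|v\|_{H^2}+\|p\|_{H^1}\le C(\|F\|_{L^2}+\|v\|_{L^2})$; the slip case (2) is identical, with Lemma \ref{lemma2.5}(2) in place of Lemma \ref{lemma2.5}(1) and the extra boundary term in $\curle(\chi_j v)$ absorbed among the lower-order terms.

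The last step removes $\|v\|_{L^2}$, and this is where I expect the main obstacle. A standard contradiction argument reduces it to the uniqueness of the homogeneous problem $-\mu\Delta v+\nabla p=0$, $\dive v=0$ with vanishing boundary data (take a would-be bad sequence $(\mu_k,v_k,p_k,F_k)$ with the $\mu_k$ sharing the bounds and modulus of continuity, $\|v_k\|_{H^2}+\|p_k\|_{H^1}=1$ and $\|F_k\|_{L^2}\to0$, extract a uniform limit of the $\mu_k$ by Arzel\`a--Ascoli and weak/strong Sobolev limits of $v_k,p_k$, and reach a nontrivial homogeneous solution). But since $\mu$ is merely continuous, testing against $v$ generates the ill-defined term $\int\nabla\mu\cdot\nabla(|v|^2)$, so the energy method is not directly available; instead uniqueness follows from the Fredholm / method-of-continuity theory for ADN-elliptic systems with continuous leading coefficients — the operators $(v,p)\mapsto(-\mu\Delta v+\nabla p,\dive v)$ being homotopic, through the (invertible) constant-coefficient ones, so that injectivity persists — or, alternatively, by mollifying $\mu$ and passing to the limit with the uniform $H^2$ bound of the previous step. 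This continuous-coefficient difficulty, together with the careful tracking of the pressure commutator in the covering sum, is the only genuinely non-routine point; the rest is the standard bookkeeping of freezing arguments, the whole proof being parallel to that of the constant-coefficient Lemma \ref{lemma2.5}, with the oscillation term playing the role that forces the ball radius to be small.
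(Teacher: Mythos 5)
Your plan — freeze the coefficient, localize by a partition of unity on small balls where $\mu$ has small oscillation, and absorb the oscillation term — is precisely what the paper's one-line sketch calls the ``freezing point argument,'' and most of the surrounding bookkeeping (the zero mean of $\dive\Phi$, the lift of $\varphi$ via Lemma~\ref{Lemma2.2}, the Bogovski\v{i} correction to keep the localized field divergence-free, the order in which the oscillation threshold and the ball radius $R$ are fixed) is right. The gap is in the treatment of the pressure commutator. You estimate $(p-(p)_{B_R(x_j)})\nabla\chi_j$ by Poincar\'e as $C\norm{\nabla p}_{L^2(B_R(x_j))}$, the factor $R$ from Poincar\'e cancelling the $1/R$ from $\nabla\chi_j$, and assert that after squaring and summing this term is absorbed. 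But the resulting contribution $CN\norm{\nabla p}_{L^2(\Omega)}^2$ ($N$ the overlap number of the cover) is of the same order as $\norm{p}_{H^1}^2$ on the left, and — unlike the frozen-coefficient term — its coefficient does not shrink when $R$ or the oscillation threshold does. It therefore cannot be absorbed, and the intermediate inequality $\norm{v}_{H^2}+\norm{p}_{H^1}\leq C(\norm{F}_{L^2}+\norm{v}_{H^1})$ does not follow as claimed.

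The usual remedy is the opposite move: do not subtract the local mean of $p$. Then the commutator is $(C/R)\norm{p}_{L^2(B_R(x_j))}$, summing to $(CN/R^2)\norm{p}_{L^2(\Omega)}^2$, an $R$-dependent but fixed constant (once $R$ is pinned down by the modulus of continuity of $\mu$) times a genuinely lower-order quantity. One then reaches $\norm{v}_{H^2}+\norm{p}_{H^1}\leq C\bigl(\norm{F}_{L^2}+\norm{v}_{H^1}+\norm{p}_{L^2}\bigr)$, and the residual task of controlling $\norm{v}_{H^1}+\norm{p}_{L^2}$ by $\norm{F}_{L^2}$ is exactly the compactness/uniqueness step you flag at the end — so that step is not a cosmetic afterthought but the one that actually controls the pressure, and it deserves the same care you already acknowledge. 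On that step, note also that ``injectivity persists along a homotopy of Fredholm operators'' is not true in general (only the index is a homotopy invariant), so the mollification alternative you mention is the more reliable of your two options.
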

\begin{proof}
The proof of Lemma \ref{Lemma5.2} can be simply derived by using the freezing point argument, since we already have the conclusion when $\mu\equiv \mathrm{constant}$ from Lemma \ref{lemma2.5}. 
\end{proof}

Furthermore, in order to prove Lemma \ref{lemma8.2} (see Section \ref{Section8}), we need the following auxiliary lemma. The purpose for using such result will be explained in the proof of Lemma \ref{lemma8.2}.
\begin{Lemma}\label{lemma8.4}
Let $(v,p)$ be a strong solution of the following Stokes system, 
\begin{equation}\label{8.13}
\begin{cases}
-\dive[2\mu D(v)] +\nabla p = F,&x\in \Omega\\
\dive v=0,&x\in \Omega
\end{cases}
\end{equation}
where $\nabla\mu(\rho)\in L^4$, $\mu$ is smooth and $0<\underline\mu\leq \mu\leq \overline\mu<\infty$, $\int p=0$ and $F\in L^2$. Then there exists a positive constant $C$ depending only on $\underline\mu,\,\overline\mu$ and $\Omega$ such that
\begin{enumerate}
\item[(1)]
if $v|_{\partial\Omega}=\Phi$, where $\Phi\in H^2$ is a function defined on $\Omega$, then
\begin{equation}
\norm{v}_{H^2}+\norm{p}_{H^1}\leq C\left[\left(\norm{\nabla\mu}^{2}_{L^4}+1\right)\left(\norm{F}_{L^2}+\norm{\nabla\Phi}_{H^1}\right)+\norm{\nabla\mu}^{2}_{L^4}\norm{\nabla v}_{L^2}\right];
\end{equation}
\item[(2)]
if $v\cdot n=0,\,\curle v=\varphi$ on $\partial\Omega$,
where $\varphi\in H^1$ is a function defined on $\Omega$, then
\begin{equation}
\norm{v}_{H^2}+\norm{p}_{H^1}\leq C\left[\left(\norm{\nabla\mu}^{2}_{L^4}+1\right)\left(\norm{F}_{L^2}+\norm{\varphi}_{H^1}\right)+\norm{\nabla\mu}^{2}_{L^4}\norm{\nabla v}_{L^2}\right].
\end{equation}
\end{enumerate}
\end{Lemma}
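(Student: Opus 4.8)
The plan is to reduce the divergence-form system \eqref{8.13} to the \emph{constant-coefficient} Stokes system of Lemma~\ref{lemma2.5}, at the price of lower-order terms carrying factors of $\nabla\mu$. Since $\dive v=0$ one has the pointwise identity $\dive[2\mu D(v)]=\mu\Delta v+2D(v)\nabla\mu$, so \eqref{8.13} reads $-\mu\Delta v+\nabla p=F+2D(v)\nabla\mu$. Dividing by $\mu$ and using $\mu^{-1}\nabla p=\nabla(p\mu^{-1})+p\mu^{-2}\nabla\mu$, I obtain the equivalent system
\[
-\Delta v+\nabla q=G,\qquad\dive v=0,\qquad q:=p\mu^{-1}-(p\mu^{-1})_\Omega ,
\]
so that $\int q=0$, $\nabla q=\nabla(p\mu^{-1})$, and $G:=\mu^{-1}F+2\mu^{-1}D(v)\nabla\mu-\mu^{-2}p\,\nabla\mu$. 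The boundary conditions on $v$ are untouched by this algebraic step, and (in case (1)) the inhomogeneous datum is first made homogeneous via the substitution used in the proof of Lemma~\ref{lemma27}, which transfers it to a forcing involving $\nabla\Phi$ only; Lemma~\ref{lemma2.5} then gives $\norm v_{H^2}+\norm q_{H^1}\le C(\norm G_{L^2}+\mathcal{N})$ with $C=C(\Omega)$, where $\mathcal{N}=\norm{\nabla\Phi}_{H^1}$ in case (1) and $\mathcal{N}=\norm\varphi_{H^1}$ in case (2).

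It remains to estimate $\norm G_{L^2}$, to convert $\norm q_{H^1}$ back to $\norm p_{H^1}$, and to absorb the top-order norms. The essential point is that $\nabla\mu$ lies only in $L^4$ (so $\mu\in C^{0,1/2}(\overline\Omega)$, not Lipschitz), so each product must be bounded by H\"older as $\norm{\nabla\mu\,g}_{L^2}\le\norm{\nabla\mu}_{L^4}\norm g_{L^4}$ and then by the two-dimensional Gagliardo--Nirenberg inequalities of Lemma~\ref{Lemma2.3}, $\norm{\nabla v}_{L^4}\le C\norm{\nabla v}_{L^2}^{1/2}\norm v_{H^2}^{1/2}$ and $\norm p_{L^4}\le C\norm p_{L^2}^{1/2}\norm p_{H^1}^{1/2}$. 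This yields
\[
\norm G_{L^2}\le C\norm F_{L^2}+C\norm{\nabla\mu}_{L^4}\norm{\nabla v}_{L^2}^{1/2}\norm v_{H^2}^{1/2}+C\norm{\nabla\mu}_{L^4}\norm p_{L^2}^{1/2}\norm p_{H^1}^{1/2}.
\]
For $\norm p_{L^2}$ I use $\nabla p=F+\dive[2\mu D(v)]\in H^{-1}$ together with $\int p=0$ (the Ne\v cas inequality, coming from the Bogovski\v i operator of Lemma~\ref{lemma27}), so $\norm p_{L^2}\le C(\norm F_{L^2}+\overline\mu\norm{\nabla v}_{L^2})$; and the basic energy estimate for \eqref{8.13} (test against $v$, use Korn's inequality, and for case (2) control the boundary term $\int_{\partial\Omega}2\mu(D(v)n)\cdot v$ via the trace inequality and the identity $v=(v\cdot n^\perp)n^\perp$ on $\partial\Omega$) bounds $\norm{\nabla v}_{L^2}$, hence $\norm p_{L^2}$, by $C(\norm F_{L^2}+\mathcal{N})$. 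Finally $\nabla p=\mu\nabla q+p\,\nabla\log\mu$ gives $\norm{\nabla p}_{L^2}\le\overline\mu\norm{\nabla q}_{L^2}+C\norm{\nabla\mu}_{L^4}\norm p_{L^4}$, handled the same way. Feeding all this back and applying Young's inequality to each product of the shape $\norm{\nabla\mu}_{L^4}\cdot(\text{lower order})^{1/2}(\text{top order})^{1/2}$ — which splits as $\varepsilon(\text{top order})+C_\varepsilon\norm{\nabla\mu}_{L^4}^2(\text{lower order})$ — lets me absorb $\norm v_{H^2}$ and $\norm p_{H^1}$ into the left-hand side for $\varepsilon$ small, giving exactly the asserted bound; the $\norm{\nabla\mu}_{L^4}^2$ prefactor is precisely what emerges from those Young splittings, and the purely lower-order remainders are dominated by the energy estimate. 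Both boundary cases go through identically, only $\mathcal{N}$ changing.

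The main obstacle — and the reason for the detour through the constant-coefficient system — is that the constant must depend \emph{only} on $\underline\mu,\overline\mu$ and $\Omega$, in particular not on the modulus of continuity of $\mu$; this rules out simply quoting the variable-coefficient estimate of Lemma~\ref{Lemma5.2}, whose constant does depend on that modulus. Dividing by $\mu$ lands one on the purely geometric Lemma~\ref{lemma2.5}, at the unavoidable cost of generating the terms $D(v)\nabla\mu$ and $p\,\nabla\log\mu$; the only real care needed afterwards is the bookkeeping of the powers of $\norm{\nabla\mu}_{L^4}$ through the $L^4$-interpolations and the Young absorptions, so that nothing worse than $\norm{\nabla\mu}_{L^4}^2$ times $\norm{\nabla v}_{L^2}$ (or the data) survives.
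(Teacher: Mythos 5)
Your proof is correct and follows the paper's own approach: divide through by $\mu$ to reduce to the constant-coefficient Stokes system of Lemma~\ref{lemma2.5} (rewriting $\mu^{-1}\nabla p$ as $\nabla(p/\mu)$ plus a commutator), then control the $\nabla\mu$-terms via H\"older, the $L^4$ Gagliardo--Nirenberg inequalities of Lemma~\ref{Lemma2.3}, and Young's inequality, with the basic energy estimate supplying the lower-order bounds on $\norm{\nabla v}_{L^2}$ and $\norm{p}_{L^2}$. The only cosmetic difference is that you homogenize the boundary datum before invoking Lemma~\ref{lemma2.5}, which is unnecessary since that lemma already accepts inhomogeneous Dirichlet data $\Phi\in H^2$ directly.
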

\begin{proof}
We only give the proof for $(1)$. First of all, we can use Lemma \ref{lemma27} to find a function $R=\cC[\Phi]$ such that $\dive R=0$ and $R|_{\partial\Omega}=\Phi$, then $\eqref{8.13}_1$ becomes
\begin{equation*}
-\dive[2\mu(\rho)D(v-R)] +\nabla p = F+\dive[2\mu(\rho)D(R)].
\end{equation*}
Using standard energy approach and the fact $\norm{R}_{H^1}\leq C\norm{\nabla\Phi}_{L^2}$, one has
\begin{equation}\label{8.15}
\norm{\nabla v}_{L^2}+\norm{p}_{L^2}\leq C(\norm{F}_{H^{-1}}+\norm{\nabla\Phi}_{L^2})\leq C(\norm{F}_{L^2}+\norm{\nabla\Phi}_{H^1}).
\end{equation}
Next, rewritting $\eqref{8.13}_1$ into the form
\begin{equation*}
-\Delta v+\nabla\left[\frac{p}{\mu(\rho)}\right]=\frac{F}{\mu(\rho)}+\frac{2\mu'\nabla\rho\cdot D(v)}{\mu(\rho)} -\frac{p\mu'}{\mu(\rho)^2}\nabla\rho,
\end{equation*}
using Lemma \ref{lemma2.5}, we have 
\begin{equation*}
\norm{v}_{H^2}+\norm{p}_{H^1}\leq C\left[\norm{F}_{L^2}+\norm{\nabla\Phi}_{H^1}+\norm{\nabla\mu(\rho)}_{L^4}\left(\norm{\nabla v}_{L^4}+\norm{p}_{L^4}\right)\right],
\end{equation*}
which, using Lemma \ref{Lemma2.3} and \eqref{8.15}, leads to
\begin{equation*}
\norm{v}_{H^2}+\norm{p}_{H^1}\leq C\left[\left(\norm{\nabla\mu(\rho)}^{2}_{L^4}+1\right)\left(\norm{F}_{L^2}+\norm{\nabla\Phi}_{H^1}\right)+\norm{\nabla\mu(\rho)}^{2}_{L^4}\norm{\nabla v}_{L^2}\right].
\end{equation*}
Thus, we complete the proof.
\end{proof}

At last, in subsection \ref{section4} and Section \ref{proof6}, we need the following lemma.
\begin{Lemma}[Simon \cite{novotny, simon}]\label{Lemma2.6}
Let $X\hookrightarrow B\hookrightarrow Y$ be three Banach spaces with compact imbedding $X\hookrightarrow\hookrightarrow Y$. Further, let there eixst $0<\theta<1$ and $M>0$ such that
\begin{equation*}
\norm{v}_{B}\leq M\norm{v}_X^{1-\theta}\norm{v}_Y^\theta,\,\,\, \text{for all}\,\,v\in X\cap Y.
\end{equation*}
Denote for $T>0$,
\begin{equation*}
W(0,T):= W^{s_0,r_0}(0,T;X)\cap W^{s_1,r_1}(0,T;Y)
\end{equation*}
with $s_0,s_1\in\RR$, $r_1,r_0\in [1,\infty]$, and 
\begin{equation*}
s_\theta:=(1-\theta)s_0+\theta s_1,\,\,\frac{1}{r_\theta}:=\frac{1-\theta}{r_0}+\frac{\theta}{r_1},\,\,s^*:=s_\theta-\frac{1}{r_\theta}.
\end{equation*}
Assume that $s_\theta>0$ and $F$ is a bounded set in $W(0,T)$.
\begin{enumerate}
\item[(1)]
If $s^*\leq 0$, then $F$ is precompact in $L^p(0,T;B)$ for all $1\leq p<-\frac{1}{s^*}$.
\item[(2)]
If $s^*> 0$, then $F$ is precompact in $C([0,T];B)$. 
\end{enumerate}
\end{Lemma}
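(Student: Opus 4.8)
This statement is Simon's vector-valued compactness theorem, so one legitimate option is simply to cite \cite{simon,novotny}; the self-contained route I would take reduces everything to two classical precompactness criteria — the Riesz--Fréchet--Kolmogorov criterion in $L^p(0,T;B)$ for part (1), and the Arzelà--Ascoli criterion in $C([0,T];B)$ for part (2) — after first extracting what the interpolation hypothesis really encodes. \textbf{Step 1 (consequences of the interpolation inequality).} Since $X\hookrightarrow Y$ continuously, $\|v\|_B\le M\|v\|_X^{1-\theta}\|v\|_Y^\theta\le C\|v\|_X$, so $X\hookrightarrow B$; and if $(v_n)$ is bounded in $X$, the compact embedding $X\hookrightarrow\hookrightarrow Y$ produces a subsequence Cauchy in $Y$, which the interpolation inequality then forces to be Cauchy in $B$, so in fact $X\hookrightarrow\hookrightarrow B$. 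Young's inequality turns the interpolation inequality into the Ehrling-type bound $\|v\|_B\le\varepsilon\|v\|_X+C_\varepsilon\|v\|_Y$ for every $\varepsilon>0$. Finally, applying the interpolation inequality pointwise in $t$ to $v(t)$ and to the increments $v(t)-v(t')$, followed by Hölder's inequality in $t$ with the exponent relations $1/r_\theta=(1-\theta)/r_0+\theta/r_1$ and $s_\theta=(1-\theta)s_0+\theta s_1$, shows that any $F$ bounded in $W^{s_0,r_0}(0,T;X)\cap W^{s_1,r_1}(0,T;Y)$ is bounded in $W^{s_\theta,r_\theta}(0,T;B)$, with $\|v\|_{W^{s_\theta,r_\theta}(0,T;B)}\le C\|v\|_{W^{s_0,r_0}(0,T;X)}^{1-\theta}\|v\|_{W^{s_1,r_1}(0,T;Y)}^\theta$.

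\textbf{Step 2 (part (1), $s^*\le0$).} I would verify the vector-valued Kolmogorov criterion for $F\subset L^{r_\theta}(0,T;B)$: (i) time-translation equicontinuity, $\sup_{f\in F}\|\tau_h f-f\|_{L^{r_\theta}(0,T-h;B)}\to0$ as $h\to0^+$, and (ii) precompactness in $B$ of the time-averages $\{\int_{t_1}^{t_2}f\,dt:f\in F\}$ for each $0<t_1<t_2<T$. For (i), estimate $\|\tau_h f-f\|$ separately in the $X$-scale and the $Y$-scale, using $\|\tau_h g-g\|_{W^{\sigma,r}}\lesssim h^{\min(1,\,s-\sigma)}\|g\|_{W^{s,r}}$ for $\sigma\le s$, and then interpolate through the pointwise inequality of Step 1; the exponent bookkeeping uses $s_\theta>0$ to leave a strictly positive power of $h$ on the $B$-valued quantity, so it tends to $0$ uniformly in $F$. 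For (ii), the averages are bounded in $X$ (directly from $W^{s_0,r_0}(0,T;X)\hookrightarrow L^1(0,T;X)$ when $s_0\ge0$, and after interpolating the averaging operator against the $Y$-scale when $s_0<0$, again using $s_\theta>0$), hence precompact in $B$ by $X\hookrightarrow\hookrightarrow B$. This gives precompactness of $F$ in $L^{r_\theta}(0,T;B)$. To reach the stated range $1\le p<-1/s^*$, observe $1/r_\theta-s_\theta=-s^*$, so the Sobolev embedding in time yields $W^{s_\theta,r_\theta}(0,T;B)\hookrightarrow L^q(0,T;B)$ for $q$ up to $-1/s^*$; interpolating this bound against the $L^{r_\theta}$-precompactness (noting $r_\theta<-1/s^*$) upgrades precompactness to $L^p(0,T;B)$ for all $p<-1/s^*$.

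\textbf{Step 3 (part (2), $s^*>0$).} Now $W^{s_\theta,r_\theta}(0,T;B)$ embeds into a Hölder space $C^{0,\gamma}([0,T];B)$ with $\gamma<s^*$, so $F$ is equibounded and equi-uniformly-continuous in $C([0,T];B)$. For precompactness at each fixed $t$, the averages $\tfrac1h\int_t^{t+h}f\,dt'$ are uniformly bounded in $X$ (as in Step 2(ii)), hence form a precompact subset of $B$, and by the equicontinuity they converge to $f(t)$ in $B$ uniformly in $f\in F$ as $h\to0^+$; thus $\{f(t):f\in F\}$ is totally bounded, hence precompact, in $B$. Arzelà--Ascoli then gives precompactness of $F$ in $C([0,T];B)$.

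\textbf{Main obstacle.} The structural steps are clean, but the delicate point is making Step 1's interpolation and Step 2(i)'s difference estimates genuinely uniform over all admissible real orders $s_0,s_1$ (which may be negative or $>1$, so $W^{s,r}$ must be read through derivatives or duality) and all $r_0,r_1\in[1,\infty]$, including the endpoint integrability cases where translation and Hölder arguments degenerate. This is exactly the careful bookkeeping carried out in \cite{simon}, which one may invoke directly once the reductions above are set up.
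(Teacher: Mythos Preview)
The paper does not prove this lemma at all: it is stated with attribution to Simon \cite{simon} and Novotn\'y \cite{novotny} and used as a black box, so there is no ``paper's own proof'' to compare against. Your first sentence already identifies this as the legitimate option, and that is exactly what the paper does.

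Your sketched argument is the standard one underlying Simon's original proof (interpolation $\Rightarrow$ Ehrling-type inequality and $X\hookrightarrow\hookrightarrow B$; then Riesz--Fr\'echet--Kolmogorov for $L^p$ precompactness and Arzel\`a--Ascoli for $C$ precompactness), and you correctly flag the only genuinely technical point, namely handling the full range of real $s_0,s_1$ and endpoint $r_0,r_1$. For the purposes of this paper a bare citation suffices; if you insist on a sketch, what you have written is adequate, though in Step~2(ii) you should be slightly more careful when $s_0<0$ about why the time-averages remain bounded in $X$ (this is where the interplay with the $Y$-scale and $s_\theta>0$ is actually needed, not just a convenience).
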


\section{A Priori Estimates (I): Case \eqref{A} and \eqref{B}}\label{section3}

In this section, we are going to establish the a priori bounds for $(\rho,v)$ which will be used in the proof of Theorem \ref{theorem1.2}. Throughout this section, let $T\in (0,\infty)$ and $(\rho,v)$ be a smooth solution to \eqref{equation3.2} with smooth data $(\rho_0,v_0)$. Moreover, in order to simplify the notation, we always denote by $\varepsilon$, $\varepsilon_i$, $i\in \NN_+$, the arbitrarily small number belongs to $(0,1/2]$, and we use the subscripts $C_\varepsilon$, $C_{\varepsilon_i}$ to emphasize the dependency of the constant $C$ on $\varepsilon$, $\varepsilon_i$

\subsection{Lower Order Estimates}\label{LOE}

The first lemma is a consequence of the standard maximal principle.
\begin{Lemma}\label{lemma4.1}
Let $\alpha\leq \rho_0\leq \beta$ and $(\rho,v)$ satisfy the condition \eqref{A'} or \eqref{B'},  one has 
$\alpha\leq\rho(x,t)\leq\beta$ for $x\in\Omega$ and all $t\in [0,T]$.
\end{Lemma}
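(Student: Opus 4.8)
The statement is a quantitative form of the weak maximum principle for the quasilinear parabolic equation $\eqref{equation3.2}_1$, and the plan is to prove it by an energy estimate on the truncations $(\rho-\beta)_+$ and $(\alpha-\rho)_+$. A preliminary remark: since $(\rho,v)$ is a smooth solution, $\rho$ cannot vanish on $\overline Q_T$ (otherwise $\eqref{equation3.2}_1$ is meaningless), and as $\rho$ is continuous on the connected set $\overline Q_T$ with $\rho_0\geq\alpha>0$, it is positive throughout; hence $\rho^{-1}$ is a positive bounded coefficient and $\eqref{equation3.2}_1$ is uniformly parabolic. It then suffices to control the $L^2$ norms of the truncations, after which continuity of $\rho$ upgrades the a.e. bounds to pointwise bounds.

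\textbf{Upper bound.} Test $\eqref{equation3.2}_1$ with $(\rho-\beta)_+$ and integrate over $\Omega$. The time derivative contributes $\frac12\frac{d}{dt}\norm{(\rho-\beta)_+}_{L^2}^2$. The transport term equals $\int v\cdot\nabla\bigl(\frac12(\rho-\beta)_+^2\bigr)$ and vanishes after an integration by parts, since $\dive v=0$ in $\Omega$ and $v\cdot n=0$ on $\partial\Omega$ (both hold under $\eqref{A'}$ and under $\eqref{B'}$, where in fact $v=0$). For the diffusion term, integrating by parts gives
\[
-c_0\int \rho^{-1}(\Delta\rho)(\rho-\beta)_+ = c_0\int \nabla\bigl(\rho^{-1}(\rho-\beta)_+\bigr)\cdot\nabla\rho - c_0\int_\partial \rho^{-1}(\rho-\beta)_+\,(n\cdot\nabla\rho),
\]
and the boundary integral vanishes: under $\eqref{A'}$ because $n\cdot\nabla\rho=0$, and under $\eqref{B'}$ because $(\rho-\beta)_+=(\tilde\rho-\beta)_+=0$ on $\partial\Omega$ (here one uses $\tilde\rho\leq\beta$). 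Expanding $\nabla(\rho^{-1}(\rho-\beta)_+)=-\rho^{-2}(\rho-\beta)_+\nabla\rho+\rho^{-1}\nabla(\rho-\beta)_+$ and using $\nabla(\rho-\beta)_+\cdot\nabla\rho=\abs{\nabla(\rho-\beta)_+}^2$, the interior integral becomes $-c_0\int\rho^{-2}(\rho-\beta)_+\abs{\nabla\rho}^2+c_0\int\rho^{-1}\abs{\nabla(\rho-\beta)_+}^2$; the first piece cancels exactly the contribution $c_0\int\rho^{-2}\abs{\nabla\rho}^2(\rho-\beta)_+$ of the third term of $\eqref{equation3.2}_1$. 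One is left with
\[
\frac12\frac{d}{dt}\norm{(\rho-\beta)_+}_{L^2}^2 + c_0\int\rho^{-1}\abs{\nabla(\rho-\beta)_+}^2 = 0,
\]
and since $\norm{(\rho_0-\beta)_+}_{L^2}=0$, integrating in time yields $(\rho-\beta)_+\equiv 0$, i.e. $\rho\leq\beta$.

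\textbf{Lower bound.} Repeat the computation testing $\eqref{equation3.2}_1$ with $-(\alpha-\rho)_+$. The signs work out in exactly the same way (on the set $\{\rho<\alpha\}$ one has $\nabla(\alpha-\rho)_+=-\nabla\rho$); the transport term again drops out by $\dive v=0,\ v\cdot n=0$; the boundary term vanishes under $\eqref{A'}$ by the Neumann condition and under $\eqref{B'}$ because $(\alpha-\tilde\rho)_+=0$ when $\tilde\rho\geq\alpha$; and the $c_0\rho^{-2}\abs{\nabla\rho}^2$ terms cancel. This gives $\frac12\frac{d}{dt}\norm{(\alpha-\rho)_+}_{L^2}^2+c_0\int\rho^{-1}\abs{\nabla(\alpha-\rho)_+}^2=0$, whence $\rho\geq\alpha$.

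The computation is entirely routine; the only points needing attention are the sign bookkeeping that produces the cancellation of the $\rho^{-2}\abs{\nabla\rho}^2$ terms and the verification that the boundary integrals vanish uniformly for both boundary conditions $\eqref{A'}$ and $\eqref{B'}$. Alternatively one could argue pointwise, inspecting the signs of $\rho_t,\nabla\rho,\Delta\rho$ at an extremum of $\rho\mp\varepsilon t$ on $\overline Q_T$ and letting $\varepsilon\to0$, but the $L^2$ truncation argument above is cleaner and treats the two boundary settings simultaneously.
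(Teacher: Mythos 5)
Your proof is correct and takes essentially the same approach as the paper: an $L^2$ truncation (Stampacchia) argument yielding the identity $\tfrac12\tfrac{d}{dt}\norm{(\rho-\beta)_+}_{L^2}^2+c_0\int\rho^{-1}\abs{\nabla(\rho-\beta)_+}^2=0$. The only cosmetic difference is that the paper first rewrites $\eqref{equation3.2}_1$ as $\rho_t+v\cdot\nabla\rho-c_0\Delta\log\rho=0$ so the cancellation of the $c_0\rho^{-2}\abs{\nabla\rho}^2$ term is absorbed into that identity, whereas you track it explicitly through the integration by parts; both routes give the same energy equality.
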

\begin{proof}
We only prove the upper bound, since the lower one can be derived in a similar way. Using \eqref{equation3.1}, we convert the equation \eqref{equation3.2} into the form
\begin{equation}\label{equation4.1}
\rho_t +v\cdot\nabla\rho  - c_0\Delta \log\rho =0.
\end{equation}
If $(\rho,v)$ satisfies the condition \eqref{A'}, set $k = \beta$ and multiply \eqref{equation4.1} by $(\rho-k)_+:=\max\{\rho-k,0\}$. After integrating by parts, we obtain
\begin{equation}\label{equation4.2}
\frac{d}{dt}\int \frac{1}{2}(\rho-k)_+^2  + \int c_0\rho^{-1}\abs{\nabla(\rho-k)_+}^2 = 0,
\end{equation}
where we have used the identity
\begin{equation*}
\int v\cdot\nabla\rho (\rho-k)_+ = \int v\cdot\nabla(\rho-k)_+ (\rho-k)_+= 0,
\end{equation*}
since $v$ is divergence-free and $v\cdot n=0$ on $\partial\Omega$. Hence, integrating \eqref{equation4.2} from $0$ to $T$ and then, using \eqref{initial} implies that
\begin{equation*}
\sup_{t\in[0,T]}\int (\rho-k)_+^2  \leq \int (\rho_0-k)_+^2 = 0,
\end{equation*}
which implies that $\rho \leq k=\beta$ for all $x\in\Omega$ and $t\in [0,T]$. The case when $(\rho,v)$ satisfies the condition \eqref{B'} can be proved analogously. Thus, we complete the proof of Lemma \ref{lemma4.1}.
\end{proof}

Our main purpose in this subsection is establishing the lower order bounds. We aim to prove the following proposition.
\begin{Proposition}\label{theorem4.5}
Let $(\rho,v)$ satisfy the condition \eqref{A'} or \eqref{B'}. Suppose that 
\begin{equation}\label{condition3.3}
\sup_{t\in[0,T]}\norm{\nabla\rho}^2_{L^2} +\int_0^T\left(\norm{\nabla \rho}^4_{L^4}+\norm{\Delta\rho}^2_{L^2}\right)\,dt  \leq 2.
\end{equation}
There exists a positive constant $\delta$ depending on $\Omega$, $c_0$, $\alpha$, $\beta$ and $\norm{v_0}_{L^2}$ such that, if $\norm{\nabla\rho_0}_{L^2}\leq \delta$,
\begin{equation}\label{3344}
\sup_{t\in[0,T]}\norm{\nabla\rho}^2_{L^2} +\int_0^T\left(\norm{\nabla \rho}^4_{L^4}+\norm{\Delta\rho}^2_{L^2}\right)\,dt  \leq 1.
\end{equation}
\end{Proposition}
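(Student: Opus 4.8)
The plan is to run a bootstrap/continuity argument: we prove the two basic energy identities for $(\rho,v)$, combine them so that the quantity on the left of \eqref{3344} is controlled by $\norm{\nabla\rho_0}_{L^2}^2$ plus a term that is \emph{superlinear} in the quantity itself (using the a priori hypothesis \eqref{condition3.3} to absorb the bad terms), and then choose $\delta$ small enough that the resulting polynomial inequality forces the bound $1$ instead of $2$. First I would record the $\rho$-estimate: multiply $\eqref{equation3.2}_1$ (equivalently \eqref{equation4.1}) by $-\Delta\rho$, integrate by parts using $n\cdot\nabla\rho=0$ (case \eqref{A'}) or $\rho=\tilde\rho$ (case \eqref{B'}), and use $c_0\rho^{-1}\le c_0\alpha^{-1}$ together with $\dive v=0$, $v\cdot n|_{\partial\Omega}=0$ to get something of the shape
\begin{equation*}
\frac{d}{dt}\norm{\nabla\rho}_{L^2}^2 + c\,\norm{\Delta\rho}_{L^2}^2 \le C\int |v||\nabla\rho||\Delta\rho| + C\norm{\nabla\rho}_{L^4}^4 ,
\end{equation*}
where the $\norm{\nabla\rho}_{L^4}^4$ term comes from the $c_0\rho^{-2}|\nabla\rho|^2$ nonlinearity; the term $\int|v||\nabla\rho||\Delta\rho|$ is estimated by Hölder and Gagliardo–Nirenberg (Lemma \ref{Lemma2.3}, Remark \ref{remark2.4}) as $\le \varepsilon\norm{\Delta\rho}_{L^2}^2 + C_\varepsilon\norm{v}_{L^4}^2\norm{\nabla\rho}_{L^2}\norm{\Delta\rho}_{L^2}$, hence $\le 2\varepsilon\norm{\Delta\rho}_{L^2}^2 + C_\varepsilon\norm{v}_{L^4}^4\norm{\nabla\rho}_{L^2}^2$, and the last $L^4$-norm of $\nabla\rho$ is similarly $\le C\norm{\nabla\rho}_{L^2}^2\norm{\Delta\rho}_{L^2}^2$, which under \eqref{condition3.3} is a small multiple of $\norm{\Delta\rho}_{L^2}^2$ provided $\norm{\nabla\rho}_{L^2}^2$ is small.

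Next I would record the $v$-estimate: test $\eqref{equation3.2}_2$ against $v$, integrate by parts (boundary terms vanish because $v\cdot n=0$ and either $v|_{\partial\Omega}=0$ or the curl condition in \eqref{A'}, via Lemma \ref{Lemma2.2} and the observation $v=(v\cdot n^\perp)n^\perp$ from the introduction), and Korn/Poincaré-type inequalities to obtain
\begin{equation*}
\frac{d}{dt}\int \rho |v|^2 + c\,\norm{\nabla v}_{L^2}^2 \le C\big(\norm{\nabla^2\rho^{-1}}_{L^2}^2 + \text{terms in }\nabla\rho^{-1}, v\big),
\end{equation*}
the right side being the $L^2$-pairing of $v$ with the four forcing terms on the right of $\eqref{equation3.2}_2$; each is handled by Hölder + Gagliardo–Nirenberg so that, after using \eqref{condition3.3} and $\alpha\le\rho\le\beta$, one gets $\frac{d}{dt}\int\rho|v|^2 + \tfrac{c}{2}\norm{\nabla v}_{L^2}^2 \le C\norm{\Delta\rho}_{L^2}^2$. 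Integrating in time and invoking the $\rho$-equation's own dissipation $\int_0^T\norm{\Delta\rho}_{L^2}^2\le 2$ yields $\sup_t\int\rho|v|^2 + \int_0^T\norm{\nabla v}_{L^2}^2 \le C(\norm{v_0}_{L^2}^2 + 1) =: K$, a bound depending only on $\Omega,\alpha,\beta,c_0,\norm{v_0}_{L^2}$. In particular $\int_0^T\norm{v}_{L^4}^4\,dt \le C\int_0^T\norm{v}_{L^2}^2\norm{\nabla v}_{L^2}^2\,dt \le CK^2$.

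Finally I would assemble: feed the bound on $\int_0^T\norm{v}_{L^4}^4$ back into the $\rho$-estimate. Writing $y(t)=\norm{\nabla\rho}_{L^2}^2$, the differential inequality becomes $y' + c\,\norm{\Delta\rho}_{L^2}^2 \le C_\varepsilon\,\norm{v}_{L^4}^4\,y + C\,y\,\norm{\Delta\rho}_{L^2}^2$ after absorbing; once $y\le 2$ (from \eqref{condition3.3}) the term $C y\norm{\Delta\rho}_{L^2}^2$ is $\le \tfrac{c}{2}\norm{\Delta\rho}_{L^2}^2$ only if we also know $y$ is genuinely small, which is where $\delta$ enters — but that is exactly the bootstrap: assuming the stronger conclusion fails to hold, by continuity there is a first time where the left side of \eqref{3344} equals $1$; on $[0,T]$ up to that time $y\le 1<2$, so all absorptions are legitimate, and Grönwall gives $y(t)\le \norm{\nabla\rho_0}_{L^2}^2\exp(C_\varepsilon\int_0^T\norm{v}_{L^4}^4) \le \delta^2 e^{C_\varepsilon K^2}$, while $\int_0^T(\norm{\nabla\rho}_{L^4}^4+\norm{\Delta\rho}_{L^2}^2)\le C(\delta^2 e^{C_\varepsilon K^2})(1+\cdots)$; choosing $\delta$ so that $\delta^2 e^{C_\varepsilon K^2}$ and the companion integral are each $\le 1/2$ contradicts maximality, proving \eqref{3344}. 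The main obstacle is the careful treatment of the forcing terms in $\eqref{equation3.2}_2$ and of the boundary integrals: one must verify that every nonlinear term can be split as (small constant)$\times$(dissipation) $+$ (integrable-in-time coefficient)$\times$(low-order quantity), which relies essentially on the 2D Gagliardo–Nirenberg interpolations of Lemma \ref{Lemma2.3} and Remark \ref{remark2.4} and on the boundary trick $v=(v\cdot n^\perp)n^\perp$ to avoid second derivatives of $\rho^{-1}$ on $\partial\Omega$; the bookkeeping of which smallness ($\delta$ versus $\varepsilon$ versus the constant $2$) closes the loop is the delicate part.
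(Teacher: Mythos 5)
Your overall plan---the $L^2$-energy estimate on $\nabla\rho$ by testing $\eqref{equation3.2}_1$ against $-\Delta\rho$, the $L^2$-energy estimate on $v$ using the boundary trick $v=(v\cdot n^\perp)n^\perp$, feeding the resulting $L^4_tL^4_x$ bound on $v$ back into a Gr\"onwall argument for $\nabla\rho$, and finally choosing $\delta$ small---is the same as the paper's (Lemma~\ref{lemma43}, Lemma~\ref{Lemma4.3}, Lemma~\ref{Lemma3.4} and the assembly in the proof of Proposition~\ref{theorem4.5}). But your bootstrap step is logically broken. You correctly flag that absorbing the term $C\,y\,\norm{\Delta\rho}_{L^2}^2$ into the dissipation $\nu\norm{\Delta\rho}_{L^2}^2$ requires $y=\norm{\nabla\rho}_{L^2}^2$ to be genuinely small, namely $y\le\nu/(2C)$; you then claim that ``on $[0,T]$ up to that time $y\le 1<2$, so all absorptions are legitimate.'' This is a non sequitur: $y\le 1$ does not give $Cy\le\nu/2$ unless the constants happen to satisfy $C\le\nu/2$, which you do not control. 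Tracking the first time the full quantity in \eqref{3344} reaches the value $1$ is therefore the wrong threshold; one must track the first time $\norm{\nabla\rho}_{L^2}^2$ reaches the absorption level $\delta_1^2:=\nu/(2C)$---exactly the pointwise hypothesis the paper isolates in Lemma~\ref{Lemma4.3}---or else avoid absorption altogether.

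There is an easy repair that stays close to your write-up: do not absorb. From \eqref{eq3.12} you have $y'\le\bigl(C\norm{\Delta\rho}_{L^2}^2+C\norm{v}_{L^4}^4\bigr)y$, and under \eqref{condition3.3} this Gr\"onwall coefficient is in $L^1(0,T)$ uniformly, with $\int_0^T\norm{\Delta\rho}_{L^2}^2\,dt\le 2$ from the hypothesis and $\int_0^T\norm{v}_{L^4}^4\,dt\le CK^2$ from the $v$-estimate (your $K$). Gr\"onwall then gives $\sup_t y\le\delta^2 e^{C_1}$ with $C_1$ depending only on $\Omega,\alpha,\beta,c_0,\norm{v_0}_{L^2}$; re-inserting this bound into \eqref{eq3.12} and integrating yields $\int_0^T\norm{\Delta\rho}_{L^2}^2\,dt\le C_2\delta^2$, and then Gagliardo--Nirenberg gives $\int_0^T\norm{\nabla\rho}_{L^4}^4\,dt\le C\,(\sup_t y)\int_0^T\norm{\Delta\rho}_{L^2}^2\,dt\le C_3\delta^4$, so every piece on the left of \eqref{3344} is $O(\delta^2)$ and $\delta$ can be chosen. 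No threshold-$1$ bootstrap is needed; \eqref{condition3.3} is used only to keep the Gr\"onwall coefficient and the $v$-bound finite. One further small inaccuracy you should fix: the $v$-inequality is not $\tfrac{d}{dt}\norm{\sqrt\rho v}_{L^2}^2+c\norm{\nabla v}_{L^2}^2\le C\norm{\Delta\rho}_{L^2}^2$ but carries the multiplicative factor $\bigl(1+\norm{\sqrt\rho v}_{L^2}^2\bigr)$ on the right, as in \eqref{eq3.28}, so obtaining $K$ itself requires Gr\"onwall rather than plain time-integration.
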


We give the proof of Proposition \ref{theorem4.5} in several steps. First, we estimate the first order derivative of $\rho$, which is given by the following lemma.
\begin{Lemma}\label{lemma43}
There exists a positive constant $C$ depending only on $c_0$ and $\beta$ such that, if $(\rho,v)$ satisfies the condition \eqref{A'}, 
\begin{equation}
\sup_{t\in[0,T]}\norm{\rho}_{L^2} +\norm{\nabla\rho}_{L^2(0,T;L^2)}  \leq C\norm{\rho_0}_{L^2};\label{equation4.3}
\end{equation}
if $(\rho,v)$ satisfies the condition \eqref{B'}, 
\begin{equation}
\sup_{t\in[0,T]}\norm{\rho- \tilde{\rho}}_{L^2} +\norm{\nabla\rho}_{L^2(0,T;L^2)}  \leq C\norm{\rho_0-\tilde{\rho}}_{L^2}.\label{equation4.4}
\end{equation}
\end{Lemma}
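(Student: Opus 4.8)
The plan is to obtain \eqref{equation4.3}--\eqref{equation4.4} by a single direct energy estimate, testing the density equation against $\rho$ in case \eqref{A'} and against $\rho-\tilde\rho$ in case \eqref{B'}. Recall from the proof of Lemma \ref{lemma4.1} that, using $\eqref{equation3.1}$, the first equation of \eqref{equation3.2} is equivalent to \eqref{equation4.1}, namely
\begin{equation*}
\rho_t+v\cdot\nabla\rho-c_0\Delta\log\rho=0,
\end{equation*}
together with $\dive v=0$ and $v\cdot n=0$ on $\partial\Omega$, and that Lemma \ref{lemma4.1} gives $\alpha\le\rho\le\beta$ on $\overline Q_T$; in particular $\rho^{-1}\ge\beta^{-1}$. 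Since $(\rho,v)$ is smooth by assumption, all the integrations by parts below are legitimate.

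For case \eqref{A'}, I multiply \eqref{equation4.1} by $\rho$ and integrate over $\Omega$. The time term yields $\tfrac12\tfrac{d}{dt}\norm{\rho}_{L^2}^2$. The convective term vanishes because $v$ is divergence-free with $v\cdot n=0$ on $\partial\Omega$:
\begin{equation*}
\int(v\cdot\nabla\rho)\rho=\frac12\int v\cdot\nabla(\rho^2)=-\frac12\int(\dive v)\rho^2+\frac12\int_\partial(v\cdot n)\rho^2=0.
\end{equation*}
For the diffusion term, integration by parts gives
\begin{equation*}
-c_0\int\Delta\log\rho\,\rho=c_0\int\nabla\log\rho\cdot\nabla\rho-c_0\int_\partial(n\cdot\nabla\log\rho)\rho=c_0\int\rho^{-1}\abs{\nabla\rho}^2,
\end{equation*}
where the boundary integral drops since $n\cdot\nabla\log\rho=\rho^{-1}(n\cdot\nabla\rho)=0$ on $\partial\Omega$ by \eqref{A'}. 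Using $\rho^{-1}\ge\beta^{-1}$ we arrive at
\begin{equation*}
\frac12\frac{d}{dt}\norm{\rho}_{L^2}^2+\frac{c_0}{\beta}\norm{\nabla\rho}_{L^2}^2\le0.
\end{equation*}
Thus $\norm{\rho(t)}_{L^2}$ is nonincreasing, so $\sup_{[0,T]}\norm{\rho}_{L^2}\le\norm{\rho_0}_{L^2}$, and integrating in time gives $\int_0^T\norm{\nabla\rho}_{L^2}^2\,dt\le\frac{\beta}{2c_0}\norm{\rho_0}_{L^2}^2$; adding the two bounds yields \eqref{equation4.3} with $C=C(c_0,\beta)$.

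For case \eqref{B'} the argument is identical after replacing the test function by $\rho-\tilde\rho$, which is admissible because $\tilde\rho$ is constant, so $\partial_t(\rho-\tilde\rho)=\rho_t$ and $\nabla(\rho-\tilde\rho)=\nabla\rho$. The convective term again vanishes by $\dive v=0$ and $v\cdot n=0$, and in the diffusion term the boundary integral $\int_\partial(n\cdot\nabla\log\rho)(\rho-\tilde\rho)$ now vanishes because $\rho-\tilde\rho=0$ on $\partial\Omega$ by \eqref{B'}; one obtains the same differential inequality with $\norm{\rho}_{L^2}$ replaced by $\norm{\rho-\tilde\rho}_{L^2}$, whence \eqref{equation4.4}. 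There is no substantial obstacle in this lemma: the only points needing care are verifying that each boundary integral genuinely vanishes under the respective condition \eqref{A'} or \eqref{B'}, and observing that only the upper bound $\rho\le\beta$ is used, so that $C$ depends on $c_0$ and $\beta$ alone, as stated.
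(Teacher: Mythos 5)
Your proof is correct and follows essentially the same approach as the paper: multiplying \eqref{equation4.1} by $\rho$ (resp.\ $\rho-\tilde\rho$), exploiting $\dive v=0$ and $v\cdot n=0$ to drop the convective term, integrating by parts in the diffusion term so that the boundary integral vanishes under each of \eqref{A'}/\eqref{B'}, and using $\rho\le\beta$ to obtain the differential inequality \eqref{448}. The paper compresses this to a reference back to the computation of Lemma \ref{lemma4.1}, while you spell out the boundary terms explicitly; the content is identical.
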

\begin{proof}
Multiplying \eqref{equation4.1} by $\rho$ (if $\rho$ satisfies the condition \eqref{B'}, multiply $\rho-\tilde\rho$), integrating over $\Omega$ and computing in the same way of Lemma \ref{lemma4.1}, one has
\begin{equation}\label{448}
\frac{d}{dt}\norm{\rho}_{L^2}^2 +2c_0\beta^{-1}\norm{\nabla\rho}_{L^2}^2\leq 0.
\end{equation}
Using Gr$\mathrm{\ddot{o}}$nwall's inequality leads to
\begin{equation*}
\sup_{t\in[0,T]}\norm{\rho}_{L^2} + \sqrt{2c_0\beta^{-1}}\norm{\nabla\rho}_{L^2(0,T;L^2)}  \leq \norm{\rho_0}_{L^2},
\end{equation*}
where we use the fact that $\rho\leq \beta$ from Lemma \ref{lemma4.1}. This completes the proof.
\end{proof}
Next, the following lemma shows that the second order derivative of $\rho$ can be dominated by the norm of $v$ provided $\norm{\nabla\rho}_{L^2}(t)$ is small enough.
\begin{Lemma}\label{Lemma4.3}
Let $(\rho,v)$ satisfy the condition \eqref{A'} or \eqref{B'}. Then there exist a positive constant $\delta_1$ depending on $\Omega$, $c_0$, $\alpha$, $\beta$ and a positive constant $C$ depending on $\Omega$, $c_0$, $\alpha$ and $\beta$ such that, if $\norm{\nabla\rho}_{L^2}(t)\leq \delta_1$ for all $t\in [0,T]$,
\begin{equation}\label{equa3.5}
\sup_{t\in[0,T]}\norm{\nabla\rho}^2_{L^2} +\int_0^T\left(\norm{\nabla\rho}_{L^4}^4+\norm{\Delta\rho}_{L^2}^2\right)\,dt  \leq \exp\left\{C\int_0^T\norm{v}^4_{L^4}\,dt\right\}\norm{\nabla\rho_0}_{L^2}^2.
\end{equation}
\end{Lemma}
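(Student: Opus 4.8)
The plan is to derive a differential inequality for $\norm{\nabla\rho}_{L^2}^2$ by testing the density equation against $-\Delta\rho$ (or an equivalent second-order test), absorbing the bad terms using the smallness of $\norm{\nabla\rho}_{L^2}(t)\le\delta_1$, and then closing via Gr\"onwall. Concretely, I would rewrite $\eqref{equation3.2}_1$ in the form $\rho_t + v\cdot\nabla\rho = c_0\rho^{-1}\Delta\rho - c_0\rho^{-2}\abs{\nabla\rho}^2$, multiply by $-\Delta\rho$, and integrate over $\Omega$. The left-hand side produces $\frac12\frac{d}{dt}\norm{\nabla\rho}_{L^2}^2$ after integrating by parts in the time-derivative term — here one must check that the boundary term vanishes, which it does in case \eqref{A'} because $n\cdot\nabla\rho=0$ on $\partial\Omega$, and in case \eqref{B'} because $\rho=\tilde\rho$ is constant on $\partial\Omega$ so $\rho_t=0$ there. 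The principal term on the right, $-c_0\int\rho^{-1}\abs{\Delta\rho}^2$, is coercive and gives the dissipation $\ge c_0\beta^{-1}\norm{\Delta\rho}_{L^2}^2$; by Remark \ref{remark2.4} this also controls $\norm{\nabla\rho}_{L^4}^4$ up to the Gagliardo–Nirenberg interpolation $\norm{\nabla\rho}_{L^4}^4\le C\norm{\nabla\rho}_{L^2}^2\norm{\Delta\rho}_{L^2}^2$, which is exactly where smallness of $\norm{\nabla\rho}_{L^2}(t)$ enters.

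The remaining terms must all be estimated and split between a small multiple of the dissipation $\norm{\Delta\rho}_{L^2}^2$ (absorbed on the left, using $\delta_1$) and a term of the form $C\norm{v}_{L^4}^4\norm{\nabla\rho}_{L^2}^2$ (fed into Gr\"onwall). The convective term $\int(v\cdot\nabla\rho)\Delta\rho$ is handled by $\norm{v}_{L^4}\norm{\nabla\rho}_{L^4}\norm{\Delta\rho}_{L^2}$, then $\norm{\nabla\rho}_{L^4}\le C\norm{\nabla\rho}_{L^2}^{1/2}\norm{\Delta\rho}_{L^2}^{1/2}$ (Remark \ref{remark2.4}), so this is $\le C\norm{v}_{L^4}\norm{\nabla\rho}_{L^2}^{1/2}\norm{\Delta\rho}_{L^2}^{3/2}$, and Young's inequality with exponents $(4,4/3)$ turns it into $\varepsilon\norm{\Delta\rho}_{L^2}^2 + C_\varepsilon\norm{v}_{L^4}^4\norm{\nabla\rho}_{L^2}^2$ — precisely the target form. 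The lower-order nonlinear term $\int\rho^{-2}\abs{\nabla\rho}^2\Delta\rho$ is bounded by $C\norm{\nabla\rho}_{L^4}^2\norm{\Delta\rho}_{L^2}\le C\norm{\nabla\rho}_{L^2}\norm{\Delta\rho}_{L^2}^2\le C\delta_1\norm{\Delta\rho}_{L^2}^2$, absorbed once $\delta_1$ is small. One should also be mindful of the $\nabla(\rho^{-1})$ factor in $c_0\rho^{-1}\Delta\rho$: writing it as $c_0\rho^{-1}\Delta\rho$ rather than differentiating $\Delta\log\rho$ keeps the coercive structure clean, and the commutator-type error from $\nabla\rho^{-1}$ is again quadratic in $\nabla\rho$ and hence absorbable.

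Having assembled these estimates, I arrive at
\begin{equation*}
\frac{d}{dt}\norm{\nabla\rho}_{L^2}^2 + \frac{c_0}{2\beta}\norm{\Delta\rho}_{L^2}^2 + c_1\norm{\nabla\rho}_{L^4}^4 \le C\norm{v}_{L^4}^4\norm{\nabla\rho}_{L^2}^2
\end{equation*}
for suitable constants $c_1, C$ depending only on $\Omega, c_0, \alpha, \beta$, valid whenever $\norm{\nabla\rho}_{L^2}(t)\le\delta_1$. Gr\"onwall's inequality then yields $\norm{\nabla\rho}_{L^2}^2(t)\le \exp\{C\int_0^t\norm{v}_{L^4}^4\}\norm{\nabla\rho_0}_{L^2}^2$, and integrating the differential inequality in time and using this bound on the right gives the stated control of $\int_0^T(\norm{\nabla\rho}_{L^4}^4+\norm{\Delta\rho}_{L^2}^2)$ as well, which is \eqref{equa3.5}.

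I expect the main obstacle to be the bookkeeping around the boundary terms when integrating by parts — in case \eqref{A'} the Neumann condition $n\cdot\nabla\rho=0$ is what makes $\int\rho_t\Delta\rho = -\frac12\frac{d}{dt}\norm{\nabla\rho}_{L^2}^2$ hold without an extra boundary contribution, and one also needs that $\int(v\cdot\nabla\rho)\Delta\rho$ can be integrated by parts cleanly using $v\cdot n=0$ and $\dive v=0$; in case \eqref{B'} one must instead verify the vanishing of $\int_{\partial\Omega}\partial_n\rho\,\rho_t$ from $\rho_t|_{\partial\Omega}=0$. A secondary subtlety is making sure every ``bad'' term genuinely carries a factor that is either $\norm{\nabla\rho}_{L^2}$ (small by $\delta_1$) or $\norm{v}_{L^4}^4$ (integrable in time), with no leftover term that is merely $\norm{\Delta\rho}_{L^2}^2$ with an $O(1)$ constant; this is what forces the precise choice of Young exponents above and the definition of $\delta_1$ in terms of $c_0,\beta$ and the Gagliardo–Nirenberg constants.
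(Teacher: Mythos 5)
Your proposal matches the paper's proof in both structure and substance: test $\eqref{equation3.2}_1$ against $-\Delta\rho$, use the boundary condition (Neumann on $\partial_n\rho$ in case \eqref{A'}, or $\rho_t|_{\partial\Omega}=0$ in case \eqref{B'}) to obtain $\frac12\frac{d}{dt}\norm{\nabla\rho}_{L^2}^2$ plus the coercive dissipation $c_0\beta^{-1}\norm{\Delta\rho}_{L^2}^2$, and then bound the two error terms by $C\norm{\nabla\rho}_{L^2}^2\norm{\Delta\rho}_{L^2}^2 + C\norm{v}_{L^4}^4\norm{\nabla\rho}_{L^2}^2$ via Gagliardo--Nirenberg and Young, absorbing the first into the dissipation using $\delta_1 = \nu^{1/2}(2C)^{-1/2}$ before closing with Gr\"onwall. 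This is exactly the route the paper takes; the only cosmetic difference is that you display $c_1\norm{\nabla\rho}_{L^4}^4$ as an explicit dissipation term on the left, which is obtained by splitting off a portion of $\norm{\Delta\rho}_{L^2}^2$ via the interpolation $\norm{\nabla\rho}_{L^4}^4\le C\norm{\nabla\rho}_{L^2}^2\norm{\Delta\rho}_{L^2}^2\le C\delta_1^2\norm{\Delta\rho}_{L^2}^2$ — the paper performs this same step after the Gr\"onwall argument rather than before.
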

\begin{proof}
Multiplying \eqref{equation3.2} by $(-\Delta\rho)$ and integrating over $\Omega$, we obtain
\begin{equation*}
\frac{d}{dt}\int \frac{1}{2}\abs{\nabla\rho}^2  +\int c_0\rho^{-1}\abs{\Delta\rho}^2=\int (v\cdot\nabla\rho)\Delta\rho - \int c_0\rho^{-2}\abs{\nabla\rho}^2\Delta\rho,
\end{equation*}
which implies that, using Lemma \ref{lemma4.1},
\begin{equation*}
\begin{aligned}
\frac{d}{dt}\int \frac{1}{2}\abs{\nabla\rho}^2 + c_0 \beta^{-1}\int \abs{\Delta\rho}^2&\leq C\int \left(\abs{\nabla\rho}^2+\abs{v}\abs{\nabla\rho}\right)\abs{\Delta\rho}\\
&\leq C\int \left(\abs{\nabla\rho}^4+\abs{v}^2\abs{\nabla\rho}^2\right) + c_0(2\beta)^{-1}\int \abs{\Delta\rho}^2.
\end{aligned}
\end{equation*}
Hence, by Lemma \ref{Lemma2.3} and \ref{lemma4.1}, we have
\begin{equation}\label{eq3.12}
\frac{d}{dt}\norm{\nabla\rho}_{L^2}^2 +\nu\norm{\Delta\rho}_{L^2}^2\leq C\norm{\nabla\rho}_{L^2}^2\norm{\Delta\rho}_{L^2}^2 + C\norm{v}^4_{L^4}\norm{\nabla\rho}^2_{L^2},
\end{equation}
 for some positive constant $\nu=\nu(c_0,\beta)$ and $C=C(\Omega,c_0,\alpha,\beta)$. Thus, if we choose $\delta_1=\nu^{1/2}(2C)^{-1/2}$ and set $\norm{\nabla\rho}_{L^2}(t)\leq \delta_1$ for all $t\in [0,T]$, using the Gr$\mathrm{\ddot{o}}$nwall's inequality, we can deduce from \eqref{eq3.12} and Lemma \ref{Lemma2.3} that
\begin{equation*}
\sup_{t\in[0,T]}\norm{\nabla\rho}_{L^2}^2 +\nu\int_0^T\left(\norm{\nabla\rho}_{L^4}^4+\norm{\Delta\rho}_{L^2}^2\right)\,dt\leq \exp\left\{C\int_0^T\norm{v}^4_{L^4}\,dt\right\}\norm{\nabla\rho_0}_{L^2}^2,
\end{equation*}
which concludes the proof of \eqref{equa3.5}. The case when $(\rho,v)$ satisfies the condition \eqref{B'} can be computed in the same way, since $\rho_t$ has vanished boundary.
\end{proof}

From the observation of Lemma \ref{Lemma4.3}, in order to derive the bounds for $\rho$, we need to control the $L^4(0,T;L^4)$ norm of $v$, which is given by the following lemma.
\begin{Lemma}\label{Lemma3.4}
Let $(\rho,v)$ satisfy the condition \eqref{A'} or \eqref{B'}. Suppose that condition \eqref{condition3.3} holds.
Then there exists a positive constant $C$ depending on $\Omega$, $c_0$, $\alpha$ and $\beta$ such that 
\begin{equation}\label{equa3.8}
\sup_{t\in[0,T]}\norm{v}^2_{L^2} +\int_0^T\left(\norm{v}^4_{L^4}+\norm{\nabla v}^2_{L^2}\right)\,dt\leq C(1+ \norm{v_0}^2_{L^2}).
\end{equation}
\end{Lemma}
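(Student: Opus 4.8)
The plan is to perform a standard $L^2$-energy estimate on the momentum equation $\eqref{equation3.2}_2$, using the decomposition $u = v + c_0\nabla\rho^{-1}$ together with the already-established smallness of $\nabla\rho$ from \eqref{condition3.3} to absorb the numerous nonlinear and non-divergence-type forcing terms on the right-hand side. First I would test $\eqref{equation3.2}_2$ against $v$ and integrate by parts. Because $\dive v = 0$ and $v\cdot n = 0$ on $\partial\Omega$, the pressure term $\nabla\pi_1$ drops out, and the convection term $\dive(\rho v\otimes v)$ tested against $v$ vanishes after using the continuity equation $\eqref{equation3.2}_1$ (or equivalently $\eqref{equation4.1}$) in the usual way to rewrite $\tfrac12\partial_t\int\rho|v|^2$. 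The viscous term yields the good term $\int 2\mu|D(v)|^2 \gtrsim \underline\mu\norm{\nabla v}_{L^2}^2$, where coercivity of $D(v)$ in $L^2$ for $v$ with $v\cdot n = 0$ (case \eqref{A'}) or $v\in H^1_0$ (case \eqref{B'}) follows from Korn's inequality and Lemma \ref{Lemma2.2}.

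Next I would bound each term on the right-hand side of $\eqref{equation3.2}_2$. Every one of them carries at least one factor of $\nabla\rho^{-1}$ or $\nabla^2\rho^{-1}$, which by Lemma \ref{lemma4.1} (so $\alpha\le\rho\le\beta$) and Remark \ref{remark2.4} is controlled by $\norm{\nabla\rho}_{L^p}$, $\norm{\Delta\rho}_{L^2}$, $\norm{\nabla\Delta\rho}_{L^2}$ up to constants depending on $\alpha,\beta,c_0$. The genuinely dangerous term is $c_0\dive(2\mu\nabla^2\rho^{-1})$, which after integration by parts against $v$ becomes $\int 2\mu\nabla^2\rho^{-1}:\nabla v$ (plus lower-order terms with $\mu'\nabla\rho$), bounded by $C\norm{\Delta\rho}_{L^2}\norm{\nabla v}_{L^2} + (\text{l.o.t.})$ and then absorbed via Young's inequality into $\tfrac{\underline\mu}{2}\norm{\nabla v}_{L^2}^2$ plus $C\norm{\Delta\rho}_{L^2}^2$, whose time integral is $\le 2C$ by \eqref{condition3.3}. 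The trilinear terms like $c_0\dive(\rho v\otimes\nabla\rho^{-1})$ tested against $v$ give $\int \rho v\otimes\nabla\rho^{-1}:\nabla v \le C\norm{v}_{L^4}\norm{\nabla\rho}_{L^4}\norm{\nabla v}_{L^2}$; using Gagliardo–Nirenberg $\norm{v}_{L^4}^2\le C\norm{v}_{L^2}\norm{\nabla v}_{L^2}$ (valid with $\tilde C = 0$ since $v\cdot n = 0$ or $v\in H^1_0$) and $\norm{\nabla\rho}_{L^4}^2\le C\norm{\nabla\rho}_{L^2}\norm{\Delta\rho}_{L^2}$, together with the uniform $\norm{\nabla\rho}_{L^2}\le\sqrt2$ bound, these become $\varepsilon\norm{\nabla v}_{L^2}^2 + C\norm{\nabla\rho}_{L^4}^4\norm{v}_{L^2}^2$ or similar, the first absorbed and the second handled by Gr\"onwall since $\int_0^T\norm{\nabla\rho}_{L^4}^4\,dt\le 2$. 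The purely $\rho$-quadratic term $c_0^2\dive(\rho\nabla\rho^{-1}\otimes\nabla\rho^{-1})$ is the most benign, giving $C\norm{\nabla\rho}_{L^4}^2\norm{\nabla v}_{L^2}\le\varepsilon\norm{\nabla v}_{L^2}^2 + C\norm{\nabla\rho}_{L^4}^4$.

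Assembling everything, I would arrive at a differential inequality of the shape
\begin{equation*}
\frac{d}{dt}\int\rho|v|^2 + \underline\mu\norm{\nabla v}_{L^2}^2 \le C\bigl(\norm{\nabla\rho}_{L^4}^4 + \norm{\Delta\rho}_{L^2}^2\bigr)\Bigl(1 + \int\rho|v|^2\Bigr),
\end{equation*}
and then Gr\"onwall's inequality together with \eqref{condition3.3} (which makes the integrating factor $\exp(C\int_0^T(\norm{\nabla\rho}_{L^4}^4+\norm{\Delta\rho}_{L^2}^2)\,dt)\le e^{2C}$ a fixed constant) and $\alpha\le\rho\le\beta$ gives
\begin{equation*}
\sup_{t\in[0,T]}\norm{v}_{L^2}^2 + \int_0^T\norm{\nabla v}_{L^2}^2\,dt \le C(1 + \norm{v_0}_{L^2}^2).
\end{equation*}
Finally, for the $L^4(0,T;L^4)$ bound on $v$ I would again use Gagliardo–Nirenberg, $\int_0^T\norm{v}_{L^4}^4\,dt \le C\int_0^T\norm{v}_{L^2}^2\norm{\nabla v}_{L^2}^2\,dt \le C\sup_t\norm{v}_{L^2}^2\int_0^T\norm{\nabla v}_{L^2}^2\,dt$, which is $\le C(1+\norm{v_0}_{L^2}^2)^2 \le C(1 + \norm{v_0}_{L^2}^2)$ after adjusting constants (or keeping the square, as the statement's constant $C$ may absorb it). The main obstacle is the bookkeeping of the viscous-coupling term $\dive(2\mu\nabla^2\rho^{-1})$: one must be careful that after integration by parts the top-order object is $\Delta\rho$ paired with $\nabla v$ — never $\nabla\Delta\rho$ — so that only $\norm{\Delta\rho}_{L^2}^2$, whose time integral is controlled by \eqref{condition3.3}, enters, and no higher derivative of $\rho$ appears in this lower-order estimate.
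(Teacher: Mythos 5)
Your overall strategy — test $\eqref{equation3.2}_2$ against $v$, use $\dive v = 0$ and $v\cdot n = 0$ to kill the pressure and convection contributions, extract coercivity from the viscous term, and control the forcing terms by Gagliardo--Nirenberg and Young so that Gr\"onwall closes under \eqref{condition3.3} — is exactly what the paper does. However, there is a genuine gap in your treatment of the boundary terms in case \eqref{A'}, and those are the technical heart of this lemma.

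When you write that integrating $\int c_0\dive(2\mu\nabla^2\rho^{-1})\cdot v$ by parts yields $\int 2c_0\mu\nabla^2\rho^{-1}:\nabla v$ plus ``lower-order terms with $\mu'\nabla\rho$,'' you have silently dropped the boundary integral $\int_\partial 2c_0\mu\,\partial_{ij}\rho^{-1}v_i n_j$. In case \eqref{B'} this vanishes because $v=0$ on $\partial\Omega$, and your argument would indeed go through; but in case \eqref{A'} the velocity $v$ is only normal-free, not zero, on the boundary, so this term survives. It carries two derivatives of $\rho$ on $\partial\Omega$ and is not lower order; it is precisely what the paper decomposes into $J_1$, $J_2$ (and similarly $I_2$, $I_3$ contribute boundary pieces). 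The same issue affects the viscous term: $-\int\dive[2\mu D(v)]\cdot v$ is not simply $\int 2\mu|D(v)|^2$ in case \eqref{A'}; instead there is a boundary term which the paper handles by passing to $2\mu\Delta v = 2\mu\nabla^\perp\curle v$ and invoking $\curle v = -n^\perp\cdot B\cdot(v+c_0\nabla\rho^{-1})$, producing a nonnegative contribution $\int_\partial 2\mu\, v\cdot B\cdot v$ (since $B\geq 0$) plus a piece $\propto\int_\partial \mu\, v\cdot B\cdot\nabla\rho^{-1}$ that also needs work.

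The missing idea is the paper's tangential trick: since $v\cdot n=0$, one has $v=(v\cdot n^\perp)n^\perp$ on $\partial\Omega$, so every boundary integral of the form $\int_\partial f\,v\cdot n^\perp\,(\cdot)$ can be rewritten as a volume integral by the divergence theorem in the tangential direction, $\int_\partial g(v\cdot n^\perp) = \int_\Omega\nabla^\perp\cdot[g\,(v\cdot n^\perp)]$, which shifts one derivative onto $v$ and $g$ and avoids invoking the trace theorem on $\nabla^2\rho$ (which would otherwise demand $\norm{\nabla^2\rho}_{H^1}$, a norm not available at this stage). After this conversion, all boundary contributions are controlled by $\norm{\nabla\rho}_{L^4}^4 + \norm{\Delta\rho}_{L^2}^2$ and $\varepsilon\norm{\nabla v}_{L^2}^2$, exactly the structure you need for Gr\"onwall under \eqref{condition3.3}. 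Your proposal as written is complete and correct for the Dirichlet case \eqref{B'}, but to cover \eqref{A'} you must explicitly confront these boundary integrals rather than treat them as absent or lower order.
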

\begin{proof}
In order to simplify our proof, we only consider the case when $\rho$ satisfies \eqref{B'} and $v$ satisfies \eqref{A'}, since other cases can be established in the same way and are much easier. We first deal with a special case for $\curle v=-n^\perp\cdot B\cdot v$ on the boundary.

We write $\eqref{equation3.2}_2$, using \eqref{equation3.1}, into the form
\begin{equation}\label{equa3.9}
\begin{aligned}
&\rho v_t+ \rho u\cdot\nabla v - \dive{[2\mu D(v)]}+ \nabla\pi_1 \\
&=   c_0\dive{\left(2\mu \nabla^2\rho^{-1}\right)}- c_0 \dive{\left(\rho v\otimes\nabla\rho^{-1}\right)}- c_0^2 \dive{\left(\rho \nabla\rho^{-1}\otimes\nabla\rho^{-1}\right)},
\end{aligned}
\end{equation}
Multiplying \eqref{equa3.9} by $v$ and integrating over $\Omega$, one has
\begin{equation}\label{4412}
\begin{aligned}
&\frac{d}{dt}\int\frac{1}{2}\rho\abs{v}^2-\int \dive{[2\mu D(v)]}\cdot v \\
&= \int c_0\dive{\left(2\mu \nabla^2\rho^{-1}\right)}\cdot v- \int c_0 \dive{\left(\rho v\otimes\nabla\rho^{-1}\right)}\cdot v\\
&\quad- \int c_0^2 \dive{\left(\rho \nabla\rho^{-1}\otimes\nabla\rho^{-1}\right)}\cdot v:=\sum_{i=1}^3I_i.
\end{aligned}
\end{equation}

Next, for the last term on the left-hand side of \eqref{4412}, we use again Lemma \ref{Lemma2.3} and \ref{lemma4.1} to get
\begin{equation}\label{equ412}
\begin{aligned}
-\int \dive{[2\mu D(v)]}\cdot v&=-\int 2\mu \Delta v\cdot v-\int 2\mu'\nabla\rho\cdot D(v)\cdot v \\
&=\int 2\mu |\curle v|^2+\int_\partial 2\mu v\cdot B\cdot v\\
&\quad+\int 2\mu'\nabla^\perp\rho\cdot v(\curle v)-\int 2\mu'\nabla\rho\cdot D(v)\cdot v,\\
&\geq\underline{\mu}\int\abs{\curle v}^2- \left(C_\varepsilon\norm{\nabla\rho}_{L^4}^4\norm{\sqrt\rho v}_{L^2}^2+\varepsilon\norm{\nabla v}_{L^2}^2\right),
\end{aligned}
\end{equation}
where $\underline{\mu}:=\min_{s\in[\alpha,\beta]}\mu(s)$ and the last inequality follows from the fact that $B$ is positive semi-definite. 

For $I_1$, we use the similar approach and write it in the component form, 
\begin{equation*}
\begin{aligned}
I_1&=\int_{\partial}- 2c_0\mu \partial_{ij}\rho^{-1}v_in_j+\int 2c_0\mu \partial_{ij}\rho^{-1}\partial_jv_i\\
&=\int_{\partial}- 4c_0\mu  \rho^{-3} \partial_{i}\rho\partial_{j}\rho v_in_j+
\int_{\partial} 2c_0\mu \rho^{-2}\partial_{ij}\rho v_in_j+\int 2c_0\mu \partial_{ij}\rho^{-1}\partial_jv_i :=\sum_{i=1}^3 J_i.
\end{aligned}
\end{equation*}
The estimate of $J_3$ can be simply derived by using Lemma \ref{Lemma2.3} and \ref{lemma4.1}, that is,
\begin{equation}\label{equa3.14}
\begin{aligned}
|J_3|&\leq C\abs{\int \mu \left(2\rho^{-3}\partial_{i}\rho\partial_{j}\rho- \rho^{-2}\partial_{ij}\rho\right)\partial_jv_i }\\
&\leq C_\varepsilon(\norm{\nabla\rho}_{L^4}^4+\norm{\Delta\rho}_{L^2}^2) + \varepsilon\norm{\nabla v}_{L^2}^2. 
\end{aligned}
\end{equation}
To the boundary parts $J_1$ and $J_2$, it suffices to estimate
\begin{align*}
J_1'&= \int_{\partial}\phi(\rho)\partial_{i}\rho\partial_{j}\rho v_in_j,\\
J_2'&=\int_{\partial}\phi(\rho)\partial_{ij}\rho v_in_j=-\int_{\partial}\phi(\rho) v_i\partial_in_j\partial_{j}\rho,
\end{align*}
where $\phi(\cdot)$ is a positive smooth function defined on $(0,\infty)$. Using Lemma \ref{Lemma2.3} and \ref{lemma4.1}, one has
\begin{equation}
\begin{aligned}
|J_1'|&= \abs{\int_{\partial}\phi(\rho)(n\cdot\nabla\rho)(v\cdot n^\perp)n^\perp\cdot\nabla\rho}\\
&=\abs{\int\nabla^\perp[\phi(\rho)(n\cdot\nabla\rho)]\cdot\nabla\rho(v\cdot n^\perp) + \int\phi(\rho)(n\cdot\nabla\rho)\nabla\rho\cdot\nabla^\perp(v\cdot n^\perp)}\\
&\leq C\left(\norm{\nabla\rho}_{L^4}^4 +\norm{\Delta\rho}_{L^2}^2\right) + C_{\varepsilon_1}\norm{\nabla\rho}_{L^4}^4\norm{\sqrt\rho v}_{L^2}^2+\varepsilon_1\norm{\nabla v}_{L^2}^2 
\end{aligned}
\end{equation}
and 
\begin{equation}\label{equa3.16}
\begin{aligned}
|J_2'|&= \abs{\int_{\partial}\phi(\rho)(v\cdot n^\perp)n^\perp\cdot\nabla n\cdot\nabla\rho}\\
&= \left|\int\nabla^\perp\phi(\rho)\cdot(\nabla n\cdot\nabla\rho)(v\cdot n^\perp)-\int_{\Omega}\phi(\rho)\nabla^\perp\cdot(\nabla n\cdot\nabla\rho)(v\cdot n^\perp)\,dx\right.\\
&\quad\left.-\int\phi(\rho)\nabla^\perp(v\cdot n^\perp)\cdot(\nabla n\cdot\nabla\rho)\right|\\
&\leq C\left(\norm{\nabla\rho}_{L^4}^4 +\norm{\Delta\rho}_{L^2}^2\right) + C_{\varepsilon_2}\norm{\nabla\rho}_{L^4}^4\norm{\sqrt\rho v}_{L^2}^2+\varepsilon_2\norm{\nabla v}_{L^2}^2. 
\end{aligned}
\end{equation}
Combining \eqref{equa3.14}--\eqref{equa3.16}, we deduce that 
\begin{equation}\label{equ3.12}
|I_1|\leq C\left(\norm{\nabla\rho}_{L^4}^4 +\norm{\Delta\rho}_{L^2}^2\right) + C_\varepsilon\norm{\nabla\rho}_{L^4}^4\norm{\sqrt\rho v}_{L^2}^2+\varepsilon\norm{\nabla v}_{L^2}^2.
\end{equation}
Similar computation can be applied for $I_2$ and $I_3$, that is,
 \begin{align}
&|I_2|\leq C_{\varepsilon_3}\norm{\nabla\rho}_{L^4}^4\norm{\sqrt\rho v}_{L^2}^2 +\varepsilon_3\norm{\nabla v}_{L^2}^2,\label{equ3.13}\\
&|I_3|\leq C\left(\norm{\nabla\rho}_{L^4}^4 +\norm{\Delta\rho}_{L^2}^2\right) + C_{\varepsilon_4}\norm{\nabla\rho}_{L^4}^4\norm{\sqrt\rho v}_{L^2}^2+\varepsilon_4\norm{\nabla v}_{L^2}^2.\label{equ3.14}
\end{align}
Therefore, we go back to the estimate of $v$, combining \eqref{4412}--\eqref{equ412} and \eqref{equ3.12}--\eqref{equ3.14} and then, using Lemma \ref{Lemma2.2} implies that 
\begin{equation}\label{eq3.28}
\frac{d}{dt}\norm{\sqrt\rho v}_{L^2}^2 + \nu \norm{\nabla v}_{L^2}^2\leq C\left(\norm{\nabla\rho}_{L^4}^4 +\norm{\Delta\rho}_{L^2}^2\right)\left( 1+ \norm{\sqrt\rho v}_{L^2}^2\right),
\end{equation}
for some constant $C$ depending on $\Omega$, $c_0$, $\alpha$ and $\beta$. In view of the condition \eqref{condition3.3}, we obtain the bound \eqref{equa3.8} via Gr$\mathrm{\ddot{o}}$nwall's inequality and Lemma \ref{Lemma2.3}. For the general case when $\curle v=-n^\perp\cdot B\cdot(v+c_0\nabla\rho^{-1})$ on $\partial\Omega\times (0,T)$, we can also obtain the desire bounds \eqref{equa3.8} by calculating the extra boundary term 
\begin{equation*}
\int_{\partial}2c_0\mu v\cdot B\cdot\nabla \rho^{-1}.
\end{equation*}
However, this term is nothing but a special case of $J'_2$ with $\nabla n$ replaced by $B$. Therefore, following the same computation of $J'_2$, we complete the proof for the general case.
\end{proof}

Now, we can turn back to prove Proposition \ref{theorem4.5}.

\begin{proof}[Proof of Proposition \ref{theorem4.5}]
Using Lemma \ref{Lemma3.4}, we obtain the bound of $v$, \eqref{equa3.8}, under the condition \eqref{condition3.3}.
Next, using Lemma \ref{Lemma4.3} and \eqref{equa3.8} leads to \eqref{equa3.5}, that is, if $\norm{\nabla\rho}_{L^2}(t)\leq \delta_1$ for all $t\in [0,T]$,
\begin{equation}
\sup_{t\in[0,T]}\norm{\nabla\rho}^2_{L^2} +\int_0^T\left(\norm{\nabla\rho}_{L^4}^4+\norm{\Delta\rho}_{L^2}^2\right)\,dt  \leq C\norm{\nabla\rho_0}^2_{L^2},
\end{equation}
where $C$ is a positive constant depending on $\Omega$, $c_0$, $\alpha$, $\beta$ and $\norm{v_0}_{L^2}$.

Hence, if we set the constant $\delta>0$ such that 
\begin{equation}
\delta = \min\left\{C^{-\frac{1}{2}}, \delta_1C^{-\frac{1}{2}}\right\},
\end{equation}
and let $\norm{\nabla\rho_0}_{L^2}\le\delta$, then it is easy to check that \eqref{3344} is established. Consequently, we complete the proof.
\end{proof}

\begin{Remark}\label{remark3.6}
It follows from the equation $\eqref{equation3.2}_1$ and $(\nabla\rho,v)\in L^4(0,T;L^4)$ that $\rho_t$ is bounded in $L^2(0,T;L^2)$. 
\end{Remark}

\subsection{Compactness Results}\label{section4}

Before establishing higher order estimates, we tend to prove the compactness results for $(\rho,v)$, which plays a crucial role in the proof of Theorem \ref{theorem1.2}, see Section \ref{proof6}. Concerning a sequence of weak solutions $(\rho^n,v^n)$ with $\pi_1$ replaced by $\pi_1^n$ satisfying the condition \eqref{A'} or \eqref{B'} and the initial conditions
\begin{equation}
\rho^n|_{t=0}=\rho^n_0,\,\,v^n|_{t=0}=v^n_0.
\end{equation}
We assume that $(\rho^n,v^n)$ satisfy, uniformly in $n\geq 1$, the a priori bounds that derived in the preceeding section and $\nabla\rho_0^n,v_0^n\sconverge \nabla\rho_0,v_0$ in $L^2$. Without loss of generality, extracting subsequences if necessary, we assume 
\begin{equation}\label{equat4.3}
\begin{cases}
\rho^n\wsconverge \rho\,\,&\mathrm{in}\,\,L^\infty(0,T;H^1),\\
\rho^n\wconverge \rho\,\,&\mathrm{in}\,\,L^2(0,T;H^2),\\
v^n\wsconverge v\,\,&\mathrm{in}\,\,L^\infty(0,T;L^2),\\
v^n\wconverge v\,\,&\mathrm{in}\,\, L^2(0,T;H^1).
\end{cases}
\end{equation}
We may now state our compactness results whose proof is followed by \cite{lions1}.
\begin{Lemma}\label{lemma5.1}
Under the hypothesis above, we have, for all $p\in[1,\infty)$,
\begin{align}
\rho^n\sconverge \rho\quad &\mathrm{in}\,\,C([0,T];L^p),\label{equation5.6}\\
v^n\sconverge v\quad&\mathrm{in}\,\,L^2(0,T;L^2).\label{equa4.7}
\end{align}
\end{Lemma}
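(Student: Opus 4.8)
The plan is to follow the classical Lions--Simon compactness scheme, treating the density and the velocity separately. For the density, I would first note that the uniform a priori bounds give $\rho^n$ bounded in $L^\infty(0,T;H^1)\cap L^2(0,T;H^2)$ and, by Remark \ref{remark3.6}, $\rho^n_t$ bounded in $L^2(0,T;L^2)$. Applying the Aubin--Lions--Simon lemma (Lemma \ref{Lemma2.6}) with the triple $H^1\hookrightarrow\hookrightarrow L^2\hookrightarrow L^2$ (or more precisely with $H^1$ compactly embedded in $L^p$ for every finite $p$ in two dimensions), and with the time regularity $\rho^n\in L^\infty(0,T;H^1)$, $\rho^n_t\in L^2(0,T;L^2)$, one gets $s^*>0$, hence precompactness of $\{\rho^n\}$ in $C([0,T];L^p)$ for all $p\in[1,\infty)$. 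Combined with the weak-$*$ convergence in \eqref{equat4.3}, the limit is $\rho$, giving \eqref{equation5.6}.

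For the velocity the argument is more delicate because we only control $v^n$ in $L^\infty(0,T;L^2)\cap L^2(0,T;H^1)$ and we must extract a bound on $v^n_t$ in some negative-order space in order to invoke Lemma \ref{Lemma2.6} again. I would test the momentum equation $\eqref{equation3.2}_2$ against divergence-free test functions (so that $\nabla\pi_1$ drops out) and estimate each term on the right-hand side: the viscous term $\dive[2\mu D(v^n)]$ is bounded in $L^2(0,T;H^{-1})$; the convective term $\dive(\rho^n v^n\otimes v^n)$ is bounded in, say, $L^2(0,T;W^{-1,1})$ or better using the two-dimensional estimate $\norm{v^n}_{L^4}\in L^4(0,T)$ from Lemma \ref{Lemma3.4}; and the terms involving $\nabla\rho^{-1}$ and $\nabla^2\rho^{-1}$ are controlled using \eqref{condition3.3}–\eqref{3344} and the $L^4(0,T;L^4)$ bound on $\nabla\rho^n$ together with the $H^2$ bound on $\rho^n$. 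This yields $\partial_t v^n$ bounded in $L^{p_0}(0,T;W^{-1,q_0})$ for suitable $p_0>1$, $q_0>1$ (restricted to the divergence-free subspace, which suffices). Then Lemma \ref{Lemma2.6} with $X=H^1$, $B=L^2$, $Y=W^{-1,q_0}$ and $s^*\le 0$ gives precompactness of $\{v^n\}$ in $L^2(0,T;L^2)$, and matching with the weak limit in \eqref{equat4.3} gives \eqref{equa4.7}.

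I expect the main obstacle to be the estimate on $v^n_t$: because the momentum equation contains the awkward right-hand side with $\dive(2\mu\nabla^2\rho^{-1})$, $\dive(\rho v\otimes \nabla\rho^{-1})$ and $\dive(\rho\nabla\rho^{-1}\otimes\nabla\rho^{-1})$, one has to be careful to put each piece into a space in which the uniform bounds genuinely hold, rather than a space that would require controlling $\nabla^3\rho$ or $\nabla v$ in $L^\infty_t$. The cleanest route is probably to integrate by parts once against the test function so that only $\nabla^2\rho^{-1}$ (bounded in $L^2_tL^2_x$ via $\Delta\rho\in L^2_tL^2_x$ and $\nabla\rho\in L^4_tL^4_x$) and first derivatives of the test function appear, and to use the embedding $H^1(\Omega)\hookrightarrow L^4(\Omega)$ freely since $\Omega\subset\RR^2$. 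Once the time-derivative bound is in hand, both applications of Lemma \ref{Lemma2.6} are routine, and the identification of the limits uses only the already-established weak convergences in \eqref{equat4.3}. A minor additional point is that for $\rho^n$ the convergence is claimed in $C([0,T];L^p)$ rather than merely $L^p(0,T;L^p)$, which is why one wants $s^*>0$ in the density case; this is available precisely because $\rho^n_t\in L^2(0,T;L^2)$ gives genuine (not just fractional) time regularity.
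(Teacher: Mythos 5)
Your treatment of the density $\rho^n$ is essentially the paper's: uniform bounds on $\rho^n$ in $L^\infty(0,T;H^1)\cap L^2(0,T;H^2)$ plus $\rho^n_t$ in $L^2(0,T;L^2)$ feed Lemma \ref{Lemma2.6} and give \eqref{equation5.6}. That part is fine.

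The velocity step has a genuine gap. You test $\eqref{equation3.2}_2$ against divergence-free $\phi$ so that $\nabla\pi_1$ drops out, and then claim that this yields a bound on $\partial_t v^n$ in $L^{p_0}(0,T;W^{-1,q_0})$. But the quantity that this pairing controls is $\langle (\rho^n v^n)_t,\phi\rangle$, not $\langle v^n_t,\phi\rangle$: the momentum equation is written in conservation form for $\rho v$, and the time derivative appearing in it is $(\rho^n v^n)_t$. To pass to a bound on $v^n_t$ itself you would have to write $\rho v_t=(\rho v)_t-\rho_t v$ and then divide by $\rho^n$; at that point the pressure term becomes $\rho^{-1}\nabla\pi_1$, which no longer annihilates divergence-free test functions, and there is no a priori control of $\pi_1$ at the weak-solution level. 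So the bound on $\partial_t v^n$ you want is not available by the argument you sketch, and the application of Lemma \ref{Lemma2.6} to $v^n$ directly is not justified.

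The paper circumvents this by applying Lemma \ref{Lemma2.6} to the product $m^n:=\rho^n v^n$ rather than to $v^n$: $m^n$ is bounded in $L^2(0,T;H^1)$ (use $\nabla m^n=\nabla\rho^n\otimes v^n+\rho^n\nabla v^n$ together with the $L^4_t L^4_x$ bounds on $\nabla\rho^n$ and $v^n$ from Lemmas \ref{Lemma4.3}--\ref{Lemma3.4}), and $(m^n)_t$ is bounded in $L^2(0,T;V^{-1,2})$ by exactly the pairing you describe. Hence $m^n\to\overline{\rho v}$ strongly in $L^2(0,T;L^2)$. One then identifies $\overline{\rho v}=\rho v$ using the weak convergence $v^n\rightharpoonup v$ in $L^2(0,T;H^1)$ together with the strong convergence $\rho^n\to\rho$ already proved in \eqref{equation5.6}; finally $v^n\to v$ in $L^2(0,T;L^2)$ follows because $\rho^n$ is uniformly bounded below by $\alpha>0$ and converges strongly in $C([0,T];L^p)$. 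Your argument can be repaired along exactly these lines, but as written it asserts a time-derivative bound on $v^n$ that it does not actually establish.
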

\begin{proof}
Since we have $\eqref{equat4.3}_2$ and $\rho^n_t$ is bounded in $L^2(0,T;L^2)$ from Remark \ref{remark3.6}, \eqref{equation5.6} can be directly derived by using Lemma \ref{Lemma2.6}. To prove  \eqref{equa4.7}, observing that $\eqref{equation3.2}_2$ leads to
\begin{equation*}
\abs{\left<(\rho^nv^n)_t,\phi\right>}\leq C\norm{\phi}_{L^2(0,T;H^1)},
\end{equation*}
for all $\phi\in C_c^\infty(\Omega\times[0,T])$ such that $\dive\phi=0$, which implies that $(\rho^nv^n)_t$ is bounded in $L^2(0,T;V^{-1,2})$. On the other hand, since $\rho^nv^n$ is bounded in $L^2(0,T;H^1)$, it follows from Lemma \ref{Lemma2.6} that $\rho^nv^n$ is precompact in $L^2(0,T;L^2)$, that is,
\begin{equation}
\rho^nv^n\sconverge \overline{\rho v}\quad\text{in }L^2(0,T;L^2).
\end{equation}
Thanks to  $\eqref{equat4.3}_4$ and \eqref{equation5.6}, we have 
\begin{equation*}
\overline{\rho v}=\rho v,\quad v^n\sconverge v\quad\text{in }L^2(0,T;L^2),
\end{equation*}
which gives \eqref{equa4.7}.
\end{proof}

\subsection{Higher Order Estimates}\label{Section6}

In this subsection, we will show the a priori estimates for strong solutions of \eqref{equation3.2}. We still use the assumption at the begining of Section \ref{section3}. Furthermore, throughout this subsection, we always keep 
$$\norm{\nabla\rho_0}_{L^2}\leq \delta$$
 small enough so that Proposition \ref{theorem4.5} is valid. In a word, we have all the estimates of $(\rho,v)$ from Lemma \ref{lemma4.1}--\ref{Lemma3.4}.

For convenience, we set 
\begin{gather*}
F(t):=\norm{\nabla v}^2_{L^2}+\norm{\Delta\rho}^2_{L^2}+\norm{\rho_t}^2_{L^2},\\
G(t):=\norm{\Delta v}^2_{L^2}+\norm{v_t}^2_{L^2}+\norm{\nabla\Delta\rho}^2_{L^2}+\norm{\nabla\rho_t}^2_{L^2},\\
\cM_1(t):=\int_{\partial} \mu v\cdot B\cdot v,\\
\cM_2(t):=\int c_0\mu \nabla^\perp(v\cdot n^\perp)\cdot B\cdot \nabla \rho^{-1}.
\end{gather*}

We now state the proposition we are aimming for in this subsection.
\begin{Proposition}\label{theorem6.3}
Let $(\rho,v)$ satisfy the condition \eqref{A'} or \eqref{B'}. Then
\begin{align}
\sup_{t\in[0,T]}F(t)+\int_0^T\left(G(t)+\norm{\pi}_{H^1}^2\right)\,dt\leq C,\label{equation6.7}
\end{align}
where $C$ is a positive constant depending on $\Omega$, $\alpha$, $\beta$, $c_0$, $\norm{\rho_0}_{H^2}$ and $\norm{v_0}_{H^1}$.
\end{Proposition}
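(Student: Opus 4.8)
The plan is to close a single Grönwall-type differential inequality for a quantity comparable to $F(t)$ (augmented by the boundary terms $\cM_1,\cM_2$), whose source and coefficients lie in $L^1(0,T)$ thanks to the low-order bounds of Proposition~\ref{theorem4.5} and Lemma~\ref{Lemma3.4}, and then to recover the remaining spatial-regularity pieces $\|\Delta v\|_{L^2}$, $\|\nabla\Delta\rho\|_{L^2}$, $\|\pi\|_{H^1}$ a posteriori from the elliptic estimates of Remark~\ref{remark2.4} and the Stokes estimates of Lemmas~\ref{lemma2.5}, \ref{Lemma5.2}, \ref{lemma8.4}. I would start with the density: differentiate $\eqref{equation4.1}$ in $t$ and test with $\rho_t$. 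The crucial point is that all boundary contributions drop — under \eqref{A'} the identity $n\cdot\nabla\rho=0$ persists in time, so $n\cdot\nabla\rho_t=0$, while under \eqref{B'}, $\rho=\tilde\rho$ is constant on $\partial\Omega$, so $\rho_t=0$ there. The principal term produces the coercive $c_0\int\rho^{-1}|\nabla\rho_t|^2$; the transport term $\int v\cdot\nabla\rho_t\,\rho_t$ vanishes because $\dive v=0$ and $v\cdot n=0$; and the only coupling to $v$ is $\int v_t\cdot\nabla\rho\,\rho_t$, controlled via Lemma~\ref{Lemma2.3} by $\varepsilon\bigl(\|v_t\|_{L^2}^2+\|\nabla\rho_t\|_{L^2}^2\bigr)+C_\varepsilon\|\nabla\rho\|_{L^4}^4\|\rho_t\|_{L^2}^2$. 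This gives $\tfrac{d}{dt}\|\rho_t\|_{L^2}^2+\nu\|\nabla\rho_t\|_{L^2}^2\le \varepsilon\|v_t\|_{L^2}^2+C(t)\bigl(1+\|\rho_t\|_{L^2}^2\bigr)$ with $C(t)\in L^1(0,T)$.

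Next, test $\eqref{equa3.9}$ with $v_t$. The viscous term $-\int\dive[2\mu D(v)]\cdot v_t$ yields $\tfrac{d}{dt}$ of a quadratic form equivalent to $\|\nabla v\|_{L^2}^2$ up to the boundary/coupling terms: after using $v=(v\cdot n^\perp)n^\perp$ on $\partial\Omega$ and the boundary law $\curle v=-n^\perp\cdot B\cdot(v+c_0\nabla\rho^{-1})$, precisely $\cM_1(t)$ and $c_0\cM_2(t)$ emerge under the time derivative (in case \eqref{B'}, where $v=0$ on $\partial\Omega$, these are absent and the step is easier). The remaining errors are the convective term $\int\rho u\cdot\nabla v\cdot v_t$ with $u=v+c_0\nabla\rho^{-1}$, and the three $c_0$-forcing terms on the right of $\eqref{equa3.9}$. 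I would bound these by integrating by parts so as to keep no derivative on $v_t$, using $\|\nabla v\|_{L^4}\le C\|\nabla v\|_{L^2}^{1/2}\|v\|_{H^2}^{1/2}$ together with the Stokes bound for $\|v\|_{H^2}$ (obtained below), absorbing $\varepsilon\|v_t\|_{L^2}^2$ and $\varepsilon\|\nabla\rho_t\|_{L^2}^2$, and leaving an $L^1(0,T)$-in-time coefficient times $\bigl(\|\nabla v\|_{L^2}^2+|\cM_1|+|\cM_2|\bigr)$ plus an $L^1(0,T)$ source. Adding this to the density estimate, choosing $\varepsilon$ small to absorb the cross terms into the coercive $\nu\bigl(\|\nabla\rho_t\|_{L^2}^2+\|v_t\|_{L^2}^2\bigr)$, and feeding in the bounds $\sup_t\|\nabla\rho\|_{L^2}^2+\int_0^T(\|\nabla\rho\|_{L^4}^4+\|\Delta\rho\|_{L^2}^2)\le1$ and $\sup_t\|v\|_{L^2}^2+\int_0^T(\|v\|_{L^4}^4+\|\nabla v\|_{L^2}^2)\le C$, I obtain $\tfrac{d}{dt}\Phi+\nu\Psi\le A(t)\Phi+B(t)$ with $A,B\in L^1(0,T)$, where $\Phi\sim\|\rho_t\|_{L^2}^2+\|\nabla v\|_{L^2}^2+|\cM_1|+|\cM_2|$ and $\Psi\sim\|\nabla\rho_t\|_{L^2}^2+\|v_t\|_{L^2}^2$. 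Since $\Phi(0)$ is finite — it is controlled by $\|\rho_0\|_{H^2}$ and $\|v_0\|_{H^1}$ through $\rho_t(0)=c_0\Delta\log\rho_0-v_0\cdot\nabla\rho_0$ and the 2D embeddings — Grönwall yields $\sup_t\Phi+\int_0^T\Psi\le C$.

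It then remains to bootstrap. From $c_0\rho^{-1}\Delta\rho=\rho_t+c_0\rho^{-2}|\nabla\rho|^2+v\cdot\nabla\rho$ and the equivalent-norm estimates of Remark~\ref{remark2.4}, using $\|\nabla\rho\|_{L^4}^2\le C\|\nabla\rho\|_{L^2}\|\Delta\rho\|_{L^2}$, $\|v\|_{L^4}^2\le C\|v\|_{L^2}\|\nabla v\|_{L^2}$ and the smallness $\|\nabla\rho\|_{L^2}\le\delta$ to absorb, one gets $\|\Delta\rho\|_{L^2}^2\le C\bigl(\|\rho_t\|_{L^2}^2+\|\nabla v\|_{L^2}^2\bigr)$, hence $\sup_t\|\Delta\rho\|_{L^2}^2\le C$; differentiating once more in space and repeating gives $\int_0^T\|\nabla\Delta\rho\|_{L^2}^2\le C\bigl(\int_0^T\|\nabla\rho_t\|_{L^2}^2+\ldots\bigr)\le C$. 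For $v$ and $\pi$, I view $\eqref{equa3.9}$ as a stationary Stokes system with forcing $F=-\rho v_t-\rho u\cdot\nabla v+c_0(\cdots)$ and boundary data $\varphi=-n^\perp\cdot B\cdot(v+c_0\nabla\rho^{-1})$ for \eqref{A'} (resp. $v|_{\partial\Omega}=0$ for \eqref{B'}), so that Lemma~\ref{lemma8.4} (or Lemma~\ref{Lemma5.2} when $\mu$ is constant) gives $\|v\|_{H^2}^2+\|\pi\|_{H^1}^2\le C\bigl(\|\nabla\rho\|_{L^4}^4+1\bigr)\bigl(\|F\|_{L^2}^2+\|\varphi\|_{H^1}^2\bigr)+C\|\nabla\rho\|_{L^4}^4\|\nabla v\|_{L^2}^2$; integrating in $t$ and using $\|\nabla\rho\|_{L^4}^4\le C\delta^2\|\Delta\rho\|_{L^2}^2\le C\delta^2$ together with $\int_0^T(\|v_t\|_{L^2}^2+\|\nabla v\|_{L^2}^2)\le C$ closes $\int_0^T\|\Delta v\|_{L^2}^2+\int_0^T\|\pi\|_{H^1}^2\le C$, completing \eqref{equation6.7}.

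\textbf{Main obstacle.} The delicate step is testing $\eqref{equa3.9}$ with $v_t$: its right-hand side carries second derivatives of $\rho^{-1}$, so pairing with $v_t$ and integrating by parts produces either a term containing $\nabla v_t$ — which must be re-integrated by parts, bringing in $\|v\|_{H^2}$ and $\|\nabla\rho_t\|_{L^2}$ — or a fresh full time derivative that has to be folded into $\Phi$. Arranging that every such contribution is either under the $\tfrac{d}{dt}$ (as with $\cM_1,\cM_2$) or absorbable, while simultaneously invoking the Stokes estimate for $\|v\|_{H^2}$ which itself depends on $\|v_t\|_{L^2}$ and on $\|\nabla\rho\|_{L^4}$, forces one to use the smallness of $\|\nabla\rho\|_{L^2}$ to break the circularity. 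The second source of difficulty is the boundary integrals in case \eqref{A'}, which throughout are tamed by the identity $\int_\partial v\cdot\nabla f=\int\nabla f\cdot\nabla^\perp(v\cdot n^\perp)$, trading a boundary derivative for an interior one.
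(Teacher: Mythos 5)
Your plan has the right overall skeleton --- a Gr\"onwall inequality for $\norm{\rho_t}_{L^2}^2+\norm{\nabla v}_{L^2}^2$ (plus the boundary functionals $\cM_1,\cM_2$) with $L^1(0,T)$ coefficients from Proposition~\ref{theorem4.5} and Lemma~\ref{Lemma3.4}, followed by a Stokes/elliptic bootstrap, and the boundary-integral trick $\int_\partial v\cdot\nabla f=\int\nabla f\cdot\nabla^\perp(v\cdot n^\perp)$. But two specific steps do not close. First, you base the $H^2$ Stokes estimate on Lemma~\ref{lemma8.4}, whose constant carries the factor $\bigl(\norm{\nabla\mu}_{L^4}^2+1\bigr)$; since $\norm{\nabla\mu}_{L^4}\le C\norm{\nabla\rho}_{L^2}^{1/2}\norm{\Delta\rho}_{L^2}^{1/2}$, this factor is only $L^4$ in time a priori. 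However, the $v_t$-energy inequality produces error terms of size $\varepsilon\norm{v}_{H^2}^2$ (for instance from the boundary contributions of the viscous term) with no $L^1$-in-time prefactor, and converting those through a Stokes estimate whose constant contains $\norm{\Delta\rho}_{L^2}$ reproduces a term of order $\varepsilon\norm{\Delta\rho}_{L^2}^2\norm{v_t}_{L^2}^2$, which cannot be absorbed by $\nu\norm{v_t}_{L^2}^2$ before $\sup_t\norm{\Delta\rho}_{L^2}$ is known. The smallness of $\norm{\nabla\rho}_{L^2}$, which you propose as the way to break this circularity, does not help here: it only bounds $\norm{\nabla\rho}_{L^4}^2$ by $\norm{\Delta\rho}_{L^2}$, not by a constant. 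The paper's device is Lemma~\ref{lemma61}: since $v\in L^4(0,T;L^4)$ by the low-order estimates, $\rho\in C^{\gamma,\gamma/2}(\overline Q_T)$, hence $\mu(\rho)\in C(\overline\Omega)$ for each $t$ with a uniformly controlled modulus of continuity; then Lemma~\ref{Lemma5.2} (which applies to any continuous $\mu$, not only constant $\mu$ as you state) gives $\norm{v}_{H^2}+\norm{\pi}_{H^1}\le C\bigl(\norm{F}_{L^2}+\norm{\varphi}_{H^1}\bigr)$ with $C$ independent of $\norm{\nabla\mu}_{L^4}$, and the absorption then goes through.

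Second, your elliptic recovery $\norm{\Delta\rho}_{L^2}\le C\bigl(\norm{\rho_t}_{L^2}+\norm{\nabla v}_{L^2}\bigr)$ requires absorbing $\norm{|\nabla\rho|^2}_{L^2}\le C\norm{\nabla\rho}_{L^2}\norm{\Delta\rho}_{L^2}$, which needs $\norm{\nabla\rho}_{L^2}$ below a threshold fixed by $\Omega,\alpha,\beta$; Proposition~\ref{theorem4.5} only supplies $\sup_t\norm{\nabla\rho}_{L^2}\le1$, so you would have to shrink $\delta$, which is at odds with Theorem~\ref{theorem1.3} using the same $\delta$ as Theorem~\ref{theorem1.2}. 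The paper explicitly flags and avoids this (Remark~\ref{remark53}). Under \eqref{A'} it multiplies $\eqref{equation3.2}_1$ by $-\nabla\Delta\rho\cdot\nabla$ and obtains a differential inequality for $\norm{\Delta\rho}_{L^2}^2$ in which the cubic term sits under an $L^1(0,T)$ Gr\"onwall coefficient $\norm{\nabla\rho}_{L^4}^4$ rather than requiring absorption. Under \eqref{B'}, where that integration by parts is blocked by the boundary condition, it renormalizes the mass equation to $(\log\rho)_t+v\cdot\nabla\log\rho-c_0\rho^{-1}\Delta\log\rho=0$, runs all higher-order density estimates on $\log\rho$ (so no $|\nabla\rho|^2$ source appears), and converts to $\Delta\rho=\rho\Delta\log\rho+\rho|\nabla\log\rho|^2$ only at the very end, with no self-referential absorption. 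Replacing your elliptic recovery by these two variants, and Lemma~\ref{lemma8.4} by Lemma~\ref{lemma61} plus Lemma~\ref{Lemma5.2}, would repair the proposal.
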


In order to prove Proposition \ref{theorem6.3}, we need several auxiliary lemmas. 
\begin{Lemma}\label{lemma6.4}
Under the assumptions at the begining of this section,
\begin{enumerate}
\item[(1)]
if $(\rho,v)$ satisfies the condition \eqref{A'}, then, for all $\varepsilon_1\in(0,1/2]$, 
\begin{equation}
\begin{aligned}
&\frac{d}{dt}\left(\norm{\Delta\rho}_{L^2}^2+\norm{\rho_t}_{L^2}^2\right)+\norm{\nabla\Delta\rho}_{L^2}^2 +\norm{\nabla\rho_t}_{L^2}^2\\
&\leq C_{\varepsilon_1} \cA_1(t)F(t)+\varepsilon_1\left(\norm{v}_{H^2}^2+\norm{v_t}_{L^2}^2\right);\label{equation67}
\end{aligned}
\end{equation}
\item[(2)]
if $(\rho,v)$ satisfies the condition \eqref{B'}, then
\begin{equation}\label{69}
\norm{\Delta\log\rho}^2_{L^2}\leq C\left(\norm{(\log\rho)_t}^2_{L^2}+\norm{\nabla v}^2_{L^2}\right)
\end{equation}
and for all $\varepsilon_2,\varepsilon_3\in (0,1]$, 
\begin{equation}\label{equation68}
\frac{d}{dt}\norm{(\log\rho)_t}_{L^2}^2 +\norm{\nabla(\log\rho)_t}_{L^2}^2
\leq C_{\varepsilon_2} \cA_2(t)\norm{(\log\rho)_t}_{L^2}^2+\varepsilon_2\norm{v_t}_{L^2}^2.
\end{equation}
\begin{equation}\label{equation69}
\begin{aligned}
\norm{\nabla\Delta\log\rho}_{L^2}^2&\leq C_{\varepsilon_3} \cA_3(t)\left(\norm{\Delta\log\rho}_{L^2}^2+\norm{\nabla v}_{L^2}^2\right)\\
&\quad+C_{\varepsilon_3}\norm{\nabla(\log\rho)_t}_{L^2}^2+\varepsilon_3\norm{\Delta v}_{L^2}^2.
\end{aligned}
\end{equation}
\end{enumerate}
Here, $C$, $C_{\varepsilon_1}-C_{\varepsilon_3}$ are positive constants depending on $\Omega$, $\alpha$, $\beta$, $c_0$ with $C_{\varepsilon_1}-C_{\varepsilon_3}$ extra depending on $\varepsilon_1$--$\varepsilon_3$ respectively, $\cA_1$--$\cA_3$ are all nonnegative integrable functions defined on $[0,\infty)$.
\end{Lemma}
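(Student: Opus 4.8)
The plan is to run energy estimates directly on the transport--diffusion equation for the density, written equivalently as $\rho_t+v\cdot\nabla\rho=c_0\rho^{-1}\Delta\rho-c_0\rho^{-2}\abs{\nabla\rho}^2$ or $\rho_t+v\cdot\nabla\rho=c_0\Delta\log\rho$, differentiated once in $t$ and once more in $x$, exploiting $\dive v=0$ and $v\cdot n|_{\partial\Omega}=0$ to kill the convective contributions. Since $\norm{\nabla\rho_0}_{L^2}\le\delta$, Proposition \ref{theorem4.5}, Lemma \ref{Lemma3.4} and Remark \ref{remark3.6} are in force, so $\norm{\nabla\rho}_{L^4}^4$, $\norm{\Delta\rho}_{L^2}^2$ (hence $\norm{\Delta\log\rho}_{L^2}^2$), $\norm{\nabla v}_{L^2}^2$, $\norm{v}_{L^4}^4$ and $\norm{\rho_t}_{L^2}^2$ all lie in $L^1(0,T)$, and $\norm{\nabla\rho}_{L^2}$, $\norm{v}_{L^2}$ are bounded. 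The functions $\cA_1$--$\cA_3$ will be assembled from exactly these integrable quantities (plus, where needed, a constant), and the $\varepsilon_i$-terms will collect every summand carrying a top-order norm of $v$, namely $\norm{v}_{H^2}$, $\norm{\Delta v}_{L^2}$ or $\norm{v_t}_{L^2}$, so that they get absorbed when this lemma is later coupled to the momentum estimates.

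For part (1) under \eqref{A'}: differentiating the density equation in $t$ and testing the resulting equation for $q:=\rho_t$ against $q$, the term $\int(v\cdot\nabla q)q$ vanishes, the diffusion produces the dissipation $\sim\norm{\nabla\rho_t}_{L^2}^2$ on the left, and the only boundary term, $\int_{\partial}\rho^{-1}q\,\partial_n q$, vanishes because differentiating $n\cdot\nabla\rho=0$ in time gives $n\cdot\nabla\rho_t=0$; Young's inequality together with the Gagliardo--Nirenberg inequalities of Remark \ref{remark2.4} and $\norm{v}_{L^2}\le C$ then absorbs the remaining terms (the worst being $\int(v_t\cdot\nabla\rho)q$, source of $\varepsilon_1\norm{v_t}_{L^2}^2$) into $C_{\varepsilon_1}\cA_1(t)F(t)+\varepsilon_1(\norm{v}_{H^2}^2+\norm{v_t}_{L^2}^2)$. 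To capture $\frac{d}{dt}\norm{\Delta\rho}_{L^2}^2+\norm{\nabla\Delta\rho}_{L^2}^2$, I apply $\Delta$ to the equation and test against $\Delta\rho$: the convective part $\int(v\cdot\nabla\Delta\rho)\Delta\rho$ again vanishes, the diffusion term $c_0\int\Delta(\rho^{-1}\Delta\rho)\Delta\rho$ yields, after one integration by parts, the good term $-c_0\int\rho^{-1}\abs{\nabla\Delta\rho}^2\le-c_0\beta^{-1}\norm{\nabla\Delta\rho}_{L^2}^2$, and all interior remainders are handled by repeated use of $\norm{\Delta\rho}_{L^4}^2\le C\norm{\Delta\rho}_{L^2}\norm{\nabla\Delta\rho}_{L^2}$, $\norm{\nabla v}_{L^4}^2\le C\norm{\nabla v}_{L^2}\norm{v}_{H^2}$ (and $\norm{\Delta v}_{L^2}\le\norm{v}_{H^2}$) and Young's inequality, producing $C_{\varepsilon_1}\cA_1(t)F(t)+\varepsilon_1\norm{v}_{H^2}^2$; adding the two estimates gives \eqref{equation67}.

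The step I expect to be the main obstacle is the set of boundary integrals generated in this second estimate of part (1): the normal-derivative boundary term $c_0\int_{\partial}\partial_n(\rho^{-1}\Delta\rho)\,\Delta\rho$ from the diffusion, and the boundary terms arising from integrating by parts in $\int\Delta(v\cdot\nabla\rho)\Delta\rho$. These I intend to treat exactly as $I_1$ in Lemma \ref{Lemma3.4}: the identity $v=(v\cdot n^\perp)n^\perp$ on $\partial\Omega$ (forced by $v\cdot n|_{\partial\Omega}=0$) converts $\int_{\partial}v\cdot\nabla f=\int_\Omega\nabla f\cdot\nabla^\perp(v\cdot n^\perp)$, avoiding extra tangential derivatives of $f$, while the normal derivatives of $\Delta\rho$ on $\partial\Omega$ are reduced, via the equation restricted to $\partial\Omega$ together with $n\cdot\nabla\rho=n\cdot\nabla\rho_t=0$, to lower-order tangential quantities in $\rho$, $v$ and $\rho_t$. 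After these reductions each boundary contribution is again of the form $C_{\varepsilon_1}\cA_1(t)F(t)+\varepsilon_1(\norm{v}_{H^2}^2+\norm{v_t}_{L^2}^2)$.

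Part (2) under \eqref{B'} is cleaner because no boundary integrals survive: with $\rho=\tilde\rho$ constant on $\partial\Omega$ and $v=0$ on $\partial\Omega$, the function $\sigma:=\log\rho$ satisfies $\sigma_t+v\cdot\nabla\sigma=c_0\rho^{-1}\Delta\sigma$ with $\sigma-\log\tilde\rho\in H^1_0$ and $\sigma_t|_{\partial\Omega}=0$. Estimate \eqref{69} follows algebraically from $c_0\rho^{-1}\Delta\sigma=\sigma_t+v\cdot\nabla\sigma$ by bounding $\norm{v\cdot\nabla\sigma}_{L^2}\le\norm{v}_{L^4}\norm{\nabla\sigma}_{L^4}\le C\norm{\nabla v}_{L^2}\norm{\nabla\sigma}_{L^2}^{1/2}\norm{\Delta\sigma}_{L^2}^{1/2}$ (using $\norm{v}_{L^2}\le C$, $v\in H^1_0$, Remark \ref{remark2.4}) and absorbing $\norm{\Delta\sigma}_{L^2}$ with $\norm{\nabla\sigma}_{L^2}$ bounded. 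For \eqref{equation68}, differentiate the $\sigma$-equation in $t$ and test against $\sigma_t$: the convective and diffusion boundary terms vanish since $v\cdot n=0$ and $\sigma_t|_{\partial\Omega}=0$, and Young's inequality plus Gagliardo--Nirenberg (with $\norm{\nabla\sigma}_{L^4}^4\le C\norm{\Delta\sigma}_{L^2}^2$, integrable by \eqref{69}) give the stated bound, $\int(v_t\cdot\nabla\sigma)\sigma_t$ producing $\varepsilon_2\norm{v_t}_{L^2}^2$. Finally \eqref{equation69} comes from applying $\nabla$ to $c_0\rho^{-1}\Delta\sigma=\sigma_t+v\cdot\nabla\sigma$ and estimating $\norm{\nabla\Delta\sigma}_{L^2}$ through its four sources $\norm{\nabla\sigma_t}_{L^2}$, $\norm{\nabla v\cdot\nabla\sigma}_{L^2}$, $\norm{v\cdot\nabla^2\sigma}_{L^2}$, $\norm{\nabla\rho\,\Delta\sigma}_{L^2}$, using $\norm{v}_{H^2}\le C(\norm{\Delta v}_{L^2}+\norm{v}_{L^2})$, Remark \ref{remark2.4} and Young's inequality to absorb $\varepsilon_3\norm{\nabla\Delta\sigma}_{L^2}^2$ and $\varepsilon_3\norm{\Delta v}_{L^2}^2$, with $\cA_3$ built from $\norm{\nabla v}_{L^2}^2$, $\norm{v}_{L^4}^4$, $\norm{\nabla\rho}_{L^4}^4$ and a constant. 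In every instance the integrability of $\cA_1$--$\cA_3$ is precisely what Proposition \ref{theorem4.5}, Lemma \ref{Lemma3.4} and Remark \ref{remark3.6} provide.
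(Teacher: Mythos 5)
Your treatment of part (2), and the $\rho_t$-estimate in part (1), follow the paper's route essentially verbatim. The issue is in your derivation of the $\Delta\rho$/$\nabla\Delta\rho$ bound in part (1).

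You propose to apply $\Delta$ to $\eqref{equation3.2}_1$ and test against $\Delta\rho$, which forces you to integrate by parts on the dissipative term $-c_0\int\Delta(\rho^{-1}\Delta\rho)\Delta\rho$ and generates the boundary integral $c_0\int_\partial \rho^{-1}(\partial_n\Delta\rho)\,\Delta\rho$ (using $\partial_n\rho^{-1}=0$). The paper avoids this entirely: it takes $\nabla$ of the equation and tests against $-\nabla\Delta\rho$. With that test function the only integration by parts needed is on the time-derivative term, and there the boundary integral carries $\partial_n\rho_t$, which vanishes under \eqref{A'}. The dissipation $c_0\int\rho^{-1}\abs{\nabla\Delta\rho}^2$ then appears as a pure volume term, and every remainder $K_1,\dots,K_5$ in \eqref{eq4.1} is already an interior integral — no boundary terms at all. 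Your proposed remedy for the boundary integral does not hold up: the tangential-vector trick ``$\int_\partial v\cdot\nabla f=\int\nabla f\cdot\nabla^\perp(v\cdot n^\perp)$'' exploits that $v$ is tangential, but the offending integrand contains the \emph{normal} derivative of $\Delta\rho$, to which this identity does not apply; and ``reducing $\partial_n\Delta\rho$ via the equation restricted to $\partial\Omega$'' entails differentiating the equation in the normal direction, which produces mixed normal-tangential second derivatives of $\rho$ that are not in your $\cA_1(t)F(t)$ budget and for which there is no analogue of $I_1$ in Lemma \ref{Lemma3.4} (those boundary terms involve at most $\nabla^2\rho$, not $\nabla^3\rho$). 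The fix is simple: use the paper's test function $-\nabla\Delta\rho$ against $\nabla\eqref{equation3.2}_1$ rather than $\Delta\rho$ against $\Delta\eqref{equation3.2}_1$, and the problematic term never appears.

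One secondary inaccuracy: you claim $\int\Delta(v\cdot\nabla\rho)\Delta\rho$ also produces boundary terms ``from integrating by parts,'' but that term needs no integration by parts and generates none when handled as a volume integral. It is only your choice of dissipative test function that creates the boundary problem.
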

\begin{proof}
We first consider the case when $(\rho,v)$ satisfies the condition \eqref{A'}. Taking $-(\nabla\Delta\rho)\nabla$ on the both sides of $\eqref{equation3.2}_1$ and integrating by parts, we have
\begin{equation}\label{eq4.1}
\begin{aligned}
\frac{d}{dt}\int \frac{1}{2}\abs{\Delta\rho}^2 + \int c_0\rho^{-1}\abs{\nabla\Delta\rho}^2&= \int\nabla\Delta\rho\cdot\nabla v\cdot\nabla\rho + \int v\cdot\nabla^2\rho\cdot\nabla\Delta\rho\\
&\quad-\int 2c_0\rho^{-3}\abs{\nabla\rho}^2\nabla\rho\cdot\nabla\Delta\rho+\int c_0 \rho^{-2}\nabla(\abs{\nabla\rho}^2)\cdot\nabla\Delta\rho\\
&\quad + \int c_0 \rho^{-2}\Delta\rho\nabla\rho\cdot\nabla\Delta\rho\\
&:=\sum_{i=1}^5 K_i.
\end{aligned}
\end{equation}

For $K_1$--$K_5$, we use Lemma \ref{Lemma2.3} and \ref{lemma4.1} to find that 
\begin{equation}\label{6611}
\begin{cases}
\abs{K_1}\leq C_{\varepsilon_1,\varepsilon_2}\norm{\nabla\rho}_{L^4}^4\norm{\nabla v}_{L^2}^2 + \varepsilon_1\norm{\nabla\Delta\rho}_{L^2}^2 +\varepsilon_2\norm{\Delta v}_{L^2}^2\\
\abs{K_2}\leq C_{\varepsilon_3}\norm{v}_{L^4}^4\norm{\Delta\rho}_{L^2}^2 + \varepsilon_3\norm{\nabla\Delta\rho}_{L^2}^2 \\
\abs{K_3}\leq C_{\varepsilon_4}\norm{\nabla\rho}_{L^6}^6+ \varepsilon_4\norm{\nabla\Delta\rho}_{L^2}^2\\
\quad\quad\,\leq C_{\varepsilon_4}\norm{\nabla\rho}_{L^4}^4\norm{\Delta\rho}_{L^2}^2+ \varepsilon_4\norm{\nabla\Delta\rho}_{L^2}^2\\
\abs{K_4}\leq C_{\varepsilon_5}\norm{\nabla\rho}_{L^4}^4\norm{\Delta\rho}_{L^2}^2+\varepsilon_5\norm{\nabla\Delta\rho}_{L^2}^2\\
\abs{K_5}\leq C_{\varepsilon_6}\norm{\nabla\rho}_{L^4}^4\norm{\Delta\rho}_{L^2}^2+\varepsilon_6\norm{\nabla\Delta\rho}_{L^2}^2.
\end{cases}
\end{equation}
Combining \eqref{eq4.1} and \eqref{6611}, we have, for some $\nu>0$,
\begin{equation}\label{equa5.11}
\frac{d}{dt}\norm{\Delta\rho}_{L^2}^2+\nu\norm{\nabla\Delta\rho}_{L^2}^2\leq C_\varepsilon\left(\norm{\nabla\rho}_{L^4}^4+\norm{v}_{L^4}^4\right)\norm{\Delta\rho}_{L^2}^2+C_\varepsilon\norm{\nabla\rho}_{L^4}^4\norm{\nabla v}_{L^2}^2+\varepsilon\norm{\Delta v}_{L^2}^2,
\end{equation}

Next, we estimate the bound of $\rho_t$. Differentiating $\eqref{equation3.2}_1$ with respect to $t$, one has
\begin{equation}\label{equa5.14}
\rho_{tt}- c_0\rho^{-1}\Delta\rho_t=-v_t\cdot\nabla\rho-v\cdot\nabla\rho_t+ 2c_0 \rho^{-3}\rho_t\abs{\nabla\rho}^2 - c_0 \rho^{-1}\left(\abs{\nabla\rho}^2\right)_t - c_0 \rho^{-2}\rho_t\Delta\rho.
\end{equation}
Multiplying $\rho_t$ on both sides of \eqref{equa5.14} and integrating over $\Omega$, we have
\begin{equation}\label{615}
\begin{aligned}
\frac{d}{dt}\int\frac{1}{2}\abs{\rho_t}^2 +\nu\int\abs{\nabla\rho_t}^2 &\leq\int \abs{v_t}\abs{\nabla\rho}\abs{\rho_t}+ C\int \abs{\rho_t}^2\abs{\nabla\rho}^2\\
&\quad+\int \abs{\rho_t}\abs{\nabla\rho_t}\abs{\nabla\rho}+C\int \abs{\rho_t}^2\abs{\Delta\rho}\\
&:= \sum_{i=1}^4L_i.
\end{aligned}
\end{equation}
Similarly, we use Lemma \ref{Lemma2.3} and \ref{lemma4.1} to obtain
\begin{equation}\label{616}
\begin{cases}
\abs{L_1}\leq C_{\varepsilon_1,\varepsilon_2}\norm{\nabla\rho}_{L^4}^4\norm{\rho_t}_{L^2}^2 +\varepsilon_1\norm{v_t}_{L^2}^2+\varepsilon_2\norm{\nabla\rho_t}_{L^2}^2,\\
\abs{L_2}\leq C_{\varepsilon_3}\norm{\nabla\rho}_{L^4}^4\norm{\rho_t}_{L^2}^2 +\varepsilon_3\norm{\nabla\rho_t}_{L^2}^2,\\
\abs{L_3}\leq C_{\varepsilon_4}\norm{\nabla\rho}_{L^4}^4\norm{\rho_t}_{L^2}^2 +\varepsilon_4\norm{\nabla\rho_t}_{L^2}^2,\\
\abs{L_4}\leq C_{\varepsilon_5}\norm{\Delta\rho}_{L^2}^2\norm{\rho_t}_{L^2}^2 +\varepsilon_5\norm{\nabla\rho_t}_{L^2}^2.
\end{cases}
\end{equation}
Thus, from \eqref{615} and \eqref{616}, one has, for some $\nu>0$,
\begin{equation}\label{equation616}
\frac{d}{dt}\norm{\rho_t}_{L^2}^2 +\nu\norm{\nabla\rho_t}_{L^2}^2
\leq C_\varepsilon\left(\norm{\nabla\rho}_{L^4}^4+\norm{\Delta\rho}_{L^2}^2\right)\norm{\rho_t}_{L^2}^2+\varepsilon\norm{v_t}_{L^2}^2,
\end{equation}
Combining \eqref{equa5.11} and \eqref{equation616} leads to the estimate \eqref{equation67}.

Next, we trun to the case when $(\rho,v)$ satisfies the condition \eqref{B'}. The main difficulty in this case is that, although we still have the estimate \eqref{equation616}, we can not use the energy method by integrating by parts to derive the bound of $\nabla \Delta\rho$. To overcome it, we estimate directly from $\eqref{equation3.2}_1$. More precisely, we first renormalize $\eqref{equation3.2}_1$ by writting 
\begin{equation}\label{5513}
(\log\rho)_t +v\cdot\nabla\log\rho - c_0\rho^{-1}\Delta\log\rho=0.
\end{equation}
Next, differentiating in $x$ on both sides of \eqref{5513}, one has
\begin{equation}\label{5514}
\nabla(\log\rho)_t +\nabla v\cdot\nabla\log\rho+v\cdot\nabla^2\log\rho +c_0\rho^{-2}\nabla\rho\Delta\log\rho- c_0\rho^{-1}\nabla\Delta\log\rho=0.
\end{equation}
Then, applying $L^2$-norm for $\nabla\Delta\log\rho$, then, using Lemma \ref{Lemma2.3} and \ref{lemma4.1} leads to
\begin{equation*}
\begin{aligned}
\norm{\nabla\Delta\log\rho}_{L^2}&\leq C\left(\norm{\nabla(\log\rho)_t}_{L^2} + \norm{\abs{\nabla v}\!\cdot\!\abs{\nabla\log\rho}}_{L^2} + \normf{\abs{v}\!\cdot\!|\nabla^2\log\rho|}_{L^2}+\normf{\abs{\nabla\rho}\!\cdot\!|\Delta\log\rho|}_{L^2}\right).
\end{aligned}
\end{equation*}
Thus, using Lemma \ref{Lemma2.3}, for all $\varepsilon_1,\varepsilon_2\in(0,1/2]$, there exists a constant $C_{\varepsilon_1,\varepsilon_2}$ depending on $\Omega$, $c_0$, $\alpha$, $\beta$, $\varepsilon_1$ and $\varepsilon_2$ such that 
\begin{equation*}
\begin{aligned}
\norm{\nabla\Delta\log\rho}_{L^2}^2&\leq C_{\varepsilon_1,\varepsilon_2}\left(\norm{\nabla(\log\rho)_t}_{L^2}^2  +\norm{v}_{L^4}^4\norm{\Delta\log\rho}_{L^2}^2+\norm{\nabla\rho}_{L^4}^4\norm{\Delta\log \rho}_{L^2}^2 \right)\\
&\quad+C_{\varepsilon_2} \norm{\nabla\rho}_{L^4}^4\norm{\nabla v}_{L^2}^2+\varepsilon_1\norm{\nabla\Delta\log\rho}_{L^2}^2 +\varepsilon_2\norm{\Delta v}_{L^2}^2,
\end{aligned}
\end{equation*}
Consequently,
\begin{equation}\label{eq4.9}
\begin{aligned}
\norm{\nabla\Delta\log\rho}_{L^2}^2&\leq C_\varepsilon\norm{\nabla(\log\rho)_t}_{L^2}^2+ C_\varepsilon\left(\norm{\nabla\rho}_{L^4}^4+\norm{v}_{L^4}^4\right)\norm{\Delta\log\rho}_{L^2}^2\\
&\quad+C_\varepsilon\norm{\nabla\rho}_{L^4}^4\norm{\nabla v}_{L^2}^2+\varepsilon\norm{\Delta v}_{L^2}^2,
\end{aligned}
\end{equation}
which gives \eqref{equation69}. 

It remains to show \eqref{69} and \eqref{equation68}. From \eqref{5513}, we use Lemma \ref{Lemma2.3} and \ref{lemma4.1} to obtain, 
\begin{equation*}
\begin{aligned}
\norm{\Delta\log\rho}^2_{L^2}&\leq C\norm{(\log\rho)_t}^2_{L^2} + C\norm{\nabla\log\rho}^2_{L^4}\norm{v}^2_{L^4}\\
&\leq C\norm{(\log\rho)_t}^2_{L^2} + C_\varepsilon\norm{\nabla\rho}^2_{L^2}\norm{v}^2_{L^2}\norm{\nabla v}^2_{L^2}+\varepsilon\norm{\Delta\log\rho}^2_{L^2},
\end{aligned}
\end{equation*}
that is,
\begin{equation*}
\norm{\Delta\log\rho}^2_{L^2}\leq C\left(\norm{(\log\rho)_t}^2_{L^2}+\norm{\nabla v}^2_{L^2}\right).
\end{equation*}

For \eqref{equation68}, we can follow the proof from \eqref{equa5.14} to \eqref{equation616} by applying $(\log\rho)_t\partial_t$ on both sided \eqref{5513} and integrating over $\Omega$, that is,
\begin{equation}\label{5516}
\begin{aligned}
&\frac{d}{dt}\int\frac{1}{2}|(\log\rho)_t|^2+\int c_0 \rho^{-1}|\nabla(\log\rho)_t|^2\\
&=-\int c_0\rho^{-1}\nabla(\log\rho)_t\cdot\nabla\log\rho(\log\rho)_t-\int v_t\cdot\nabla\log\rho(\log\rho)_t\\
&\quad+\int c_0 \rho^{-1}|(\log\rho)_t|^2|\nabla\log\rho|^2\\
&:=\sum_{i=1}^3P_i,
\end{aligned}
\end{equation}
where, applying Lemma \ref{Lemma2.3} and \ref{lemma4.1},
\begin{equation}\label{5517}
\begin{cases}
|P_1|&\!\!\!\!\leq C\norm{\nabla\log\rho}_{L^4}\norm{(\log\rho)_t}_{L^4}\norm{\nabla(\log\rho)_t}_{L^2}\\
&\!\!\!\!\leq C_{\varepsilon_1}\norm{\nabla\rho}_{L^4}^4\norm{(\log\rho)_t}_{L^2}^2+\varepsilon_1\norm{\nabla(\log\rho)_t}_{L^2}^2\\
|P_2|&\!\!\!\!\leq \norm{\nabla\log\rho}_{L^4}\norm{(\log\rho)_t}_{L^4}\norm{v_t}_{L^2}\\
&\!\!\!\!\leq C_{\varepsilon_2}\norm{\nabla\rho}_{L^4}^4\norm{(\log\rho)_t}_{L^2}^2+\varepsilon_2\norm{v_t}_{L^2}^2\\
|P_3|&\!\!\!\!\leq \norm{\nabla\log\rho}^2_{L^4}\norm{(\log\rho)_t}^2_{L^4}\\
&\!\!\!\!\leq C_{\varepsilon_3}\norm{\nabla\rho}_{L^4}^4\norm{(\log\rho)_t}_{L^2}^2+\varepsilon_3\norm{\nabla(\log\rho)_t}_{L^2}^2.
\end{cases}
\end{equation}
Combining \eqref{5516} and \eqref{5517} leads to
\begin{equation*}
\frac{d}{dt}\norm{(\log\rho)_t}_{L^2}^2+\nu\norm{\nabla(\log\rho)_t}_{L^2}^2\leq C_\varepsilon\norm{\nabla\rho}_{L^4}^4\norm{(\log\rho)_t}_{L^2}^2+\varepsilon\norm{v_t}_{L^2}^2.
\end{equation*}
This completes the proof of the lemma.
\end{proof}
\begin{Remark}\label{remark53}
One may find that we estimate $\log\rho$ instead of $\rho$ in the proof of $(\rho,v)$ satisfying \eqref{B'}. This is based on the observation that $\eqref{equation3.2}_1$ has the dissipative term $\Delta\log\rho$, that is,
\begin{equation*}
\rho_t +v\cdot\nabla\rho - c_0\Delta\log\rho=0.
\end{equation*}
Thus, such conversion can avoid the occurrence of the nonlinear term $|\nabla\rho|^2$, otherwise, if we estimate $\Delta\rho$ in the proof of \eqref{69}, we need additional smallness assumption on $\nabla\rho_0$ to handle $\norm{|\nabla\rho|^2}_{L^2}$, which is not what we expect (this point will also be seen in Section \ref{section9}).
\end{Remark}

The next lemma shows that $v$ can be bounded by the norm of $\rho$ provided $\norm{\nabla\rho_0}_{L^2}$ is small.
\begin{Lemma}\label{lemma6.5}
Under the assumptions at the begining of this section,
\begin{enumerate}
\item[(1)]
if $(\rho,v)$ satisfy the condition \eqref{A'}, then for all $\varepsilon\in(0,1/2]$,
\begin{equation}\label{equation620}
\begin{aligned}
&\frac{d}{dt}\left(\cM_1(t)+\normf{\sqrt{\mu}\curle v}_{L^2}^2\right)+\norm{v_t}_{L^2}^2+\frac{d}{dt}\cM_2(t)\\
&\leq C_\varepsilon \cA_4(t)F(t)+\varepsilon\left(\norm{\nabla\rho_t}^2_{L^2}+\norm{\nabla\Delta \rho}_{L^2}^2\right)+\cA_5(t),
\end{aligned}
\end{equation}
and
\begin{equation}\label{equation621}
\begin{aligned}
\norm{v}^2_{H^2}+\norm{\pi}_{H^1}^2 \leq \cA_6(t)F(t)+C\left(\norm{\nabla\Delta\rho}^2_{L^2}+\norm{v_t}^2_{L^2}+\norm{\nabla\rho_t}_{L^2}^2\right)+\cA_7(t),
\end{aligned}
\end{equation}
where $C$ and $C_\varepsilon$ are positive constants depending on $\Omega$, $c_0$, $\alpha$, $\beta$ with $C_\varepsilon$ extra depending on $\varepsilon$; $\cA_4$--$\cA_7$ are nonnegative integrable functions defined on $[0,\infty)$;
\item[(2)]
 if $(\rho,v)$ satisfies the conditin \eqref{B'}, one still has the estimates \eqref{equation620} and \eqref{equation621} with $\cM_1(t)=\cM_2(t)=0$ in \eqref{equation620}. More precisely,
 \begin{equation}\label{5520}
\begin{aligned}
&\frac{d}{dt}\norm{\sqrt{\mu}|D(v)|}_{L^2}^2+\norm{v_t}_{L^2}^2\\
&\leq C_\varepsilon\cA_8(t)\left(\norm{\nabla v}_{L^2}^2+\norm{\Delta\log\rho}_{L^2}^2\right)+\varepsilon\norm{\nabla\Delta\log\rho}_{L^2}^2,
\end{aligned}
\end{equation}
and 
\begin{equation}\label{5521}
\begin{aligned}
\norm{v}^2_{H^2}+\norm{\pi}_{H^1}^2&\leq C_\varepsilon\cA_9(t)\left(\norm{\nabla v}^2_{L^2}+\norm{\Delta\log\rho}^2_{L^2}\right) + \varepsilon\norm{\nabla\Delta\log\rho}^2_{L^2}\\
&\quad +C\left(\norm{v_t}^2_{L^2}+\norm{\nabla
(\log\rho)_t}^2_{L^2}\right).
\end{aligned}
\end{equation}
where $C$ and $C_\varepsilon$ as in $(1)$; $\cA_8$ and $\cA_9$ are nonnegative integrable functions defined on $[0,\infty)$.
\end{enumerate}
\end{Lemma}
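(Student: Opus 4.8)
The plan is to obtain \eqref{equation620}--\eqref{equation621} (resp.\ \eqref{5520}--\eqref{5521}) from two independent ingredients: a $v_t$-weighted energy identity for the momentum equation \eqref{equa3.9}, which produces the dissipation $\norm{v_t}_{L^2}^2$ together with the time derivative of a viscous energy and, in case \eqref{A'}, the boundary functionals $\cM_1,\cM_2$; and the elliptic (Stokes) regularity estimate obtained by reading \eqref{equa3.9} as $-\mu(\rho)\Delta v+\nabla\pi_1=\tilde F$, $\dive v=0$. The two are then coupled: the energy identity will leave a spurious $\varepsilon\norm{v}_{H^2}^2$ on its right-hand side, which is removed by feeding in the elliptic bound.

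For the energy identity I would multiply \eqref{equa3.9} by $v_t$ and integrate over $\Omega$. The pressure term vanishes since $\dive v_t=\partial_t\dive v=0$ and $v_t\cdot n=0$ on $\partial\Omega$ (indeed $v_t=0$ there in case \eqref{B'}), and $\int\rho|v_t|^2\ge\alpha\norm{v_t}_{L^2}^2$ by Lemma \ref{lemma4.1}. Integrating $-\int\dive[2\mu D(v)]\cdot v_t$ by parts yields $\frac{d}{dt}\int\mu|D(v)|^2-\int\mu'\rho_t|D(v)|^2$ plus the boundary integral $-\int_\partial(2\mu D(v)\,n)\cdot v_t$. In case \eqref{B'} that boundary integral is absent and $\cM_1=\cM_2=0$; following Remark \ref{remark53} one works with $\log\rho$ throughout, which produces \eqref{5520}. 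In case \eqref{A'} I would instead use $\dive[2\mu D(v)]=\mu\nabla^\perp\curle v+2\mu'\nabla\rho\cdot D(v)$ (valid since $\dive v=0$), integrate the $\mu\nabla^\perp\curle v$ piece by parts to get $\frac{d}{dt}\int\frac\mu2|\curle v|^2$ plus a $\mu'\rho_t$-remainder, a term $\int\curle v\,\nabla^\perp\mu\cdot v_t$, and the boundary term $-\int_\partial\mu\,\curle v\,(v_t\cdot n^\perp)$; substituting the slip condition $\curle v=-n^\perp\cdot B\cdot(v+c_0\nabla\rho^{-1})$ and $v=(v\cdot n^\perp)n^\perp$, $v_t=(v_t\cdot n^\perp)n^\perp$ on $\partial\Omega$ converts this boundary term into $\tfrac12\frac{d}{dt}\cM_1(t)+\frac{d}{dt}\cM_2(t)$ modulo remainders, where the $\cM_2$ part appears after transporting the leftover $\int_\partial c_0\mu(v_t\cdot n^\perp)(n^\perp\cdot B\cdot\nabla\rho^{-1})$ into the interior with the Cai--Li device of \cite{cai} (this is structurally the term $J_2'$ from the proof of Lemma \ref{Lemma3.4}). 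All remaining terms --- the convective $\int\rho u\cdot\nabla v\cdot v_t$ with $u=v+c_0\nabla\rho^{-1}$, the three divergence-form sources, the $\mu_t$- and $\partial_t\nabla\rho^{-1}$-remainders, and every boundary remainder --- I would bound by H\"older, the $2$D inequalities of Lemma \ref{Lemma2.3}, Lemmas \ref{Lemma2.2} and \ref{lemma4.1}, always keeping at most one derivative on $\rho$ or $v$ on $\partial\Omega$ via $v=(v\cdot n^\perp)n^\perp$ rather than a trace estimate; the coefficients $\norm{\nabla\rho}_{L^4}^4$, $\norm{u}_{L^4}^4$, $\norm{\rho_t}_{L^2}^2$ that arise are integrable in $t$ by Proposition \ref{theorem4.5}, Lemma \ref{Lemma3.4} and Remark \ref{remark3.6}. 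This gives the inequality \eqref{equation620} but with an extra $\varepsilon\norm{v}_{H^2}^2$ on the right.

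For the elliptic estimate, note that $v\in L^4(0,T;L^4)$ (Lemma \ref{Lemma3.4}) satisfies the exponents of \eqref{rs}, so Lemma \ref{lemma61} gives $\rho\in C^{\gamma,\frac\gamma2}(\overline Q_T)$; hence $\mu(\rho)$ is uniformly continuous on $\overline Q_T$ with modulus controlled by the admissible data, and Lemma \ref{Lemma5.2} applies to $-\mu(\rho)\Delta v+\nabla\pi_1=\tilde F$, with $\tilde F:=-\rho v_t-\rho u\cdot\nabla v+2\mu'\nabla\rho\cdot D(v)+c_0\dive(2\mu\nabla^2\rho^{-1})-c_0\dive(\rho v\otimes\nabla\rho^{-1})-c_0^2\dive(\rho\nabla\rho^{-1}\otimes\nabla\rho^{-1})$, taking $\Phi=0$ for \eqref{B'} and $\varphi=\curle v=-n^\perp\cdot B\cdot(v+c_0\nabla\rho^{-1})$ (so $\norm{\varphi}_{H^1}\le C(\norm{v}_{H^1}+\norm{\Delta\rho}_{L^2})$) for \eqref{A'}. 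Estimating $\norm{\tilde F}_{L^2}$ by H\"older and Lemma \ref{Lemma2.3}: $\norm{\rho v_t}_{L^2}\le\beta\norm{v_t}_{L^2}$; the quadratic terms $\rho u\cdot\nabla v$ and $2\mu'\nabla\rho\cdot D(v)$ give $\varepsilon\norm{v}_{H^2}^2+C_\varepsilon(\norm{u}_{L^4}^4+\norm{\nabla\rho}_{L^4}^4)\norm{\nabla v}_{L^2}^2$ after Young, the first absorbed on the left; the top-order source $c_0\dive(2\mu\nabla^2\rho^{-1})$ costs $C\norm{\nabla\Delta\rho}_{L^2}$ in case \eqref{A'} (resp.\ $\varepsilon\norm{\nabla\Delta\log\rho}_{L^2}$ in case \eqref{B'}, rewriting $\nabla^2\rho^{-1}$ through $\log\rho$) plus lower order; the remaining sources are treated similarly. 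Finally $\pi=\pi_1+c_0(\log\rho)_t$ with $\int\pi=0$ (condition \eqref{normal}) gives $\norm{\pi}_{H^1}\le\norm{\pi_1-(\pi_1)_\Omega}_{H^1}+C\norm{\nabla(\log\rho)_t}_{L^2}$, bounded using $(\log\rho)_t=\rho_t/\rho$ in case \eqref{A'}. This is \eqref{equation621} (resp.\ \eqref{5521}). Substituting it back into the energy inequality eliminates the $\varepsilon\norm{v}_{H^2}^2$; shrinking $\varepsilon$ makes the resulting $\varepsilon C\norm{v_t}_{L^2}^2$ (and $\varepsilon C\norm{\nabla\Delta\rho}_{L^2}^2$, $\varepsilon C\norm{\nabla\rho_t}_{L^2}^2$) small enough to absorb, yielding \eqref{equation620} (resp.\ \eqref{5520}).

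The hard part is the boundary analysis in case \eqref{A'}: isolating the exact time derivatives $\frac{d}{dt}\cM_1$ and $\frac{d}{dt}\cM_2$ inside $-\int_\partial(2\mu D(v)\,n)\cdot v_t$, and showing that the leftover boundary integrals --- which at face value carry a full $\norm{\nabla\rho_t}$ or $\norm{v}_{H^2}$ on $\partial\Omega$ --- are in fact controlled by $C_\varepsilon\cA_4(t)F(t)+\varepsilon(\norm{\nabla\rho_t}_{L^2}^2+\norm{\nabla\Delta\rho}_{L^2}^2)+\cA_5(t)$ with $\cA_4,\cA_5\in L^1(0,T)$. This works only because $v|_{\partial\Omega}=(v\cdot n^\perp)n^\perp$ lets one transfer each boundary integral to the interior with just one derivative on the slow variables, exactly as in the estimates of $J_1'$ and $J_2'$ in Lemma \ref{Lemma3.4}; a plain trace inequality would demand a second derivative and fail. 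A secondary point is to keep the top-order quantities $\norm{v}_{H^2}$ and $\norm{\nabla\Delta\rho}_{L^2}$ (resp.\ $\norm{\nabla\Delta\log\rho}_{L^2}$) accounted for consistently between the energy and the elliptic estimates, so that Lemmas \ref{lemma6.4} and \ref{lemma6.5} combine to give Proposition \ref{theorem6.3}.
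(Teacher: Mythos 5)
Your proposal is correct and follows essentially the same route as the paper: multiply \eqref{equa3.9} by $v_t$, split $\dive[2\mu D(v)]$ into $\mu\nabla^\perp\curle v + 2\mu'\nabla\rho\cdot D(v)$, extract $\tfrac{d}{dt}\cM_1 + \tfrac{d}{dt}\cM_2$ from the boundary terms via the slip condition and the Cai--Li tangential decomposition $v=(v\cdot n^\perp)n^\perp$, invoke Lemma \ref{lemma61} to get $\rho\in C^{\gamma,\gamma/2}$ and hence Lemma \ref{Lemma5.2} for the Stokes estimate, and couple the two, using the $\log\rho$ reformulation in case \eqref{B'}. The only notable (and harmless) deviations are that you correctly carry the factor $\mu\Delta v$ rather than the paper's typographical $2\mu\Delta v$ (hence your $\tfrac12\tfrac{d}{dt}\cM_1$), and that you keep $\pi_1$ in the Stokes equation and adjust for $\int\pi=0$ afterward, whereas the paper works with $\pi$ and absorbs $c_0\nabla(\log\rho)_t$ into $F$ from the start.
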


\begin{proof}
Rewrite $\eqref{equation3.2}_2$ as 
\begin{equation}\label{eq4.18}
\begin{aligned}
&\rho v_t - \dive{[2\mu D(v)]} 
 + \nabla\pi_1\\
 &= - \rho u\cdot\nabla v+ c_0\dive{\left(2\mu\nabla^2\rho^{-1}\right)}  -c_0 \dive{\left(\rho v\otimes\nabla\rho^{-1}\right)}\\
 &\quad- c_0^2 \dive{\left(\rho \nabla\rho^{-1}\otimes\nabla\rho^{-1}\right)}.
\end{aligned}
\end{equation}
We first come to the case when $(\rho,v)$ satisfies the condition \eqref{A'} and consider the special case when $\curle v=-n^\perp \cdot B\cdot v$ on $\partial\Omega\times(0,T)$. Multiplying $v_t$ on both sides of \eqref{eq4.18} and integrating over $\Omega$, one gets
\begin{equation}\label{6623}
\begin{aligned}
&\int \rho\abs{v_t}^2 - \int \dive[2\mu D(v)]\cdot v_t\\
 &= - \int\rho u\cdot\nabla v\cdot v_t+ \int c_0\dive{\left(2\mu \nabla^2\rho^{-1}\right)}\cdot v_t -\int c_0 \dive{\left(\rho v\otimes\nabla\rho^{-1}\right)}\cdot v_t\\
 &\quad-\int c_0^2 \dive{\left(\rho \nabla\rho^{-1}\otimes\nabla\rho^{-1}\right)}\cdot v_t:=\sum	_{i=1}^4M_i.
\end{aligned}
\end{equation}

For the second term on the left-hand side of \eqref{6623}, we have
\begin{align*}
- \int \dive[2\mu D(v)]\cdot v_t&= - \int 2\mu \Delta v\cdot v_t- \int 2\mu' \nabla\rho\cdot D(v)\cdot v_t:=\sum_{i=1}^2Q_i.
\end{align*}
First, to estimate $Q_1$, we have
\begin{align*}
Q_1&= - \int 2\mu \nabla^\perp(\curle v)\cdot v_t\\
&=-\int_{\partial} 2\mu \curle v(v_t\cdot n^\perp)+\int \mu \frac{d}{dt}|\curle v|^2+\int \mu' \nabla^\perp \rho\cdot v_t(\curle v)\\
&=\frac{d}{dt}\left(\cM_1(t)+\normf{\sqrt{\mu} \curle v}_{L^2}^2\right)\\
&\quad-\int_{\partial} \mu_t(v\cdot B\cdot v)-\int \mu_t|\curle v|^2+\int \mu' \nabla^\perp \rho\cdot v_t(\curle v),
\end{align*}
where, for the last three terms, we use Lemma \ref{Lemma2.3} and \ref{lemma4.1} to obtain
\begin{align*}
\abs{\int_{\partial}\mu_t(v\cdot B\cdot v)}&=\abs{\int_{\partial}\mu_t(v\cdot n^\perp)n^\perp\cdot B\cdot v}\notag\\
&=\abs{\int \mu_t\nabla^\perp(v\cdot n^\perp)\cdot B\cdot v+\int v\cdot n^\perp \nabla^\perp\cdot[\mu_tB\cdot v]}\\
&\leq C_{\varepsilon_1}\left(\norm{\nabla\rho}_{L^4}^4+\norm{v}_{L^4}^4\right)\norm{\rho_t}_{L^2}^2+C_{\varepsilon_1}\norm{v}_{L^4}^4+ \varepsilon_1\left(\norm{\nabla\rho_t}^2_{L^2}+\norm{v}_{H^2}^2\right),\\
\abs{\int \mu_t\abs{\curle v}^2}&\leq C_{\varepsilon_2}\norm{\rho_t}_{L^2}^2\norm{\nabla v}_{L^2}^2+\varepsilon_2\norm{v}_{H^2}^2,\notag\\
\abs{\int \mu'\nabla^\perp \rho\cdot v_t(\curle v)}&\leq C_{\varepsilon_3}\norm{\nabla\rho}_{L^4}^4\norm{\nabla v}_{L^2}^2+\varepsilon_3\left(\norm{v_t}_{L^2}^2+\norm{v}_{H^2}^2\right),\notag
\end{align*}
which gives
\begin{equation}\label{624}
\begin{aligned}
Q_1&\geq\frac{d}{dt}\left(\cM_1(t)+\normf{\sqrt{\mu}\curle v}_{L^2}^2\right)\\
&\quad-C_{\varepsilon_4}\left(\norm{\rho_t}_{L^2}^2+\norm{v}_{L^4}^4+\norm{\nabla\rho}_{L^4}^4\right)\left(\norm{\nabla v}_{L^2}^2+\norm{\rho_t}_{L^2}^2\right)+C_{\varepsilon_4}\norm{v}_{L^4}^4\\
&\quad+\varepsilon_4\left(\norm{\nabla\rho_t}^2_{L^2}+\norm{v_t}_{L^2}^2+\norm{v}_{H^2}^2\right).
\end{aligned}
\end{equation}
On the other hand,
\begin{align}
|Q_2|\leq C_{\varepsilon_5}\norm{\nabla\rho}_{L^4}^4\norm{\nabla v}_{L^2}^2+\varepsilon_5\left(\norm{v_t}_{L^2}^2+\norm{v}_{H^2}^2\right).\label{625}
\end{align}
Combining \eqref{624} and \eqref{625} leads to
\begin{equation}\label{eq626}
\begin{aligned}
- \int \dive[2\mu D(v)]\cdot v_t&\geq \frac{d}{dt}\left(\cM_1(t)+\normf{\sqrt{\mu}\curle v}_{L^2}^2\right)\\
&\quad-C_{\varepsilon_6}\left(\norm{\rho_t}_{L^2}^2+\norm{v}_{L^4}^4+\norm{\nabla\rho}_{L^4}^4\right)\left(\norm{\nabla v}_{L^2}^2+\norm{\rho_t}_{L^2}^2\right)\\
&\quad+C_{\varepsilon_6}\norm{v}_{L^4}^4+\varepsilon_6\left(\norm{\nabla\rho_t}^2_{L^2}+\norm{v_t}_{L^2}^2+\norm{v}_{H^2}^2\right).
\end{aligned}
\end{equation}

Next, using Lemma \ref{Lemma2.3} and \ref{lemma4.1} again, we estimate $M_1-M_4$, that is,
\begin{align}
\abs{M_1}\leq C_{\varepsilon_7}\left(\norm{v}_{L^4}^4+\norm{\nabla\rho}_{L^4}^4\right)\norm{\nabla v}_{L^2}^2 +\varepsilon_7\left(\norm{v_t}_{L^2}^2 +\norm{v}_{H^2}^2\right),
\end{align}
\begin{equation}
\begin{aligned}
\abs{M_2}&=2c_0\abs{\int \mu'\partial_j\rho\partial_{ij}\rho^{-1}(v_t)_i+ \int \mu(\rho)\partial_{ijj}\rho^{-1}(v_t)_i}\\
&=2c_0 \abs{\int \mu'\partial_j\rho\partial_{ij}\rho^{-1}(v_t)_i-\int \mu'\partial_i\rho\partial_{jj}\rho^{-1}(v_t)_i}\\
&\leq C\int (\abs{\nabla\rho}^3+\abs{\nabla\rho}|\nabla^2\rho|)\abs{v_t}\\
&\leq C_{\varepsilon_8}\norm{\nabla\rho}_{L^4}^4\norm{\Delta \rho}_{L^2}^2 +\varepsilon_8\left(\norm{v_t}_{L^2}^2+\norm{\nabla\Delta \rho}_{L^2}^2\right),
\end{aligned}
\end{equation}
\begin{equation}\label{equa5.27}
\begin{aligned}
\abs{M_3+M_4}&=\abs{c_0 \int\partial_j\left( u_j\partial_i\log\rho\right)(v_t)_i}\\
&=\abs{c_0 \int\partial_j\left(\log\rho \partial_iv_j\right)(v_t)_i+c_0^2 \int\partial_j\left(\log\rho \partial_{ij}\rho^{-1}\right)(v_t)_i}\\
&\leq C\int \left(\abs{\nabla v}\abs{\nabla\rho}+\abs{\nabla\rho}^3+\abs{\nabla\rho}\abs{\nabla^2\rho}\right)\abs{v_t}\\
&\leq C_{\varepsilon_9}\norm{\nabla\rho}_{L^4}^4\left(\norm{\nabla v}_{L^2}^2+\norm{\Delta \rho}_{L^2}^2\right) +\varepsilon_9\left(\norm{v_t}_{L^2}^2 +\norm{v}_{H^2}^2+\norm{\nabla\Delta \rho}_{L^2}^2\right).
\end{aligned}
\end{equation}
Combining \eqref{eq626}--\eqref{equa5.27}, we have, for all $\varepsilon\in (0,1/2]$, there exists a positive constant $C_\varepsilon$ depending on $\Omega$, $c_0$, $\alpha$, $\beta$ and $\varepsilon$ such that
\begin{equation}\label{eq4.27}
\begin{aligned}
&\frac{d}{dt}\left(\cM_1(t)+\normf{\sqrt{\mu}\curle v}_{L^2}^2\right)+\norm{v_t}_{L^2}^2\\
&\leq C_\varepsilon\left(\norm{\rho_t}_{L^2}^2+\norm{v}_{L^4}^4+\norm{\nabla\rho}_{L^4}^4\right)F(t)+C_\varepsilon\norm{v}_{L^4}^4\\
&\quad+\varepsilon\left(\norm{\nabla\rho_t}^2_{L^2}+\norm{v}_{H^2}^2+\norm{\nabla\Delta \rho}_{L^2}^2\right).
\end{aligned}
\end{equation}

For general boundary case, that is, $\curle v=-n^\perp\cdot B\cdot(v+c_0\nabla\rho^{-1})$, it suffices to calculate the following extra term
\begin{equation}\label{631}
\begin{aligned}
\int_{\partial} \phi(\rho)v_t\cdot B\cdot \nabla \rho&=\int_{\partial} \phi(\rho)(v_t\cdot n^\perp)n^\perp\cdot B\cdot \nabla \rho\\
&=\int(v_t\cdot n^\perp)\nabla^\perp\phi(\rho)\cdot B\cdot \nabla \rho+\int \phi(\rho)(v_t\cdot n^\perp)\nabla^\perp\cdot (B\cdot \nabla \rho)\\
&\quad+\int\phi(\rho)\nabla^\perp(v_t\cdot n^\perp)\cdot B\cdot \nabla \rho:=\sum_{i=1}^3G_i,
\end{aligned}
\end{equation}
where $\phi(s):=c_0\mu(s)s^{-2}$. For the first two terms of \eqref{631}, using Lemma \ref{Lemma2.3} and \ref{lemma4.1}, we have
\begin{equation}
\abs{G_1+G_2}\leq C_{\varepsilon_1}\left(\norm{\nabla\rho}_{L^4}^4+\norm{\Delta\rho}_{L^2}^2\right)+\varepsilon_1\norm{v_t}_{L^2}^2.
\end{equation}
For $G_3$, since we can not handle the term $\nabla^\perp(v_t\cdot n^\perp)$, it shall be converted into
\begin{equation}\label{633}
\begin{aligned}
G_3=&\frac{d}{dt}\cM_2(t)-\int \phi(\rho)_t\nabla^\perp(v\cdot n^\perp)\cdot B\cdot \nabla \rho-\int \phi(\rho)\nabla^\perp(v\cdot n^\perp)\cdot B\cdot \nabla \rho_t\notag\\
\geq& \frac{d}{dt}\cM_2(t)-\left[C_{\varepsilon_2}\left(\norm{\rho_t}_{L^2}^2+\norm{\nabla\rho}_{L^4}^4\norm{\nabla v}_{L^2}^2+\norm{\nabla v}_{L^2}^2\right)+\varepsilon_2\left(\norm{v}_{H^2}^2+\norm{\nabla\rho_t}_{L^2}^2\right)\right].
\end{aligned}
\end{equation}
Thus, combining \eqref{633}--\eqref{eq4.27}, we deduce the estimate which is simliar with \eqref{eq4.27}, that is 
\begin{equation}\label{633}
\begin{aligned}
&\frac{d}{dt}\left(\cM_1(t)+\normf{\sqrt{\mu}\curle v}_{L^2}^2\right)+\norm{v_t}_{L^2}^2+\frac{d}{dt}\cM_2(t)\\
&\leq C_\varepsilon \cA_4(t)F(t)+\varepsilon\left(\norm{\nabla\rho_t}^2_{L^2}+\norm{v}_{H^2}^2+\norm{\nabla\Delta \rho}_{L^2}^2\right)+\cA_5(t),
\end{aligned}
\end{equation}
for some integrable functions $\cA_4$ and $\cA_5$ defined on $[0,\infty)$.

We still need estimate $\norm{v}_{H^2}$. Let us rewrite \eqref{eq4.18} as 
\begin{equation}\label{5533} 
- \mu \Delta v+ \nabla\pi= F,
\end{equation}
where 
\begin{equation*}
\begin{aligned}
 F&:=-\rho v_t+\nabla(\log\rho)_t- \rho u\cdot\nabla v+2\mu'\nabla\rho\cdot D(v)+2c_0\dive\left(\mu\nabla^2\rho^{-1}\right) \\
 &-c_0 \dive{\left(\rho v\otimes\nabla\rho^{-1}\right)} - c_0^2 \dive{\left(\rho \nabla\rho^{-1}\otimes\nabla\rho^{-1}\right)}
\end{aligned}
\end{equation*}
Since $\mu(\rho(x,t))$ is bounded contiuous on $\overline Q_T$ from Lemma \ref{lemma61}, it follows from Lemma \ref{Lemma5.2} with $\varphi= -n^\perp\cdot B\cdot (v+c_0\nabla\rho^{-1})$ that
\begin{equation}\label{eq4.31}
\norm{v}_{H^2} + \norm{\pi}_{H^1}\leq C\left(\norm{F}_{L^2}+\norm{\varphi}_{H^1}\right),
\end{equation}
where
\begin{equation*}
\begin{aligned}
\norm{\varphi}_{H^1}&\leq C\left(\norm{\nabla v}_{L^2}+\norm{\Delta\rho}_{L^2}+\norm{\nabla\rho}_{L^4}^2\right),\\
\norm{F}_{L^2}&\leq C\big(\norm{v_t}_{L^2}+\norm{\nabla
\rho_t}_{L^2}+\norm{\nabla\rho}^2_{L^4}\norm{\rho_t}_{L^2}+\norm{|v|\!\cdot\!|\nabla v|}_{L^2}+\normf{|\nabla\rho|\!\cdot\!|\nabla v|}_{L^2}+ \normf{\nabla\rho}_{L^6}^3 \\
&\quad+\normf{|\nabla\rho|\!\cdot\!|\nabla^2 \rho|}_{L^2}+\normf{|v|\!\cdot\!|\nabla^2 \rho|}_{L^2}+\normf{|v|\!\cdot\!|\nabla\rho|^2}_{L^2}\big)\\
&\leq C\left(\norm{v_t}_{L^2}+\norm{\nabla
\rho_t}_{L^2}\right) + C_\varepsilon\left(\norm{v}_{L^4}^2+\norm{\nabla\rho}_{L^4}^2\right)\left(\norm{\nabla v}_{L^2}+\norm{\Delta\rho}_{L^2}+\norm{\rho_t}_{L^2}\right) \\
&\quad+ \varepsilon\left(\norm{v}_{H^2}+\norm{\nabla\Delta\rho}_{L^2}\right).
\end{aligned}
\end{equation*}
Hence, using the Poincar\'e's inequality, we deduce from \eqref{eq4.31} that 
\begin{equation}\label{eq635}
\begin{aligned}
\norm{v}^2_{H^2}+\norm{\pi}_{H^1}^2& \leq C\left(\norm{v}_{L^4}^4+\norm{\nabla\rho}_{L^4}^4\right)\left(\norm{\nabla v}^2_{L^2}+\norm{\Delta\rho}^2_{L^2}+\norm{\rho_t}_{L^2}^2\right)\\
&\quad+C\left(\norm{v_t}^2_{L^2}+\norm{\nabla\rho_t}^2_{L^2}\right) +C\left(\norm{\nabla\Delta\rho}^2_{L^2}+\norm{\nabla v}_{L^2}^2\right),
\end{aligned}
\end{equation}
which gives \eqref{equation621}. Finally, substituting \eqref{equation621} into \eqref{633}, we obtain \eqref{equation620} and complete the proof of the lemma.

For $(\rho,v)$ satisfying condition \eqref{B'}, we first convert \eqref{eq4.18} into
\begin{equation}
\begin{aligned}
&\rho v_t - \dive{[2\mu D(v)]} 
 + \nabla\pi_1\\
 &= - \rho v\cdot\nabla v +c_0\nabla\log\rho\cdot\nabla v+ c_0\dive{\left[2\mu\rho^{-1} \left(\nabla^2\log\rho-\nabla\log\rho\otimes\nabla\log\rho\right)\right]} \\
&\quad +c_0 \dive{\left(v\otimes\nabla\log\rho\right)}
- c_0^2 \dive{\left(\rho^{-1} \nabla\log\rho\otimes\nabla\log\rho\right)}.
\end{aligned}
\end{equation}
Then, following the calculations from \eqref{eq4.18} to \eqref{eq4.27} and from \eqref{5533} to \eqref{eq635}, we can derive the similar estimates (even much easier, since $v_t$ is vanished on the boundary), that is, 
\begin{equation}\label{5537}
\begin{aligned}
\frac{d}{dt}\norm{\mu D(v)}_{L^2}^2+\nu\norm{v_t}_{L^2}^2&\leq C_\varepsilon\left(\norm{\nabla\rho}_{L^4}^4+\norm{v}_{L^4}^4+\norm{\rho_t}_{L^2}^2\right)\left(\norm{\nabla v}_{L^2}^2+\norm{\Delta\log\rho}_{L^2}^2\right)\\
&\quad+\varepsilon\left(\norm{\nabla\Delta\log\rho}_{L^2}^2+\norm{v}_{H^2}^2\right),
\end{aligned}
\end{equation}
\begin{equation}\label{5538}
\begin{aligned}
\norm{F}_{L^2}&\leq C\left(\norm{v_t}_{L^2}+\norm{\nabla
\log\rho_t}_{L^2}\right) + C_\varepsilon\left(\norm{v}_{L^4}^2+\norm{\nabla\rho}_{L^4}^2\right)\left(\norm{\nabla v}_{L^2}+\norm{\Delta\log\rho}_{L^2}\right) \\
&\quad+ \varepsilon\left(\norm{v}_{H^2}+\norm{\nabla\Delta\log\rho}_{L^2}\right)
\end{aligned}
\end{equation}
and, using Lemma \ref{Lemma5.2} with $\Phi=0$, together with \eqref{5538},
\begin{equation}\label{5539}
\begin{aligned}
\norm{v}^2_{H^2}+\norm{\pi}_{H^1}^2&\leq C\norm{F}_{L^2}^2\\
&\leq C_\varepsilon\left(\norm{v}_{L^4}^4+\norm{\nabla\rho}_{L^4}^4\right)\left(\norm{\nabla v}^2_{L^2}+\norm{\Delta\log\rho}^2_{L^2}\right) + \varepsilon\norm{\nabla\Delta\log\rho}^2_{L^2}\\
&\quad +C\left(\norm{v_t}^2_{L^2}+\norm{\nabla
(\log\rho)_t}^2_{L^2}\right).
\end{aligned}
\end{equation}
Thus, we complete the proof by plugging \eqref{5539} into \eqref{5537}.
\end{proof}

Now, combining Lemma \ref{lemma6.4}--\ref{lemma6.5}, we can complete the proof of Proposition \ref{theorem6.3}.
\begin{proof}[Proof of Proposition \ref{theorem6.3}]
We first prove the case when $(\rho,v)$ satisfies the condition \eqref{A'}. Using Lemma \ref{Lemma2.2}, \eqref{equation67} in Lemma \ref{lemma6.4} and \eqref{equation620}, \eqref{equation621} in Lemma \ref{lemma6.5} leads to
\begin{equation}\label{638}
\begin{aligned}
&\frac{d}{dt}\left(\normf{\sqrt{\mu}\curle v}_{L^2}^2+\norm{\Delta\rho}_{L^2}^2+\norm{\rho_t}_{L^2}^2+\cM_1(t)\right)+\norm{v_t}_{L^2}^2+\norm{\nabla\Delta\rho}_{L^2}^2 +\norm{\nabla\rho_t}_{L^2}^2\\
&\leq -\frac{d}{dt}\cM_2(t)+\tilde\cA_1(t)F(t)+\tilde\cA_2(t),\\
&\leq -\frac{d}{dt}\cM_2(t)+\tilde\cA_1(t)\left(\normf{\sqrt{\mu}\curle v}_{L^2}^2+\norm{\Delta\rho}_{L^2}^2+\norm{\rho_t}_{L^2}^2+\cM_1(t)\right)+\tilde\cA_2(t),
\end{aligned}
\end{equation}
where $\tilde\cA_1$ and $\tilde\cA_2$ are positive integrable functions defined on $[0,\infty)$. Using Gr$\mathrm{\ddot{o}}$nwall's inequality and Lemma \ref{Lemma2.2} once again, we deduce the bound
\begin{equation}\label{639}
\begin{aligned}
&\sup_{t\in[0,T]}F(t)+\int_0^T\left(\norm{\nabla\Delta\rho}_{L^2}^2 +\norm{\nabla\rho_t}_{L^2}^2+\norm{v_t}_{L^2}^2\right)\,dt\\
&\leq C(\norm{v_0}^2_{H^1}+\norm{\rho_0}^2_{H^2}+\norm{v_0}_{H^1}^2\norm{\rho_0}_{H^2}^2+1)\leq C,
\end{aligned}
\end{equation}
where we have used, denote by $\rho_{t,0}=\rho_t(x,0)$,
$$\norm{\rho_{t,0}}_{L^2}\leq \norm{\rho_0}_{H^2}+\norm{v_0}_{H^1}\norm{\rho_0}_{H^2},$$  
$\cM_1\geq 0$ for $B$ positively semi-definited and 
\begin{equation*}
\abs{e^{\int_0^T h(t)\,dt}\int_0^T \frac{d}{dt}\cM_2(t)e^{-\int_0^t h(s)\,ds}\,dt}\leq \varepsilon\sup_{t\in[0,T]}\norm{\nabla v}_{L^2}^2+C_\varepsilon\sup_{t\in[0,T]}\norm{\nabla \rho}_{L^2}^2,
\end{equation*}
where $h(t)$ is an integrable function on $[0,\infty)$. 

Next, integrating \eqref{equation621} over $[0,T]$ and using the bound \eqref{639} gives  
\begin{equation}
\int_0^T\left(\norm{v}_{H^2}^2+\norm{\pi}_{H^1}^2\right)\,dt \leq C, 
\end{equation}
which shows \eqref{equation6.7}.

To prove the case when $(\rho,v)$ satisfies the condition \eqref{B'}, we use \eqref{equation68} in Lemma \ref{lemma6.4} and \eqref{5520} in Lemma \ref{lemma6.5}. It follows from Lemma \ref{lemma4.1} and
$$2\int |D(v)|^2=\int |\nabla v|^2$$
that
\begin{equation}\label{6642}
\begin{aligned}
&\frac{d}{dt}\left(\norm{\nabla v}_{L^2}^2+\norm{(\log\rho)_t}_{L^2}^2\right)+\norm{v_t}_{L^2}^2
 +\norm{\nabla(\log\rho)_t}_{L^2}^2\\
 &\leq \tilde\cA_3(t)\left(\norm{\nabla v}_{L^2}^2+\norm{(\log\rho)_t}_{L^2}^2\right)+\varepsilon\norm{\nabla\Delta\log\rho}_{L^2}^2,
\end{aligned}
\end{equation}
for some nonegative integrable functions $\tilde\cA_3$. Using the Gr$\mathrm{\ddot{o}}$nwall's inequality, one has the bound
\begin{align}
&\sup_{t\in[0,T]}\norm{\nabla v}^2_{L^2}+\sup_{t\in[0,T]}\norm{(\log\rho)_t}^2_{L^2} +\int_0^T\left(\norm{v_t}_{L^2}^2
+\norm{\nabla(\log\rho)_t}_{L^2}^2\right)\,dt\notag\\
 &\leq C(\norm{v_0}^2_{H^1}+\norm{\rho_0}^2_{H^2}+\norm{v_0}^2_{H^1}\norm{\rho_0}^2_{H^2}+1)\leq C,\label{641}
\end{align}
With the aid of the estimate  \eqref{69}, \eqref{equation69}, \eqref{5521} and \eqref{641}, one has 
\begin{align}
\sup_{t\in[0,T]}\norm{\Delta\log\rho}^2_{L^2} +\int_0^T\left(\norm{\nabla\Delta\log\rho}_{L^2}^2 +\norm{v}_{H^2}^2+\norm{\pi}_{H^1}^2\right)\,dt\leq C.
\end{align}
At last, noticing that 
$$\Delta\rho=\rho\Delta\log\rho+\rho^{-1}|\nabla\rho|^2,$$
and
$$\nabla\Delta\rho=\nabla\rho\Delta\log\rho+\rho\nabla\Delta\log\rho-\rho^{-2}\nabla\rho|\nabla\rho|^2+2\rho^{-1}\nabla\rho\cdot\nabla^2\rho,$$
we complete the proof of \eqref{equation6.7}.
\end{proof}

\section{A Priori Estimates (II): Case \eqref{C}}\label{Section8}

In this section, we will prove Theorem \ref{theorem1.6} via a different approach. The main difficulty lines here is that, in this situation, $v\cdot n=0$ and $v=-c_0\nabla\rho^{-1}$ on $\partial\Omega$, which makes one impossible to handle the high order derivatives of $\rho$ appeared in the boundary integrals when we deal with the energy estimates of $v$. 

To over come it, we may take a different decomposition on $u$. This idea mainly comes from Lemma \ref{lemma27}, which pushes us to introduce a new function $Q=\cB[c_0\Delta\rho^{-1}]$ to eliminate the non-divergence-free condition on $u$. More precisely, we split $u$ into two parts, $u=w+Q$, and one can find that $w$ possesses the nice properties, that is, $w$ is divergence-free and $w|_{\partial\Omega}=0$. Therefore, we can use $w$ to get the energy estimates for system $\eqref{equation1.1}_2$.

Fortunately, in spite of this difficulty, we still has the estimates on $\rho$, which has been derived in Section \ref{section3} and \ref{Section6} such as \eqref{eq3.12}, \eqref{equation616}, etc. This is because those estimates only require $v\cdot n=0$ on $\Omega$. Then, using the relation $v=u-c_0\nabla\rho^{-1}$, one can easily change the norm of $v$ into that of $u$ and $\rho$.

Before giving the proof of Theorem \ref{theorem1.6}, we make some comments on the analysis in this section. In this section, we devote to establish the higher order estimates for $(\rho,u)$. One should notice that, if $Q=\cB[c_0\Delta\rho^{-1}]$, then $Q_t=\cB[c_0\Delta\rho^{-1}_t]$. With this fact, using Remark \ref{remark2.4} and Lemma \ref{lemma27}, one has the following estimates,  
\begin{equation}\label{8.2}
\begin{cases}
\norm{Q}_{L^p}\leq C\norm{\nabla\rho}_{L^p},\\
\norm{Q}_{H^1}\leq C\left(\norm{\Delta\rho}_{L^2}+\norm{\nabla\rho}_{L^{4}}^2\right),\\
\norm{Q_t}_{L^p}\leq C\left(\norm{\nabla\rho_t}_{L^p}+\norm{|\rho_t||\nabla\rho|}_{L^p}\right);
\end{cases}
\end{equation}
for $1<p<\infty$ and some constants $C$ depending only on $\Omega$, $c_0$ and $p$. In addition, the following equivalence will be used frequently, that is,
\begin{equation}\label{8.3}
\begin{aligned}
\norm{u}_{L^p}+\norm{\nabla\rho}_{L^p}&\sim\norm{v}_{L^p}+\norm{\nabla\rho}_{L^p},\\
\norm{\nabla u}_{L^p}+\norm{\Delta\rho}_{L^p}+\norm{\nabla\rho}_{L^{2p}}^2&\sim\norm{\nabla v}_{L^p}+\norm{\Delta\rho}_{L^p}+\norm{\nabla\rho}_{L^{2p}}^2.\\
\end{aligned}
\end{equation}

Now, we turn to the proof. The key of the proof is deriving the following proposition. Using the idea from \cite{huang}, we first assume the bounds \eqref{8.4} and obtain the a priori estimates of $(\rho,u)$, see Lemma \ref{lemma8.3}--\ref{lemma8.2}. Then, these bounds lead to smaller ones \eqref{8.5} provided $\norm{\nabla u_0}_{L^2}$ suitably small, which means that we can close the energy estimates of $(\rho,u)$ and, consequently, we complete the proof of the theorem.
\begin{Proposition}\label{prop8.1}
Suppose that $(\rho,u,\pi)$ is a smooth solution of \eqref{equation1.1}. There exists a positive constant $\delta$ depending only on $\Omega$, $\alpha$, $\beta$ and $c_0$ such that, if $\norm{\nabla u_0}_{L^2} \leq \delta$ and
\begin{equation}\label{8.4}
\sup_{t\in[0,T]}\norm{\nabla\rho}_{L^4}\leq 2,\quad\int_0^T\left(\norm{\Delta\rho}^4_{L^2}+\norm{\nabla u}^4_{L^2}\right)\,dt\leq 2\norm{\nabla u_0}_{L^2}^2,
\end{equation}
then the following estimates hold
\begin{equation}\label{8.5}
\sup_{t\in[0,T]}\norm{\nabla\rho}_{L^4}\leq 1,\quad\int_0^T\left(\norm{\Delta\rho}^4_{L^2}+\norm{\nabla u}^4_{L^2}\right)\,dt\leq \norm{\nabla u_0}_{L^2}^2.
\end{equation}
\end{Proposition}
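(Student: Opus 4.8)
The plan is a continuity (bootstrap) argument. Granting the a priori bounds \eqref{8.4} on an interval $[0,T]$, I will extract from them strictly stronger estimates that, as soon as $\norm{\nabla u_0}_{L^2}$ is small enough, collapse to \eqref{8.5}; since the left-hand sides of \eqref{8.4}--\eqref{8.5} are continuous in $T$ and are $O(\norm{\nabla u_0}_{L^2}^2)$ (or vanish) at $T=0$, this self-improvement propagates on any interval of existence. Two preliminary facts are used throughout. First, since $u|_{\partial\Omega}=0$ forces $v\cdot n=u\cdot n-c_0\,n\cdot\nabla\rho^{-1}=0$ on $\partial\Omega$, the maximum principle (Lemma \ref{lemma4.1}) gives $\alpha\leq\rho\leq\beta$. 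Second, by the elliptic problem of Remark \ref{remark18} the initial density is slaved to the velocity, $\norm{\nabla\rho_0}_{L^4}+\norm{\nabla\rho_0}_{H^1}\leq C\norm{\nabla u_0}_{L^2}$, and $\norm{u_0}_{L^2}\leq C\norm{\nabla u_0}_{L^2}$ by Poincar\'e (as $u_0\in H^1_0$); hence every datum entering the estimates is $O(\norm{\nabla u_0}_{L^2})$, in particular $O(\delta)$, and also $\norm{\rho_{t,0}}_{L^2}\leq C\norm{\nabla u_0}_{L^2}$ from the equation at $t=0$.

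The density estimates are precisely the computations of Sections \ref{section3}--\ref{Section6}, which use only $v\cdot n=0$ on $\partial\Omega$: testing $\eqref{equation3.2}_1$ by $-\Delta\rho$, by $\rho_t$ after a $t$-differentiation, and by $-(\nabla\Delta\rho)\cdot\nabla$ (the $\rho$-Neumann integration by parts, as in the derivation of \eqref{equation67}) yields \eqref{eq3.12}, \eqref{equation616} and a third-order inequality of the form $\norm{\nabla\Delta\rho}_{L^2}^2\leq C_\varepsilon\cA(t)\bigl(\norm{\Delta\rho}_{L^2}^2+\norm{\nabla u}_{L^2}^2\bigr)+C_\varepsilon\norm{\nabla\rho_t}_{L^2}^2+\varepsilon\norm{u}_{H^2}^2$ with $\cA$ integrable; everywhere the norms of $v$ are converted into norms of $u$ and $\nabla\rho$ by the equivalences \eqref{8.3}. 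Because $\norm{\nabla\rho_0}_{L^2}$ and $\sup_t\norm{\nabla\rho}_{L^2}^2$ are small, the quadratic self-interactions are absorbed, and Gr\"onwall (with $\int_0^T\norm{v}_{L^4}^4\,dt\leq C\int_0^T(\norm{u}_{L^4}^4+\norm{\nabla\rho}_{L^4}^4)\,dt$ controlled a priori by \eqref{8.4} and interpolation) gives $\sup_t\norm{\nabla\rho}_{L^2}^2+\int_0^T(\norm{\Delta\rho}_{L^2}^2+\norm{\nabla\rho}_{L^4}^4+\norm{\rho_t}_{L^2}^2)\,dt\leq C\norm{\nabla u_0}_{L^2}^2$, $C=C(\Omega,\alpha,\beta,c_0)$.

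For the velocity I use $u=w+Q$ with $Q=\cB[c_0\Delta\rho^{-1}]$, so $\dive w=0$ and $w|_{\partial\Omega}=0$, and I keep \eqref{8.2} in hand. Testing $\eqref{equation1.1}_2$ against $w$ kills the pressure ($\int\nabla\pi\cdot w=-\int\pi\,\dive w=0$) and all boundary terms, and, after bounding the $Q$-remainders through \eqref{8.2} and the density estimates, gives $\sup_t\norm{\sqrt\rho u}_{L^2}^2+\int_0^T\norm{\nabla u}_{L^2}^2\,dt\leq C\norm{\nabla u_0}_{L^2}^2$. Testing against $u_t$ (licit since $u_t|_{\partial\Omega}=0$; the pressure term becomes $-c_0\int\nabla\pi\cdot\nabla(\rho^{-2}\rho_t)$, the boundary contribution vanishing because $n\cdot\nabla\rho_t=0$) produces a differential inequality for $\norm{\sqrt{\mu}\,D(u)}_{L^2}^2$ with $\norm{\sqrt\rho u_t}_{L^2}^2$ on the left, the convection estimated by $\norm{\sqrt\rho u_t}_{L^2}\norm{u}_{L^4}\norm{\nabla u}_{L^4}$ plus Gagliardo--Nirenberg. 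To close the top order I rewrite $\eqref{equation1.1}_2$ as the Stokes system $-\dive[2\mu D(w)]+\nabla\pi=F$, $F=-\rho u_t-\rho u\cdot\nabla u+\dive[2\mu D(Q)]$, $\dive w=0$, $w|_{\partial\Omega}=0$; the bootstrap hypothesis $\sup_t\norm{\nabla\rho}_{L^4}\leq2$ is exactly what bounds $\norm{\nabla\mu(\rho)}_{L^4}=\norm{\mu'(\rho)\nabla\rho}_{L^4}\leq C$, so Lemma \ref{lemma8.4}(1) with $\Phi=0$ gives $\norm{w}_{H^2}+\norm{\pi}_{H^1}\leq C(\norm{F}_{L^2}+\norm{\nabla w}_{L^2})$, where $\norm{F}_{L^2}\leq C(\norm{u_t}_{L^2}+\norm{u}_{L^4}\norm{\nabla u}_{L^4}+\norm{Q}_{H^2})$ and $\norm{Q}_{H^2}\leq C\norm{\Delta\rho^{-1}}_{H^1}\leq C(\norm{\nabla\Delta\rho}_{L^2}+\norm{\nabla\rho}_{L^4}\norm{\Delta\rho}_{L^2}+\norm{\nabla\rho}_{L^6}^3)$ — the reason the third-order density bound is needed — and finally $\norm{u}_{H^2}\leq\norm{w}_{H^2}+\norm{Q}_{H^2}$. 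Feeding these into the $u_t$-inequality and the third-order density inequality and absorbing the $\varepsilon$-terms, I obtain a single closed inequality $\tfrac{d}{dt}\cX(t)+\norm{u_t}_{L^2}^2+\norm{\nabla\rho_t}_{L^2}^2+\norm{\nabla\Delta\rho}_{L^2}^2+\norm{u}_{H^2}^2\leq C\,h(t)\,\cX(t)+g(t)$ for $\cX=\norm{\nabla u}_{L^2}^2+\norm{\Delta\rho}_{L^2}^2+\norm{\rho_t}_{L^2}^2$, with $h,g$ integrable and $\int_0^T h\,dt$, $\int_0^T g\,dt$ controlled a priori by \eqref{8.4}. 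Gr\"onwall then yields $\sup_t\cX(t)\leq C_0\norm{\nabla u_0}_{L^2}^2$ and $\int_0^T(\norm{\Delta\rho}_{L^2}^2+\norm{\nabla u}_{L^2}^2)\,dt\leq C_0\norm{\nabla u_0}_{L^2}^2$ with $C_0=C_0(\Omega,\alpha,\beta,c_0)$; hence $\sup_t\norm{\nabla\rho}_{L^4}^2\leq C\sup_t\norm{\nabla\rho}_{L^2}\,\sup_t\norm{\Delta\rho}_{L^2}\leq C_0\norm{\nabla u_0}_{L^2}^2\leq C_0\delta^2$ and $\int_0^T(\norm{\Delta\rho}_{L^2}^4+\norm{\nabla u}_{L^2}^4)\,dt\leq\bigl(\sup_t\cX\bigr)\int_0^T(\norm{\Delta\rho}_{L^2}^2+\norm{\nabla u}_{L^2}^2)\,dt\leq C_0^2\norm{\nabla u_0}_{L^2}^4\leq C_0^2\delta^2\norm{\nabla u_0}_{L^2}^2$; choosing $\delta=\delta(\Omega,\alpha,\beta,c_0)$ so small that $C_0\delta^2\leq1$, $C_0^2\delta^2\leq1$ (and small enough for the absorptions and the boundedness of the Gr\"onwall exponential) gives exactly \eqref{8.5}.

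The hard part is the top-order step: the velocity equation is genuinely not divergence free, so no Stokes estimate applies to $u$ directly; the corrector $w=u-\cB[c_0\Delta\rho^{-1}]$ restores $\dive w=0$ and $w|_{\partial\Omega}=0$ but injects $\dive[2\mu D(Q)]$ into the forcing, so the $H^2$-estimate of $u$ is inextricably tied to the $H^3$-estimate of $\rho$; and the variable viscosity forces Lemma \ref{lemma8.4}, whose constant is harmless only because $\norm{\nabla\mu}_{L^4}$ is dominated by the bootstrapped $\norm{\nabla\rho}_{L^4}$. Keeping every constant so that the terminal bound is a clean multiple of $\norm{\nabla u_0}_{L^2}^2$ — rather than merely of $\delta^2$ — is the bookkeeping that makes the smallness of $\delta$ close the loop.
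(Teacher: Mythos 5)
Your proof is correct and follows essentially the same architecture as the paper's: the maximum principle and lower-order density estimates, the decomposition $u=w+Q$ with $Q=\cB[c_0\Delta\rho^{-1}]$, a Gr\"onwall estimate for $\cF(t)=\norm{\nabla u}_{L^2}^2+\norm{\rho_t}_{L^2}^2+\norm{\Delta\rho}_{L^2}^2$ closed via the variable-viscosity Stokes estimate of Lemma~\ref{lemma8.4}, and finally the observation that $\int_0^T\norm{\cdot}_{L^2}^4\,dt\leq\bigl(\sup_t\norm{\cdot}_{L^2}^2\bigr)\int_0^T\norm{\cdot}_{L^2}^2\,dt$ turns the Gr\"onwall bound into the self-improving estimate \eqref{8.5} once $\delta$ is small. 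Two of your technical choices differ from the paper and cost a bit more bookkeeping. First, you test $\eqref{equation1.1}_2$ against $u_t$ rather than $w_t=u_t-Q_t$; the pressure contribution then does not vanish but becomes $-c_0\int\nabla\pi\cdot\nabla(\rho^{-2}\rho_t)$ (your boundary argument is correct). Closing this term requires the Stokes estimate for $\norm{\nabla\pi}_{L^2}$, which re-introduces $\norm{u_t}_{L^2}$ and $\norm{\nabla\Delta\rho}_{L^2}$, so Young's inequality produces a $C_\varepsilon\norm{\nabla\rho_t}_{L^2}^2$ with no small prefactor; you must then weight the $u_t$-inequality against the $\rho_t$-inequality \eqref{8824} so that $\nu\norm{\nabla\rho_t}_{L^2}^2$ dominates. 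The paper tests against $w_t$ and kills the pressure outright, avoiding this entirely. Second, you apply Lemma~\ref{lemma8.4}(1) to $w$ with $\Phi=0$, which puts $\dive[2\mu D(Q)]$ into the forcing and requires $\norm{Q}_{H^2}\lesssim\norm{\Delta\rho^{-1}}_{H^1}$. That $W^{2,2}$-mapping of the Bogovski\v{\i} operator is true, but it is not what Lemma~\ref{lemma27} as quoted in this paper gives (it states only $W^{1,p}$); the paper instead applies Lemma~\ref{lemma8.4}(1) to $v=u-c_0\nabla\rho^{-1}$ with $\Phi=-c_0\nabla\rho^{-1}$, so the forcing involves $\nabla^2\rho^{-1}$ directly and no extended Bogovski\v{\i} estimate is needed. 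Neither deviation is fatal, but if you keep your route you should cite the higher-order Bogovski\v{\i} bound explicitly and carry out the weighting step carefully.
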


In order to prove Proposition \ref{prop8.1}, we need the following estimates. 

\begin{Lemma}\label{lemma8.3}
Suppose that $(\rho,u,\pi)$ is a smooth solution of \eqref{equation1.1}. There exists some positive constant $C$ depending on $\Omega$, $\alpha$, $\beta$ and $c_0$ such that, for all $T\in (0,\infty)$, $\alpha\leq \rho\leq \beta$ and
\begin{equation}\label{eq87}
\sup_{t\in[0,T]}\normf{\rho-(\rho_0)_\Omega}^2_{L^2}+ \int_0^T\norm{\nabla\rho}^2_{L^2} \,dt \leq \norm{\rho_0-(\rho_0)_\Omega}_{L^2}^2.
\end{equation}
Furthermore, if $\norm{\nabla u_0}_{L^2}\leq 1$ and \eqref{8.4} holds, one has 
\begin{equation}\label{eq88}
\sup_{t\in[0,T]}\norm{\nabla\rho}_{L^2}^2+ \int_0^T\left(\norm{\nabla\rho}_{L^4}^4+ \norm{\Delta\rho}_{L^2}^2\right)\,dt\leq  C\norm{\nabla\rho_0}^2_{L^2}.
\end{equation}
\end{Lemma}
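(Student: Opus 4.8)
The plan is to prove the two inequalities of the lemma separately, both by testing the density equation against suitable multipliers, exploiting two structural facts specific to case \eqref{C}: the divergence‑free field $v=u-c_0\nabla\rho^{-1}$ still satisfies $v\cdot n=0$ on $\partial\Omega$ (because the Neumann condition $n\cdot\nabla\rho=0$ forces $n\cdot\nabla\rho^{-1}=0$), and the total mass is conserved, $\rho_\Omega=(\rho_0)_\Omega$.

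First I would record the pointwise bounds $\alpha\le\rho\le\beta$: the proof of Lemma \ref{lemma4.1} used only $\dive v=0$ and $v\cdot n=0$, both of which hold here, so it applies verbatim. For \eqref{eq87} I would test the renormalized density equation \eqref{equation4.1} (equivalently $\eqref{equation3.2}_1$) with $\rho-(\rho_0)_\Omega$ and integrate over $\Omega$. The convective term $\int (v\cdot\nabla\rho)(\rho-(\rho_0)_\Omega)=\tfrac12\int v\cdot\nabla\big((\rho-(\rho_0)_\Omega)^2\big)$ vanishes since $\dive v=0$, $v\cdot n=0$; the diffusion term produces $c_0\int\rho^{-1}|\nabla\rho|^2\ge c_0\beta^{-1}\norm{\nabla\rho}_{L^2}^2$ (the boundary integral drops out because $n\cdot\nabla\rho=0$). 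Integrating in time and keeping the good sign gives \eqref{eq87}.

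For \eqref{eq88} I would test the density equation against $-\Delta\rho$, or — following Remark \ref{remark53} — test the renormalized equation \eqref{5513} against $-\Delta\log\rho$, which keeps the nonlinearity confined to the transport term. The boundary term in $\int\rho_t(-\Delta\rho)$ disappears by the Neumann condition, and the viscous term gives coercivity $\ge c_0\beta^{-1}\norm{\Delta\rho}_{L^2}^2$ via $\alpha\le\rho\le\beta$. The nonlinear contributions $\int(v\cdot\nabla\rho)\Delta\rho$ and $c_0\int\rho^{-2}|\nabla\rho|^2\Delta\rho$ are handled by Hölder together with the Gagliardo–Nirenberg inequality $\norm{\nabla\rho}_{L^4}^2\le C\norm{\nabla\rho}_{L^2}\norm{\Delta\rho}_{L^2}$ of Remark \ref{remark2.4} and the bound $\norm{v}_{L^4}\le C(\norm{\nabla u}_{L^2}+\norm{\nabla\rho}_{L^4})$ coming from the splitting $u=w+Q$ and \eqref{8.2}–\eqref{8.3} (Poincaré and $\norm{Q}_{L^4}\le C\norm{\nabla\rho}_{L^4}$). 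This yields the differential inequality $\tfrac{d}{dt}\norm{\nabla\rho}_{L^2}^2+\nu\norm{\Delta\rho}_{L^2}^2\le C\norm{\nabla\rho}_{L^2}^2\norm{\Delta\rho}_{L^2}^2+C\norm{v}_{L^4}^4\norm{\nabla\rho}_{L^2}^2$, i.e. \eqref{eq3.12}. One then closes by Grönwall: the time‑integrals of $\norm{\Delta\rho}_{L^2}^4$ and $\norm{\nabla u}_{L^2}^4$ are $\le 2\norm{\nabla u_0}_{L^2}^2$ by \eqref{8.4}, which (via the interpolation above and \eqref{eq87}) also controls $\int_0^T\norm{\nabla\rho}_{L^4}^4\,dt$ and $\int_0^T\norm{v}_{L^4}^4\,dt$, while $\norm{\nabla\rho_0}_{L^2}\le C\norm{\nabla u_0}_{L^2}$ follows from the compatibility condition \eqref{compat} as in Remark \ref{remark18}.

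The main obstacle is \eqref{eq88}: the term $C\norm{\nabla\rho}_{L^2}^2\norm{\Delta\rho}_{L^2}^2$ (produced by the quasilinear $\rho^{-2}|\nabla\rho|^2$ term, equivalently by re‑expressing $v$ through $u$) cannot be absorbed into the dissipation by smallness of $\norm{\nabla\rho}_{L^2}$, since \eqref{8.4} only provides $\norm{\nabla\rho}_{L^4}\le2$, hence a bounded but not small $\norm{\nabla\rho}_{L^2}$. The way around this is to treat that term through its time‑integral, using the $L^4_t$ smallness $\int_0^T\norm{\Delta\rho}_{L^2}^4\,dt\le 2\norm{\nabla u_0}_{L^2}^2$ from \eqref{8.4} together with $\int_0^T\norm{\nabla\rho}_{L^2}^2\,dt\le\norm{\rho_0-(\rho_0)_\Omega}_{L^2}^2$ from \eqref{eq87}, so that the Grönwall factor stays bounded uniformly in $T$; this is precisely why the bootstrap hypothesis \eqref{8.4} is imposed in that form and why the estimate is stated relative to $\norm{\nabla\rho_0}_{L^2}$.
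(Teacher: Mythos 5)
Your approach for \eqref{eq87} (test the transport–diffusion equation against $\rho-(\rho_0)_\Omega$, use $\dive v=0$, $v\cdot n=0$, mass conservation, and the Neumann condition to kill the boundary term) and for \eqref{eq88} (test against $-\Delta\rho$, apply Gagliardo--Nirenberg and the splitting $u=w+Q$) is exactly the route the paper intends; the paper itself merely points back to Section~\ref{section3} without spelling out the new closing step.

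The most valuable part of your write-up is the observation you make at the end: the Section~\ref{section3} version of this estimate (Lemma~\ref{Lemma4.3}) absorbs the term $C\norm{\nabla\rho}_{L^2}^2\norm{\Delta\rho}_{L^2}^2$ into the dissipation by requiring $\norm{\nabla\rho}_{L^2}(t)\le\delta_1$ pointwise in $t$, whereas \eqref{8.4} only gives $\norm{\nabla\rho}_{L^4}\le 2$, so $\norm{\nabla\rho}_{L^2}$ is bounded but not small and that absorption is unavailable. Your workaround — Young's inequality $\norm{\nabla\rho}_{L^2}^2\norm{\Delta\rho}_{L^2}^2\le\tfrac12\norm{\nabla\rho}_{L^2}^4+\tfrac12\norm{\Delta\rho}_{L^2}^4$, then controlling $\int_0^T\norm{\nabla\rho}_{L^2}^4\,dt\le\sup_t\norm{\nabla\rho}_{L^2}^2\int_0^T\norm{\nabla\rho}_{L^2}^2\,dt$ by \eqref{eq87} and $\int_0^T\norm{\Delta\rho}_{L^2}^4\,dt$ by \eqref{8.4}, so the Grönwall exponent is uniformly bounded in $T$ — is precisely what is needed, and it fills a genuine gap that the paper's ``already proved in Section~\ref{section3}'' elides. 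One small caveat worth making explicit: after this splitting the pure forcing term $\tfrac{C}{2}\int_0^T\norm{\Delta\rho}_{L^2}^4\,dt\lesssim\norm{\nabla u_0}_{L^2}^2$ enters \emph{additively} (not multiplied by $\norm{\nabla\rho}_{L^2}^2$), so the bound one actually obtains is $C(\norm{\nabla\rho_0}_{L^2}^2+\norm{\nabla u_0}_{L^2}^2)$, which by Remark~\ref{remark18} is $\le C\norm{\nabla u_0}_{L^2}^2$ but is not, in general, $\le C\norm{\nabla\rho_0}_{L^2}^2$ as \eqref{eq88} literally claims; this discrepancy is harmless downstream (the proof of Lemma~\ref{lemma8.2} immediately majorizes by $\norm{\nabla u_0}_{L^2}^2$ anyway), but your write-up should not assert the sharper right-hand side without noticing it. The analogous remark applies to \eqref{eq87}, which really carries a constant $\nu(c_0,\beta)$ in front of $\int_0^T\norm{\nabla\rho}_{L^2}^2\,dt$.
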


\begin{Lemma}\label{lemma8.2}
Suppose that $\norm{\nabla u_0}_{L^2}\leq 1$ and \eqref{8.4} is established, then one has 
\begin{align}
\sup_{t\in [0,T]}\cF(t)+\int_0^T\left(\cG(t)+\norm{\pi}_{H^1}^2\right)\,dt\leq C\norm{\nabla u_0}_{L^2}^2,\label{810}
\end{align}
where $\lambda$ and $C$ are positive constants depending on $\Omega$, $\alpha$, $\beta$ and $c_0$,
\begin{gather*}
\cF(t):=\norm{\nabla u}_{L^2}^2+\norm{\rho_t}_{L^2}^2+\norm{\Delta\rho}_{L^2}^2,\\ 
\cG(t):=\norm{u_t}_{L^2}^2+\norm{\Delta u}_{L^2}^2+\norm{\nabla\Delta\rho}_{L^2}^2+\norm{\nabla\rho_t}_{L^2}^2.
\end{gather*}
\end{Lemma}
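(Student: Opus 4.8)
The plan is to close a differential inequality for the quantities in $\cF$ and $\cG$ by combining three ingredients: (i) the higher-order density estimates, which use only the Neumann condition on $\rho$; (ii) a velocity estimate obtained by testing $\eqref{equation1.1}_2$ against $u_t$, where $u$ itself vanishes on $\partial\Omega$; and (iii) an $H^2$-bound for $u$ and an $H^1$-bound for $\pi$ extracted from the Stokes system satisfied by $w=u-Q$. Concretely, since $u|_{\partial\Omega}=0$ and $n\cdot\nabla\rho^{-1}=-\rho^{-2}n\cdot\nabla\rho=0$, the auxiliary velocity $v=u-c_0\nabla\rho^{-1}$ still satisfies $v\cdot n=0$ on $\partial\Omega$, so the computations of Lemma \ref{lemma6.4}(1) (testing $\eqref{equation3.2}_1$ against $-\Delta\rho$, then differentiating in $t$ and testing against $\rho_t$) apply verbatim; rewriting every $v$-norm in terms of $u$ and $\rho$ via \eqref{8.2}--\eqref{8.3} yields
\begin{equation*}
\frac{d}{dt}\left(\norm{\Delta\rho}_{L^2}^2+\norm{\rho_t}_{L^2}^2\right)+\nu\left(\norm{\nabla\Delta\rho}_{L^2}^2+\norm{\nabla\rho_t}_{L^2}^2\right)\le C_\varepsilon\cA(t)\cF(t)+\varepsilon\left(\norm{\Delta u}_{L^2}^2+\norm{u_t}_{L^2}^2\right),
\end{equation*}
where $\cA(t)$ is a combination of $\norm{\nabla\rho}_{L^4}^4$, $\norm{v}_{L^4}^4$ and $\norm{\Delta\rho}_{L^2}^2$, integrable on $[0,T]$ with $\int_0^T\cA\,dt\le C\norm{\nabla u_0}_{L^2}^2$ by \eqref{8.4} and \eqref{eq88}.

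Next I would multiply $\eqref{equation1.1}_2$ by $u_t$ and integrate over $\Omega$. Because $u_t|_{\partial\Omega}=0$, integrating the viscous term by parts produces $\tfrac{d}{dt}\int\mu\abs{D(u)}^2$ plus the error $\int\mu'\rho_t\abs{D(u)}^2$, which is absorbed into $C_\varepsilon\norm{\rho_t}_{L^2}^2\norm{\nabla u}_{L^2}^2+\varepsilon\norm{u}_{H^2}^2$ by Lemma \ref{Lemma2.3} and $\alpha\le\rho\le\beta$; likewise $\int\rho u\cdot\nabla u\cdot u_t$ is dominated by $C_\varepsilon\norm{\nabla u}_{L^2}^4+\varepsilon(\norm{u_t}_{L^2}^2+\norm{u}_{H^2}^2)$ by the $2$D Gagliardo--Nirenberg inequality. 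The only delicate term is $\int\nabla\pi\cdot u_t$: splitting $u_t=w_t+Q_t$ with $\dive w_t=0$ and $w_t\in W^{1,2}_0$ kills $\int\nabla\pi\cdot w_t$, so that $\int\nabla\pi\cdot u_t=\int\nabla\pi\cdot Q_t\le\varepsilon\norm{\pi}_{H^1}^2+C_\varepsilon\big(\norm{\nabla\rho_t}_{L^2}^2+\norm{\abs{\rho_t}\abs{\nabla\rho}}_{L^2}^2\big)$ by \eqref{8.2}, and $\norm{\abs{\rho_t}\abs{\nabla\rho}}_{L^2}\le\norm{\nabla\rho}_{L^4}\norm{\rho_t}_{L^4}$ is controlled by $\norm{\nabla\rho}_{L^4}\le2$ and interpolation between $\norm{\rho_t}_{L^2}$ and $\norm{\nabla\rho_t}_{L^2}$. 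This gives
\begin{equation*}
\frac{d}{dt}\int\mu\abs{D(u)}^2+\nu\norm{u_t}_{L^2}^2\le C_\varepsilon\cA(t)\cF(t)+\varepsilon\left(\norm{u}_{H^2}^2+\norm{\pi}_{H^1}^2+\norm{\nabla\rho_t}_{L^2}^2\right).
\end{equation*}

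For the $H^2$- and $H^1$-bounds I would use $u=w+Q$ with $Q=\cB[c_0\Delta\rho^{-1}]$ (so $\dive Q=c_0\Delta\rho^{-1}$, $Q\in W^{1,p}_0$, $\dive w=0$, $w|_{\partial\Omega}=0$): then $w$ solves
\begin{equation*}
-\dive[2\mu D(w)]+\nabla\pi=-\rho u_t-\rho u\cdot\nabla u+\dive[2\mu D(Q)],\qquad\dive w=0,\qquad w|_{\partial\Omega}=0,
\end{equation*}
and Lemma \ref{lemma8.4}(1) with $\Phi=0$ applies since $\norm{\nabla\mu(\rho)}_{L^4}\le C\norm{\nabla\rho}_{L^4}\le2C$ is bounded. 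Estimating the forcing with \eqref{8.2}, the mapping property $\norm{Q}_{H^2}\le C(\norm{\nabla\Delta\rho}_{L^2}+\mathrm{l.o.t.})$ of the Bogovski\v i operator, $\alpha\le\rho\le\beta$ and Lemma \ref{Lemma2.3}, one gets $\norm{u}_{H^2}^2+\norm{\pi}_{H^1}^2\le C\big(\norm{u_t}_{L^2}^2+\norm{\nabla\Delta\rho}_{L^2}^2\big)+C_\varepsilon\cA(t)\cF(t)$. Feeding this back into the two inequalities above and choosing $\varepsilon$ small absorbs all the $\varepsilon\norm{u}_{H^2}^2$, $\varepsilon\norm{\Delta u}_{L^2}^2$, $\varepsilon\norm{\pi}_{H^1}^2$, $\varepsilon\norm{\nabla\Delta\rho}_{L^2}^2$ and $\varepsilon\norm{\nabla\rho_t}_{L^2}^2$ terms into the dissipation, leaving $\tfrac{d}{dt}\cF(t)+\nu\big(\cG(t)+\norm{\pi}_{H^1}^2\big)\le C\cA(t)\cF(t)$ with $\int_0^T\cA\,dt\le C\norm{\nabla u_0}_{L^2}^2\le 1$ after fixing $\delta$. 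Gr\"onwall's inequality and the bound $\cF(0)\le C\norm{\nabla u_0}_{L^2}^2$ — valid because $u_0\in H^1_0$, $\rho_0\in H^2$ with $\norm{\Delta\rho_0}_{L^2}\le C\norm{\nabla u_0}_{L^2}$ by Remark \ref{remark18}, and $\rho_t(\cdot,0)=-\dive(\rho_0 u_0)$ — then give \eqref{810}.

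The main obstacle is exactly the one signalled in the opening of this section: the reformulated velocity $v=u-c_0\nabla\rho^{-1}$ does not vanish on $\partial\Omega$, so one cannot integrate the viscous term by parts in the $(\rho,v)$-formulation of Section \ref{section3}; the fix is to keep all the velocity energy identities in terms of $u$ (where $u|_{\partial\Omega}=0$ is available) and to invoke the splitting $u=w+Q$ only to remove the non-zero divergence in the Stokes estimates and to annihilate $\int\nabla\pi\cdot u_t$. A secondary difficulty is that for the general viscosity $\mu(\rho)$ the factor $\norm{\nabla\mu(\rho)}_{L^4}\sim\norm{\nabla\rho}_{L^4}$ is only bounded, not small, so one must check it never multiplies a quantity needing a smallness gain — which is precisely why Lemma \ref{lemma8.4} keeps the $\norm{\nabla\mu}_{L^4}^2\norm{\nabla v}_{L^2}$ term separated from $\norm{F}_{L^2}$.
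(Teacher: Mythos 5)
Your plan is the same in spirit as the paper's proof (close a Gr\"onwall inequality for $\cF$ with dissipation $\cG$, using the density estimates already proved for $v\cdot n=0$, the splitting $u=w+Q$ to kill the pressure term, and an $H^2$-Stokes bound for the velocity), but you route the Stokes estimate differently, and that detour contains a gap. The paper applies Lemma~\ref{lemma8.4}(1) to the system \eqref{e7718} for $v=u-c_0\nabla\rho^{-1}$, which is divergence-free with non-homogeneous boundary data $\Phi=-c_0\nabla\rho^{-1}\in H^2$; the forcing there is $F=-\rho u_t-\rho u\cdot\nabla u+c_0\dive(2\mu\nabla^2\rho^{-1})$, whose $L^2$ norm is controlled by $\norm{u_t}_{L^2}$, lower-order products, and $\norm{\nabla\Delta\rho}_{L^2}$. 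You instead apply Lemma~\ref{lemma8.4}(1) with $\Phi=0$ to the system for $w=u-Q$, which puts $\dive[2\mu D(Q)]$ into the right-hand side, and you then invoke ``the mapping property $\norm{Q}_{H^2}\le C(\norm{\nabla\Delta\rho}_{L^2}+\text{l.o.t.})$ of the Bogovski\v i operator.'' That step is not available from the paper's Lemma~\ref{lemma27}, which only gives $\cB\colon L^p\cap L^p_0\to W^{1,p}_0$; the standard higher-order Bogovski\v i estimate $\cB\colon W^{m,p}_0\cap L^p_0\to W^{m+1,p}_0$ requires the data to vanish on $\partial\Omega$, and there is no reason for $c_0\Delta\rho^{-1}$ to do so. Without some replacement construction (e.g.\ solving $\Delta\phi=c_0\Delta\rho^{-1}$, $n\cdot\nabla\phi=0$, and subtracting a divergence-free Stokes lift of the tangential trace of $\nabla\phi$), the term $\norm{\Delta Q}_{L^2}$ inside $\dive[2\mu D(Q)]$ is simply not controlled, and the estimate $\norm{u}_{H^2}^2+\norm{\pi}_{H^1}^2\le C(\norm{u_t}_{L^2}^2+\norm{\nabla\Delta\rho}_{L^2}^2)+C_\varepsilon\cA(t)\cF(t)$ does not follow.

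The fix is exactly the paper's choice: keep the Stokes system in terms of $v$, accept the non-zero boundary datum $\Phi=-c_0\nabla\rho^{-1}$ (Lemma~\ref{lemma8.4}(1) allows it), and translate back to $\norm{\Delta u}_{L^2}$ via the pointwise identity $u=v+c_0\nabla\rho^{-1}$ as in \eqref{uv}. Two smaller remarks: (i) in your summary inequality you wrote $\varepsilon\norm{\nabla\rho_t}_{L^2}^2$ on the right, but the $\int\nabla\pi\cdot Q_t$ estimate you performed produces $C_\varepsilon\norm{\nabla\rho_t}_{L^2}^2$; this is harmless because the $\nabla\rho_t$-dissipation from \eqref{8824} can absorb an $O(1)$ multiple after a suitable linear combination, exactly as the paper does. (ii) Testing $\eqref{equation1.1}_2$ against $u_t$ rather than $w_t$ is fine, but it makes the $\varepsilon\norm{\pi}_{H^1}^2$ term appear explicitly and then be absorbed via the Stokes bound; the paper's choice of $w_t$ avoids ever introducing the pressure into the absorption step. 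Once the Bogovski\v i step is replaced as above, your argument matches the paper's.
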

\begin{Remark}
One should keep in mind that we always have
\begin{equation}
\norm{\rho_0-(\rho_0)_\Omega}_{L^2}\leq C\norm{\nabla\rho_0}_{L^2}\leq C\norm{u_0}_{L^2}\leq C\norm{\nabla u_0}_{L^2}.
\end{equation}
\end{Remark}

We temporarily assume that Lemma \ref{lemma8.3}--\ref{lemma8.2} are established and prove Proposition \ref{prop8.1}.
\begin{proof}[Proof of Proposition \ref{prop8.1}]
Since Lemma \ref{lemma8.2} is established, we have, for some $C_1>0$ depending only on $\Omega$,
\begin{equation*}
\sup_{t\in[0,T]}\norm{\nabla\rho}_{L^4}\leq C_1\sup_{t\in[0,T]}\norm{\Delta\rho}_{L^2}\leq C_1C\norm{\nabla u_0}_{L^2}\leq 1
\end{equation*}
provided
\begin{equation}
\norm{\nabla u_0}_{L^2}\leq \delta_1:= (C_1C)^{-1},
\end{equation}
and, by Lemma \ref{Lemma2.3},
\begin{align*}
\int_0^T\left(\norm{\Delta\rho}^4_{L^2}+\norm{\nabla u}^4_{L^2}\right)\,dt&\leq \left(\sup_{t\in[0,T]}\norm{\Delta\rho}_{L^2}^2+\sup_{t\in[0,T]}\norm{\nabla u}_{L^2}^2\right)\int_0^T \left(\norm{\Delta\rho}^2_{L^2}+\norm{\nabla u}^2_{L^2}\right)\,dt\\
&\leq C^2\norm{\nabla u_0}_{L^2}^4\leq \norm{\nabla u_0}_{L^2}^2
\end{align*}
provided
\begin{equation}
\norm{\nabla u_0}_{L^2}\leq \delta_2:=C^{-1},
\end{equation}
where $C$ is the constant in Lemma \ref{lemma8.2}.

Thus, if we choose 
$$\norm{\nabla u_0}_{L^2}\leq \delta:=\min\left\{1,\delta_1,\delta_2\right\},$$
then the proof of Proposition \ref{prop8.1} is completed.
\end{proof}

Now, we trun back to prove Lemma \ref{lemma8.3}--\ref{lemma8.2}. Since Lemma \ref{lemma8.3} has already been proved in Section \ref{section3}, we only give the proof for Lemma \ref{lemma8.2}.

\begin{proof}[Proof of Lemma \ref{lemma8.2}]
We start with the lower order estimate of $u$. Multiplying $w$ on the both sides of $\eqref{equation1.1}_2$ and integrating over $\Omega$, one has
\begin{equation}\label{equation8.16}
\begin{aligned}
\frac{d}{dt}\int \frac{1}{2}\rho|u|^2+\int 2\mu |D(u)|^2&=\int \rho u_t\cdot Q +\int \rho u\cdot \nabla u\cdot Q-\int \dive[2\mu D(u)]\cdot Q\\
&=\int \rho u_t\cdot Q +\int \rho u\cdot \nabla u\cdot Q+\int 2\mu D(u)\cdot \nabla Q\\
&:=\sum_{i=1}^3 S_i,
\end{aligned}
\end{equation}
where, using estimates \eqref{8.2}, Lemma \ref{Lemma2.3} and \ref{lemma4.1},
\begin{equation}\label{equation8.17}
\begin{cases}
|S_1|\leq C\norm{Q}_{L^2}\norm{u_t}_{L^2}\leq C_{\varepsilon_1}\norm{\nabla\rho}^2_{L^2}+\varepsilon_1\norm{u_t}^2_{L^2},\\
|S_2|\leq C\norm{u}_{L^4}\norm{\nabla u}_{L^2}\norm{Q}_{L^4}\leq C_{\varepsilon_2}\norm{\Delta\rho}^4_{L^2}\norm{u}_{L^2}^2+\varepsilon_2\norm{\nabla u}_{L^2}^2,\\
|S_3|\leq C\norm{\nabla Q}_{L^2}\norm{\nabla u}_{L^2}\leq C_{\varepsilon_3}\left(\norm{\Delta\rho}^2_{L^2}+\norm{\nabla\rho}^4_{L^4}\right)+\varepsilon_3\norm{\nabla u}^2_{L^2}.
\end{cases}
\end{equation}
Here, we still use the notation $\varepsilon_i\in (0,1/2]$ and the constant $C_{\varepsilon_i}$ as before. Combining \eqref{equation8.16} and \eqref{equation8.17} leads to, $\exists\, \nu>0$,
\begin{equation}\label{818}
\begin{aligned}
\frac{d}{dt}\norm{u}_{L^2}^2+\nu\norm{\nabla u}_{L^2}^2&\leq C_\varepsilon\left(\norm{\Delta\rho}^4_{L^2}\norm{u}_{L^2}^2+\norm{\Delta\rho}^2_{L^2}+\norm{\nabla\rho}_{L^4}^4\right)\\
&\quad+C_\varepsilon\norm{\nabla\rho}_{L^2}^2+\varepsilon\norm{u_t}_{L^2}^2,
\end{aligned}
\end{equation}

For the estimate of $u_t$, multiplying $w_t$ on the both sides of $\eqref{equation1.1}_2$, one has
\begin{equation}\label{8818}
\begin{aligned}
\int\rho|u_t|^2+\frac{d}{dt}\int\mu |D(u)|^2&=-\int\rho u_t\cdot Q_t-\int\rho u\cdot \nabla u\cdot w_t\\
&\quad+\int\mu_t|D(u)|^2-\int\dive[2\mu D(u)]\cdot Q_t\\
&:=\sum_{i=1}^4U_i.
\end{aligned}
\end{equation}
Using Lemma \ref{Lemma2.3} and \ref{lemma4.1}, \eqref{8.2}--\eqref{8.4} and Poincar\'e's inequality, we have
\begin{equation}
\norm{Q_t}^2_{L^2}\leq C\left(\norm{\nabla\rho_t}_{L^2}^2+\norm{\rho_t}_{L^4}^2\norm{\nabla\rho}_{L^4}^2\right)\leq C\norm{\nabla\rho_t}_{L^2}^2
\end{equation}
and, thus,
\begin{equation}\label{888}
\begin{cases}
|U_1|&\!\!\!\!\leq C\norm{Q_t}_{L^2}\norm{u_t}_{L^2}\leq  C_{\varepsilon_1}\norm{\nabla\rho_t}^2_{L^2}+\varepsilon_1\norm{u_t}^2_{L^2},\\
|U_2|&\!\!\!\!\leq C\norm{u}_{L^4}\norm{\nabla u}_{L^4}\norm{w_t}_{L^2}\\
&\!\!\!\!\leq C_{\varepsilon_2}\norm{u}_{L^4}^2\norm{\nabla u}_{L^4}^2+\varepsilon_2\norm{u_t}_{L^2}^2+C\norm{\nabla\rho_t}_{L^2}^2\\
&\!\!\!\!\leq C_{\varepsilon_2,\varepsilon_3}\norm{\nabla u}_{L^2}^4\norm{\nabla u}_{L^2}^2+C\norm{\nabla\rho_t}_{L^2}^2+\varepsilon_2\norm{u_t}_{L^2}^2+\varepsilon_3\norm{\Delta u}_{L^2}^2,\\
|U_3|&\!\!\!\!\leq C\norm{\rho_t}_{L^2}\norm{\nabla u}^2_{L^4}\leq C_{\varepsilon_4}\norm{\nabla u}_{L^2}^2\norm{\rho_t}_{L^2}^2+\varepsilon_4\norm{\Delta u}^2_{L^2}\\
&\!\!\!\!\leq C_{\varepsilon_4}\norm{\nabla u}_{L^2}^4\norm{\rho_t}_{L^2}^2+\varepsilon_4\left(\norm{\nabla\rho_t}^2_{L^2}+\norm{\Delta u}^2_{L^2}\right),\\
|U_4|&\!\!\!\!\leq C\left(\norm{\nabla\rho}_{L^4}\norm{\nabla u}_{L^4}+\normf{\nabla^2 u}_{L^2}\right)\norm{\nabla\rho_t}_{L^2}\\
&\!\!\!\!\leq C_{\varepsilon_5}\norm{\Delta\rho}_{L^2}^4\norm{\nabla u}_{L^2}^2+C_{\varepsilon_5}\norm{\nabla\rho_t}_{L^2}^2+\varepsilon_5\norm{\Delta u}_{L^2}^2.
\end{cases}
\end{equation}
Thus, combining \eqref{8818} and \eqref{888}, one has
\begin{equation}\label{equation821}
\begin{aligned}
\frac{d}{dt}\normf{\sqrt{\mu}D(u)}_{L^2}^2+\nu\norm{u_t}_{L^2}^2&\leq C_\varepsilon\left(\norm{\nabla u}_{L^2}^4+\norm{\Delta\rho}_{L^2}^4\right)\norm{\nabla u}_{L^2}^2+C_\varepsilon\norm{\nabla u}_{L^2}^4\norm{\rho_t}_{L^2}^2\\
&\quad+C_\varepsilon\norm{\nabla\rho_t}_{L^2}^2+\varepsilon\norm{\Delta u}_{L^2}^2.
\end{aligned}
\end{equation}

From the observation of \eqref{equation821}, one have to derive the estimate of $\Delta u$, or that of $\Delta v$. Unfortunately, we can not directly use, for example, \eqref{eq635} in the Section \ref{Section6}. The main obstacle here is that  \eqref{eq635} strongly depends on the conitnuity of $\rho$ and we have not closed the lower bounds of $v$ yet, so that we can not apply Lemma \ref{lemma61}--\ref{Lemma5.2} (notice that Lemma \ref{lemma61} requiring $v\in L^s(0,T;L^r)$). Consequently, we apply Lemma \ref{lemma8.4} with $\Phi=-c_0\nabla\rho^{-1}$ on 
\begin{equation}\label{e7718}
-\dive[2\mu D(v)]+ \nabla\pi= F,
\end{equation}
where 
\begin{equation*}
\begin{aligned}
 F=-\rho u_t- \rho u\cdot\nabla u+c_0\dive\left(2\mu \nabla^2\rho^{-1}\right)
\end{aligned}
\end{equation*}
and, using condition \eqref{8.4}, Lemma \ref{Lemma2.3} and \ref{lemma4.1} and Poincar\'e's inequality,
\begin{equation}\label{88.21}
\begin{aligned}
\norm{F}_{L^2}&\leq C\norm{u_t}_{L^2}+ C_{\varepsilon_2}\norm{u}_{L^4}^2\norm{\nabla u}_{L^2}+C\norm{\nabla\rho}_{L^4}^2\norm{\Delta\rho}_{L^2}\\
&\quad+ \varepsilon_2\norm{\Delta u}_{L^2}+C\norm{\nabla\Delta\rho}_{L^2}\\
&\leq C\norm{u_t}_{L^2}+ C_{\varepsilon_2}\left(\norm{u}_{L^4}^2+\norm{\nabla\rho}_{L^4}^2\right)\left(\norm{\nabla u}_{L^2}+\norm{\Delta\rho}_{L^2}\right) \\
&\quad+ \varepsilon_2\norm{v}_{H^2}+C\norm{\nabla\Delta\rho}_{L^2},
\end{aligned}
\end{equation}
where we have applied the estimate \eqref{8.3} and 
\begin{equation}\label{uv}
\begin{aligned}
\norm{\Delta u}_{L^2}&\leq C\left(\norm{\Delta v}_{L^2}+\norm{\nabla\Delta\rho^{-1}}_{L^2}\right)\\
&\leq C\left(\norm{\Delta v}_{L^2}+\norm{\nabla\rho}_{L^4}^2\norm{\Delta\rho}_{L^2}+\norm{\nabla\Delta\rho}_{L^2}\right)\quad \text{use }\eqref{8.4}\\
&\leq C\left(\norm{\Delta v}_{L^2}+\norm{\nabla\Delta\rho}_{L^2}\right).
\end{aligned}
\end{equation}
Then, using Lemma \ref{Lemma2.3}, Lemma \ref{lemma8.4} and Poincar\'e's inequality, combining the condition \eqref{8.4} and the estimates \eqref{88.21}, we can derive a similar estimate of \eqref{eq635}, that is,
\begin{equation*}
\norm{v}_{H^2}^2+\norm{\nabla \pi}_{L^2}^2\leq C\norm{u_t}^2_{L^2}+C\left(\norm{u}_{L^4}^4+\norm{\nabla\rho}_{L^4}^4\right)\left(\norm{\nabla u}^2_{L^2}+\norm{\Delta\rho}^2_{L^2}\right)+C\norm{\nabla\Delta\rho}_{L^2}^2.
\end{equation*}
Using again \eqref{uv} we derive that
\begin{equation}\label{821}
\begin{aligned}
\norm{\Delta u}_{L^2}^2+\norm{\nabla \pi}_{L^2}^2&\leq C\norm{u_t}^2_{L^2}+C\left(\norm{u}_{L^4}^4+\norm{\nabla\rho}_{L^4}^4\right)\left(\norm{\nabla u}^2_{L^2}+\norm{\Delta\rho}^2_{L^2}\right)\\
&\quad+C\norm{\nabla\Delta\rho}_{L^2}^2.
\end{aligned}
\end{equation}
Then, substituting \eqref{821} into \eqref{equation821}, $\exists\,\nu>0$,
\begin{equation}\label{822}
\begin{aligned}
\frac{d}{dt}\norm{\nabla u}_{L^2}^2+\nu\norm{\Delta u}_{L^2}^2+\nu\norm{u_t}_{L^2}^2&\leq C_\varepsilon\left(\norm{\nabla u}_{L^2}^4+\norm{\Delta\rho}_{L^2}^4\right)\cF(t)\\
&\quad+C_\varepsilon \norm{\nabla\rho_t}_{L^2}^2+\varepsilon\norm{\nabla\Delta\rho}_{L^2}^2.
\end{aligned}
\end{equation}

Next, in order to close the estimate \eqref{822}, we turn to get the bounds of $\nabla\rho_t$ and $\nabla\Delta\rho$. From the estimate \eqref{equation616} and 
$$
\begin{aligned}
\norm{v_t}_{L^2}&\leq C(\norm{u_t}_{L^2}+\norm{\nabla\rho^{-1}_t}_{L^2})\\
&\leq C\left(\norm{u_t}_{L^2}+\norm{\nabla\rho_t}_{L^2}+\norm{\nabla\rho}_{L^4}^2\norm{\rho_t}_{L^2}\right)\quad \text{use }\eqref{8.4}\\
&\leq C(\norm{u_t}_{L^2}+\norm{\nabla\rho_t}_{L^2})
\end{aligned}
$$
we have
\begin{equation}\label{8824}
\frac{d}{dt}\norm{\rho_t}_{L^2}^2 +\nu\norm{\nabla\rho_t}_{L^2}^2
\leq C_\varepsilon\left(\norm{\nabla\rho}_{L^4}^4+\norm{\Delta\rho}_{L^2}^2\right)\norm{\rho_t}_{L^2}^2+\varepsilon\norm{u_t}_{L^2}^2.
\end{equation}

On the other hand, for $\nabla\Delta\rho$, since the estimate \eqref{equa5.11} we derived in the Section \ref{Section6} is still valid, replacing $v$ by $u$ and $\nabla\rho$, one has
\begin{equation}
\begin{aligned}
\frac{d}{dt}\norm{\Delta\rho}_{L^2}^2+\nu\norm{\nabla\Delta\rho}_{L^2}^2&\leq C_\varepsilon\left(\norm{\Delta\rho}_{L^2}^4+\norm{\nabla u}_{L^2}^4\right)\left(\norm{\Delta\rho}_{L^2}^2+\norm{\nabla u}_{L^2}^2\right)+\varepsilon \norm{\Delta u}_{L^2}^2.
\end{aligned}
\end{equation}
Using this inequality together with \eqref{821}, we eliminate term $\Delta u$ and, then, we subsititute it, alonging with\eqref{8824}, into \eqref{822} to deduce that
\begin{equation}\label{827}
\frac{d}{dt}\cF(t)+\nu \cG(t)\leq C\left(\norm{\nabla u}_{L^2}^4+\norm{\Delta\rho}_{L^2}^4+\norm{\Delta\rho}_{L^2}^2\right)\cF(t).
\end{equation}

Since we have $\norm{\nabla\rho_0}_{H^1}\leq \norm{\nabla u_0}_{L^2}$ from Remark \ref{remark18}, applying Gr$\mathrm{\ddot{o}}$nwall's inequality for \eqref{827} and using Lemma \ref{lemma8.3}, we have
\begin{align}
\sup_{t\in [0,T]}\cF(t)+\int_0^T\cG(t)\,dt\leq C\left(\norm{\nabla u_0}_{L^2}^2+\norm{\nabla u_0}_{L^2}^4\right)\leq C\norm{\nabla u_0}_{L^2}^2,
\end{align}
which, turning back to \eqref{821} to get the bound for $\pi$, implies the estimate \eqref{810}. 
Therefore, we complete the proof of Lemma \ref{lemma8.2}.
\end{proof}

\section{Proof of Theorem \ref{theo:Serrin-type}}\label{Section7}

In this section, we devote to accomplish the proofs of Theorem \ref{theo:Serrin-type} in several steps. Our proofs are basically relying on the approach in \cite{cho}. In subsection \ref{LP},  we are going to solve the linearized system and give some basic uniform estimates, which is critical for the existence proofs in next few subsections. Next, in subsection \ref{PR}--\ref{AS}, we will construct an approximate system and use the contraction mapping theorem to show that it admits a unique smooth solution. Finally, in subsection \ref{P}, we will prove Theorem \ref{theo:Serrin-type}.

\subsection{Linearized Problem}\label{LP}

Consider  the following linearized problem
\begin{equation}\label{linearized}
\begin{cases}
\rho_t+\Phi\cdot\nabla\rho-\dive(\varphi^{-1}\nabla\rho)=0,\\
\rho u_t+\rho(\Phi+\nabla\varphi^{-1})\cdot\nabla u-\dive(2\mu D)+\nabla p=0,\\
\dive u=c_0\Delta\rho^{-1},\\
\rho|_{t=0}=\rho_0,\quad u|_{t=0}=u_0,\\ \alpha\leq\rho_0\leq\beta,\,\,(\rho_0,u_0)\in [C^\infty(\overline\Omega)]^4\text{ satisfying }\eqref{compat},\\
(\rho,u)\text{ satisfies one of the bundary conditions }\eqref{A}-\eqref{C},
\end{cases}\tag{L.P.}
\end{equation}
where $\int p=0$, $\mu=\mu(x,t)\in H^1(0,T;C^\infty(\overline \Omega))$ is a positive function.

\begin{Lemma}[Linearized problem]\label{LP}
Assume that the hypotheses of Theorem \ref{theo:Serrin-type} are satisfied by the data $(\rho_0,u_0)$. If $\Phi$ and $\varphi$ satisifes the following conditions
\begin{equation}\label{2215}
\begin{cases}
\Phi\in C([0,T];H^1)\cap L^2(0,T;H^2),\quad \Phi_t\in L^2(0,T;L^2),\\
\varphi\in C([0,T];H^2)\cap L^2(0,T;H^3),\quad \varphi_t\in C([0,T];L^2)\cap L^2(0,T;H^1),\\
c^{-1}\leq \varphi\leq c,\quad \dive\Phi=0\text{ in }\Omega,
\end{cases}
\end{equation}
then there exists a unique global strong solution $(\rho,u,p)$ to the problem \eqref{linearized} satisfying \eqref{strong}.
\end{Lemma}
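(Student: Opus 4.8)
The plan is to solve the linearized problem \eqref{linearized} by decoupling it: first solve the scalar parabolic equation for $\rho$, then feed $\rho$ (hence the coefficients $\rho$, $\rho(\Phi+\nabla\varphi^{-1})$, $\mu$ and the forcing through $\dive u = c_0\Delta\rho^{-1}$) into the linear Stokes-type system for $(u,p)$. For the $\rho$-equation $\rho_t + \Phi\cdot\nabla\rho - \dive(\varphi^{-1}\nabla\rho) = 0$, note that under \eqref{2215} we have $\dive\Phi = 0$, $\Phi \in C([0,T];H^1)\cap L^2(0,T;H^2)$ and $\varphi^{-1}$ bounded above and below with $\varphi^{-1}\in C([0,T];H^2)$, so this is a uniformly parabolic linear equation in divergence form with coefficients regular enough to apply standard linear parabolic theory (after regularizing and passing to the limit, or by Galerkin). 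I would first establish existence and uniqueness of a weak solution with the claimed regularity $\rho\in C([0,T];H^2)\cap L^2(0,T;H^3)$, $\rho_t\in C([0,T];L^2)\cap L^2(0,T;H^1)$: the $L^2$, $H^1$, $H^2$ energy estimates for $\rho$ follow by testing with $\rho$, $-\Delta\rho$, and differentiating in $t$ and testing with $\rho_t$ (exactly the pattern of Lemma~\ref{Lemma4.3}, Lemma~\ref{lemma6.4}), using Lemma~\ref{Lemma2.3}, Lemma~\ref{Lemma2.2} and Remark~\ref{remark2.4} to absorb the top-order terms; the boundary conditions (Neumann or the constant Dirichlet value) are compatible with these test functions. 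The maximum principle gives $\alpha\le\rho\le\beta$ as in Lemma~\ref{lemma4.1}, which is what makes the equation for $u$ non-degenerate.

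Once $\rho$ is in hand, the equation for $u$ is $\rho u_t + \rho(\Phi+\nabla\varphi^{-1})\cdot\nabla u - \dive(2\mu D(u)) + \nabla p = 0$ with the prescribed divergence $\dive u = c_0\Delta\rho^{-1}\in L^2(0,T;H^1)$ and the corresponding boundary data. I would remove the inhomogeneous divergence by writing $u = \tilde u + Q$ where $Q = \mathcal B[c_0\Delta\rho^{-1}]$ (or $\mathcal C$ in the non-slip case) from Lemma~\ref{lemma27}, so that $\tilde u$ is divergence-free with the homogeneous version of the boundary condition, and $Q$, $Q_t$ are controlled by $\nabla\rho$, $\nabla\rho_t$ via \eqref{8.2}. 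The resulting equation for $\tilde u$ is a linear non-stationary Stokes system with a bounded positive density coefficient $\rho$ and a transport term with coefficient in $L^2(0,T;H^1)\cap\ldots$; this is solved by Galerkin in the appropriate space ($V_0^{1,2}$ for \eqref{C}, or the slip-space variant $H^1_\omega\cap\{\dive=0\}$ for \eqref{A}, or $V_0^{1,2}$ after subtracting a lift for \eqref{B}). The a priori estimates are: multiply by $\tilde u$ for the basic energy bound, multiply by $\tilde u_t$ for $\sup_t\|\nabla\tilde u\|_{L^2}^2 + \int_0^T\|\tilde u_t\|_{L^2}^2$, and then recover $\tilde u\in L^2(0,T;H^2)$ and $p\in L^2(0,T;H^1)$ from the stationary Stokes estimates Lemma~\ref{lemma2.5}, Lemma~\ref{Lemma5.2} or Lemma~\ref{lemma8.4} (depending on whether $\mu$ is constant or $\rho$-dependent, and on the boundary condition), treating $\rho u_t$, the transport term, and $\dive(2\mu D(Q))$ as the right-hand side $F\in L^2(0,T;L^2)$. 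Uniqueness of $(u,p)$ follows from linearity and the energy estimate applied to the difference; normalizing $\int p = 0$ fixes $p$.

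The main obstacle is the regularity matching at top order and, in particular, the estimate of $\nabla\Delta\rho$ and of $\Delta u$ near the boundary in the non-slip case \eqref{C}, where $u|_{\partial\Omega} = c_0\nabla\rho^{-1}\ne 0$ is only as regular as $\nabla\rho$; this is why one must route through $\tilde u = u - Q$ and invoke Lemma~\ref{lemma8.4} rather than a naive energy argument, and why the hypothesis $\varphi\in L^2(0,T;H^3)$ with $\varphi_t\in L^2(0,T;H^1)$ is needed to give $\rho$ enough regularity that $\Delta\rho^{-1}$ lands in $L^2(0,T;H^1)$ and $\Delta\rho^{-1}_t$ in $L^2(0,T;L^2)$. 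The other delicate point is consistency of the initial data: the compatibility condition \eqref{compat} together with the elliptic estimate in Remark~\ref{remark18} ensures $\rho_0\in H^2$ and $u_0$ has the right trace, so that $\rho_t(\cdot,0)\in L^2$ and $\nabla\tilde u(\cdot,0)\in L^2$, which is what the time-differentiated estimates require; I would spell this out carefully. Everything else is a routine (if lengthy) Galerkin-plus-a-priori-estimates argument, so the writeup would state the scheme, record the key estimates in the order above, and then pass to the limit, noting that all constants depend only on the data and on the bounds in \eqref{2215}.
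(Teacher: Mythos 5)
Your proposal is correct and follows essentially the same scheme as the paper's proof of Lemma~\ref{LP}: decouple the system, solve $\eqref{linearized}_1$ by linear parabolic theory with the maximum principle giving $\alpha\le\rho\le\beta$, run the energy hierarchy for $\rho$ (testing against $\rho$, $-\Delta\rho$, $-\nabla\Delta\rho\nabla$, and against $\rho_t$ after differentiating in $t$), then pass $\rho$ into the momentum equation, absorb the inhomogeneous divergence, and close by testing against the velocity and its time derivative plus the elliptic Stokes estimates (Lemma~\ref{lemma2.5}/\ref{Lemma5.2}/\ref{lemma8.4}) to recover $H^2\times H^1$ regularity for $(u,p)$. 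The one place where you deviate from the paper is the decomposition used in cases \eqref{A} and \eqref{B}: you propose uniformly $\tilde u = u - Q$ with $Q = \mathcal B[c_0\Delta\rho^{-1}]$, whereas the paper converts to the $v$-formulation $v = u - c_0\nabla\rho^{-1}$ and works with the reformulated system \eqref{2231}. Both yield a divergence-free unknown, but the $v$-decomposition gives strictly cleaner boundary data: in case \eqref{A'} one has $v\cdot n=0$ with the slip relation on $\curle v$, and in case \eqref{B'} one gets $v|_{\partial\Omega}=0$ exactly (since $\rho|_{\partial\Omega}=\tilde\rho$ forces $\nabla\rho^{-1}$ to be purely normal there). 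Your $\tilde u = u-Q$ approach would instead leave $\tilde u$ with the same inhomogeneous boundary value as $u$ in case \eqref{B} and a $Q$-dependent curl condition in case \eqref{A}, which is workable but generates extra boundary terms to control. The paper reserves the Bogovskii decomposition $w=u-Q$ precisely for case \eqref{C}, where $u|_{\partial\Omega}=0$ makes it the natural choice (and where $\mathcal B$, not $\mathcal C$, is what is needed, since $u\cdot n=0$ on $\partial\Omega$ guarantees $\int_\Omega c_0\Delta\rho^{-1}=0$). Aside from this choice-of-coordinates point, and the cosmetic difference of Galerkin versus citation of standard linear existence theory (\cite{ladyzhenskaia,cho,solonnikov}), your argument matches the paper's.
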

\begin{proof}
We only give the a priori estimates. The unique sovablity is obvious, since we can first solve $\eqref{linearized}_1$ by the theories of linear parabolic equations, see \cite{ladyzhenskaia} and, then, derive $u$ from $\eqref{linearized}_2$, see \cite{cho, solonnikov}. 

Firstly, the sup-bound and lower order estimates of $\rho$ has been proved in Section \ref{section3}. See Lemma \ref{lemma4.1} and \ref{lemma43}, that is,
\begin{equation}\label{554.}
\alpha\leq \rho\leq \beta,\quad \norm{\rho}_{L^2}^2+\int_0^t\norm{\nabla\rho}_{L^2}^2\,ds\leq C(\Omega,c)\,\,(\text{or }C(\Omega,c,\tilde\rho)).
\end{equation}

Next, we multiply $-\Delta\rho$ on $\eqref{linearized}_1$ and integrate over $\Omega$, then, using Lemma \ref{Lemma2.3}, we have
\begin{equation*}
\begin{aligned}
\frac{d}{dt}\norm{\nabla\rho}_{L^2}^2+\nu\norm{\Delta\rho}_{L^2}^2&\leq C\int \left(|\Phi|+|\nabla\varphi|\right)|\nabla\rho||\Delta\rho|\\
&\leq C\left(\norm{\Phi}_{L^4}^4+\norm{\nabla\varphi}_{L^4}^4\right)\norm{\nabla\rho}_{L^2}^2+\frac{\nu}{2}\norm{\Delta\rho}_{L^2}^2.
\end{aligned}
\end{equation*}
Thus, 
\begin{equation}\label{554}
\frac{d}{dt}\norm{\nabla\rho}_{L^2}^2+\nu\norm{\Delta\rho}_{L^2}^2\leq C\left(\norm{\Phi}_{L^4}^4+\norm{\nabla\varphi}_{L^4}^4\right)\norm{\nabla\rho}_{L^2}^2.
\end{equation}
By virtue of Gr$\mathrm{\ddot{o}}$nwall's inequality, we derive
\begin{equation}\label{2218}
\norm{\nabla\rho}_{L^2}^2(t)+\nu\int_0^t\norm{\Delta\rho}_{L^2}^2\,ds\leq \norm{\nabla\rho_0}_{L^2}^2\exp\left\{C\int_0^t\left(\norm{\Phi}_{L^4}^4+\norm{\nabla\varphi}_{L^4}^4\right)\,ds\right\}.
\end{equation}
To get the higher order bounds, if $n\cdot\nabla\rho=0$ on $\partial\Omega$, we apply $-\nabla\Delta\rho\nabla$ on both sides of $\eqref{linearized}_1$ and integrate over $\Omega$, then
\begin{equation*}
\begin{aligned}
\frac{d}{dt}\int \frac{1}{2}\abs{\Delta\rho}^2 + \int \varphi^{-1}\abs{\nabla\Delta\rho}^2&= \int\nabla\Delta\rho\cdot\nabla \Phi\cdot\nabla\rho + \int \Phi\cdot\nabla^2\rho\cdot\nabla\Delta\rho\\
&\quad-\int 2 \varphi^{-3}|\nabla\varphi|^2\nabla\rho\cdot\nabla\Delta\rho+\int\varphi^{-2}\nabla\varphi\cdot\nabla^2\rho\cdot\nabla\Delta\rho\\
&\quad +\int\varphi^{-2}\nabla\rho\cdot\nabla^2\varphi\cdot\nabla\Delta\rho- \int\varphi^{-2}\Delta\rho\nabla\varphi\cdot\nabla\Delta\rho.
\end{aligned}
\end{equation*}
Then, applying Lemma \ref{Lemma2.3} and Poincar\,e's inequality, we have
\begin{equation*}
\begin{aligned}
&\frac{d}{dt}\norm{\Delta\rho}_{L^2}^2+\nu\norm{\nabla\Delta\rho}_{L^2}^2\\
&\leq \left(\norm{\nabla\Phi}_{L^4}\norm{\nabla\rho}_{L^4}+\norm{\Phi}_{L^4}\norm{\Delta\rho}_{L^4}\right)\norm{\nabla\Delta\rho}_{L^2}+C\norm{\nabla\varphi}_{L^8}^2\norm{\nabla\rho}_{L^4}\norm{\nabla\Delta\rho}_{L^2}\\
&\quad+C\left(\norm{\nabla\varphi}_{L^4}\norm{\Delta\rho}_{L^4}+\norm{\nabla^2\varphi}_{L^4}\norm{\nabla\rho}_{L^4}\right)\norm{\nabla\Delta\rho}_{L^2}\\
&\leq C\left(\norm{\Phi}_{W^{1,4}}^4+\norm{\varphi}_{W^{2,4}}^4+\norm{\nabla\varphi}_{L^8}^8\right)\norm{\Delta\rho}_{L^2}^2   +\frac{\nu}{2}\norm{\nabla\Delta\rho}_{L^2}^2,
\end{aligned}
\end{equation*}
that is
\begin{equation}\label{557}
\frac{d}{dt}\norm{\Delta\rho}_{L^2}^2+\nu\norm{\nabla\Delta\rho}_{L^2}^2\leq C\left(\norm{\Phi}_{W^{1,4}}^4+\norm{\varphi}_{W^{2,4}}^4+\norm{\nabla\varphi}_{L^8}^8\right)\norm{\Delta\rho}_{L^2}^2,
\end{equation}
which, using Gr$\mathrm{\ddot{o}}$nwall's inequality, leads to
\begin{equation}\label{558}
\begin{aligned}
&\norm{\Delta\rho}_{L^2}^2(t)+\nu\int_0^t\norm{\nabla\Delta\rho}_{L^2}^2\,ds\\
&\leq C\norm{\Delta\rho_0}_{L^2}^2\exp\left\{\int_0^t\left(\norm{\Phi}_{W^{1,4}}^4+\norm{\varphi}_{W^{2,4}}^4+\norm{\nabla\varphi}_{L^8}^8\right)\,ds\right\}.
\end{aligned}
\end{equation}

For $\rho$ satisfying the non-homogeneous Dirichlet condition,  that is, $\rho|_{\partial\Omega}=\tilde\rho$, taking $\rho_t\partial_t$ on $\eqref{linearized}_1$, one has
\begin{equation*}
\begin{aligned}
\frac{d}{dt}\int\frac{1}{2}\abs{\rho_t}^2 +\nu\int\abs{\nabla\rho_t}^2 &\leq\int \abs{\Phi_t}\abs{\nabla\rho}\abs{\rho_t}+ C\int \abs{\varphi_t}|\rho_t||\nabla\varphi||\nabla\rho|\\
&\quad+C\int \abs{\rho_t}\abs{\nabla\rho_t}\abs{\nabla\varphi}+C\int \abs{\rho_t}\abs{\nabla\varphi_t}\abs{\nabla\rho}\\
&\quad+C\int |\varphi_t||\rho_t|\abs{\Delta\rho}.
\end{aligned}
\end{equation*}
Simiarly, it follows from Lemma \ref{Lemma2.3} that
\begin{equation*}
\begin{aligned}
&\frac{d}{dt}\norm{\rho_t}_{L^2}^2+\nu\norm{\nabla\rho_t}_{L^2}^2\\
&\leq C\left(\norm{\Phi_t}_{L^2}+\norm{\nabla\varphi}_{L^4}\norm{\varphi_t}_{L^4}\right)\norm{\nabla\rho}_{L^4}\norm{\rho_t}_{L^4}\\
&\quad+C\left(\norm{\nabla\varphi}_{L^4}\norm{\nabla\rho_t}_{L^2}+\norm{\nabla\rho}_{L^4}\norm{\nabla\varphi_t}_{L^2}\right)\norm{\rho_t}_{L^4}+C\norm{\Delta\rho}_{L^2}\norm{\varphi_t}_{L^4}\norm{\rho_t}_{L^4}\\
&\leq \left(\norm{\Phi_t}_{L^2}^2+\norm{\varphi_t}_{L^4}^4+\norm{\nabla\varphi}_{L^4}^4+\norm{\nabla\varphi_t}_{L^2}^2\right)\norm{\rho_t}_{L^2}^2+\frac{\nu}{2}\norm{\nabla\rho_t}_{L^2}^2\\
&\quad+C\left(\norm{\Phi_t}^2_{L^2}+\norm{\varphi_t}^4_{L^4}+\norm{\nabla\varphi}^4_{L^4}+\norm{\nabla\varphi_t}^2_{L^2}\right)\norm{\nabla\rho}_{L^2}^2+C\norm{\Delta\rho}_{L^2}^2.
\end{aligned}
\end{equation*}
that is,
\begin{equation}\label{5510}
\begin{aligned}
&\frac{d}{dt}\norm{\rho_t}_{L^2}^2+\nu\norm{\nabla\rho_t}_{L^2}^2\\
&\leq C\left(\norm{\Phi_t}_{L^2}^2+\norm{\varphi_t}_{L^4}^4+\norm{\nabla\varphi}_{L^4}^4+\norm{\nabla\varphi_t}_{L^2}^2\right)\left(\norm{\rho_t}_{L^2}^2+\norm{\nabla\rho}_{L^2}^2\right)+C\norm{\Delta\rho}_{L^2}^2.
\end{aligned}
\end{equation}
Then, using Gr$\mathrm{\ddot{o}}$nwall's inequality and \eqref{2218}, one has
\begin{equation}\label{2224}
\begin{aligned}
\norm{\rho_t}_{L^2}^2(t)+\nu\int_0^t\norm{\nabla\rho_t}_{L^2}^2\,ds\leq C(\Omega,c,\alpha,\beta,\Phi,\varphi,u_0).
\end{aligned}
\end{equation}
Noticing that \eqref{2224} also holds for the Neumann case. 

Next, we take $L^2$-norm on $\eqref{linearized}_1$ and use Lemma \ref{Lemma2.3} to get
\begin{equation}\label{equation5512}
\begin{aligned}
\norm{\Delta\rho}_{L^2}^2
\leq C\norm{\rho_t}_{L^2}^2+C\left(\norm{\Phi}_{L^4}^4+\norm{\nabla\varphi}_{L^4}^4\right)\norm{\nabla\rho}_{L^2}^2
\end{aligned}
\end{equation}
and take $\nabla$ on both sides of $\eqref{linearized}_1$ to get
\begin{equation}\label{equation5513}
\begin{aligned}
\norm{\nabla\Delta\rho}_{L^2}^2\leq  C\norm{\nabla\rho_t}_{L^2}^2+C\left(\norm{\Phi}_{W^{1,4}}^4+\norm{\nabla\varphi}_{L^8}^8+\norm{\nabla\varphi}_{L^4}^4\right)\norm{\Delta\rho}_{L^2}^2.
\end{aligned}
\end{equation}
Thus, we use \eqref{equation5512} and \eqref{equation5513}, alonging with \eqref{2218} and \eqref{5510}, to deduce that
\begin{equation}\label{2227}
\begin{aligned}
&\norm{\Delta\rho}_{L^2}^2(t)+\int_0^t\norm{\nabla\Delta\rho}_{L^2}^2\,ds\leq  C(\Omega,c,\alpha,\beta,\Phi,\varphi,u_0).
\end{aligned}
\end{equation}

In conclusion, for both cases, it follows from \eqref{554.}, \eqref{2218}, \eqref{558}, \eqref{2224} and \eqref{2227} that
\begin{equation}\label{equation515}
\begin{aligned}
\norm{\rho_t}_{L^2}^2(t)+\norm{\nabla\rho}_{H^1}^2(t)+\int_0^t\left(\norm{\rho_t}_{H^1}^2+\norm{\nabla\rho}_{H^2}^2\right)\,ds\leq C(\Omega,c,\alpha,\beta,\Phi,\varphi,u_0).
\end{aligned}
\end{equation}

The next part is estimating $v$ (for case \eqref{A} or \eqref{B}) or $u$ (for case \eqref{C}). We first treat the case for $u$ satisfying \eqref{C}. Note that $\eqref{linearized}_1$ is equivalent to
\begin{equation*}
\rho_t+\dive[\rho(\Phi+\nabla\varphi^{-1})]=\Delta(\rho\varphi^{-1}).
\end{equation*}
Thus, if we multiply $\eqref{linearized}_2$ by $w:=u-Q$ with $Q=\cB[c_0\Delta\rho^{-1}]$ and integrate over $\Omega$, we have
\begin{equation*}
\begin{aligned}
&\frac{d}{dt}\int \frac{1}{2}\rho|u|^2+\int 2\mu|D(u)|^2\\
&=\int \Delta\left({\rho}{\varphi}^{-1}\right)\frac{|u|^2}{2}+\int \rho u_t\cdot Q+\int \rho (\Phi+\nabla\varphi^{-1})\cdot\nabla u\cdot Q+\int 2\mu D(u)\cdot \nabla Q.
\end{aligned}
\end{equation*}
Then, using Lemma \ref{Lemma2.3} and \ref{Lemma2.2}, we have
\begin{equation*}
\begin{aligned}
\frac{d}{dt}\norm{\sqrt{\rho}u}_{L^2}^2+\nu\norm{\nabla u}_{L^2}^2&\leq C\left(\norm{\Delta \rho}_{L^2}+\norm{\nabla\rho}_{L^4}\norm{\nabla\varphi}_{L^4}+\norm{\nabla\varphi}_{L^4}^2+\norm{\Delta\varphi}_{L^2}\right)\norm{u}_{L^4}^2\\
&\quad+C\norm{\Phi+\nabla\varphi^{-1}}_{L^4}\norm{Q}_{L^4}\norm{\nabla u}_{L^2}+C\norm{\nabla Q}_{L^2}\norm{\nabla u}_{L^2}\\
&\quad +C\norm{Q}_{L^2}\norm{u_t}_{L^2}\\
&\leq C\left(\norm{\Delta \rho}^2_{L^2}+\norm{\nabla\rho}^4_{L^4}+\norm{\nabla\varphi}_{L^4}^4+\norm{\Delta\varphi}_{L^2}^2\right)\norm{u}_{L^2}^2\\
&\quad+C\norm{\Phi+\nabla\varphi^{-1}}^4_{L^4}\norm{Q}_{L^2}^2+C\norm{\nabla Q}_{L^2}^2+\frac{\nu}{2}\norm{\nabla u}_{L^2}^2\\
&\quad +C_\varepsilon\norm{Q}_{L^2}^2+\varepsilon\norm{u_t}_{L^2}^2,
\end{aligned}
\end{equation*}
that is,
\begin{equation}\label{equation518}
\begin{aligned}
\frac{d}{dt}\norm{\sqrt{\rho}u}_{L^2}^2+\nu\norm{\nabla u}_{L^2}^2&\leq C\left(\norm{\Delta \rho}^2_{L^2}+\norm{\nabla\rho}^4_{L^4}+\norm{\nabla\varphi}_{L^4}^4+\norm{\Delta\varphi}_{L^2}^2\right)\norm{u}_{L^2}^2\\
&\quad+C\norm{\Phi+\nabla\varphi^{-1}}^4_{L^4}\norm{\nabla\rho}_{L^2}^2+C_\varepsilon\norm{Q}_{H^1}^2+\varepsilon\norm{u_t}_{L^2}^2,\\
&\leq C\left(\norm{\Delta \rho}^2_{L^2}+\norm{\nabla\rho}^4_{L^4}+\norm{\nabla\varphi}_{L^4}^4+\norm{\Delta\varphi}_{L^2}^2\right)\norm{u}_{L^2}^2\\
&\quad+C\norm{\Phi+\nabla\varphi^{-1}}^4_{L^4}\norm{\nabla\rho}_{L^2}^2+C_\varepsilon\left(\norm{\nabla \rho}^4_{L^4}+\norm{\nabla\rho}_{H^1}^2\right)\\
&\quad +\varepsilon\norm{u_t}_{L^2}^2,
\end{aligned}
\end{equation}

Next, for the estimate of $u_t$, multiplying $w_t=u_t-Q_t$ on the both sides of $\eqref{linearized}_2$, one has
\begin{equation*}
\begin{aligned}
\int\rho|u_t|^2+\frac{d}{dt}\int\mu|D(u)|^2&=-\int\rho u_t\cdot Q_t-\int\rho(\Phi+\nabla\varphi^{-1})\cdot \nabla u\cdot w_t\\
&\quad+\int\mu_t|D(u)|^2-\int\dive[2\mu D(u)]\cdot Q_t.
\end{aligned}
\end{equation*}
Using again Lemma \ref{Lemma2.3}, applying Poincar\'e's inequality and the fact that
\begin{equation*}
\norm{Q_t}^2_{L^2}\leq C\left(\norm{\nabla\rho_t}_{L^2}^2+\norm{\rho_t}_{L^4}^2\norm{\nabla\rho}_{L^4}^2\right),
\end{equation*}
we obtain 
\begin{equation}\label{equation520}
\begin{aligned}
&\norm{u_t}_{L^2}^2+\frac{d}{dt}\norm{\sqrt{\mu}D(u)}_{L^2}^2\\
&\leq C\left(\norm{\Phi+\nabla\varphi^{-1}}_{L^4}^2+\norm{\mu_t}_{L^2}+\norm{\nabla\mu}_{L^4}^2\right)\norm{\nabla u}^2_{L^4}+C\norm{Q_t}_{L^2}^2+C\norm{Q_t}_{L^2}\norm{\Delta u}_{L^2}\\
&\leq C_{\varepsilon}\left(\norm{\Phi+\nabla\varphi^{-1}}_{L^4}^4+\norm{\mu_t}^2_{L^2}+\norm{\nabla\mu}_{L^4}^4+\norm{\nabla\rho}_{L^4}^4\right)\norm{\nabla u}^2_{L^2}+\varepsilon\norm{\Delta u}_{L^2}^2\\
&\quad+C_\varepsilon\left(\norm{\nabla\rho}_{L^4}^4\norm{\rho_t}_{L^2}^2+\norm{\nabla\rho_t}_{L^2}^2\right)
\end{aligned}
\end{equation}

To estimate $\Delta u$, we change $\eqref{linearized}_2$ into the form
\begin{equation*}
-\mu\Delta v+\nabla p=2\nabla\mu\cdot D(u)-\rho u_t-\rho(\Phi+\nabla\varphi^{-1})\cdot\nabla u+2c_0\mu\nabla\Delta\rho^{-1},
\end{equation*}
which, using Lemma \ref{Lemma5.2}, leads to
\begin{equation}\label{equation5515}
\begin{aligned}
\norm{v}_{H^2}^2+\norm{p}_{H^1}^2&\leq C\norm{u_t}_{L^2}^2+C\norm{\nabla \Delta\rho^{-1}}_{L^2}^2\\
&\quad+C\left(\norm{\nabla\mu}_{L^4}^2+\norm{\Phi+\nabla \varphi^{-1}}_{L^4}^2\right)\norm{\nabla u}_{L^4}^2,
\end{aligned}
\end{equation}
that is, using Lemma \ref{Lemma2.3},
\begin{equation}\label{0517}
\begin{aligned}
\norm{\Delta u}_{L^2}^2+\norm{p}_{H^1}^2&\leq C\norm{u_t}_{L^2}^2+C\norm{\nabla \Delta\rho^{-1}}_{L^2}^2\\
&\quad+C\left(\norm{\nabla\mu}_{L^4}^4+\norm{\Phi+\nabla \varphi^{-1}}_{L^4}^4\right)\norm{\nabla u}_{L^2}^2.
\end{aligned}
\end{equation}
Then, using this bound together with \eqref{equation520}, we have
\begin{equation}\label{equation5.19}
\begin{aligned}
&\norm{u_t}_{L^2}^2+\frac{d}{dt}\norm{\sqrt{\mu}D(u)}_{L^2}^2\\
&\leq C_{\varepsilon}\left(\norm{\Phi+\nabla\varphi^{-1}}_{L^4}^4+\norm{\mu_t}^2_{L^2}+\norm{\nabla\mu}_{L^4}^4+\norm{\nabla\rho}_{L^4}^4\right)\norm{\nabla u}^2_{L^2}+\varepsilon\norm{\nabla\Delta \rho^{-1}}_{L^2}^2\\
&\quad+C\norm{\nabla\rho}_{L^4}^4\norm{\rho_t}_{L^2}^2+C\norm{\nabla\rho_t}_{L^2}^2
\end{aligned}
\end{equation}
Finally, combining \eqref{equation518} and \eqref{equation5.19}, then, using Gr$\mathrm{\ddot{o}}$nwall's inequality and the bound \eqref{equation515}, we obtain the a priori estimates for $u$.

For case \eqref{A} or \eqref{B}, as we have said at the end of Section \ref{Section1}, we convert \eqref{linearized} into
\begin{equation}\label{2231}
\begin{cases}
\rho_t+\Phi\cdot\nabla\rho-\dive(\varphi^{-1}\nabla\rho)=0,\\
\quad\\
\begin{cases}
\rho v_t+\rho(\Phi+\nabla\varphi^{-1})\cdot\nabla v-\dive(2\mu D(v))+\nabla p\\
=c_0\nabla(\log\rho)_t-c_0\rho(\Phi+\nabla\varphi^{-1})\cdot\nabla^2\rho^{-1}+c_0\dive(2\mu \nabla^2\rho^{-1}),
\end{cases}\\
\quad\\
\dive v=0.
\end{cases}
\end{equation}
Then, we can apply the energy arguements analogous to the case \eqref{C}. More precisely, multiplying $v$ on both sides of $\eqref{2231}_2$ and integrating over $\Omega$, we have, for all $\varepsilon\in (0,1/2]$,
\begin{equation}\label{2232}
\begin{aligned}
&\frac{d}{dt}\int\frac{1}{2}\rho\abs{v}^2-\int \dive{[2\mu D(v)]}\cdot v \\
&=\int \Delta(\rho\varphi^{-1})\frac{|v|^2}{2}+ \int c_0\dive{\left(2\mu\nabla^2\rho^{-1}\right)}\cdot v- \int c_0\rho(\Phi+\nabla\varphi^{-1})\cdot\nabla^2\rho^{-1}\cdot v\\
&\leq C\left(\norm{\Delta \rho}_{L^2}+\norm{\nabla\rho}_{L^4}\norm{\nabla\varphi}_{L^4}+\norm{\nabla\varphi}_{L^4}^2+\norm{\Delta\varphi}_{L^2}\right)\norm{v}_{L^4}^2\\
&\quad+C\norm{\nabla\mu}_{L^4}\norm{\nabla^2\rho^{-1}}_{L^2}\norm{v}_{L^4}+C\norm{\nabla\Delta\rho^{-1}}_{L^2}\norm{v}_{L^2}\\
&\quad+C\left(\norm{\Phi}_{L^4}+\norm{\nabla\varphi}_{L^4}\right)\norm{\nabla^2\rho^{-1}}_{L^2}\norm{v}_{L^4}\\
&\leq C_\varepsilon\left(\norm{\Delta \rho}^2_{L^2}+\norm{\nabla\rho}^4_{L^4}+\norm{\nabla\mu}_{L^4}^4+\norm{\Phi}_{L^4}^4+\norm{\nabla\varphi}_{L^4}^4+\norm{\Delta\varphi}^2_{L^2}\right)\norm{v}_{L^2}^2\\
&\quad +\varepsilon\left(\norm{\nabla v}_{L^2}^2+\norm{\nabla\Delta\rho^{-1}}_{L^2}^2\right)
\end{aligned}
\end{equation}
where we have used Poincar\'e's inequality for the last inequality. For the term $-\int \dive{[2\mu(\rho)D(v)]}\cdot v$, we directly use the results in Lemma \ref{Lemma3.4}, that is, for case \eqref{B'}, 
\begin{equation}\label{2233}
-\int \dive{[2\mu D(v)]}\cdot v=\int 2\mu |D(v)|^2\geq \nu\norm{\nabla v}_{L^2}^2,\text{ (use Lemma \ref{Lemma2.2})},
\end{equation}
while, for case \eqref{A'}, 
\begin{equation}\label{2234}
\begin{aligned}
-\int \dive{[2\mu D(v)]}\cdot v\geq\nu\norm{\nabla v}_{L^2}^2- \left(C_\varepsilon\norm{\nabla\mu}_{L^4}^4\norm{\sqrt\rho v}_{L^2}^2+C_\varepsilon\norm{\Delta\rho^{-1}}_{L^2}^2+\varepsilon\norm{\nabla v}_{L^2}^2\right).
\end{aligned}
\end{equation}
Thus, combining \eqref{2232}--\eqref{2234}, in both cases, 
\begin{equation}\label{equation5522}
\begin{aligned}
&\frac{d}{dt}\norm{\sqrt{\rho}v}_{L^2}^2+\nu\norm{\nabla v}_{L^2}^2\\
&\leq C\left(\norm{\Delta \rho}^2_{L^2}+\norm{\nabla\rho}^4_{L^4}+\norm{\nabla\mu}_{L^4}^4+\norm{\Phi}_{L^4}^4+\norm{\nabla\varphi}^4_{L^4}+\norm{\Delta\varphi}^2_{L^2}\right)\norm{v}_{L^2}^2\\
&\quad +C\norm{\nabla\Delta\rho^{-1}}_{L^2}^2
\end{aligned}
\end{equation}
which, using Gr$\mathrm{\ddot{o}}$nwall's inequality and \eqref{equation515}, gives
\begin{equation}\label{equation5523}
\norm{v}_{L^2}^2(t)+\int_0^t\norm{\nabla v}_{L^2}^2\,ds\leq C(\Omega,c,\alpha,\beta,\Phi,\varphi,\rho_0,v_0).
\end{equation}

Next, multiplying $\eqref{2231}_2$ by $v_t$ and integrating over $\Omega$, one has, using Lemma \ref{Lemma2.3},
\begin{equation}\label{equation5524}
\begin{aligned}
&\int \rho\abs{v_t}^2 - \int \dive[2\mu D(v)]\cdot v_t\\
&= - \int\rho (\Phi+\nabla\varphi^{-1})\cdot\nabla v\cdot v_t+ \int c_0\dive{\left(2\mu\nabla^2\rho^{-1}\right)}\cdot v_t \\
&\quad- \int c_0\rho(\Phi+\nabla\varphi^{-1})\cdot\nabla^2\rho^{-1}\cdot v_t\\
&\leq C\left(\norm{\Phi}_{L^4}+\norm{\nabla \varphi}_{L^4}\right)\norm{\nabla v}_{L^4}\norm{v_t}_{L^2}\\
&\quad+ C\left(\norm{\nabla\mu}_{L^4}\norm{\nabla^2\rho^{-1}}_{L^4}+\norm{\nabla\Delta\rho^{-1}}_{L^2}\right)\norm{v_t}_{L^2}\\
&\quad +C\left(\norm{\Phi}_{L^4}+\norm{\nabla\varphi}_{L^4}\right)\norm{\nabla^2\rho^{-1}}_{L^4}\norm{v_t}_{L^2}\\
&\leq C_\varepsilon\left(\norm{\Phi}^4_{L^4}+\norm{\nabla\varphi}^4_{L^4}\right)\norm{\nabla v}_{L^2}^2+C_\varepsilon\norm{\nabla\Delta\rho^{-1}}^2_{L^2}\\
&\quad+C_\varepsilon\left(\norm{\Phi}_{L^4}^4+\norm{\nabla\varphi}_{L^4}^4+\norm{\nabla\mu}_{L^4}^4\right)\norm{\Delta\rho^{-1}}^2_{L^2}+\varepsilon\left(\norm{v_t}_{L^2}^2+\norm{v}_{H^2}^2\right).
\end{aligned}
\end{equation}
For the term $- \int \dive[2\mu D(v)]\cdot v_t$, if $(\rho,v)$ satisfies the condition \eqref{A'}, we use the proof from \eqref{eq626} to \eqref{633},
\begin{equation}\label{5525}
\begin{aligned}
- \int \dive[2\mu D(v)]\cdot v_t&\geq \frac{d}{dt}\left(\cM_1(t)+\normf{\sqrt{\mu}\curle v}_{L^2}^2\right)+\frac{d}{dt}\cM_2(t)\\
&\quad-C_{\varepsilon}\left(\norm{\mu_t}_{H^1}^2+\norm{\nabla\mu}_{L^4}^4+1\right)\norm{v}_{H^1}^2\\
&\quad-C_{\varepsilon}\norm{\nabla\mu}_{L^4}^4\norm{\nabla\rho^{-1}}_{L^2}^2-C_\varepsilon\norm{\Delta\rho^{-1}}_{L^2}^2\\
&\quad-\varepsilon\left(\norm{v_t}_{L^2}^2+\norm{v}_{H^2}^2+\norm{\nabla\rho^{-1}_t}_{L^2}^2\right).
\end{aligned}
\end{equation}
Recalling that
\begin{equation*}
\cM_1(t)=\int_{\partial} \mu v\cdot B\cdot v,\quad\cM_2(t)=\int c_0\mu \nabla^\perp(v\cdot n^\perp)\cdot B\cdot \nabla \rho^{-1}.
\end{equation*}
For case \eqref{B'}, it is much easier, 
\begin{equation}
\begin{aligned}
- \int \dive[2\mu D(v)]\cdot v_t&=\int\mu\frac{d}{dt}|D(v)|^2\\
&=\frac{d}{dt}\int\mu|D(v)|^2-\int\mu_t|D(v)|^2\\
&\geq \frac{d}{dt}\int\mu|D(v)|^2-\left(C_\varepsilon\norm{\mu_t}_{L^2}^2\norm{\nabla v}_{L^2}^2+\varepsilon\norm{v}_{H^2}^2\right).
\end{aligned}
\end{equation}
Furthermore, from \eqref{equation5515}, we have
\begin{equation}\label{equation5527}
\begin{aligned}
\norm{v}_{H^2}^2+\norm{p}_{H^1}^2&\leq C\norm{v_t}_{L^2}^2+C\norm{\nabla\log\rho_t}_{L^2}^2+C\norm{\Delta\rho^{-1}}_{L^2}^2\\
&\quad+C\left(\norm{\nabla\mu}_{L^4}^4+\norm{\Phi}_{L^4}^4+\norm{\nabla \varphi}_{L^4}^4\right)\left(\norm{\nabla v}_{L^2}^2+\norm{\Delta\rho^{-1}}_{L^2}^2\right).
\end{aligned}
\end{equation}

Therefore, combining \eqref{equation5524}--\eqref{equation5527}, without loss of generality, one has 
\begin{equation}\label{equation5528}
\begin{aligned}
&\norm{v_t}_{L^2}^2+ \frac{d}{dt}\left(\cM_1(t)+\normf{\sqrt{\mu}\curle v}_{L^2}^2\right)+\frac{d}{dt}\cM_2(t)\\
&\leq C_\varepsilon\left(\norm{\Phi}^4_{L^4}+\norm{\nabla\varphi}^4_{L^4}+\norm{\mu_t}_{H^1}^2+\norm{\nabla\mu}_{L^4}^4+1\right)\norm{v}_{H^1}^2+C_\varepsilon\norm{\nabla\Delta\rho^{-1}}^2_{L^2}\\
&\quad+C_\varepsilon\left(\norm{\Phi}_{L^4}^4+\norm{\nabla\varphi}_{L^4}^4+\norm{\nabla\mu}_{L^4}^4\right)\norm{\Delta\rho^{-1}}^2_{L^2}+\varepsilon\left(\norm{\nabla\log\rho_t}_{L^2}^2+\norm{\nabla\rho^{-1}_t}_{L^2}^2\right).
\end{aligned}
\end{equation}

Finally, using Gr$\mathrm{\ddot{o}}$nwall's inequality, \eqref{equation515} and \eqref{equation5523}, we deduce that
\begin{equation}
\norm{\nabla v}_{L^2}^2(t)+\int_0^t\norm{v_t}_{L^2}^2\,ds\leq C(\Omega,c,\alpha,\beta,\Phi,\varphi,\rho_0,v_0).
\end{equation}
Therefore, we complete the proof of Lemma \ref{LP}.
\end{proof}

\subsection{Preliminary Reductions}\label{PR}

We claim that it is enough to prove the existence results for smooth initial data $(\rho_0,u_0)$ satisfying the compatiblity conditions \eqref{compat}. Once this is established, for general data $(\rho_0, u_0)$, we can build a sequence of smooth initial data $(\rho_0^n, u_0^n)$ such that it converges to $(\rho_0, u_0)$ in some appropriate functional spaces. Then, we can obtain a corresponding sequence of solutions $(\rho^n, v^n, \pi^n)$ (or $(\rho^n,u^n,\pi^n)$), which is uniformly bounded  with respect of $n$, satisfying the initial data $(\rho_0^n, v_0^n)$ (or $(\rho_0^n, u_0^n)$). We may use the weak convergence method and compactness reults to deduce that $(\rho^n, v^n, \pi^n)$ (or $(\rho^n,u^n,\pi^n)$) converges to $(\rho, v,\pi)$ (or $(\rho,u,\pi)$) in some functional spaces. As a result, $(\rho, v,\pi)$ (or $(\rho,u,\pi)$) will be the solution we expect, which proves our claim.

Now, we explain how we obtain such smooth data. We begin with $\alpha\leq \rho_0\leq\beta$, $u_0\in H^1_\omega$ (the case $u_0\in H^1_{nd}$ or $H^1_0$ can be done analogously). First, as we have said in Remark \ref{remark18}, we can derive that $\rho_0\in H^2$ from the compatiability condition \eqref{compat}
\begin{equation}
\begin{cases}
\Delta\rho_0^{-1}=c_0^{-1}\dive u_0,&x\in \Omega,\\
n\cdot\nabla\rho_0^{-1}=0,&x\in \partial\Omega.
\end{cases}
\end{equation}
Consequently, we get $v_0\in H^1$ by setting $v_0=u_0-c_0\nabla\rho_0^{-1}$. Then, we can construct a smooth sequence $(\hat\rho^n_0,\hat v^n_0)\in [C^\infty(\overline\Omega)]^4$ via flatten method and partition of unity such that 
\begin{equation}\label{eq5.1}
\begin{gathered}
\hat\rho^n_0\sconverge \rho_0\quad\mathrm{in}\,\,H^2,\quad \hat v_0^n\sconverge v_0\quad\mathrm{in}\,\,H^1.
\end{gathered}
\end{equation}
For details, see \cite{evans} Chapter 5.

However, the sequence $(\hat\rho^n_0,\hat v_0^n)$ may be failed to satisfy the boundary conditions and divergence-free condition, which means that we need further construction. First of all, we solve the following ellptic problem
\begin{equation*}
\begin{cases}
\Delta\rho^n_0=\Delta\hat\rho^n_0,&x\in \Omega,\\
n\cdot \nabla\rho^n_0=0,&x\in \partial\Omega,\\
(\rho_0^n)_\Omega=(\rho_0)_\Omega.
\end{cases}
\end{equation*}
Of course, for each $n\geq 1$, $\rho_0^n\in C^\infty(\overline\Omega)$ is unique and 
\begin{equation}
\norm{\nabla(\rho^n_0-\rho^m_0)}_{H^1}\leq C\norm{\nabla(\hat\rho^n_0-\hat\rho^m_0)}_{H^1}\to 0,\quad \text{as }n,m\to \infty.
\end{equation}
It follows from \eqref{eq5.1} that $\{\rho^n_0\}$ is a Cauchy sequence, and, thus, $\rho^n_0\sconverge \rho_0$ in $H^2$. Using Sobolev embedding theorem, $H^2\hookrightarrow C(\overline\Omega)$, we deduce that $\rho^n_0$ converges uniformly to $\rho_0$ and thus, without loss of generality, we may assume that $\rho^n_0\in [\alpha,\beta]$.

Next, to construct $v_0^n$, we borrow from the construction method in \cite{lions1}, Appendix A. More precisely, consider following Stokes problem of $v_0^n$
\begin{equation*}
\begin{cases}
-\Delta v_0^n +\nabla p^n= -\Delta \hat v_0^n,& x\in\Omega,\\
\dive v_0^n =0,&x\in \Omega,\\
v_0^n\cdot n =0, \,\,\curle v_0^n=-n^\perp\cdot\!B\cdot\![v_0^n+c_0\nabla(\rho_0^n)^{-1}],&x\in\partial\Omega,
\end{cases}
\end{equation*}
where $\int p^n =0$ and $\{\rho^n_0\}$ is the smooth sequence we just obtain. In view of  Lemma \ref{lemma2.5}, there exists a unique smooth solution $(v_0^n, p^n)\in [C^\infty(\overline\Omega)]^4$ such that
\begin{equation}\label{7.18}
\normf{v_0^n}_{H^1}+\normf{p^n}_{L^2}\leq C(\normf{\hat v_0^n}_{H^1}+\norm{\rho_0^n}_{H^2}).
\end{equation}
Thus, we obtain a Cauchy sequence 
\begin{equation*}
\normf{v_0^n-v_0^m}_{H^1}+\normf{p^n-p^m}_{L^2}\leq C(\normf{\hat v_0^n-\hat v_0^m}_{H^1}+\norm{\rho_0^n-\rho_0^m}_{H^2})\longrightarrow 0,\,\,\, \mathrm{as}\,\, n,m\to \infty,
\end{equation*}
because of \eqref{eq5.1} and the strong covergence of $\{\rho^n_0\}$. Without loss of generality, let
\begin{equation*}
v_0^n\sconverge \overline v_0\,\,\,\mathrm{in}\,\, H^1 \,\,\,\,\mathrm{and} \,\,\,\,p^n \sconverge p\,\,\mathrm{in} \,\, L^2.
\end{equation*}
Then, $V_0:=\overline v_0-v_0$ solves
\begin{equation*}
\begin{cases}
-\Delta V_0 +\nabla p=0,&x\in\Omega,\\
\dive V_0 =0,&x\in \Omega,\\
V_0\cdot n =0 ,\,\,\curle V_0=-n^\perp\cdot B\cdot V_0,&x\in\partial\Omega.
\end{cases}
\end{equation*}
It follows form the uniqueness of Stokes equations that $V_0\equiv 0$, that is, $\overline v_0=v_0$. Thus, we have found a smooth divergence-free sequence $v_0^n$, which satisfies the condtion \eqref{A'}, that converges strongly to $v_0$ in $H^1$.

If we treat the case \eqref{C}, we just turn back to $u^n_0$ by setting
\begin{equation*}
u^n_0:=v_0^n+c_0\nabla(\rho^n_0)^{-1}.
\end{equation*}
Then, it is easy to check that $u^n_0\in C^\infty(\overline\Omega)$, $u^n_0|_{\partial\Omega}=0$ and $(\rho^n_0,u^n_0)$ satisfies the compatiablity condition \eqref{compat}.

\subsection{Approximate System}\label{AS}

In order to get the existence for \eqref{equation1.1}, we first try to establish the smooth solutions for the following system: 
\begin{equation}\label{origin}
\begin{cases}
\rho_t+v_\eta\cdot\nabla\rho-c_0\dive\left(\rho_\eta^{-1}\nabla\rho\right)=0,\\
\rho u_t+\rho u_\eta\cdot\nabla u-\dive[2\mu_\epsilon D(u)]+\nabla\pi=0,\\
\dive u=c_0\Delta\rho^{-1},\quad \epsilon,\,\eta\in (0,1],\\
\rho|_{t=0}=\rho_0,\quad u|_{t=0}=u_0,\\
 \alpha\leq\rho_0\leq\beta,\,\,(\rho_0,u_0)\in [C^\infty(\overline\Omega)]^4\text{ satisfying }\eqref{compat},\\
u_0,\,(\rho,u)\text{ satisfies one of the bundary conditions }\eqref{A}-\eqref{C}.
\end{cases}\tag{A.P.}
\end{equation}
Let us give an explaination about the new elements in \eqref{origin}. We define
\begin{equation*}
u_\eta:= v_\eta+\rho_\eta,\quad \mu_\epsilon:=\mu(\rho_\epsilon),
\end{equation*}
and $\rho_\epsilon, \rho_\eta, v_\eta$ are constructed as we did in preceeding subsection, that is, $\rho_\epsilon, \rho_\eta, v_\eta\in C^\infty(\overline\Omega)$, $\dive v_\eta=0$, $\rho_\epsilon, \rho_\eta\in [\alpha,\beta]$ and $\rho_\epsilon, \rho_\eta, v_\eta$ satisfying corresponding boundary conditions.

For convenience, we collect some bounds here which will be used later. Obviously, we always have $\mu_\epsilon\in C^\infty(\overline \Omega)$ for every fixed $t$, $\epsilon$ and, for all $1\leq r\leq \infty$, $k\in \NN$,
\begin{equation}\label{base}
\begin{gathered}
\norm{\mu_\epsilon}_{L^r}\leq C(r,\Omega)\norm{\rho}_{L^\infty},\quad \norm{\nabla\mu_\epsilon}_{L^r}\leq C(r,\epsilon,\Omega)\norm{\rho}_{L^\infty},\\
 \norm{\nabla^k\rho_\eta}_{L^r}\leq C(k,r,\eta,\Omega)\norm{\rho}_{H^1},\quad \norm{\nabla^k\rho_\epsilon}_{L^r}\leq C(k,r,\epsilon,\Omega)\norm{\rho}_{H^1},\\
 \norm{\nabla^k v_\eta}_{L^r}\leq C(k,r,\eta,\Omega)\norm{v}_{L^2}.\\
\end{gathered}
\end{equation}
Also, we have the following uniform controls, for all $1\leq q<\infty$,
\begin{equation}\label{eq5.3.5}
\begin{aligned}
&\norm{v_\eta}_{W^{\ell,q}}\leq C\norm{v}_{W^{\ell,q}},\quad \ell=0,1,\\
&\norm{\rho_\eta}_{W^{\ell,q}}\leq C\norm{\rho}_{W^{\ell,q}}, \quad \ell=0,1,2.
\end{aligned}
\end{equation}

Our aim is proving the following theorem.
\begin{Theorem}\label{theoLP}
For every fixed $\epsilon,\,\eta\in (0,1]$, the problem \eqref{origin} admits an unique smooth solution on $\overline Q_{T_1}$ for some positive time $T_1$.
\end{Theorem}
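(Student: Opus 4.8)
The plan is to solve \eqref{origin} by an iteration scheme whose single step is the linearized problem \eqref{linearized} (solved in Lemma~\ref{LP}), using in an essential way that $\epsilon$ and $\eta$ are held \emph{fixed}: all the mollified coefficients $v_\eta,\rho_\eta,\rho_\epsilon,\mu_\epsilon$ are then smooth in $x$ and obey \eqref{base}--\eqref{eq5.3.5} with finite constants. I would set $(\rho^0,v^0):=(\rho_0,v_0)$ (constant in $t$) and, given the $k$-th iterate $(\rho^k,v^k)$ — with $u^k:=v^k+c_0\nabla(\rho^k)^{-1}$ in the cases \eqref{B}, \eqref{C} — form the mollified data $v^k_\eta,\rho^k_\eta,\mu^k_\epsilon:=\mu(\rho^k_\epsilon)$ and verify, via \eqref{base}--\eqref{eq5.3.5} and the parabolic regularity inherited from the previous step, that $(\Phi,\varphi,\mu)=(v^k_\eta,\rho^k_\eta,\mu^k_\epsilon)$ satisfies all the hypotheses \eqref{2215} of Lemma~\ref{LP} (in particular $\dive v^k_\eta=0$, $\alpha\le\rho^k_\eta\le\beta$, and the correct boundary conditions among \eqref{A}--\eqref{C}). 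Lemma~\ref{LP} then yields a unique global strong solution $(\rho^{k+1},u^{k+1},\pi^{k+1})$ of the associated linearized problem, and I set $v^{k+1}:=u^{k+1}-c_0\nabla(\rho^{k+1})^{-1}$.

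The next step is a uniform bound on a short time interval. Reexamining the a priori estimates in the proof of Lemma~\ref{LP}, the norm of the output is controlled by an exponential of $\int_0^T\big(\norm{\Phi}_{W^{1,4}}^4+\norm{\varphi}_{W^{2,4}}^4+\cdots\big)\,dt$ times the norm of the data; by \eqref{base} each integrand is, for fixed $\epsilon,\eta$, dominated by a fixed power of $\sup_{[0,T]}\big(\norm{\rho^k}_{H^1}+\norm{v^k}_{L^2}\big)$. Hence, fixing $M$ large in terms of the data and then choosing $T_1=T_1(\epsilon,\eta,\alpha,\beta,c_0,\Omega,\norm{u_0}_{H^1})$ small, a continuity argument shows that the ball
\begin{equation*}
\cX_{M,T_1}:=\Big\{(\rho,v):\ \sup_{[0,T_1]}\big(\norm{\rho}_{H^2}^2+\norm{\rho_t}_{L^2}^2+\norm{v}_{H^1}^2\big)+\int_0^{T_1}\big(\norm{\rho}_{H^3}^2+\norm{\rho_t}_{H^1}^2+\norm{v}_{H^2}^2+\norm{v_t}_{L^2}^2\big)\,dt\le M\Big\}
\end{equation*}
is stable under the iteration map, so the whole sequence $(\rho^k,v^k)$ stays in $\cX_{M,T_1}$.

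To obtain convergence I would prove a contraction in a weaker norm. Subtracting consecutive linearized systems, $(\delta\rho^{k+1},\delta v^{k+1}):=(\rho^{k+1}-\rho^k,\,v^{k+1}-v^k)$ solves a linear problem of the same type whose source terms are linear in $v^k_\eta-v^{k-1}_\eta$, $\rho^k_\eta-\rho^{k-1}_\eta$, $\mu^k_\epsilon-\mu^{k-1}_\epsilon$, and hence, by linearity of the mollifiers together with \eqref{base}, are dominated in $L^2$ by $\norm{\delta\rho^k}_{L^2}+\norm{\nabla\delta\rho^k}_{L^2}+\norm{\delta v^k}_{L^2}$. An energy estimate for this difference system, using the bounds of the previous step, then gives, for some $\theta\in(0,1)$,
\begin{equation*}
\sup_{[0,T_1]}\big(\norm{\delta\rho^{k+1}}_{L^2}^2+\norm{\delta v^{k+1}}_{L^2}^2\big)+\int_0^{T_1}\big(\norm{\nabla\delta\rho^{k+1}}_{L^2}^2+\norm{\nabla\delta v^{k+1}}_{L^2}^2\big)\,dt\le C\,T_1^{\theta}\,\mathcal{E}_k,
\end{equation*}
with $\mathcal{E}_k$ denoting the same quantity at level $k$. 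After a final shrinking of $T_1$ so that $CT_1^\theta<1/2$, the sequence is Cauchy in $C([0,T_1];L^2)\cap L^2(0,T_1;H^1)$; interpolating with the uniform bound of the previous step and using weak-$*$ compactness, $(\rho^k,v^k,\pi^k)$ converges (strongly in the weak norm, weak-$*$ in the strong norm) to a limit $(\rho,v,\pi)$, and since the mollifications are linear one may pass to the limit in \eqref{origin}, obtaining a solution $(\rho,u,\pi)$, $u=v+c_0\nabla\rho^{-1}$, with the regularity \eqref{strong} on $\overline Q_{T_1}$. With $\epsilon,\eta$ fixed the coefficients are $C^\infty$ in $x$, so feeding the solution back into $\eqref{origin}_1$ (a linear parabolic equation) and $\eqref{origin}_2$ (a Stokes system), bootstrapping in space and using \eqref{compat} for the time regularity at $t=0$, upgrades $(\rho,u,\pi)$ to $C^\infty(\overline Q_{T_1})$; uniqueness follows by applying the difference estimate directly to two solutions.

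The main obstacle I anticipate is the uniform bound: organizing the a priori bounds from Lemma~\ref{LP} so that the nonlinear feedback closes on a time interval independent of $k$, and checking at each stage that the mollified coefficients retain precisely the boundary structure required by \eqref{2215} in all three cases \eqref{A}, \eqref{B}, \eqref{C} (the decomposition $u=v+c_0\nabla\rho^{-1}$ being what makes \eqref{A}, \eqref{B} tractable). By comparison the contraction is routine once the uniform bounds hold, and the bootstrap to $C^\infty$ is standard for fixed $\epsilon,\eta$.
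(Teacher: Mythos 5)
Your scaffolding—iterate the linearized problem of Lemma~\ref{LP}, establish a uniform bound on a short time interval, prove a contraction, then bootstrap to $C^\infty$—is the same as the paper's (\S\ref{AS}--\ref{5.3.2}), and your invariant-ball version of the uniform bound is an acceptable substitute for the paper's running-max $\cH_N$ with a Bihari-type integral inequality. The gap is in the contraction step, specifically in the choice of $\mathcal{E}_k$. The difference system is \emph{not} a linear problem with source linear solely in $v^k_\eta-v^{k-1}_\eta$, $\rho^k_\eta-\rho^{k-1}_\eta$, $\mu^k_\epsilon-\mu^{k-1}_\epsilon$: because the leading coefficient $\rho^{k+1}$ of $u^{k+1}_t$ also changes from one step to the next, subtracting consecutive momentum equations produces the source term $-\sigma^{k+1}\bigl(u^k_t+u^k_\eta\cdot\nabla u^k\bigr)$ with $\sigma^{k+1}:=\rho^{k+1}-\rho^k$ (compare \eqref{ee663} and the definition of $K^n$). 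Since $u^k_t$ is available only in $L^2(0,T_1;L^2)$, estimating this term in $L^2$ forces $\sigma^{k+1}\in L^\infty(\Omega)$, which in two dimensions requires control in $H^{1+\delta}$, not $H^1$. Your $\mathcal{E}_{k+1}$ tracks only $\sup_t\|\sigma^{k+1}\|_{L^2}^2+\int_0^{T_1}\|\nabla\sigma^{k+1}\|_{L^2}^2\,dt$; interpolating against the uniform $L^\infty_tH^2_x$ bound from the invariant ball yields only $\|\sigma^{k+1}\|_{L^\infty}\lesssim M^{1/4}\mathcal{E}_{k+1}^{1/4}$, so the recursion one can actually close has the shape $\mathcal{E}_{k+1}\le aT_1\mathcal{E}_k+bT_1^{1/2}\mathcal{E}_k^{1/2}$. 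That inequality has a strictly positive fixed point and does not force $\mathcal{E}_k\to 0$, so the Cauchy property you assert does not follow from your estimate.

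The paper avoids this by contracting directly in the stronger quantity $\cY^n$, which includes $\sup_t\|\sigma^n\|_{H^2}^2$ and $\sup_t\|a^n\|_{H^1}^2$ (so $\|\sigma^{n+1}\|_{L^\infty}$ is absorbed into the left-hand side rather than handed to $\mathcal{E}_k$), and by running the recursion as a Volterra-type integral inequality $\cY^{n+1}(t)\le C\int_0^t\cY^n(s)\,ds$, whose iterates decay like $(CT_1)^{n-1}/(n-1)!$ without any further smallness condition on $T_1$. Replacing your $\mathcal{E}_k$ by a norm of the strength of $\cY^k$ and rederiving the difference estimates accordingly repairs the argument; the remainder of your proposal (the iteration via Lemma~\ref{LP}, the role of the fixed mollification parameters $\epsilon,\eta$ in making the coefficients smooth, and the final bootstrap to $C^\infty$) is in line with the paper.
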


Our proof is organized as follows. In the first part, we use iteration arguements and contraction mapping theorem to establish the unique smooth solution of \eqref{origin} for every fixed $\eta$ and $\epsilon$. Then, we recover the original system \eqref{equation1.1} by letting $\eta$, $\epsilon$ tend to $0$ in turn with help of the uniform estimates.

\subsubsection{Uniform Bounds}

\begin{equation}\label{regu}
\begin{cases}
\rho^n_t+v_\eta^{n-1}\cdot\nabla\rho^n-c_0\dive\left[(\rho^{n-1}_\eta)^{-1}\nabla\rho^n\right]=0,\\
\rho^{n} u^n_t+\rho^{n}u_\eta^{n-1}\cdot\nabla u^n-\dive[2\mu_\epsilon^nD(u^n)]+\nabla\pi^n=0,\\
\dive u^n=c_0\Delta(\rho^n)^{-1},
\end{cases}
\end{equation}
with the initial-boundary conditions
\begin{gather}
(\rho^n,u^n)(x,0)=(\rho_0,u_0),\quad \text{in }\Omega,\label{1}\\
n\cdot\nabla\rho^n=0,\quad u^n=0\quad\text{on }\partial\Omega\times(0,T).\label{2}	
\end{gather}
where we use the following notations
\begin{equation*}
\mu^n=\mu(\rho^n),\quad Q^n=\cB[c_0\Delta(\rho^n)^{-1}],\quad v^n=u^{n}+c_0\nabla(\rho^n)^{-1},\quad w^n=u^n-Q^n
\end{equation*}

To prove the existence for \eqref{regu}, we construct approximate solutions as follows. We first define $(\rho^0,u^0)=(C,0)$ and, then, assume that $(\rho^{n-1},u^{n-1})$ was defined for $n\geq 1$, let $(\rho^n,u^n,\pi^n)$ be the unique global strong solution to the problem \eqref{regu}.

To prove the uniform bounds for the approximate solutions, we introduce the function $\cH_N(t)$ defined by
$$\cH_N(t):=\begin{cases}
\max_{1\leq n\leq N}\left(1+\norm{\rho^n}_{H^2}^2+\norm{v^n}_{H^1}^2+\norm{\rho^n_t}_{L^2}^2\right),&\text{case }\eqref{A}\text{ or }\eqref{B}\\
\max_{1\leq n\leq N}\left(1+\norm{\rho^n}_{H^2}^2+\norm{u^n}_{H^1}^2+\norm{\rho^n_t}_{L^2}^2\right),&\text{case }\eqref{C}
\end{cases}$$

Observe that, in all cases, it follows from the maximal principle and energy estimates that
\begin{equation}
\alpha\leq \rho^n\leq \beta, \quad\sup_{t\in[0,T]}\norm{\rho^n}_{L^2}^2+\int_0^T\norm{\nabla\rho^n}_{L^2}^2\leq C,\text{ for all }T\in (0,\infty).
\end{equation}

Moreover, let $N$ be a fixed large number, we have
\begin{Lemma}\label{Lemma61}
There exists a positive constant $C$ depending on $\Omega$, $c_0$, $\alpha$, $\beta$ and $\rho_0$ such that
\begin{equation}
\norm{\nabla\rho^n}_{L^2}^2(t)+\int_0^t\norm{\Delta\rho^n}_{L^2}^2\,ds\leq C+C\int_0^t\cH_N(s)^3\,ds,
\end{equation}
for all $n$, $1\leq n\leq N$. 
\end{Lemma}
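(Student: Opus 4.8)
The plan is to estimate $\nabla\rho^n$ by multiplying the first equation of \eqref{regu} by $-\Delta\rho^n$ and integrating over $\Omega$, exploiting the Neumann boundary condition $n\cdot\nabla\rho^n=0$ so that the boundary term in the integration by parts vanishes. This is the same computation as in Lemma \ref{Lemma4.3} and \eqref{554}, but now carried out at the linearized level where $v^{n-1}_\eta$ and $(\rho^{n-1}_\eta)^{-1}$ play the roles of $\Phi$ and $\varphi$. First I would write
\[
\frac{d}{dt}\int\frac12|\nabla\rho^n|^2+\int c_0(\rho^{n-1}_\eta)^{-1}|\Delta\rho^n|^2=\int (v^{n-1}_\eta\cdot\nabla\rho^n)\Delta\rho^n+\int c_0\nabla(\rho^{n-1}_\eta)^{-1}\cdot\nabla\rho^n\,\Delta\rho^n,
\]
using $\alpha\le\rho^{n-1}_\eta\le\beta$ (from the maximum principle on the mollified densities) to bound the dissipative term below by $c_0\beta^{-1}\|\Delta\rho^n\|_{L^2}^2$. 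Then I would apply Young's inequality to absorb $\Delta\rho^n$ on the right and be left with terms controlled by $\|v^{n-1}_\eta\|_{L^4}^4\|\nabla\rho^n\|_{L^2}^2$ and $\|\nabla(\rho^{n-1}_\eta)^{-1}\|_{L^4}^4\|\nabla\rho^n\|_{L^2}^2$, exactly as in \eqref{554}.

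Next I would convert these bounds into something controlled by $\cH_N$. Using the uniform controls \eqref{eq5.3.5}, namely $\|v^{n-1}_\eta\|_{W^{1,q}}\le C\|v^{n-1}\|_{W^{1,q}}$ and $\|\rho^{n-1}_\eta\|_{W^{2,q}}\le C\|\rho^{n-1}\|_{W^{2,q}}$, together with Lemma \ref{Lemma2.3} (Gagliardo–Nirenberg), one estimates $\|v^{n-1}_\eta\|_{L^4}^4\le C\|v^{n-1}\|_{L^2}^2\|\nabla v^{n-1}\|_{L^2}^2\le C\|v^{n-1}\|_{H^1}^4$ and similarly $\|\nabla(\rho^{n-1}_\eta)^{-1}\|_{L^4}^4\le C\|\nabla\rho^{n-1}\|_{L^4}^4\le C\|\nabla\rho^{n-1}\|_{L^2}^2\|\Delta\rho^{n-1}\|_{L^2}^2$, each of which is bounded by $C\cH_N(s)^2$ (and in the latter case, after using also $\|\nabla\rho^n\|_{L^2}^2\le\cH_N(s)$, by $C\cH_N(s)^3$). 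Note one must be mildly careful in the Dirichlet case \eqref{B}: the test function should be taken so that the boundary term vanishes, but $\rho^n-\tilde\rho$ has vanishing trace and $\Delta\rho^n=\Delta(\rho^n-\tilde\rho)$, so the same computation goes through; alternatively one works with $\log\rho^n$ as in Remark \ref{remark53} to avoid the $|\nabla\rho|^2$ nonlinearity, though for this first-order estimate it is not needed.

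Putting these together yields
\[
\frac{d}{dt}\|\nabla\rho^n\|_{L^2}^2+\nu\|\Delta\rho^n\|_{L^2}^2\le C\cH_N(t)^3,
\]
and integrating in time from $0$ to $t$, using $\|\nabla\rho_0\|_{L^2}^2\le C$ (which is part of the data, or follows from Remark \ref{remark18} via the compatibility condition), gives
\[
\|\nabla\rho^n\|_{L^2}^2(t)+\nu\int_0^t\|\Delta\rho^n\|_{L^2}^2\,ds\le C+C\int_0^t\cH_N(s)^3\,ds,
\]
which is the claimed bound after renaming $\nu$ into the constant (or dividing through). Taking the maximum over $1\le n\le N$ does not affect the right-hand side since the bound on each $n$ involves only $\cH_N$, which already contains the max over all indices up to $N$.

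The main obstacle I anticipate is purely bookkeeping rather than analytic: one must be sure that every term arising on the right-hand side, after Gagliardo–Nirenberg interpolation, is dominated by a fixed power of $\cH_N$ — here the cube is what appears because the worst term $\|\nabla\rho^{n-1}_\eta\|_{L^4}^4\|\nabla\rho^n\|_{L^2}^2$ splits as (roughly) $\cH_N\cdot\cH_N\cdot\cH_N$. Getting the boundary terms to vanish cleanly in each of the three boundary cases \eqref{A}, \eqref{B}, \eqref{C} also requires choosing the test function correctly, but in all three the relevant quantity ($\nabla\rho^n$ tangential, or $\rho^n-\tilde\rho$ with zero trace) is exactly what makes the integration by parts free of boundary contributions; this is why the lemma is stated uniformly across cases.
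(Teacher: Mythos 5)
Your proposal is correct and follows the same route as the paper's own proof: multiply the linearized mass equation by $-\Delta\rho^n$ (equivalently, invoke \eqref{554} from Lemma \ref{LP} with $\Phi=v_\eta^{n-1}$, $\varphi=\rho_\eta^{n-1}$), use the Neumann or Dirichlet boundary condition so the boundary term in the integration by parts vanishes, bound the resulting right-hand side by $C(\norm{v_\eta^{n-1}}_{L^4}^4+\norm{\nabla\rho_\eta^{n-1}}_{L^4}^4)\norm{\nabla\rho^n}_{L^2}^2$, then pass to $v^{n-1},\rho^{n-1}$ via \eqref{eq5.3.5}, and apply Gagliardo--Nirenberg to conclude each factor is $O(\cH_N)$, whence the $\cH_N^3$. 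Your extra care with the boundary term in case \eqref{B} (testing against $\Delta\rho^n=\Delta(\rho^n-\tilde\rho)$ with $\rho^n-\tilde\rho$ of vanishing trace, so that $\rho^n_t$ vanishes on $\partial\Omega$) and the remark about $n=1$ being trivial (since $(\rho^0,u^0)$ is constant) are both fine points the paper glosses over; they do not change the argument.
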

\begin{proof}
Let $n\geq 2$. From \eqref{554},
\begin{equation*}
\begin{aligned}
\frac{d}{dt}\norm{\nabla\rho^n}_{L^2}^2+\nu \norm{\Delta\rho^n}_{L^2}^2&\leq C\left(\norm{v_\eta^{n-1}}_{L^4}^4+\norm{\nabla\rho_\eta^{n-1}}_{L^4}^4\right)\norm{\nabla\rho^n}_{L^2}^2\\
&\leq C\left(\norm{u^{n-1}}_{L^4}^4+\norm{\nabla\rho^{n-1}}_{L^4}^4\right)\norm{\nabla\rho^n}_{L^2}^2\\
&\leq C\cH_N(t)^3.
\end{aligned}
\end{equation*}
Then, we integrate from $0$ to $t$ with respect of time and finish the proof of lemma.
\end{proof}

The next Lemma concerns with the uniform bounds for case \eqref{C}.
\begin{Lemma}\label{Lemma62}
Let $(\rho,u)$ satisfy the condition \eqref{C}. There exists a positive constant $C$ depending on $\Omega$, $c_0$, $\alpha$, $\beta$ and $u_0$ such that
\begin{equation}\label{equation5544}
\begin{aligned}
&\left[\norm{u^n}_{H^1}^2(t)+\norm{\nabla\rho^n}_{H^1}^2(t)
+\norm{\rho^n_t}_{L^2}^2(t)\right]\\
&\quad+\int_0^t\left(\norm{u^n}_{H^2}^2+\norm{\Delta\rho^n}_{H^1}^2+\norm{\rho^n_t}_{H^1}^2\right)\,ds\\
&\leq C+C\int_0^t\cH_N(s)^4\,ds,
\end{aligned}
\end{equation}
for all $n$, $1\leq n\leq N$. 
\end{Lemma}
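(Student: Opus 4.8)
The plan is to transfer to the iteration scheme \eqref{regu} the a priori analysis of case \eqref{C} from Section \ref{Section8}, the only change being that every nonlinear coefficient that was previously absorbed via Gr\"onwall's inequality will now be estimated by a fixed power of $\cH_N(t)$ and integrated directly. I would fix $n$ with $2\le n\le N$ (the base step $n=1$, with $u^0\equiv0$ and $\rho^0$ constant, is immediate) and use the decomposition $u^n=w^n+Q^n$ with $Q^n=\cB[c_0\Delta(\rho^n)^{-1}]$, so that $\dive w^n=0$ and $w^n|_{\partial\Omega}=0$; this is what kills all boundary contributions in the momentum estimates. First I would record, from \eqref{8.2} and $\alpha\le\rho^n\le\beta$, that $\norm{Q^n}_{L^p}\le C\norm{\nabla\rho^n}_{L^p}$, $\norm{Q^n}_{H^1}\le C(\norm{\Delta\rho^n}_{L^2}+\norm{\nabla\rho^n}_{L^4}^2)$ and $\norm{Q^n_t}_{L^2}\le C\norm{\nabla\rho^n_t}_{L^2}+C\norm{\,|\rho^n_t|\,|\nabla\rho^n|\,}_{L^2}$, together with the density bound of Lemma \ref{Lemma61}. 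Since $\rho_\epsilon,\rho_\eta\in[\alpha,\beta]$ and, by \eqref{eq5.3.5}, $\norm{v_\eta^{n-1}}_{W^{\ell,q}}\le C\norm{v^{n-1}}_{W^{\ell,q}}$ and $\norm{\rho_\eta^{n-1}}_{W^{\ell,q}}\le C\norm{\rho^{n-1}}_{W^{\ell,q}}$ uniformly in $\eta,\epsilon$, every constant below is independent of $\eta,\epsilon$ (the $\eta,\epsilon$-dependent bounds \eqref{base} are never used).

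Next I would carry out the momentum estimates for $u^n$: multiply $\eqref{regu}_2$ by $w^n$ and then by $w^n_t$ and integrate by parts (with no boundary term), bounding the transport terms via $\norm{u_\eta^{n-1}}_{L^4}\le C\norm{u^{n-1}}_{L^4}$ and $\underline\mu\le\mu_\epsilon^n\le\overline\mu$; this yields, as in \eqref{818}--\eqref{equation821}, differential inequalities for $\norm{\sqrt{\rho^n}u^n}_{L^2}^2$ and $\norm{\sqrt{\mu_\epsilon^n}D(u^n)}_{L^2}^2$ whose right-hand sides contain $\varepsilon\norm{u^n_t}_{L^2}^2$, $\varepsilon\norm{\Delta u^n}_{L^2}^2$ and products of $\norm{\nabla u^n}_{L^2}^2$, $\norm{\Delta\rho^n}_{L^2}^2$, $\norm{\rho^n_t}_{L^2}^2$ with factors of $\norm{\nabla\rho^n}_{L^4}^4$ and $\norm{u^{n-1}}_{L^4}^4$. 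For the elliptic regularity I would rewrite $\eqref{regu}_2$ as a Stokes system $-\dive[2\mu_\epsilon^n D(v^n)]+\nabla\pi^n=F^n$ for the divergence-free field $v^n:=u^n-c_0\nabla(\rho^n)^{-1}$, whose boundary value is $v^n|_{\partial\Omega}=-c_0\nabla(\rho^n)^{-1}$; applying Lemma \ref{lemma8.4} with $\Phi=-c_0\nabla(\rho^n)^{-1}$ (so $\norm{\nabla\Phi}_{H^1}\le C\norm{\nabla\Delta\rho^n}_{L^2}+(\text{lower order terms})$) and using \eqref{uv} to pass between $\norm{\Delta u^n}_{L^2}$ and $\norm{\Delta v^n}_{L^2}$, I obtain, after absorbing the small $\Delta v^n$-term, the analogue of \eqref{821}: $\norm{\Delta u^n}_{L^2}^2+\norm{\pi^n}_{H^1}^2$ is controlled by $\norm{u^n_t}_{L^2}^2$, $\norm{\nabla\Delta\rho^n}_{L^2}^2$ and products of $(\norm{u^n}_{L^4}^4+\norm{\nabla\rho^n}_{L^4}^4)$ with $(\norm{\nabla u^n}_{L^2}^2+\norm{\Delta\rho^n}_{L^2}^2)$. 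In parallel, differentiating $\eqref{regu}_1$ in $t$ and testing with $\rho^n_t$, and applying $-\Delta$ and $-\nabla\Delta\nabla$ to $\eqref{regu}_1$, yields the analogues of \eqref{equation616} and \eqref{equa5.11}, that is, differential inequalities for $\norm{\rho^n_t}_{L^2}^2$ and $\norm{\Delta\rho^n}_{L^2}^2$ with coefficients of the same shape plus a harmless $\varepsilon\norm{\Delta u^n}_{L^2}^2$.

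I would then put $\cF^n(t):=\norm{\nabla u^n}_{L^2}^2+\norm{\rho^n_t}_{L^2}^2+\norm{\Delta\rho^n}_{L^2}^2$ and $\cG^n(t):=\norm{u^n_t}_{L^2}^2+\norm{\Delta u^n}_{L^2}^2+\norm{\nabla\Delta\rho^n}_{L^2}^2+\norm{\nabla\rho^n_t}_{L^2}^2$, substitute the Stokes bound to eliminate $\norm{\Delta u^n}_{L^2}$ from all right-hand sides, take $\varepsilon$ small, and reach $\frac{d}{dt}\cF^n+\nu\cG^n\le C\,\Theta_N(t)\,\cF^n+C\,\Theta_N(t)$, where $\Theta_N$ collects terms such as $\norm{\nabla\rho^n}_{L^4}^4$, $\norm{u^{n-1}}_{L^4}^4$, $\norm{\Delta\rho^n}_{L^2}^2$, $\norm{\rho^n_t}_{L^2}^2$, each bounded by a fixed power ($\le3$) of $\cH_N(t)$ through Lemma \ref{Lemma2.3}, \eqref{eq5.3.5} and Lemma \ref{lemma4.1}. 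The decisive point is that $\cF^n(t)\le\cH_N(t)$ by the very definition of $\cH_N$, so $\Theta_N\cF^n+\Theta_N\le C\cH_N(t)^4$; integrating over $[0,t]$ and using $\cF^n(0)\le C$ (here $\norm{\Delta\rho_0}_{L^2}\le C\norm{u_0}_{H^1}$ by \eqref{compat} and Remark \ref{remark18}, and $\norm{\rho^n_t(0)}_{L^2}\le C$ from $\eqref{regu}_1$ at $t=0$) gives $\sup_{[0,t]}\cF^n+\int_0^t\cG^n\le C+C\int_0^t\cH_N(s)^4\,ds$. Feeding this back into the Stokes estimate for $u^n$ provides $\int_0^t\norm{u^n}_{H^2}^2$, Remark \ref{remark2.4} (the Neumann norm equivalence) turns $\norm{\Delta\rho^n}_{L^2}^2(t)$ into $\norm{\nabla\rho^n}_{H^1}^2(t)$, and the $\nabla\Delta\rho^n$-estimate together with Lemma \ref{Lemma61} yields $\int_0^t\norm{\Delta\rho^n}_{H^1}^2$; combining these is precisely \eqref{equation5544}.

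I expect the main obstacle to be the elliptic/pressure estimate with the non-homogeneous boundary datum $v^n|_{\partial\Omega}=-c_0\nabla(\rho^n)^{-1}$: it is not available through Lemma \ref{lemma61}--\ref{Lemma5.2}, because the H\"older continuity of $\rho^n$ cannot be invoked before the lower-order bounds for $v^n$ are closed, so it must be routed through Lemma \ref{lemma8.4}, and the ensuing term $\norm{\nabla\Phi}_{H^1}\sim\norm{\nabla\Delta\rho^n}_{L^2}$ has to be carried along and absorbed against the dissipation $\nu\norm{\nabla\Delta\rho^n}_{L^2}^2$ coming from the parabolic estimate on $\rho^n$. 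This couples the $u^n$-estimates and the $\rho^n$-estimates, which must be closed simultaneously, with $\varepsilon$ chosen only after all the inequalities are assembled. A secondary bookkeeping issue — keeping every constant independent of $\eta$ and $\epsilon$ — is dealt with by systematically using \eqref{eq5.3.5} and $\rho_\epsilon,\rho_\eta\in[\alpha,\beta]$ in place of \eqref{base}.
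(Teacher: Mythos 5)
Your overall architecture is sound — the decomposition $u^n=w^n+Q^n$ with $Q^n=\cB[c_0\Delta(\rho^n)^{-1}]$, energy estimates against $w^n$ and $w^n_t$, the $-\Delta$ and $-\nabla\Delta\nabla$ tests on the mass equation, Stokes regularity to control $\Delta u^n$, and the recursive target $\cF^n$, $\cG^n$ — but there is a genuine gap in how you close the iteration, and it is precisely the feature that makes this a \emph{linearized} iteration rather than a quasilinear a priori estimate.

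The coefficients in $\eqref{regu}_1$ are $v_\eta^{n-1}$ and $\rho_\eta^{n-1}$, not $v_\eta^n$ and $\rho_\eta^n$. When you apply $-\nabla\Delta\nabla$ to $\eqref{regu}_1$ and estimate $\int\nabla\Delta\rho^n\cdot\nabla v_\eta^{n-1}\cdot\nabla\rho^n$, the factor $\norm{\nabla v_\eta^{n-1}}_{L^4}$ interpolates (via Lemma \ref{Lemma2.3} and \eqref{eq5.3.5}, since you have — correctly — forbidden yourself the $\eta$-dependent smoothing \eqref{base}) as $\norm{\nabla v^{n-1}}_{L^2}^{1/2}\norm{v^{n-1}}_{H^2}^{1/2}$. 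After Young this produces $\varepsilon\norm{\Delta u^{n-1}}_{L^2}^2$ on the right-hand side, \emph{not} $\varepsilon\norm{\Delta u^n}_{L^2}^2$ as you wrote. The same shift of index appears in the $\rho^n_t$-estimate, which contributes $\varepsilon\norm{u^{n-1}_t}_{L^2}^2$. These $(n-1)$-indexed quantities live in $\cG^{n-1}$, not in $\cG^n$; they cannot be absorbed by the dissipation at step $n$ and they are \emph{not} bounded by any power of $\cH_N$, since $\cH_N$ only controls $\norm{u^{m}}_{H^1}$ and $\norm{\rho^m}_{H^2}$, not $\norm{\Delta u^m}_{L^2}$ or $\norm{u^m_t}_{L^2}$. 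Consequently your claimed reduction to $\frac{d}{dt}\cF^n+\nu\cG^n\le C\Theta_N\cF^n+C\Theta_N$ is false as stated, and Gr\"onwall at fixed $n$ does not close. What the paper actually does is integrate to obtain
\begin{equation*}
\cP^n(t)+\int_0^t\bigl(\norm{\nabla\Delta\rho^n}_{L^2}^2+\norm{u^n_t}_{L^2}^2\bigr)\,ds
\le C\Bigl(1+\int_0^t\cH_N(s)^4\,ds\Bigr)+\tfrac12\int_0^t\bigl(\norm{\nabla\Delta\rho^{n-1}}_{L^2}^2+\norm{u^{n-1}_t}_{L^2}^2\bigr)\,ds,
\end{equation*}
then iterate this Volterra-type inequality down to $n=1$ (where $v_\eta^0=0$, $\rho_\eta^0$ is constant, so the $(n-1)$-indexed terms vanish), and sum the resulting geometric series $1+\tfrac12+\cdots+\tfrac1{2^n}\le2$. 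This telescoping step is indispensable and is absent from your argument.

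A secondary, more benign, discrepancy: you route the Stokes estimate through Lemma \ref{lemma8.4} so as to avoid \eqref{base} and obtain an $\epsilon$-independent constant, whereas the paper for case \eqref{C} uses Lemma \ref{Lemma5.2} via \eqref{0517} and explicitly pays an $\epsilon$-dependent constant (Remark \ref{remark56}). Your choice is defensible — it is the one the paper makes for cases \eqref{A},\eqref{B} in Lemma \ref{Lemma56} — but it has a cost you should acknowledge: Lemma \ref{lemma8.4} carries a prefactor $\norm{\nabla\mu_\epsilon^n}_{L^4}^2\lesssim\cH_N(t)$ and a trailing $\norm{\nabla\mu_\epsilon^n}_{L^4}^2\norm{\nabla v^n}_{L^2}$, so the $v^n_t$-estimate must be left \emph{linear} in $\norm{v^n}_{H^2}$ (as $\cH_N^2\norm{\nabla v^n}_{L^2}\norm{v^n}_{H^2}$) until the Stokes bound is substituted and only then Young-ed; doing Young first destroys absorbability. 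Carrying this out correctly produces a power closer to $\cH_N^8$ than $\cH_N^4$, as in \eqref{equatio}, so strictly speaking you would be proving a slightly weaker inequality than the stated \eqref{equation5544} — harmless for the eventual conclusion (existence of a uniform $T_1$), but worth noting that it deviates from the lemma as written.
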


\begin{proof}
For the higher regularity, we apply $-\nabla\Delta\rho^n\nabla$ on both sides of \eqref{regu}  and, then, integrate over $\Omega$ to derive the analogue of \eqref{eq4.1}
\begin{equation*}
\begin{aligned}
&\frac{d}{dt}\int \frac{1}{2}\abs{\Delta\rho^n}^2 + \int\frac{c_0}{\rho_\eta^{n-1}}\abs{\nabla\Delta\rho^n}^2\\
&= \int\nabla\Delta\rho^n\cdot\nabla v_\eta^{n-1}\cdot\nabla\rho^n+ \int v_\eta^{n-1}\cdot\nabla^2\rho^n\cdot\nabla\Delta\rho^n\\
&\quad-\int\frac{2c_0}{(\rho_\eta^{n-1})^3}\abs{\nabla\rho_\eta^{n-1}}^2\nabla\rho^n\cdot\nabla\Delta\rho^n+\int\frac{c_0}{(\rho_\eta^{n-1})^2}\nabla(\nabla\rho^n\cdot\nabla\rho_\eta^{n-1})\cdot\nabla\Delta\rho^n\\
&\quad + \int\frac{c_0}{\rho_\eta^{n-1}}\Delta\rho^n\nabla\rho_\eta^{n-1}\cdot\nabla\Delta\rho^n.
\end{aligned}
\end{equation*}
Then, applying Lemma \ref{Lemma2.3}, we can obtain the following inequality which is similar with \eqref{equa5.11},
\begin{equation*}
\begin{aligned}
&\frac{d}{dt}\norm{\Delta\rho^n}_{L^2}^2+\nu\norm{\nabla\Delta\rho^n}_{L^2}^2\\
&\leq C_\varepsilon\left(\norm{\nabla\rho^{n-1}}_{L^4}^4+\norm{\nabla\rho^{n}}_{L^4}^4+\norm{v^{n-1}}_{L^4}^4\right)\\
&\quad\times\left(\norm{\Delta\rho^{n-1}}_{L^2}^2+\norm{\Delta\rho^n}_{L^2}^2+\norm{\nabla v^{n-1}}_{L^2}^2\right)+\varepsilon\norm{\nabla v^{n-1}}_{H^1}^2\\
&\leq C_\varepsilon\left(\norm{\nabla\rho^{n-1}}_{L^4}^4+\norm{\nabla\rho^{n}}_{L^4}^4+\norm{u^{n-1}}_{L^4}^4\right)\\
&\quad\times\left(\norm{\Delta\rho^{n-1}}_{L^2}^2+\norm{\nabla\rho^{n-1}}_{L^4}^4+\norm{\Delta\rho^n}_{L^2}^2+\norm{\nabla u^{n-1}}_{L^2}^2\right)+\varepsilon\norm{\nabla v^{n-1}}_{H^1}^2\\
&\leq C_\varepsilon\cH_N(t)^4+\varepsilon\left(\norm{\Delta u^{n-1}}_{L^2}^2+\norm{\nabla\Delta\rho^{n-1}}_{L^2}^2+\norm{\nabla\rho^{n-1}}_{L^4}^4\norm{\Delta\rho^{n-1}}_{L^2}^2\right),
\end{aligned}
\end{equation*}
which gives
\begin{equation}\label{eq7.44}
\frac{d}{dt}\norm{\Delta\rho^n}_{L^2}^2+\nu\norm{\nabla\Delta\rho^n}_{L^2}^2\leq C_{\varepsilon_1}\cH_N(t)^4+\varepsilon_1\norm{\Delta u^{n-1}}_{L^2}^2
\end{equation}
Moreover, from \eqref{equation616},we also have
\begin{equation}\label{eq7.45}
\frac{d}{dt}\norm{\rho^n_t}_{L^2}^2 +\norm{\nabla\rho^n_t}_{L^2}^2
\leq C_{\varepsilon_2}\cH_N(t)^3+\varepsilon_2\norm{u^{n-1}_t}_{L^2}^2,
\end{equation}

To get the bounds for $u^n$, noticing that the mass equation can be written as
\begin{equation*}
\rho^n_t+\dive(\rho^{n}u_\eta^{n-1})=c_0\Delta(\rho^n/\rho^{n-1}_\eta).
\end{equation*}
Then, we follow the proof from \eqref{equation518} to get, for all $n\geq 2$
\begin{equation}\label{eq7.48}
\begin{aligned}
&\frac{d}{dt}\norm{\sqrt{\rho^{n}}u^n}_{L^2}^2+\nu\norm{\nabla u^n}_{L^2}^2\\
&\leq C_{\varepsilon_3}\norm{\nabla\rho^n}_{L^2}^2+C\left(\norm{u^{n-1}}_{L^4}^4+\norm{\nabla\rho^{n-1}}^4_{L^4}+\norm{\nabla \rho^n}_{L^4}^4+\norm{\Delta\rho^n}_{L^2}^2\right)\\
&\quad+C\left(\norm{\nabla\rho^n}^4_{L^4}+\norm{\nabla\rho^{n-1}}^4_{L^4}+\norm{\Delta\rho^n}_{L^2}^2+\norm{\Delta\rho^{n-1}}_{L^2}^2\right)\norm{u^n}_{L^2}^2+\varepsilon_3\norm{u^n_t}_{L^2}^2\\
&\leq C_{\varepsilon_3}\cH_N(t)^3+\varepsilon_3\norm{u^n_t}_{L^2}^2.
\end{aligned}
\end{equation}
Similarly, for $u^n_t$, it follows from \eqref{equation520} that
\begin{equation}\label{7746}
\begin{aligned}
&\norm{u^n_t}_{L^2}^2+\frac{d}{dt}\norm{\sqrt{\mu_\epsilon^n}D(u^n)}_{L^2}^2\\
&\leq C\left(\norm{u^{n-1}}_{L^4}^2+\norm{\rho^n_t}_{L^2}+\norm{\nabla\rho^{n}}_{L^4}^2\right)\norm{\nabla u^n}^2_{L^4}+C\norm{Q^n_t}_{L^2}^2+C\norm{Q^n_t}_{L^2}\norm{\Delta u^n}_{L^2}\\
&\leq C_{\varepsilon_4}\left(\cH_N(t)^3+\norm{\nabla\rho^n_t}_{L^2}^2\right)+\varepsilon_4\norm{\Delta u^n}_{L^2}^2.
\end{aligned}
\end{equation}
Combining \eqref{eq7.48}--\eqref{7746}, we obtain
\begin{equation}\label{eq7.51}
\begin{aligned}
&\left(\norm{u^n_t}_{L^2}^2+\norm{\nabla u^n}_{L^2}^2\right)+\frac{d}{dt}\left(\norm{\sqrt{\mu_\epsilon^n}D(u^n)}_{L^2}^2+\norm{\sqrt{\rho^n}u^n}_{L^2}^2\right)\\
&\leq C_{\varepsilon_5}\left(\cH_N(t)^3+\norm{\nabla\rho^n_t}_{L^2}^2\right)+\varepsilon_5\norm{\Delta u^n}_{L^2}^2.
\end{aligned}
\end{equation}

Moreover, it follows from \eqref{0517} that
\begin{equation}\label{eq7.53}
\norm{\Delta u^n}^2_{L^2}+\norm{\pi^n}^2_{H^1}\leq C\left(\cH_N(t)^4+\norm{\nabla\Delta\rho^n}_{L^2}^2+\norm{u^n_t}_{L^2}^2\right).
\end{equation}
Plugging this into \eqref{eq7.44} and \eqref{eq7.51} and, then, combining two of them, we have
\begin{equation*}
\begin{aligned}
&\frac{d}{dt}\left(\norm{\sqrt{\mu_\epsilon^n}D(u^n)}_{L^2}^2+\norm{\Delta\rho^n}_{L^2}^2\right)+\nu\left(\norm{\nabla\Delta \rho^n}_{L^2}^2+\norm{u^n_t}_{L^2}^2\right)\\
&\leq C\norm{\nabla\rho_t^n}^2_{L^2}+C_{\varepsilon_4}\cH_N(t)^4+\varepsilon_4\left(\norm{\nabla\Delta \rho^{n-1}}_{L^2}^2+\norm{u^{n-1}_t}_{L^2}^2\right).
\end{aligned}
\end{equation*}
Alonging with \eqref{eq7.45} and choosing $\varepsilon_2$ small enough and $\varepsilon_4=1/2$, one has
\begin{equation}\label{66652}
\begin{aligned}
&\frac{d}{dt}\left(\norm{\sqrt{\mu_\epsilon^n}D(u^n)}_{L^2}^2+\norm{\Delta\rho^n}_{L^2}^2+2C\norm{\rho^n_t}_{L^2}^2\right)+\left(\norm{\nabla\Delta \rho^n}_{L^2}^2+\norm{u^n_t}_{L^2}^2+\norm{\nabla\rho^n_t}_{L^2}^2\right)\\
&\leq C\cH_N(t)^4+\frac{1}{2}\left(\norm{\nabla\Delta \rho^{n-1}}_{L^2}^2+\norm{u^{n-1}_t}_{L^2}^2\right).
\end{aligned}
\end{equation}
For simplicity, we denote by the above 
\begin{equation*}
\begin{aligned}
\frac{d}{dt}\cP^n(t)+\left(\norm{\nabla\Delta \rho^n}_{L^2}^2+\norm{u^n_t}_{L^2}^2\right)\leq C\cH_N(t)^4+\frac{1}{2}\left(\norm{\nabla\Delta \rho^{n-1}}_{L^2}^2+\norm{u^{n-1}_t}_{L^2}^2\right).
\end{aligned}
\end{equation*}
Then, integrating over $[0,t]$, one has
\begin{equation*}
\begin{aligned}
&\cP^n(t)+\int_0^t\left(\norm{\nabla\Delta \rho^{n}}_{L^2}^2+\norm{u^{n}_t}_{L^2}^2\right)\,ds\\
&\leq C\left(1+\int_0^t\cH_N(s)^4\,ds\right)+\frac{1}{2}\int_0^t\left(\norm{\nabla\Delta \rho^{n-1}}_{L^2}^2+\norm{u^{n-1}_t}_{L^2}^2\right)\,ds.
\end{aligned}
\end{equation*}
Using this recursive inequality for $\int_0^t\left(\norm{\nabla\Delta \rho^{n}}_{L^2}^2+\norm{u^{n}_t}_{L^2}^2\right)\,ds$, we obtain
\begin{equation*}
\begin{aligned}
\int_0^t\left(\norm{\nabla\Delta \rho^{n}}_{L^2}^2+\norm{u^{n}_t}_{L^2}^2\right)\,ds& \leq \left(1+\frac{1}{2}+\cdots+\frac{1}{2^n}\right)C\left(1+\int_0^t\cH_N(s)^4\,ds\right)\\
&\leq 2C\left(1+\int_0^t\cH_N(s)^4\,ds\right)
\end{aligned}	
\end{equation*}
and hence, turning back to \eqref{66652}, we get
\begin{equation*}
\begin{aligned}
\cP^n(t)+\int_0^t\left(\norm{\nabla\Delta \rho^n}_{L^2}^2+\norm{u^n_t}_{L^2}^2+\norm{\nabla\rho^n_t}_{L^2}^2\right)\,ds\leq C\left(1+\int_0^t\cH_N(s)^4\,ds\right),
\end{aligned}
\end{equation*}
for all $2\leq n\leq N$ and, thus, for all $1\leq n\leq N$. Finally, using \eqref{eq7.53}, we get the bounds for $\Delta u^n$ and $\pi^n$ which concludes the lemma.
\end{proof}

Next, we give the uniform estimates for condition \eqref{A} or \eqref{B}.
\begin{Lemma}\label{Lemma56}
Let $(\rho,v)$ satisfy the condition \eqref{A'} or \eqref{B'}. There exists a positive constant $C$ depending on $\Omega$, $c_0$, $\alpha$, $\beta$, $\rho_0$ and $v_0$ such that
\begin{equation}\label{equatio}
\begin{aligned}
&\left[\norm{v^n}_{H^1}^2(t)+\norm{\nabla\rho^n}_{H^1}^2(t)
+\norm{\rho^n_t}_{L^2}^2(t)\right]\\
&\quad+\int_0^t\left(\norm{v^n}_{H^2}^2+\norm{\Delta\rho^n}_{H^1}^2+\norm{\rho^n_t}_{H^1}^2\right)\,ds\\
&\leq C+C\int_0^t\cH_N(s)^8\,ds,
\end{aligned}
\end{equation}
for all $n$, $1\leq n\leq N$. 
\end{Lemma}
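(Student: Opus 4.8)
The plan is to run the very same argument as in the proof of Lemma \ref{Lemma62}, but on the iterative scheme built on the Kazhikhov--Smagulov reformulation: the $(\rho^n,v^n)$ analogue of \eqref{regu}, whose linear step is solvable by Lemma \ref{LP} (cf. the converted system \eqref{2231}). This makes all the a priori computations of Section \ref{section3} available with $v$ replaced by $v^n$ and $\nabla\rho$ replaced by $\nabla\rho^{n-1},\nabla\rho^n$. The bookkeeping is the familiar one: every genuinely nonlinear factor appearing in those estimates is an $L^4$- or $H^1$-norm of $(\rho^{n-1},\rho^n,v^{n-1},v^n,\rho^{n-1}_t,\rho^n_t)$, hence bounded by a power of $\cH_N(t)$; every ``$\varepsilon\,\norm{\cdot}$'' remainder still carrying the index $n$ is absorbed by the elliptic regularity of this section (so it stays on the left), whereas those carrying the index $n-1$ are kept on the right and eventually removed by a geometric-series recursion exactly as in \eqref{66652}. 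A final application of Gr\"onwall's inequality against $\cH_N$ then yields \eqref{equatio}.

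For the density we distinguish the two boundary situations, following Lemma \ref{lemma6.4}. In case \eqref{A'}, applying $-\nabla\Delta\rho^n\cdot\nabla$ to $\eqref{regu}_1$ and then differentiating $\eqref{regu}_1$ in $t$ produces the analogues of \eqref{equa5.11} and \eqref{equation616}, i.e.
\begin{equation*}
\frac{d}{dt}\left(\norm{\Delta\rho^n}_{L^2}^2+\norm{\rho^n_t}_{L^2}^2\right)+\nu\left(\norm{\nabla\Delta\rho^n}_{L^2}^2+\norm{\nabla\rho^n_t}_{L^2}^2\right)\leq C_{\varepsilon}\cH_N(t)^8+\varepsilon\left(\norm{\Delta v^{n-1}}_{L^2}^2+\norm{v^{n-1}_t}_{L^2}^2\right).
\end{equation*}
In case \eqref{B'} one first renormalizes $\eqref{regu}_1$ to an equation for $\log\rho^n$, estimates $\norm{\nabla\Delta\log\rho^n}_{L^2}$ directly from the PDE (the analogue of \eqref{equation69}) and $(\log\rho^n)_t$ by the analogue of \eqref{equation68}, and only at the very end converts back to $\Delta\rho^n$, $\nabla\Delta\rho^n$ through the identities $\Delta\rho=\rho\Delta\log\rho+\rho^{-1}|\nabla\rho|^2$ and $\nabla\Delta\rho=\nabla\rho\,\Delta\log\rho+\rho\nabla\Delta\log\rho-\rho^{-2}\nabla\rho|\nabla\rho|^2+2\rho^{-1}\nabla\rho\cdot\nabla^2\rho$; this is precisely the device of Remark \ref{remark53} and keeps us from needing any smallness on $\nabla\rho_0$.

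For the velocity one multiplies the $v^n$-momentum equation (the analogue of $\eqref{2231}_2$) successively by $v^n$ and by $v^n_t$ and integrates over $\Omega$. In case \eqref{B'} there are no boundary contributions and one reproduces the chain \eqref{equation518}--\eqref{equation5.19} verbatim; in case \eqref{A'} the slip condition $\curle v^n=-n^\perp\!\cdot\!B\!\cdot\!(v^n+c_0\nabla(\rho^n)^{-1})$ generates the boundary terms $\cM_1^n,\cM_2^n$ and the extra contribution $\int_\partial \phi(\rho^n)\,v^n_t\cdot B\cdot\nabla\rho^n$, all handled exactly as in Lemma \ref{lemma6.5}: one uses $v^n=(v^n\cdot n^\perp)n^\perp$ to turn a boundary integral into an interior one, differentiates in $t$ so as to trade $\nabla^\perp(v^n_t\cdot n^\perp)$ for $\tfrac{d}{dt}\cM_2^n$, and absorbs the remainder. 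Since $\mu_\epsilon=\mu(\rho_\epsilon)\in C^\infty(\overline\Omega)$ for fixed $\epsilon$, the $H^2$-bound for $v^n$ and the $H^1$-bound for $\pi^n$ come from Lemma \ref{Lemma5.2} (or Lemma \ref{lemma2.5}) applied to $-\mu_\epsilon\Delta v^n+\nabla\pi^n=F^n$, yielding the analogue of \eqref{eq635}/\eqref{5539}.

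Collecting the four differential inequalities, choosing the various $\varepsilon_i$ small in the right order, integrating on $[0,t]$ and running the recursion in $n$ as in the proof of Lemma \ref{Lemma62} produces \eqref{equatio}; the power $\cH_N^8$ (versus $\cH_N^4$ in case \eqref{C}) merely reflects the additional $\mu$- and boundary-nonlinearities carried by the $v$-formulation. I expect the main technical obstacle to be, as throughout Section \ref{section3}, the treatment of the boundary integrals in case \eqref{A'}, combined with the need to keep the circular triple $\norm{v^n}_{H^2}$, $\norm{v^n_t}_{L^2}$, $\norm{\nabla\Delta\rho^n}_{L^2}$ under simultaneous control by a careful choice of the small parameters; everything else is the routine propagation of the Section \ref{section3} estimates through the linear iteration.
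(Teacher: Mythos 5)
Your overall strategy — running the Lemma~\ref{Lemma62} recursion on the $v$-formulation, splitting $\rho$-estimates between \eqref{A'} and \eqref{B'}, and handling the slip-boundary integrals as in Lemma~\ref{lemma6.5} — matches the paper. But there is one concrete deviation that breaks the argument: for the $H^2$ bound on $v^n$ you propose applying Lemma~\ref{Lemma5.2} (or Lemma~\ref{lemma2.5}) to the non-divergence form $-\mu_\epsilon\Delta v^n+\nabla\pi^n=F^n$. The constant in Lemma~\ref{Lemma5.2} depends on the modulus of continuity of $\mu_\epsilon=\mu(\rho_\epsilon)$; because $\rho_\epsilon$ is an $\epsilon$-mollification, that modulus of continuity blows up as $\epsilon\to 0$, so the constant $C$ produced by your route is $\epsilon$-dependent. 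That is exactly the situation of Lemma~\ref{Lemma62}, and it is why, in Remark~\ref{remark56}, the paper points out that case \eqref{C} is confined to $\mu_\epsilon$ (in particular constant $\mu$). The whole reason Lemma~\ref{Lemma56} is stated and proved separately is to get a bound that is \emph{uniform in $\epsilon$}, so that $\epsilon$ can later be sent to $0$ and the local well-posedness extended to general $\mu(\rho)$ for cases \eqref{A}, \eqref{B}.

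The paper achieves this by instead applying Lemma~\ref{lemma8.4} to the \emph{divergence-form} Stokes system $-\dive[2\mu_\epsilon^n D(v^n)]+\nabla\pi^n=F^n$. The constant there depends only on the upper and lower bounds of $\mu$ and on $\norm{\nabla\mu_\epsilon^n}_{L^4}$, and the latter is controlled by $\norm{\nabla\rho^n}_{L^4}\lesssim\cH_N(t)^{1/2}$ uniformly in $\epsilon$ (via \eqref{eq5.3.5}). The price is the extra $\bigl(\norm{\nabla\mu_\epsilon^n}_{L^4}^2+1\bigr)$ prefactor in \eqref{equation5555}, which is precisely what pushes the exponent from $\cH_N^4$ in case \eqref{C} to $\cH_N^8$ here — not, as you suggested, merely the extra boundary nonlinearities. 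Until you replace Lemma~\ref{Lemma5.2} by Lemma~\ref{lemma8.4} at this step, the constant in \eqref{equatio} you obtain is not the one asserted (and is useless for the $\epsilon\to 0$ limit in Subsection~\ref{P}). As a minor point, in case \eqref{B'} the energy estimates for the $v$-formulation follow \eqref{equation5522}--\eqref{equation5528}, not the $u=w+Q$ chain \eqref{equation518}--\eqref{equation5.19}, which belongs to case \eqref{C}.
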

\begin{proof}
We still only give the proof for case \eqref{A'}. From \eqref{equation5522} and the proof of Lemma \ref{lemma6.5}, we get
\begin{equation}\label{equation5552}
\begin{aligned}
&\frac{d}{dt}\norm{\sqrt{\rho^n}v^n}_{L^2}^2+\nu\norm{\nabla v^n}_{L^2}^2\\
&\leq C\left(\norm{\Delta \rho^n}^2_{L^2}+\norm{\nabla\rho^n}^4_{L^4}+\norm{\nabla\mu_\epsilon^n}_{L^4}^4+\norm{v_\eta^{n-1}}_{L^4}^4+\norm{\nabla\rho^{n-1}_\eta}^4_{L^4}+\norm{\Delta\rho^{n-1}_\eta}^2_{L^2}\right)\norm{v^n}_{L^2}^2\\
&\quad +C\norm{\nabla\Delta(\rho^n)^{-1}}_{L^2}^2\\
&\leq C\cH_N(t)^3+C\norm{\nabla\Delta\rho^n}_{L^2}^2
\end{aligned}
\end{equation}
and
\begin{equation}\label{equation5553}
\begin{aligned}
&\norm{v^n_t}_{L^2}^2+ \frac{d}{dt}\left(\cM^n_1(t)+\norm{\sqrt\mu\curle v}_{L^2}^2\right)+\frac{d}{dt}\cM^n_2(t)\\
&\leq C\cH^N(t)^3+C\cH_N(t)^2\norm{\nabla v^n}_{L^4}^2\\
&\leq C\cH^N(t)^3+C\cH_N(t)^2\norm{\nabla v^n}_{L^2}\norm{v}_{H^2},
\end{aligned}
\end{equation}
while, for $\norm{v}_{H^2}$, we apply Lemma \ref{lemma8.4} for
\begin{equation}
\begin{aligned}
&-\dive[2\mu_\epsilon^nD(v^n)]+\nabla\pi^n\\
&=-\rho^{n} v^n_t+c_0\nabla\log\rho^n_t-\rho^{n}[v_\eta^{n-1}+c_0\nabla(\rho^{n-1})^{-1}]\cdot\nabla [v_\eta^{n}+c_0\nabla(\rho^{n})^{-1}]\\
&\quad+\dive[2\mu_\epsilon^n\nabla^2(\rho^n)^{-1}]:=F^n
\end{aligned}
\end{equation}
to obtain
\begin{equation*}
\begin{aligned}
\norm{v}_{H^2}+\norm{\pi}_{H^1}&\leq C\left(\norm{\nabla\mu^n_\epsilon}_{L^4}^2+1\right)\left(\norm{F^n}_{L^2}+\norm{\Delta(\rho^n)^{-1}}_{L^2}\right)+\norm{\nabla\mu^n_\epsilon}_{L^4}^2\norm{\nabla v^n}_{L^2}\\
&\leq C\cH_N(t)\left(\norm{v^n_t}_{L^2}+\norm{\nabla\log\rho^n_t}_{L^2}+\norm{\nabla\Delta(\rho^n)^{-1}}_{L^2}\right)\\
&\quad+C\cH_N(t)^\frac{3}{2}(\norm{\nabla v^n}_{L^4}+\norm{\Delta(\rho^{n})^{-1}}_{L^4})+C\cH_N(t)^\frac{3}{2}\\
&\leq C\cH_N(t)\left(\norm{v^n_t}_{L^2}+\norm{\nabla\rho^n_t}_{L^2}+\norm{\nabla\Delta\rho^n}_{L^2}+\cH_N(t)^\frac{3}{2}\right)\\
&\quad+C\left(\cH_N(t)^4+\cH_N(t)^\frac{3}{2}\right)+\frac{1}{2}\norm{v^n}_{H^2},
\end{aligned}
\end{equation*}
which leads to
\begin{equation}\label{equation5555}
\begin{aligned}
\norm{v}_{H^2}+\norm{\pi}_{H^1}\leq C\cH_N(t)\left(\norm{v^n_t}_{L^2}+\norm{\nabla\rho^n_t}_{L^2}+\norm{\nabla\Delta\rho^n}_{L^2}\right)+C\cH_N(t)^4.
\end{aligned}
\end{equation}
Substituting this into \eqref{equation5553}, we have
\begin{equation}\label{equation5556}
\begin{aligned}
&\norm{v^n_t}_{L^2}^2+ \frac{d}{dt}\left(\cM^n_1(t)+\norm{\sqrt\mu\curle v}_{L^2}^2\right)+\frac{d}{dt}\cM^n_2(t)\\
&\leq C\cH^N(t)^3+C\cH_N(t)^4\left(\norm{v^n_t}_{L^2}+\norm{\nabla\rho^n_t}_{L^2}+\norm{\nabla\Delta\rho^n}_{L^2}\right)+C\cH_N(t)^6\\
&\leq C_{\varepsilon_1}\cH_N^8(t)+\varepsilon_1\left(\norm{v^n_t}_{L^2}^2+\norm{\nabla\rho^n_t}^2_{L^2}+\norm{\nabla\Delta\rho^n}^2_{L^2}\right).
\end{aligned}
\end{equation}

On the other hand, following the proofs of \eqref{eq7.44}--\eqref{eq7.45}, one has
\begin{equation}\label{equation5557}
\begin{aligned}
&\frac{d}{dt}\norm{\Delta\rho^n}_{L^2}^2+\nu\norm{\nabla\Delta\rho^n}_{L^2}^2\\
&\leq C\cH_N(t)^4+C\norm{\nabla\rho^n}_{L^4}^2\norm{\nabla v^{n-1}}_{L^4}^2\\
&\leq C\cH_N(t)^4+C\norm{\nabla\rho^n}_{L^4}^2\norm{\nabla v^{n-1}}_{L^2}\norm{v^{n-1}}_{H^2}\\
&\leq C_{\varepsilon_2}\cH_N(t)^6+\varepsilon_2\left(\norm{v^{n-1}_t}_{L^2}^2+\norm{\nabla\rho^{n-1}_t}^2_{L^2}+\norm{\nabla\Delta\rho^{n-1}}^2_{L^2}\right)
\end{aligned}
\end{equation}
and
\begin{equation}\label{equation5558}
\frac{d}{dt}\norm{\rho^n_t}_{L^2}^2 +\norm{\nabla\rho^n_t}_{L^2}^2
\leq C_{\varepsilon_3}\cH_N(t)^3+\varepsilon_3\left(\norm{v^{n-1}_t}_{L^2}^2+\norm{\nabla\rho^{n-1}_t}_{L^2}^2\right),
\end{equation}

Therefore, combining Lemma \ref{Lemma61}, \eqref{equation5552} and \eqref{equation5556}--\eqref{equation5558} and, then, using the same recursive arguements at the end of the proof of Lemma \ref{Lemma62}, we can obtain the desire bound \eqref{equatio}. 
\end{proof}

\begin{Remark}\label{remark56}
It follows from \eqref{eq7.53} in Lemma \ref{Lemma62} that the constant $C$ in \eqref{equation5544} depends on $\epsilon\in(0,1]$, which indicates that we can only obtain the local existence for the case \eqref{C} with $\mu=\mu_\epsilon$, in particular, $\mu$ being a positive constant. However, from the proof of Lemma \ref{Lemma56}, since we used Lemma \ref{lemma8.4} to get the estimate \eqref{equation5555}, the constant $C$ in \eqref{equatio} is independent with $\epsilon$ and that is why we could extend the local existence for cases \eqref{A} and \eqref{B} to general viscosity coefficient $\mu(\rho)$.
\end{Remark}

In conclusion, we have the bounds
\begin{equation}
\cH_N(t)\leq C\left(1+\int_0^t\cH_N(s)^q\,ds\right), \text{ for some }q>1.
\end{equation}
Thanks to this integral inequality, we can easily show that there exists a time $T_1\in (0,T)$ depending only on $\Omega$, $c_0$, $\alpha$, $\beta$ and $u_0$ such that
\begin{equation}
\sup_{t\in[0,T_1]}\cH_N(t)\leq C_0,
\end{equation}
for some $C_0$ independing with $N$. Therefore, we obtain the bounds, for all $n\geq 1$,
\begin{equation}\label{eee559}
\begin{aligned}
\sup_{t\in[0,T^1]}\left(\norm{u^n}_{H^1}^2+\norm{\rho^n}_{H^2}^2+\norm{\rho^n_t}_{L^2}^2\right)+\int_0^{T_1}\left(\norm{u^n}_{H^2}^2+\norm{\rho^n}_{H^3}^2+\norm{\rho^n_t}_{H^1}^2\right)\,ds\leq C_0.
\end{aligned}
\end{equation}

\subsubsection{Convergence}\label{5.3.2}

We next show that the whole sequence $(\rho^n,u^n)$ converges to a solution to \eqref{origin} in a sufficiently strong sense. Let 
\begin{equation*}
\sigma^{n+1}:=\rho^{n+1}-\rho^n,\quad a^{n+1}:=u^{n+1}-u^n,\quad b^{n+1}:=v^{n+1}-v^n,\quad c^{n+1}:=Q^{n+1}-Q^n
\end{equation*}
and
\begin{equation*}
\cY^n(t):=
\begin{cases}
\norm{a^n}_{H^1}^2+\norm{\sigma^n}_{H^2}^2+\norm{\sigma^n_t}_{L^2}^2,&\text{case }\eqref{C}\\
\norm{b^n}_{H^1}^2+\norm{\sigma^n}_{H^2}^2+\norm{\sigma^n_t}_{L^2}^2,&\text{case }\eqref{A}\text{ or }\eqref{B}
\end{cases}
\end{equation*}
\begin{equation*}
\cZ^n(t):=
\begin{cases}
\norm{a^n_t}_{L^2}^2+\norm{\sigma^n}_{H^3}^2+\norm{\sigma^n_t}_{H^1}^2,&\text{case }\eqref{C}\\
\norm{b^n_t}_{L^2}^2+\norm{\sigma^n}_{H^3}^2+\norm{\sigma^n_t}_{H^1}^2, &\text{case }\eqref{A}\text{ or }\eqref{B}
\end{cases}
\end{equation*}

In addition, we always let $\cI^n(t)$ and $\cB^n(t)$ be generic functions associated with the bounds \eqref{eee559} such that
\begin{equation*}
\int_0^{T_1}\cI^n(t)\,dt+\sup_{t\in[0,T_1]}\cB^n(t)\leq C_0,
\end{equation*}
where $C_0$ is the constant as in \eqref{eee559}.

\noindent $\underline{\mathbf{Case\,\,\eqref{C}}}$:\\
\quad\\
It follows from the linearized mass equation that
\begin{equation}\label{eq6.65}
\begin{aligned}
&\sigma^{n+1}_t+v_\eta^{n}\cdot\nabla\sigma^{n+1}-c_0\dive\left(\frac{1}{\rho_\eta^{n}}\nabla\sigma^{n+1}\right)\\
&=-b_\eta^n\cdot\nabla\rho^n-c_0\dive\left(\frac{\sigma_\eta^n}{\rho_\eta^{n-1}\rho_\eta^{n}}\nabla\rho^n\right):=G^n
\end{aligned}
\end{equation}
where
\begin{equation*}
\begin{aligned}
\norm{G^n}_{H^{-1}}&\leq C\norm{\nabla\rho^n}_{L^4}\left(\norm{b_\eta^n}_{L^4}+\norm{\sigma_\eta^n}_{L^4}\right)\leq C\norm{\rho^n}_{H^2}\left(\norm{a^n}_{H^1}+\norm{\sigma^n}_{H^1}\right)\\
\norm{G^n}_{L^2}&\leq \norm{\nabla\rho^n}_{L^4}\norm{b_\eta^n}_{L^4}+C\norm{\Delta\rho^n}_{L^2}\norm{\sigma_\eta^n}_{L^\infty}+C\norm{\sigma_\eta^n}_{W^{1,4}}\norm{\nabla\rho^n}_{L^4}\\
&\leq C\norm{\rho^n}_{H^2}\left(\norm{a^n}_{H^1}+\norm{\sigma^n}_{H^2}\right)
\end{aligned}
\end{equation*}
\begin{equation*}
\begin{aligned}
\norm{\nabla G^n}_{L^2}&\leq \norm{\nabla\rho^n}_{L^4}\norm{\nabla b_\eta^n}_{L^4}+\norm{\Delta\rho^n}_{L^2}\norm{b_\eta^n}_{L^\infty}+C\norm{\nabla\rho^n}_{L^4}\norm{\Delta\sigma^n_\eta}_{L^4}\\
&\quad+C\norm{\Delta\rho^n}_{L^2}\norm{\nabla\sigma^n_\eta}_{L^\infty}+C\norm{(|\nabla\rho_\eta^n|+|\nabla\rho_\eta^{n-1}|)|\nabla\rho^n|}_{L^4}\norm{\nabla\sigma_\eta^n}_{L^4}\\
&\quad+C\norm{\left(|\nabla\rho^n_\eta|^2+|\nabla\rho^{n-1}_\eta|^2+|\nabla^2\rho^{n-1}_\eta|+|\nabla^2\rho^n_\eta|\right)|\nabla\rho^n|}_{L^2}\norm{\sigma_\eta^n}_{L^\infty}\\
&\quad +C\norm{(|\nabla\rho^n_\eta|+|\nabla\rho^{n-1}_\eta|)|\nabla^2\rho^n|}_{L^2}\norm{\sigma_\eta^n}_{L^\infty}+C\norm{\nabla\Delta\rho^n}_{L^2}\norm{\sigma^n_\eta}_{L^\infty}\\
&\leq C_{\eta}\left(\norm{\rho^n}_{H^2}+\norm{\rho^n}_{W^{1,4}}^2\right)\left(\norm{a^{n}}_{L^2}+\norm{\sigma^n}_{H^2}\right)\\
&\quad+C_\eta\norm{\nabla\Delta\rho^n}_{L^2}\norm{\sigma^n}_{H^1}.\quad\quad\quad\text{use }\eqref{base}
\end{aligned}
\end{equation*}
Then, using the simplified notations, the above bounds can be written as follows
\begin{equation}\label{ee658}
\begin{gathered}
\norm{G^n}_{H^{-1}}+\norm{G^n}_{L^2}\leq C\cB^n(t)\cY^n(t),\\
\norm{\nabla G^n}_{L^2}\leq C_\eta\cB^n(t)\cY^n(t)+C_\eta\sqrt{\cI^n(t)}\norm{\sigma^n}_{H^1}.
\end{gathered}
\end{equation}

Next, we are going to establish the bounds for $\sigma^{n+1}$ and $a^{n+1}$. Multiplying \eqref{eq6.65} by $\sigma^{n+1}$ and integrating over $\Omega$, we obtain
\begin{equation*}
\begin{aligned}
\frac{d}{dt}\norm{\sigma^{n+1}}_{L^2}^2+\nu\norm{\nabla \sigma^{n+1}}_{L^2}^2\leq C\norm{G^n}_{H^{-1}}\norm{\sigma^{n+1}}_{H^1},
\end{aligned}
\end{equation*}
then, using \eqref{ee658}, we deduce that
\begin{equation}\label{eee658}
\begin{aligned}
\frac{d}{dt}\norm{\sigma^{n+1}}_{L^2}^2+\nu\norm{\nabla \sigma^{n+1}}_{L^2}^2\leq C\norm{G^n}_{H^{-1}}^2\leq C\cB^n(t)\cY^n(t).
\end{aligned}
\end{equation}
Similar with \eqref{554}, multiplying $-\Delta\sigma^{n+1}$ on both sides of \eqref{eq6.65} and integrating over $\Omega$, one has
\begin{equation}\label{ee659}
\begin{aligned}
\frac{d}{dt}\norm{\nabla\sigma^{n+1}}_{L^2}^2+\nu\norm{\Delta\sigma^{n+1}}_{L^2}^2&\leq  C\left(\norm{v_\eta^n}_{L^4}^4+\norm{\nabla\rho_\eta^n}_{L^4}^4\right)\norm{\nabla\sigma^{n+1}}_{L^2}^2+C\norm{G^n}^2_{L^2}\\
&\leq C\cI^n(t)\norm{\nabla\sigma^{n+1}}_{L^2}^2+C\cB^n(t)\cY^n(t),
\end{aligned}
\end{equation}
where we have used \eqref{ee658} for the last inequality.
If we integrate \eqref{ee659} between $[0,t]$, $t<T_1$, we have
\begin{equation}\label{ee660}
\begin{aligned}
\norm{\nabla\sigma^{n+1}}_{L^2}^2(t)+\int_0^t\norm{\Delta\sigma^{n+1}}_{L^2}^2\,ds\leq C\int_0^t\cY^n(s)\,ds.
\end{aligned}
\end{equation}

For $\rho^n$ satisfying the Neumann condition, we copy the proof from \eqref{557} by applying $-\nabla\Delta\sigma^{n+1}\nabla$ on \eqref{eq6.65}, integrating over $\Omega$ and using \eqref{ee658}, that is
\begin{equation*}
\begin{aligned}
&\frac{d}{dt}\norm{\Delta\sigma^{n+1}}_{L^2}^2+\nu\norm{\nabla\Delta\sigma^{n+1}}_{L^2}^2\\
&\leq C\left(\norm{v_\eta^n}_{W^{1,4}}^4+\norm{\rho_\eta^n}_{W^{2,4}}^4+\norm{\nabla\rho_\eta^n}_{L^8}^8\right)\norm{\Delta\sigma^{n+1}}_{L^2}^2+C\norm{\nabla G^n}^2_{L^2}\\
&\leq C\cI^n(t)\norm{\Delta\sigma^{n+1}}_{L^2}^2+C\cB^n(t)\cY^n(t)+C\cI^n(t)\norm{\sigma^n}_{H^1}^2.
\end{aligned}
\end{equation*}
This, alonging with \eqref{ee660}, implies that
\begin{equation}\label{ee661}
\begin{aligned}
&\frac{d}{dt}\norm{\Delta\sigma^{n+1}}_{L^2}^2+\nu\norm{\nabla\Delta\sigma^{n+1}}_{L^2}^2\\
&\leq C\cI^n(t)\norm{\Delta\sigma^{n+1}}_{L^2}^2+C\cB^n(t)\cY^n(t)+C\cI^n(t)\int_0^t\cY^n(s)\,ds.
\end{aligned}
\end{equation}
For $\sigma^{n+1}_t$, we multiply $\sigma^{n+1}_t$ on the both sides of \eqref{eq6.65} and integrating over $\Omega$, it follows analogously from \eqref{5510} that
\begin{equation}\label{ee662}
\begin{aligned}
&\frac{d}{dt}\norm{\sigma^{n+1}_t}_{L^2}^2+\nu\norm{\nabla\sigma^{n+1}_t}_{L^2}^2\\
&\leq C_\varepsilon\left(\norm{u^n_{t}}_{L^2}^2+\norm{\rho^n_{t}}_{L^4}^4+\norm{\nabla\rho^n}_{L^4}^4+\norm{\nabla\rho^n_{t}}_{L^2}^2\right)\left(\norm{\sigma^{n+1}_t}_{L^2}^2+\norm{\nabla\sigma^{n+1}}_{L^2}^2\right)\\
&\quad+\varepsilon\left(\norm{\Delta\sigma^{n+1}}_{L^2}^2+\norm{a^n_t}_{L^2}^2\right)+C\norm{\Delta\rho^n}^2_{L^2}\norm{\sigma^n}^2_{H^1}\\
&\leq C_{\varepsilon}\cI^n(t)\left(\norm{\sigma^{n+1}_t}_{L^2}^2+\norm{\sigma^{n+1}}_{H^1}^2\right)+C\cB^n(t)\cY^n(t)+\varepsilon\left(\norm{\Delta\sigma^{n+1}}_{L^2}^2+\norm{a_t^n}^2_{L^2}\right)
\end{aligned}
\end{equation}

On the other hand, we differeniate the equations between those of $u^{n+1}$ and $u^n$ to get
\begin{equation}\label{ee663}
\begin{aligned}
&\rho^{n+1}a^{n+1}_t+\rho^{n+1}u_\eta^{n}\cdot\nabla a^{n+1}-\dive[2\mu_\eta^{n+1}D(a^{n+1})]+\nabla (\pi^{n+1}-\pi^n)\\
&=-\sigma^{n+1}(u^n_t+u^n_\eta\cdot \nabla u^n)-\rho^n a^n_\eta\cdot\nabla u^n+\dive[2(\mu_\epsilon^{n+1}-\mu_\epsilon^n)D(u^n)]:=K^n,
\end{aligned}
\end{equation}
where
\begin{equation*}
\begin{aligned}
\norm{K^n}_{H^{-1}}&\leq \norm{u^n_t+u^n_\eta\cdot \nabla u^n}_{L^2}\norm{\sigma^{n+1}}_{L^\infty}+\norm{\nabla u^n}_{L^2}\norm{a^n_\eta}_{L^\infty}+\norm{\nabla u^n}_{L^2}\norm{\sigma_\eta^{n+1}}_{L^\infty}\\
\norm{K^n}_{L^2}&\leq \norm{u^n_t+u^n_\eta\cdot \nabla u^n}_{L^2}\norm{\sigma^{n+1}}_{L^\infty}+\norm{\nabla u^n}_{L^2}\norm{a^n_\eta}_{L^\infty}+\norm{\Delta u^n}_{L^2}\norm{\sigma_\eta^{n+1}}_{L^\infty}\\
&\quad +C\norm{\nabla u^n}_{L^4}\norm{\nabla\rho^n_\eta}_{L^4}\norm{\sigma^{n+1}_\eta}_{L^\infty}+C\norm{\nabla u^n}_{L^4}\norm{\nabla \sigma_\eta^{n+1}}_{L^4},
\end{aligned}
\end{equation*}
that is,
\begin{equation}\label{0570}
\norm{K^n}_{H^{-1}}+\norm{K^n}_{L^2}\leq C\sqrt{\cI^n(t)}\norm{\sigma^{n+1}}_{H^2}+C_\eta\cB^n(t)\norm{a^n}_{L^2},
\end{equation}

Next, following the proof of \eqref{equation518}, we multilpy $a^{n+1}-c^{n+1}$ on both sides of \eqref{ee663}, integrate over $\Omega$ and use \eqref{0570} to obtain 
\begin{equation}\label{ee665}
\begin{aligned}
&\frac{d}{dt}\norm{\sqrt{\rho^{n+1}}a^{n+1}}_{L^2}^2+\nu\norm{\nabla a^{n+1}}_{L^2}^2\\
&\leq C\left(\norm{\Delta\rho^{n+1}}_{L^2}^2+\norm{\Delta\rho^{n}}_{L^2}^2+\norm{\nabla\rho^{n+1}}_{L^4}^4+\norm{\nabla\rho^{n}}_{L^4}^4\right)\norm{a^{n+1}}_{L^2}^2\\
&\quad +C\norm{u^{n}_\eta}_{L^4}^2\norm{c^{n+1}}_{L^4}^2 +C_\varepsilon\norm{c^{n+1}}_{H^1}^2+\varepsilon\norm{a^{n+1}_t}_{L^2}^2+C\norm{K^n}_{H^{-1}}^2\\
&\leq C_\varepsilon\cI^n(t)\left(\norm{a^{n+1}}_{L^2}^2+\norm{\sigma^{n+1}}^2_{H^2}\right)+C\cB^n(t)\norm{a^n}^2_{L^2}+C\norm{\Delta\sigma^{n+1}}^2_{L^2}+\varepsilon\norm{a^{n+1}_t}_{L^2}^2.
\end{aligned}
\end{equation}
Here, for the last inequality, we have used Lemma \ref{Lemma2.3} and 
\begin{equation*}
\begin{aligned}
\norm{c^{n+1}}_{L^p}&\leq C\norm{\sigma^{n+1}}_{W^{1,p}},\\
\norm{c^{n+1}}_{H^1}&\leq C\left(\norm{\nabla\rho^{n+1}}_{L^4}^2+\norm{\nabla\rho^{n}}_{L^4}^2+\norm{\nabla\rho^{n}}_{L^8}^4+\norm{\Delta\rho^{n}}_{L^4}^2\right)\norm{\sigma^{n+1}}_{H^1}\\
&\quad+C\norm{\Delta\sigma^{n+1}}_{L^2}\\
&\leq C\sqrt{\cI^n(t)}\norm{\sigma^{n+1}}_{H^1}+C\norm{\Delta\sigma^{n+1}}_{L^2}
\end{aligned}
\end{equation*}
To get the higher bound for $a^n$, multiplying \eqref{ee663} by $a^{n+1}_t-c^{n+1}_t$, it follows from \eqref{equation520} that
\begin{equation}\label{ee667}
\begin{aligned}
&\frac{d}{dt}\norm{\nabla a^{n+1}}_{L^2}^2+\nu\norm{a^{n+1}_t}_{L^2}^2\\
&\leq C\left(\norm{u^n_\eta}_{L^4}^2+\norm{\rho^{n+1}_{\eta,t}}_{L^2}+\norm{\nabla\rho_\eta^{n+1}}_{L^4}^2\right)\norm{\nabla a^{n+1}}_{L^4}^2+C\norm{c^{n+1}_t}_{L^2}^2\\
&\quad+C\norm{c^{n+1}_t}_{L^2}\norm{\Delta a^{n+1}}_{L^2}+C\norm{K^n}_{L^2}^2\\
&\leq C_\varepsilon\cI^n(t)\left(\norm{\nabla a^{n+1}}_{L^2}^2+\norm{\sigma^{n+1}}^2_{H^2}+\norm{\sigma^{n+1}_t}^2_{L^2}\right)+C_\varepsilon\norm{\nabla\sigma^{n+1}_t}^2_{L^2}\\
&\quad+C\cB^n(t)\norm{a^n}^2_{L^2}+\varepsilon\norm{\Delta a^{n+1}}_{L^2}^2
\end{aligned}
\end{equation}
where we have used the fact that
\begin{equation*}
\begin{aligned}
\norm{c^{n+1}_t}_{L^2}&\leq C\norm{\nabla\sigma^{n+1}_t}_{L^2}+C\left(\norm{\nabla\rho^n_t}_{L^2}+\norm{\rho^n_t}_{L^4}\norm{\nabla\rho^n}_{L^4}\right)\norm{\sigma^{n+1}}_{L^\infty}\\
&\quad+C\norm{\rho^n_t}_{L^4}\norm{\nabla\sigma^{n+1}}_{L^4}+C\norm{\nabla\rho^{n+1}}_{L^4}\norm{\sigma^{n+1}_t}_{L^4}\\
&\leq C\norm{\nabla\sigma^{n+1}_t}_{L^2}+C\sqrt{\cI^n(t)}\left(\norm{\sigma^{n+1}}_{H^2}+\norm{\sigma^{n+1}_t}_{L^2}\right).
\end{aligned}
\end{equation*}

At last, in order to get the estimate of $\Delta a^{n+1}$, we use \eqref{0517} with the additional term $K^n$,
\begin{equation}
\begin{aligned}
\norm{\Delta a^{n+1}}_{L^2}^2+\norm{\pi^{n+1}-\pi^n}_{H^1}^2&\leq C\norm{a^{n+1}_t}_{L^2}^2+C\norm{\nabla \Delta[(\rho^{n+1})^{-1}-(\rho^{n})^{-1}]}_{L^2}^2\\
&\quad+C\left(\norm{\nabla\rho_\eta^n}_{L^4}^4+\norm{u_\eta^n}_{L^4}^4\right)\norm{\nabla a^{n+1}}_{L^2}^2+C\norm{K^n}_{L^2}^2\\
&\leq C\norm{a^{n+1}_t}_{L^2}^2+C\norm{\nabla \Delta\sigma^{n+1}}_{L^2}^2\quad\text{use }\eqref{0570}\\
&\quad+C\cI^n(t)\left(\norm{\nabla a^{n+1}}_{L^2}^2+\norm{\sigma^{n+1}}_{H^2}^2\right)+C\cB^n(t)\norm{a^n}_{L^2}^2. 
\end{aligned}
\end{equation}
We substitute above into \eqref{ee667} to get
\begin{equation}\label{ee670}
\begin{aligned}
&\frac{d}{dt}\norm{\nabla a^{n+1}}_{L^2}^2+\nu\norm{a^{n+1}_t}_{L^2}^2\\
&\leq C_\varepsilon\cI^n(t)\left(\norm{\nabla a^{n+1}}_{L^2}^2+\norm{\sigma^{n+1}}^2_{H^2}+\norm{\sigma^{n+1}_t}^2_{L^2}\right)+C_\varepsilon\norm{\nabla\sigma^{n+1}_t}^2_{L^2}\\
&\quad+C\cB^n(t)\norm{a^n}^2_{L^2}+\varepsilon\norm{\nabla\Delta \sigma^{n+1}}_{L^2}^2
\end{aligned}
\end{equation}

Therefore, combining \eqref{eee658}--\eqref{ee659}, \eqref{ee661}--\eqref{ee662}, \eqref{ee665} and \eqref{ee670}, we eventually get
\begin{equation}\label{ee672}
\begin{aligned}
\frac{d}{dt}\cY^{n+1}(t)+\nu\cZ^{n+1}(t)&\leq C\left(\cI^n(t)\cY^{n+1}(t)+\cB^n(t)\cY^n(t)+\cI^n(t)\int_0^t\cY^n(s)\,ds\right).
\end{aligned}
\end{equation}
Then, applying the Gr$\mathrm{\ddot{o}}$nwall's inequality and recalling that $\cY^n(0)=0$ and the definitions of $\cI^n(t),\cB^n(t)$, one has, for all $t\in (0,T_1)$,
\begin{equation*}
\begin{aligned}
\cY^{n+1}(t)&\leq C_0\int_0^t\left(C\cB^n(s)\cY^n(s)\,ds+C\cI^n(s)\int_0^s\cY^n(\tau)\,d\tau\right)\,ds\\
&\leq C\int_0^t\cY^n(s)\,ds,
\end{aligned}
\end{equation*}
which reduces to the Volterra-type integral equation. After a simple recursive argument, we can show that
\begin{equation}
\sup_{t\in[0,T_1]}\cY^{n+1}(t)\leq C\frac{(CT_1)^{n-1}}{(n-1)!}\int_0^{T_1}\cY^1(t)\,dt.
\end{equation}
Applying the contraction mapping theorem and using this inequality together with \eqref{ee672}, we show that the sequence $(\rho^n,u^n)$ converges strongly to an unique limit $(\rho,u)$ and, as a consequence, $\pi^n$ converges strongly to a function $\pi$. More precisely, we have
\begin{equation*}
\begin{aligned}
&\rho^n\sconverge\rho\quad\mathrm{in }\,\,C([0,T];H^2)\cap L^2(0,T;H^3),\\
&u^n\sconverge u\quad\mathrm{in }\,\,C([0,T];H^1)\cap L^2(0,T;H^2),\\
&u_t^n\sconverge u_t\quad\mathrm{in }\,\,L^2([0,T];L^2),\\
&\pi^n\sconverge\pi\quad\mathrm{in }\,\,L^2(0,T;H^1).
\end{aligned}
\end{equation*}
Of course, $(\rho,u,\pi)$ is the unique strong solution in $\Omega\times(0,T_1)$ for \eqref{origin}. Furthermore, we can show $(\rho,u,\pi)$ is acually smooth. Indeed, sicne $u\in L^2(0,T;H^2)\cap H^1(0,T;L^2)$, $v_\eta, \rho_\eta\in H^1(0,T;H^\infty)$. With this regularity on $v_\eta, \rho_\eta$, using the regularity theories of parabolic equations for $\eqref{origin}_1$, we can derive that $\rho\in H^2([0,T];H^\infty)$. Then, applying the $L^p$-theory (\cite{solonnikov}) for $\eqref{origin}_2$, we get $u\in H^2(0,T;H^\infty)$ and, hence, we can bootstrap and gain more time regualrity on $v_\eta, \rho_\eta$ then $\rho$, which implies that $(\rho,u)\in C^\infty(\overline Q_T)$. Therefore, we finish the proof of Theorem \ref{theoLP}.
\\
\quad\\

\noindent$\underline{\mathbf{Case\,\,\eqref{A}\,\,or\,\,\eqref{B}}}$:\\
\quad\\

We only consider the case \eqref{A} here and case \eqref{B} can be proved identically. Firstly, it follows from \eqref{0570} that
\begin{equation}\label{eee579}
\begin{aligned}
\norm{K^n}_{H^{-1}}+\norm{K^n}_{L^2}\leq C\sqrt{\cI^n(t)}\norm{\sigma^{n+1}}_{H^2}+C_\eta\cB^n(t)(\norm{b^n}_{L^2}+\norm{\sigma^{n}}_{H^1}).
\end{aligned}
\end{equation}
Then, applying the estimates \eqref{equation5522} and \eqref{equation5528} with $\varphi=\rho_\eta^n$ and $\Phi=v_\eta^n$ and using \eqref{eee579}, we can obtain
\begin{equation}\label{eee580}
\begin{aligned}
&\frac{d}{dt}\norm{\sqrt{\rho^{n+1}}b^{n+1}}_{L^2}^2+\nu\norm{\nabla b^{n+1}}_{L^2}^2\\
&\leq C\left(\norm{\Delta \rho^{n+1}}^2_{L^2}+\norm{\nabla\rho^{n+1}}^4_{L^4}+\norm{\nabla\mu^{n+1}_\epsilon}_{L^4}^4\right)\norm{b^{n+1}}_{L^2}^2\\
&\quad+C\left(\norm{v^{n}_\eta}_{L^4}^4+\norm{\nabla\rho_\eta^n}^4_{L^4}+\norm{\Delta\rho_\eta^n}^2_{L^2}\right)\norm{b^{n+1}}_{L^2}^2\\
&\quad+C\norm{\nabla\Delta[(\rho^{n+1})^{-1}-(\rho^n)^{-1}]}_{L^2}^2+C\norm{K^n}_{H^{-1}}^2\\
&\leq C\cI^n(t)\left(\norm{b^{n+1}}_{L^2}^2+\norm{\sigma^{n+1}}_{H^2}^2\right)+C\norm{\nabla\Delta\sigma^{n+1}}_{L^2}^2+C\cB^n(t)\left(\norm{b^{n}}_{L^2}^2+\norm{\sigma^{n}}_{H^1}^2\right)
\end{aligned}
\end{equation}
and
\begin{equation}\label{eee581}
\begin{aligned}
&\norm{b^{n+1}_t}_{L^2}^2+ \frac{d}{dt}\left(\cM^n_1(t)+\norm{\nabla b^{n+1}}_{L^2}^2\right)+\frac{d}{dt}\cM^n_2(t)\\
&\leq C_\varepsilon\left(\norm{v_\eta^n}^4_{L^4}+\norm{\nabla\rho_\eta^n}^4_{L^4}+\norm{\mu^{n+1}_{\epsilon,t}}_{H^1}^2+\norm{\nabla\mu^{n+1}_\epsilon}_{L^4}^4+1\right)\norm{b^{n+1}}_{H^1}^2\\
&\quad+C_\varepsilon\left(\norm{v_\eta^n}_{L^4}^4+\norm{\nabla\rho_\eta^n}_{L^4}^4+\norm{\nabla\mu^{n+1}_\epsilon}_{L^4}^4\right)\norm{\Delta[(\rho^{n+1})^{-1}-(\rho^n)^{-1}]}^2_{L^2}\\
&\quad+C_\varepsilon\norm{\nabla\Delta[(\rho^{n+1})^{-1}-(\rho^n)^{-1}]}_{L^2}^2\\
&\quad+\varepsilon\left(\norm{\nabla(\log\rho^{n+1}-\log\rho^n)_t}_{L^2}^2+\norm{\nabla[(\rho^{n+1})^{-1}-(\rho^n)^{-1}]_t}_{L^2}^2\right)+C\norm{K^n}_{L^2}^2\\
&\leq C_\varepsilon\cI^n(t)\left(\norm{b^{n+1}}_{H^1}^2+\norm{\sigma^{n+1}}_{H^2}^2\right)+C_\varepsilon\norm{\nabla\Delta\sigma^{n+1}}_{L^2}^2+\varepsilon\norm{\nabla\sigma^{n+1}_t}_{L^2}^2\\
&\quad +C\cB^n(t)\left(\norm{b^{n}}_{L^2}^2+\norm{\sigma^{n}}_{H^1}^2\right),
\end{aligned}
\end{equation}
where
\begin{equation*}
\cM^n_1(t):=\int_\partial \mu_\epsilon^{n+1} b^{n+1}\cdot B\cdot b^{n+1},
\end{equation*}
\begin{equation*}
\cM^n_2(t):=\int c_0\mu_\epsilon^{n+1} \nabla^\perp(b^{n+1}\cdot n^\perp)\cdot B\cdot \nabla \left[(\rho^{n+1})^{-1}-(\rho^{n})^{-1}\right].
\end{equation*}

Therefore, combining \eqref{eee658}--\eqref{ee659}, \eqref{ee661}--\eqref{ee662}, \eqref{eee580}--\eqref{eee581}, we have
\begin{equation}\label{eee583}
\begin{aligned}
&\frac{d}{dt}\cY^{n+1}(t)+\frac{d}{dt}\cM^n_2(t)+\nu\cZ^{n+1}(t)\\
&\leq C\left(\cI^n(t)\cY^{n+1}(t)+\cB^n(t)\cY^n(t)+\cI^n(t)\int_0^t\cY^n(s)\,ds\right).
\end{aligned}
\end{equation}
Here, we have used the fact that $\cM^n_1(t)\geq 0$. 

Noticing that
\begin{equation*}
\begin{aligned}
|\cM^n_2(t)|&\leq \varepsilon\norm{b^{n+1}}_{H^1}^2(t)+C_\varepsilon\left(\norm{\nabla\sigma^{n+1}}_{L^2}^2(t)+\norm{\nabla\rho^n}_{L^4}^4(t)\norm{\sigma^{n+1}}_{L^2}^2(t)\right)\\
&\leq \varepsilon\norm{b^{n+1}}_{H^1}^2(t)+C_\varepsilon\norm{\sigma^{n+1}}_{H^1}^2(t)\\
&\leq \varepsilon\norm{b^{n+1}}_{H^1}^2(t)+C_\varepsilon\int_0^t\cY^{n}(t) \quad\text{use }\eqref{ee660}
\end{aligned}
\end{equation*}
Applying Gr$\mathrm{\ddot{o}}$nwall's inequality and using \eqref{eee583}, we finally get the Volterra-type integral equation
\begin{equation}
\cY^{n+1}(t)\leq C\int_0^t\cY^n(s)\,ds,\quad t\in (0,T_1),
\end{equation}
and, hence, following the proof of case \eqref{C}, we complete the proof for the case \eqref{A}.

In conclusion, we finish the proof for Theorem \ref{theoLP}.

\subsection{Proofs of Theorem \ref{theo:Serrin-type}: Recover $\epsilon$ and $\eta$}\label{P}

We temporarily fix $\epsilon\in (0,1]$ to let $\eta\to 0^+$. We still first consider the case \eqref{C}. 

The recovering process is standard. Using Theorem \ref{theoLP}, we get a smooth sequence 
$$(\rho^{\epsilon,\eta}, u^{\epsilon,\eta},\pi^{\epsilon,\eta})\in C^\infty(\overline Q_{T_1})$$ 
which solves the problem \eqref{origin} for each $\epsilon,\,\eta\in (0,1]$ (for simplicity, we use the notation $(\rho^{\eta}, u^{\eta},\pi^{\eta})$).  Next, we can follow the proofs in Lemma \ref{Lemma61}--\ref{Lemma62} step by step and the uniform bounds \eqref{eq5.3.5} to obtain the following control
\begin{equation}\label{ee674}
\frac{d}{dt}\cF^\eta(t)+\nu\cG^\eta(t)\leq C\cF^\eta(t)^3,
\end{equation}
where 
\begin{gather*}
\cF^\eta(t):=\norm{u^\eta}_{L^2}^2+\norm{\rho^\eta_t}_{L^2}^2+\norm{\rho^\eta}_{H^2}^2,\\ 
\cG^\eta(t):=\norm{u^\eta_t}_{L^2}^2+\norm{u^\eta}_{H^2}^2+\norm{\rho^\eta}_{H^3}^2+\norm{\rho^\eta_t}_{H^1}^2.
\end{gather*}
and $C$ is a constant which is not depend on $\eta$. Using the inequality \eqref{ee674}, we can easily deduce that there eixsts a positive time $T_2$ such that 
\begin{equation}\label{ee675}
\sup_{t\in [0,T_2]}\cF^\eta(t)+\int_0^{T_2}\cG^\eta(t)\,dt\leq C_2.
\end{equation}
Therefore, using the above uniform esitmate and Lemma \ref{Lemma2.6}, we can derive that $(\rho^\eta,u^\eta,\pi^\eta)$ converges in some proper sense to the limit $(\rho,u,\pi)$ such that
\begin{equation}\label{origin2}
\begin{cases}
\rho_t+\dive(\rho u)=0,\\
\rho u_t+\rho u\cdot\nabla u-\dive[2\mu_\epsilon D(u)]+\nabla\pi=0,\\
\dive u=c_0\Delta\rho^{-1}.
\end{cases}
\end{equation}
The convergence is easy to check, we left it to the reader. Of course, as a special case, we can let $\mu_\epsilon$ be a constant $\mu$ and, thus, we have proved the uniqueness and existence of the local strong solutions for the case \eqref{C}.

For the case \eqref{A} or \eqref{B}, the proof is basically the same. However, the difference lies in this case is that we can recover $\epsilon\to 0^+$ because of the uniform estimates of $(\rho^\epsilon,u^\epsilon,\pi^\epsilon)$, $\epsilon\in (0,1]$, see Remark \ref{remark56}. The convergence is easy to check and we omit it. Thus, we have completed the proof of the existence results for Theorem \ref{theo:Serrin-type}.

It remains to check the uniqueness for the case \eqref{A} or \eqref{B}. However, this can be done by following the proof in \ref{5.3.2}. Indeed, for example, if we consider the case \eqref{A} (another case can be proved analogously), let $(\rho_i,u_i,\pi_i)$, $i=1,2$, be two strong solutions on $\Omega\times(0,T_1)$ with same initial data and set
\begin{equation*}
\sigma:=\rho_1-\rho_2,\quad a:=u_1-u_2,\quad b:=v_1-v_2,\quad c:=Q_1-Q_2,
\end{equation*}
\begin{equation*}
\cY(t):=\norm{a}_{H^1}^2+\norm{\sigma}_{H^2}^2+\norm{\sigma_t}_{L^2}^2,\quad
\cZ(t):=\norm{a_t}_{L^2}^2+\norm{\sigma}_{H^3}^2+\norm{\sigma_t}_{H^1}^2.
\end{equation*}
Then, we can derive the similar type of equations to \eqref{eq6.65} and \eqref{ee663}, that is,
\begin{equation*}
\begin{cases}
\sigma_t+v_2\cdot\nabla\sigma-c_0\dive\left(\rho_2^{-1}\nabla\sigma\right)=-b\cdot\nabla\rho_2-c_0\dive\left(\sigma\rho_1^{-1}\rho_2^{-1}\nabla\rho_2\right),\\
\rho_1a_t+\rho_1u_1\cdot\nabla a-\dive[2\mu D(a)]+\nabla (\pi_1-\pi_2)=-\sigma(u_{1,t}+u_1\cdot \nabla u_1)-\rho_2 a\cdot\nabla u_1.
\end{cases}
\end{equation*}
Applying the same discussions from \ref{5.3.2}, we can get the following type inequality
\begin{equation*}
\frac{d}{dt}\cY(t)+\cZ(t)\leq C\cI(t)\cY(t),
\end{equation*}
where $\cI$ stands for some integrable functions on time interval $(0,T_1)$. Thus, using Gr$\mathrm{\ddot{o}}$nwall's inequality and the fact that $\cY(0)=0$, we can easily deduce that $\cY(t)\equiv 0$, which yields the uniqueness.

\section{Proof of Theorem \ref{theorem1.2}--\ref{theorem1.6}}\label{proof6}

\subsection{Proof of Theorem \ref{theorem1.3}--\ref{theorem1.6}}
Since we have already show the existence and uniqueness of strong solution on $\Omega\times(0,T_1)$ for some positive times $T_1$, the proof of global ones is quite standard with the a priori estiamtes we obtained in Section \ref{section3}--\ref{Section8}. One thing we should mention is that there is a gap between the local existence and the global one when $(\rho,u)$ satisfies the condition \eqref{C} in that we only established the unique local strong solution for $\mu=\mu_\epsilon$. In every case that follows, one should first recover $\epsilon\to 0^+$ to get the global existence and, then, show their uniqueness under the smallness assumption $\norm{\nabla u_0}_{L^2}\leq \delta$. Fourtunately, the proof of either is simpe and indentical with that in subsection \ref{P}. The only thing one should notice is that, under the restriction $\norm{\nabla u_0}_{L^2}\leq \delta$, Proposition \ref{prop8.1} holds and, thus, we always have 
$$\sup_{t\in [0,T]}\norm{\nabla\rho}_{L^4}\leq 1,$$
which allows us to use Lemma \ref{lemma8.4} (in such case, there is no difference between the estimates of Lemma \ref{Lemma5.2} and those of Lemma \ref{lemma8.4}) and get the uniqueness.

\subsection{Proof of Theorem \ref{theorem1.2}}

Following the construction process in subsection \ref{PR}, one can find a smooth sequence $(\rho_0^n,v_0^n)$ such that
\begin{equation}
\begin{gathered}
\rho_0^n\sconverge\rho_0\quad\text{in}\,\,H^1,\quad \alpha\leq \rho_0^n\leq \beta,\\
v_0^n\sconverge v_0\quad\text{in}\,\,L^2,\quad \dive v_0^n=0,\quad v_0^n\cdot n=0\quad\text{on }\partial\Omega,\\
(\rho^n_0, v_0^n)\text{ satisfying }\eqref{A'}\text{ or }\eqref{B'}.
\end{gathered}
\end{equation}

If we define $u_0^n:=v_0^n+c_0\nabla(\rho_0^n)^{-1}$, it is easy to check that $u_0^n$ is smooth and $(\rho_0^n,u_0^n)$ satisfies all the conditions in Theorem \ref{theorem1.3}. Thus, by using Theorem \ref{theorem1.3}, there exists a sequence of global strong solutions $(\rho^n,u^n)$ of \eqref{equation1.1} with initial data $(\rho^n_0,u^n_0)$. Then, using the uniform bounds we get from subsection \ref{LOE}, extracting subsequences if necessary, we can derive a weak convergent subsequence satisfying 
\begin{equation}\label{6..2}
\begin{cases}
\rho^n\wsconverge \rho\,\,&\mathrm{in}\,\,L^\infty(0,T;H^1),\\
\rho^n\wconverge \rho\,\,&\mathrm{in}\,\,L^2(0,T;H^2),\\
\rho^n_t\wconverge \rho_t\,\,&\mathrm{in}\,\,L^2(0,T;L^2),\\
u^n\wsconverge u\,\,&\mathrm{in}\,\,L^\infty(0,T;L^2),\\
u^n\wconverge u\,\,&\mathrm{in}\,\, L^2(0,T;H^1).
\end{cases}
\end{equation}
Next, we can apply Lemma \ref{lemma5.1} to obtain \eqref{equation5.6}--\eqref{equa4.7}. With these hold in hand, one can immediately get 
\begin{equation}\label{6..3}
u^n\sconverge u\quad \text{in }L^2(0,T;L^2).
\end{equation}
Indeed, since $v^n\sconverge v\text{ in }L^2(0,T;L^2)$, it suffices to show the strong convergence for $\nabla(\rho^n)^{-1}$, that is,
\begin{equation}\label{6..4}
\nabla(\rho^n)^{-1}\sconverge \nabla\rho^{-1}\quad \text{in }L^2(0,T;L^2).
\end{equation}
However, 
\begin{equation*}
\nabla(\rho^n)^{-1}=-(\rho^n)^{-2}\nabla\rho^n
\end{equation*}
and $\rho^n\sconverge\rho\text{ in }L^2(0,T;H^1)$, since we have $\eqref{6..2}_2$--$\eqref{6..2}_3$ and, then, use Lemma \ref{Lemma2.6}. Therefore, \eqref{6..4} is an easy consequence of \eqref{equation5.6} and Egorov theorem.

Finally, using \eqref{equation5.6}, \eqref{6..2}--\eqref{6..3}, we can recover the weak solutions $(\rho,u)$ for system \eqref{equation1.1} and complete the prove of Theorem \ref{theorem1.2}.

\section{Proof of Theorem \ref{Theorem1.7}}\label{section9}

In the last section, we come to prove the blowup criterion for $(\rho,u)$ satisfying one of three conditions \eqref{A}, \eqref{B} and \eqref{C}.  Let $(\rho,u,\pi)$ be a local strong solution as being described in Theorem \ref{theo:Serrin-type} and suppose that \eqref{serrin1} or \eqref{serrin2} was false, that is, for some $r$ and $s$ satisfying \eqref{rs},
\begin{equation}\label{4.1}
\lim_{T\to T^*}\norm{\nabla\rho}_{L^s(0,T;L^r)}\leq M_0<\infty.
\end{equation}
or
\begin{equation}\label{serrin816}
\lim_{T\to T^*}\norm{u}_{L^s(0,T;L^r)}\leq M_0<\infty.
\end{equation}
We also let $\tilde C$ be a positive generic constant depending on $\Omega$, $c_0$, $\alpha$, $\beta$, $T^*$, $M_0$ and $\norm{u_0}_{H^1}$. Then, our goal is proving the following estimate.

\begin{Proposition}\label{prop4.1}
Suppose that \eqref{4.1} holds for $(\rho,u)$ satisfying the condition \eqref{A} or \eqref{B} and \eqref{serrin816} holds for $(\rho,u)$ satisfying the condition \eqref{C}. Then, one has, for all $T\in(0,T^*)$,
\begin{equation}\label{88.3}
\sup_{t\in[0,T]}\left(\norm{\rho_t}_{L^2}^2+\norm{\rho}^2_{H^2}+\norm{u}^2_{H^1}\right)+\int_0^T\left(\norm{\rho_t}_{H^1}^2+\norm{\rho}_{H^3}^2+\norm{u}_{H^2}^2\right)\,dt\leq \tilde C.
\end{equation}
\end{Proposition}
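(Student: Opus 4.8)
The plan is to argue by contradiction in the standard continuation style: granting \eqref{4.1} for \eqref{A}/\eqref{B} (resp.\ \eqref{serrin816} for \eqref{C}) on $[0,T^*)$, I would prove the bound \eqref{88.3} with a constant $\tilde C$ independent of $T<T^*$, and then invoke the local existence result Theorem~\ref{theo:Serrin-type} starting from a time close to $T^*$ to extend the strong solution past $T^*$, contradicting maximality. As elsewhere in the paper I would treat \eqref{A}/\eqref{B} through the Kazhikhov--Smagulov reformulation \eqref{equation3.2} in the variable $v=u-c_0\nabla\rho^{-1}$ (divergence-free, with $v\cdot n=0$ or $v=0$ on $\partial\Omega$), and \eqref{C} through the splitting $u=w+Q$, $Q=\cB[c_0\Delta\rho^{-1}]$ of Section~\ref{Section8}. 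In every case the starting data are the unconditional bounds $\alpha\le\rho\le\beta$ and $\sup_{[0,T]}\norm{\rho-c}_{L^2}^2+\int_0^T\norm{\nabla\rho}_{L^2}^2\le\tilde C$ from Lemmas~\ref{lemma4.1}--\ref{lemma43}.

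The heart of the matter is to recover the ``lower order'' bound $\sup_{[0,T]}\big(\norm{\nabla\rho}_{L^2}^2+\norm{v}_{L^2}^2\big)+\int_0^T\big(\norm{\Delta\rho}_{L^2}^2+\norm{\nabla v}_{L^2}^2\big)\le\tilde C$ \emph{without} the smallness hypothesis on $\norm{\nabla\rho_0}_{L^2}$ used in Section~\ref{section3}. The observation is that the Serrin hypothesis makes the dangerous coefficients time-integrable: since $\alpha\le\rho\le\beta$, \eqref{4.1} yields $\nabla\rho^{-1}\in L^s(0,T;L^r)$, and because $s'=\tfrac{2r}{r-2}\le s$ and $T<\infty$ one has $\norm{\nabla\rho}_{L^r}^{s'}\in L^1(0,T)$. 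Rerunning the computations \eqref{eq3.12}--\eqref{eq3.28} — and, crucially, their $\log\rho$ version \eqref{5513}--\eqref{5516}, which is what lets the quadratic term $|\nabla\rho|^2$ be absorbed (cf.\ Remark~\ref{remark53}) — every offending contribution is split so that one factor of $\nabla\rho$ sits in $L^r$ while the partner is interpolated, e.g.\ $\int\rho^{-2}|\nabla\rho|^2\Delta\rho\le C\norm{\nabla\rho}_{L^r}\norm{\nabla\rho}_{L^{2r/(r-2)}}\norm{\Delta\rho}_{L^2}\le\varepsilon\norm{\Delta\rho}_{L^2}^2+C_\varepsilon\norm{\nabla\rho}_{L^r}^{s'}\norm{\nabla\rho}_{L^2}^2$, and similarly $\int c_0\rho\, v\otimes\nabla\rho^{-1}:\nabla v\le\varepsilon\norm{\nabla v}_{L^2}^2+C_\varepsilon\norm{\nabla\rho}_{L^r}^{s'}\norm{v}_{L^2}^2$. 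Coupling the resulting differential inequality for $\norm{\nabla\log\rho}_{L^2}^2$ with the velocity inequality \eqref{eq3.28} (whose right side is $C(\norm{\nabla\rho}_{L^4}^4+\norm{\Delta\rho}_{L^2}^2)(1+\norm{\sqrt\rho v}_{L^2}^2)$, with $\norm{\nabla\rho}_{L^4}^4\le C\norm{\nabla\rho}_{L^2}^2\norm{\Delta\rho}_{L^2}^2$ and the remaining velocity contributions $\int v\cdot\nabla\log\rho\,\Delta\log\rho$ handled by $\norm{v}_{L^4}^4\in L^1(0,T)$ once that is known), a Grönwall argument on the combined energy closes the estimate. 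For \eqref{C} the same scheme runs with $u\in L^s(0,T;L^r)$ used directly in the drift of the mass equation and $w=u-Q$ tested against $\eqref{equation1.1}_2$. Once this is done, $v$ (resp.\ $u$) lies in $L^4(0,T;L^4)$, which is Serrin-admissible, so Lemma~\ref{lemma61} gives $\rho\in C^{\gamma,\frac{\gamma}{2}}(\overline Q_T)$ and hence $\mu(\rho)$ uniformly continuous on $\overline Q_T$.

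With $\mu(\rho)$ continuous, the Stokes estimates of Lemmas~\ref{Lemma5.2} (for \eqref{A}/\eqref{B}) and \ref{lemma8.4} (for \eqref{C}) are available, and I would then re-run the higher-order estimates of Subsection~\ref{Section6} (resp.\ Section~\ref{Section8}) essentially verbatim: the differential inequalities for $F,G,\cM_1,\cM_2$ (resp.\ $\cF,\cG$), together with the auxiliary inequalities of Lemmas~\ref{lemma6.4}--\ref{lemma6.5} for $\norm{\rho_t}_{L^2}^2$, $\norm{\Delta\rho}_{L^2}^2$, $\norm{\nabla\Delta\rho}_{L^2}^2$, $\norm{\nabla\rho_t}_{L^2}^2$, hold with unchanged structure; the only difference is that the ``bad'' weights $\cA_i(t)$ — integrable in Section~\ref{section3} because of smallness — are now integrable because they are polynomial in the quantities $\norm{\nabla\rho}_{L^4}^4,\norm{\Delta\rho}_{L^2}^2,\norm{\nabla v}_{L^2}^2,\norm{v}_{L^4}^4$ controlled in $L^1(0,T)$ by the previous step, times the factor $\norm{\nabla\rho}_{L^r}^{s'}\in L^1(0,T)$. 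A last Grönwall step, using $\norm{\rho_t(\cdot,0)}_{L^2}\le\tilde C$ and $\norm{\nabla\rho_0}_{L^2}\le C\norm{u_0}_{L^2}$ (Remark~\ref{remark18}), yields \eqref{88.3}, the pressure bound following by feeding the velocity estimates back into the Stokes inequality.

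The hard part is the second step: removing the smallness of $\norm{\nabla\rho_0}_{L^2}$. Two points deserve care. First, the quadratic term $|\nabla\rho|^2$ in the density equation is precisely what forced $\norm{\nabla\rho_0}_{L^2}\le\delta$ in Section~\ref{section3}; here it must be absorbed purely through the $L^r$-integrability of $\nabla\rho$, which works only after passing to $\log\rho$ and spending exactly one factor of $\nabla\rho$ in $L^r$, and one has to verify that the feedback between the $v$-energy inequality (which needs $\int_0^T\norm{\Delta\rho}_{L^2}^2<\infty$) and the $\rho$ second-order inequality (which needs $\int_0^T\norm{v}_{L^4}^4<\infty$) genuinely closes — this is where the exponent relation $\tfrac2s+\tfrac2r\le1$ is used. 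Second, in case \eqref{C} the boundary integrals are genuinely delicate since $v|_{\partial\Omega}=-c_0\nabla\rho^{-1}\neq0$ obstructs integration by parts on $v$; this is why one must keep $u=w+Q$ (so that $w$ is divergence-free and vanishes on $\partial\Omega$), whereas in the slip cases \eqref{A}/\eqref{B} the boundary terms are tamed by the identity $\int_\partial v\cdot\nabla f=\int\nabla f\cdot\nabla^\perp(v\cdot n^\perp)$, which trades a boundary trace of a second derivative for an interior first derivative. Finally, $T$-independence of every constant is automatic, since the only $T$-dependence enters through $\int_0^T(\cdots)$ of quantities that are finite at $T^*$ by hypothesis.
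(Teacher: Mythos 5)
Your proposal takes essentially the paper's route (Lemmas~\ref{lemma44}--\ref{lemma45} feeding into a verbatim re-run of Proposition~\ref{theorem6.3}, resp.\ Lemma~\ref{lemma8.2}): Serrin-split the quadratic and cross terms, close a Gr\"onwall argument on the coupled $(\rho,v)$ (resp.\ $(\rho,w)$) energy to recover the $L^4(0,T;L^4)$ bounds without any smallness, invoke Lemma~\ref{lemma61} for H\"older continuity of $\rho$ so that the Stokes estimates of Lemmas~\ref{Lemma5.2}/\ref{lemma8.4} apply, and then the higher-order machinery of Sections~\ref{Section6} and~\ref{Section8} carries over unchanged. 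Two imprecisions deserve flagging, though neither is a real gap: the paper's Lemma~\ref{lemma44} absorbs $|\nabla\rho|^2$ \emph{directly on $\rho$}, by exactly the H\"older--Young interpolation you display, so no passage to $\log\rho$ is needed at the lower-order stage --- the $\log\rho$ renormalization of Remark~\ref{remark53} is only used inside the already-established Lemma~\ref{lemma6.4}, where for the Dirichlet case~\eqref{B'} the pointwise-in-time elliptic bound~\eqref{69} (not a Gr\"onwall inequality) would otherwise need smallness of $\norm{\nabla\rho}_{L^2}$ to digest $\norm{|\nabla\rho|^2}_{L^2}$ --- and the velocity cross term cannot literally be ``handled by $\norm{v}_{L^4}^4\in L^1(0,T)$ once that is known,'' which is circular at this stage: the paper puts $\nabla\rho$ in $L^r$, interpolates $v$, and absorbs $\varepsilon\norm{\nabla v}_{L^2}^2$ into the coupled left-hand side, which is precisely where the exponent relation $\tfrac{2}{s}+\tfrac{2}{r}\le1$ is spent.
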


Before proving the proposition, let us show how to derive the blowup criterion in Theorem \ref{theo:Serrin-type} from Proposition \ref{prop4.1}. 
\begin{proof}[Proof of Theorem \ref{Theorem1.7}]
For simplicity, we give the prove for the case when $(\rho,u)$ satisfies the condition \eqref{A}, since other cases can be proved identically. Note that $\tilde C$, in \eqref{88.3}, is uniformly bounded for all $T\leq T^*$, so
$$(\rho,u)(x,T^*):=\lim_{t\to T^*}(\rho,u)(x,t)\text{ in the sense of } H^2\times H^1$$
satisfying the conditions imposed on the initial data, that is, $\alpha	\leq \rho_0\leq \beta,\,\,u_0\in H^1_\omega$, at the time $t=T^*$. Furthermore,
\begin{equation*}
\begin{cases}
\dive u|_{t=T^*}=c_0\Delta\rho^{-1}|_{t=T^*},&x\in \Omega\\
u|_{t=T^*}\cdot n=n\cdot\nabla\rho^{-1}|_{t=T^*},&x\in \partial\Omega
\end{cases}
\end{equation*}
Thus, $(\rho,u)(x,T^*)$ satisfies \eqref{compat} also. Therefore, we can take $(\rho,u)(x,T^*)$ as the initial data and apply the existence result in Theorem \ref{theo:Serrin-type} to extend the local strong solution beyond $T^*$. This contradicts the maximality of $T^*$ and, hence, we finish the proof of Theorem \ref{theo:Serrin-type}.
\end{proof}

\subsection{Case for $(\rho,u)$ satisfying \eqref{A} or \eqref{B}}\label{sub8.1}
In this subsection, we always let $(\rho,u)$ satisfy the condition \eqref{A} or \eqref{B}. Recall that it is also equivalent to require $(\rho,v)$ satisfying the condition \eqref{A'} or \eqref{B'}.

The proof for the first part of Proposition \ref{prop4.1} will be separated into the following few steps. The key of the proof is obtaining the lower order estimates for $(\rho,v)$, that is, $(\nabla\rho,v)\in C([0,T];L^2)\cap L^2(0,T;H^1)$, then, following the proof in Section \ref{Section6}, the weak solution is automatically a strong one.

The first lemma is just the combination of Lemma \ref{lemma4.1} and \ref{lemma43}, we give it here for convenience.
\begin{Lemma}\label{lemma4.2}
The following bounds hold for all $T\in(0,T^*)$, that is,
\begin{equation}
\alpha\leq\rho\leq\beta,\quad\sup_{t\in[0,T]}\norm{\rho}_{L^2}^2+\nu\int_0^T\norm{\nabla\rho}_{L^2}^2\,dt\leq C.
\end{equation}
\end{Lemma}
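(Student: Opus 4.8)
\textbf{Proof plan for Lemma \ref{lemma4.2}.}

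The plan is to observe that this statement is nothing more than the assembly of two results already proved in Section \ref{section3}, restated here because the remaining argument for Proposition \ref{prop4.1} (the blowup criterion) needs these uniform-in-$T$ bounds, valid on every $[0,T]$ with $T<T^*$. Since $(\rho,u)$ satisfying \eqref{A} or \eqref{B} is equivalent to $(\rho,v)$ satisfying \eqref{A'} or \eqref{B'}, with $v=u-c_0\nabla\rho^{-1}$ divergence-free and $v\cdot n=0$ on $\partial\Omega$, the equation $\eqref{equation3.2}_1$ — equivalently its renormalized form \eqref{equation4.1}, namely $\rho_t+v\cdot\nabla\rho-c_0\Delta\log\rho=0$ — is in force. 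Both conclusions only use this equation together with the structural facts $\dive v=0$, $v\cdot n|_{\partial\Omega}=0$, and the boundary condition on $\rho$; in particular no smallness of $\nabla\rho_0$ is required.

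First I would invoke Lemma \ref{lemma4.1}: multiplying \eqref{equation4.1} by $(\rho-\beta)_+$ (resp.\ $(\alpha-\rho)_+$) and integrating by parts, the convection term drops because $v$ is divergence-free with $v\cdot n=0$, and the diffusion term has a definite sign; together with $\alpha\le\rho_0\le\beta$ this yields $\alpha\le\rho(x,t)\le\beta$ for all $x\in\Omega$, $t\in[0,T]$. In the Dirichlet case \eqref{B'} one uses $\alpha\le\tilde\rho\le\beta$ so that the same test functions have vanishing trace. This gives the first assertion, uniformly in $T<T^*$ since the bound is pointwise and $T$-independent.

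Next I would invoke Lemma \ref{lemma43}: multiplying \eqref{equation4.1} by $\rho$ (or by $\rho-\tilde\rho$ under \eqref{B'}) and integrating over $\Omega$, the convection term again vanishes by $\dive v=0$ and $v\cdot n|_{\partial\Omega}=0$, and using $\rho\le\beta$ from the previous step to bound $c_0\rho^{-1}\ge c_0\beta^{-1}$ in the dissipation, one obtains $\frac{d}{dt}\norm{\rho}_{L^2}^2+2c_0\beta^{-1}\norm{\nabla\rho}_{L^2}^2\le 0$ (resp.\ with $\rho$ replaced by $\rho-\tilde\rho$). Integrating in time from $0$ to $T$ gives $\sup_{t\in[0,T]}\norm{\rho}_{L^2}^2+\nu\int_0^T\norm{\nabla\rho}_{L^2}^2\,dt\le C$ with $\nu=2c_0\beta^{-1}$ and $C$ depending only on $c_0$, $\beta$ and the data (through $\norm{\rho_0}_{L^2}$ or $\norm{\rho_0-\tilde\rho}_{L^2}$, which is finite). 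Crucially the right-hand side does not depend on $T$, so the estimate holds on every $[0,T]$ with $T<T^*$. Since everything here is a direct quotation of already-established lemmas, there is no real obstacle; the only point to be careful about is to record that the constants are genuinely $T$-independent, which is what makes the lemma usable in the continuation argument that follows.
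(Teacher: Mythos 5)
Your proposal is correct and takes exactly the same route as the paper: the paper presents Lemma~\ref{lemma4.2} with no proof of its own, remarking only that it is "just the combination of Lemma~\ref{lemma4.1} and~\ref{lemma43}," which are precisely the two lemmas you invoke (the $L^\infty$ bounds via the maximum principle and the $L^2$/$H^1$ energy estimate). You merely unpack those two proofs rather than citing them in a single line, and correctly note the equivalence of~\eqref{A}/\eqref{B} with~\eqref{A'}/\eqref{B'} and the $T$-independence of the constants, both of which are what justify quoting Section~\ref{section3} on $(0,T^*)$.
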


The next crucial lemma gives the lower bounds of $(\rho,v)$, that is,
\begin{Lemma}\label{lemma44}
Suppose that \eqref{4.1} holds and $(\rho,v)$ satisfies the condition \eqref{A'} or \eqref{B'}, then one has
\begin{equation}\label{equation4.4}
\sup_{t\in[0,T]}\left(\norm{\nabla\rho}^2_{L^2}+\norm{v}^2_{L^2}\right)+\int_0^T\left(\norm{\Delta\rho}_{L^2}^2+\norm{\nabla v}_{L^2}^2\right)\,dt\leq \tilde C.
\end{equation}
\end{Lemma}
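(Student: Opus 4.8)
The plan is to close an energy inequality of Grönwall type for the quantity $\norm{\nabla\rho}_{L^2}^2+\norm{v}_{L^2}^2$, where the only time-nonlocal ingredient is the assumed bound $\norm{\nabla\rho}_{L^s(0,T;L^r)}\le M_0$ from \eqref{4.1}. First I would recall that from Lemma \ref{Lemma4.3} (i.e.\ inequality \eqref{eq3.12}) we already have, for $\rho$ satisfying either boundary condition,
\begin{equation*}
\frac{d}{dt}\norm{\nabla\rho}_{L^2}^2+\nu\norm{\Delta\rho}_{L^2}^2\le C\norm{\nabla\rho}_{L^2}^2\norm{\Delta\rho}_{L^2}^2+C\norm{v}_{L^4}^4\norm{\nabla\rho}_{L^2}^2,
\end{equation*}
and this time I cannot assume $\norm{\nabla\rho_0}_{L^2}$ small, so the term $\norm{\nabla\rho}_{L^2}^2\norm{\Delta\rho}_{L^2}^2$ cannot simply be absorbed. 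The trick is to use \eqref{4.1}: interpolating $\norm{\nabla\rho}_{L^2}$ against $\norm{\nabla\rho}_{L^r}$ and $\norm{\Delta\rho}_{L^2}$ via Gagliardo--Nirenberg (Remark \ref{remark2.4}) gives, with the exponent relation \eqref{rs}, a bound of the form $\norm{\nabla\rho}_{L^4}^4\le C\norm{\nabla\rho}_{L^r}^s\norm{\nabla\rho}_{L^2}^{a}\norm{\Delta\rho}_{L^2}^{2}$ (with $a\ge 0$) — more precisely one wants to turn the bad quadratic-in-$\Delta\rho$ coefficient into $\norm{\nabla\rho}_{L^r}^s$, which is integrable in time by hypothesis. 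So the first step produces
\begin{equation*}
\frac{d}{dt}\norm{\nabla\rho}_{L^2}^2+\tfrac{\nu}{2}\norm{\Delta\rho}_{L^2}^2\le C\bigl(\norm{\nabla\rho}_{L^r}^s+\norm{v}_{L^4}^4\bigr)\norm{\nabla\rho}_{L^2}^2+C\norm{\nabla\rho}_{L^4}^4,
\end{equation*}
after the usual Young's inequality splitting, where one still has to handle $\norm{\nabla\rho}_{L^4}^4$; this is the only place a genuine smallness-free argument is needed, and one expects to absorb part of it into $\norm{\Delta\rho}_{L^2}^2$ once $\norm{\nabla\rho}_{L^2}$ is known to be finite — which makes the argument slightly circular and must be arranged as a continuation/bootstrap on $[0,T]$.

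Next I would run the $v$-estimate exactly as in Lemma \ref{Lemma3.4}: multiply the momentum equation $\eqref{equa3.9}$ by $v$, integrate, use Lemma \ref{lemma4.1}, Lemma \ref{Lemma2.2} and the boundary manipulations (the $J_i'$, $I_i$ bookkeeping) to arrive at
\begin{equation*}
\frac{d}{dt}\norm{\sqrt\rho v}_{L^2}^2+\nu\norm{\nabla v}_{L^2}^2\le C\bigl(\norm{\nabla\rho}_{L^4}^4+\norm{\Delta\rho}_{L^2}^2\bigr)\bigl(1+\norm{\sqrt\rho v}_{L^2}^2\bigr),
\end{equation*}
which is essentially \eqref{eq3.28}. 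Now I add this to the $\rho$-inequality. The cross terms $\norm{\nabla\rho}_{L^4}^4$ and $\norm{\Delta\rho}_{L^2}^2$ appearing on the right of the $v$-inequality are controlled: $\norm{\Delta\rho}_{L^2}^2$ is integrable once the first step is closed, and $\norm{\nabla\rho}_{L^4}^4\le C\norm{\nabla\rho}_{L^2}^2\norm{\Delta\rho}_{L^2}^2$ (GN) is again absorbed into the dissipation times the already-bounded $\norm{\nabla\rho}_{L^2}^2$. The term $\norm{v}_{L^4}^4\le C\norm{v}_{L^2}^2\norm{\nabla v}_{L^2}^2$ appearing in the $\rho$-estimate is absorbed into $\nu\norm{\nabla v}_{L^2}^2$ times the (to-be-bounded) $\norm{v}_{L^2}^2$. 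The upshot is a single differential inequality
\begin{equation*}
\frac{d}{dt}Y(t)+\nu\bigl(\norm{\Delta\rho}_{L^2}^2+\norm{\nabla v}_{L^2}^2\bigr)\le C\bigl(1+\norm{\nabla\rho}_{L^r}^s\bigr)\,Y(t)+C,\qquad Y:=\norm{\nabla\rho}_{L^2}^2+\norm{\sqrt\rho v}_{L^2}^2,
\end{equation*}
whose right-hand coefficient is integrable on $(0,T^*)$ by \eqref{4.1}. Grönwall then yields \eqref{equation4.4}, using $\alpha\le\rho\le\beta$ to pass between $\norm{\sqrt\rho v}_{L^2}$ and $\norm{v}_{L^2}$, and the consistency $\rho_0\in H^1$, $v_0\in L^2$.

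The main obstacle I anticipate is arranging the two estimates so that the cross-absorption is actually legitimate rather than circular: both the $\norm{\nabla\rho}_{L^4}^4$ term in the $v$-inequality and the $\norm{v}_{L^4}^4$ term in the $\rho$-inequality want to be absorbed against dissipation multiplied by a quantity ($Y$ itself) that one is in the process of bounding. The clean way is a continuation argument — define $T_{\max}$ as the supremum of times on which a provisional bound $Y\le 2Y(0)+1$ (say) holds, show via the closed inequality that on $[0,T_{\max}]$ one in fact has a strictly better bound, hence $T_{\max}=T^*$ — or, equivalently, to first establish a crude local-in-time bound from the already-proven local existence in Theorem \ref{theo:Serrin-type} and then upgrade it globally using integrability of $\norm{\nabla\rho}_{L^r}^s$. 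A secondary technical point is making the Gagliardo--Nirenberg interpolation that converts the quadratic-$\Delta\rho$ coefficient into $\norm{\nabla\rho}_{L^r}^s$ precise in two space dimensions with the borderline relation \eqref{rs} (the case $r=\infty$, $s=2$ being the cleanest); this is routine but needs the correct bookkeeping of exponents from Remark \ref{remark2.4}.
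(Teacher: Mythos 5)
Your overall strategy---to close a single Gr\"onwall inequality for $Y(t)=\norm{\nabla\rho}_{L^2}^2+\normf{\sqrt{\rho}v}_{L^2}^2$ whose coefficient involves $\norm{\nabla\rho}_{L^r}^s$, and to reuse the boundary bookkeeping ($J_i'$, $I_i$) from Lemma~\ref{Lemma3.4} for the $v$-part---is the same as the paper's, and the target inequality $\frac{d}{dt}Y+\nu(\norm{\Delta\rho}_{L^2}^2+\norm{\nabla v}_{L^2}^2)\le C(1+\norm{\nabla\rho}_{L^r}^s)Y$ is exactly what the paper proves (see \eqref{equ814}). However, the route you take to reach it has two genuine gaps.

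First, the interpolation $\norm{\nabla\rho}_{L^4}^4\le C\norm{\nabla\rho}_{L^r}^s\norm{\nabla\rho}_{L^2}^{a}\norm{\Delta\rho}_{L^2}^{2}$ you propose does not exist: under the 2D parabolic scaling $\nabla\rho\to\lambda\nabla\rho(\lambda\cdot)$, the left side scales like $\lambda^2$ while the right side scales like $\lambda^{s(1-2/r)+2}$, and with $2<r\le\infty$, $s\ge 2r/(r-2)$ the exponents never match. Even the honest interpolation of $\norm{\nabla\rho}_{L^4}$ between $\norm{\nabla\rho}_{L^r}$ and $\norm{\nabla\rho}_{L^2}$ requires $r\ge4$, which \eqref{rs} does not guarantee. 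Second, and more seriously, you begin from the Lemma~\ref{Lemma4.3}/\ref{Lemma3.4} estimates \eqref{eq3.12} and \eqref{eq3.28}, which carry coefficients $\norm{v}_{L^4}^4$ and $\norm{\nabla\rho}_{L^4}^4+\norm{\Delta\rho}_{L^2}^2$ that have nothing to do with the Serrin hypothesis. You correctly flag the resulting circularity, but the continuation argument you sketch cannot close it: to absorb $\norm{\nabla\rho}_{L^4}^4\,Y\le C Y^2\norm{\Delta\rho}_{L^2}^2$ into $\nu\norm{\Delta\rho}_{L^2}^2$ you would need $CY^2<\nu$, i.e.\ smallness of the provisional bound, which is precisely the hypothesis that Lemma~\ref{lemma44} is supposed to dispense with; a bootstrap keeps $Y$ bounded but not small, so it never absorbs. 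The absolute continuity of $t\mapsto\int\norm{\nabla\rho}_{L^r}^s$ cannot rescue terms that do not carry the $L^r$ factor.

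The paper avoids both issues by never forming the $L^4$-norm coefficients in the first place. It applies H\"older with the Serrin exponent directly on the raw integrals: for instance $\int\abs{\nabla\rho}^2\abs{\Delta\rho}\le\norm{\nabla\rho}_{L^r}\norm{\nabla\rho}_{L^{2r/(r-2)}}\norm{\Delta\rho}_{L^2}$ and $\int\abs{v}\abs{\nabla\rho}\abs{\Delta\rho}\le\norm{\nabla\rho}_{L^r}\norm{v}_{L^{2r/(r-2)}}\norm{\Delta\rho}_{L^2}$, and analogously on each $J_i'$, $I_i$ boundary and source term. The middle factor in $L^{2r/(r-2)}$ is then interpolated by Gagliardo--Nirenberg between $L^2$ and the dissipation ($\Delta\rho$ or $\nabla v$), and Young's inequality with the conjugate exponent $2r/(r-2)\le s$ produces a coefficient $C(\norm{\nabla\rho}_{L^r}^s+1)$ in front of $Y$ plus a small multiple of the dissipation. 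Because every term in both the $\rho$- and $v$-inequalities is treated this way, the combined inequality is already a linear Gr\"onwall inequality in $Y$ with an $L^1$ coefficient, and no bootstrap, continuation, or smallness of the data is required. If you rework the estimates so that H\"older against $L^r$ is the \emph{first} step on each integral---rather than reusing \eqref{eq3.12} and \eqref{eq3.28}, which were tailored to the small-data regime---your outline reduces to the paper's proof.
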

\begin{proof}
We first follow the proof of Lemma \ref{Lemma4.3}, applying Lemma \ref{Lemma2.3} and \ref{lemma4.2}, to get
\begin{equation*}
\begin{aligned}
\frac{d}{dt}\norm{\nabla\rho}_{L^2}^2 +\nu\norm{\Delta\rho}_{L^2}^2&\leq C\int \left(\abs{\nabla\rho}^2+\abs{v}\abs{\nabla\rho}\right)\abs{\Delta\rho}\\
&\leq C\norm{\nabla\rho}_{L^r}\left(\norm{\nabla\rho}_{L^{\frac{2r}{r-2}}}+\norm{v}_{L^{\frac{2r}{r-2}}}\right)\norm{\Delta\rho}_{L^2}\\
&\leq C\norm{\nabla\rho}^{\frac{2r}{r-2}}_{L^r}\left(\norm{\nabla\rho}_{L^2}^2+\norm{v}_{L^2}^2\right)+\frac{\nu}{2}\norm{\Delta\rho}_{L^2},
\end{aligned}
\end{equation*}
which implies that
\begin{equation}\label{equ85}
\frac{d}{dt}\norm{\nabla\rho}_{L^2}^2 +\nu\norm{\Delta\rho}_{L^2}^2\leq C(\norm{\nabla\rho}^{s}_{L^r}+1)\left(\norm{\nabla\rho}_{L^2}^2+\norm{v}_{L^2}^2\right).
\end{equation}

On the onther hand, as we did in \eqref{4412}, multiplying $v$ on both sides of $\eqref{equation3.2}_2$ and integrating over $\Omega$,
\begin{equation}\label{equ88}
\begin{aligned}
\frac{d}{dt}\int\frac{1}{2}\rho\abs{v}^2-\int \dive{[2\mu D(v)]}\cdot v =\sum_{i=1}^3I_i,
\end{aligned}
\end{equation}
where $I_i$, $i=1,2,3$, as in \eqref{4412}. From \eqref{equ412}, applying Lemma \ref{Lemma2.3}, \ref{Lemma2.2} and \ref{lemma4.2}, the second term on the left-hand side can be controlled by
\begin{equation}
\begin{aligned}
-\int \dive{[2\mu D(v)]}\cdot v
&\geq\underline{\mu}\int\abs{\curle v}^2- C\left(\norm{\nabla\rho}_{L^r}\norm{\sqrt\rho v}_{L^{\frac{2r}{r-2}}}\norm{\nabla v}_{L^2}\right)\\
&\geq\nu\norm{\nabla v}_{L^2}^2- \left[C_\varepsilon(\norm{\nabla\rho}_{L^r}^s+1)\norm{\sqrt\rho v}_{L^2}^2+\varepsilon\norm{\nabla v}_{L^2}^2\right].
\end{aligned}
\end{equation}
Following the proof from \eqref{equa3.14} to \eqref{equ3.12}, since 
\begin{equation}
\begin{aligned}
|J_1'|&= \abs{\int_{\partial}\phi(\rho)(n\cdot\nabla\rho)(v\cdot n^\perp)n^\perp\cdot\nabla\rho}\\
&=\abs{\int\nabla^\perp[\phi(\rho)(n\cdot\nabla\rho)]\cdot\nabla\rho(v\cdot n^\perp) + \int\phi(\rho)(n\cdot\nabla\rho)\nabla\rho\cdot\nabla^\perp(v\cdot n^\perp)}\\
&\leq C_{\varepsilon_1}\norm{\nabla\rho}_{L^r}^2\left(\norm{\sqrt\rho v}_{L^{\frac{2r}{r-2}}}^2+\norm{\nabla\rho}_{L^{\frac{2r}{r-2}}}^2\right)+\varepsilon_1\left(\norm{\nabla v}_{L^2}^2+\norm{\Delta\rho}_{L^2}^2\right) \\
&\leq C_{\varepsilon_1}(\norm{\nabla\rho}_{L^r}^s+1)\left(\norm{\sqrt\rho v}_{L^2}^2+\norm{\nabla\rho}_{L^2}^2\right)+\varepsilon_1\left(\norm{\nabla v}_{L^2}^2+\norm{\Delta\rho}_{L^2}^2\right) 
\end{aligned}
\end{equation}
\begin{equation}
\begin{aligned}
|J_2'|&= \abs{\int_{\partial}\phi(\rho)(v\cdot n^\perp)n^\perp\cdot\nabla n\cdot\nabla\rho}\\
&= \left|\int\nabla^\perp\phi(\rho)\cdot(\nabla n\cdot\nabla\rho)(v\cdot n^\perp)-\int_{\Omega}\phi(\rho)\nabla^\perp\cdot(\nabla n\cdot\nabla\rho)(v\cdot n^\perp)\,dx\right.\\
&\quad\left.-\int\phi(\rho)\nabla^\perp(v\cdot n^\perp)\cdot(\nabla n\cdot\nabla\rho)\right|\\
&\leq C_{\varepsilon_2}\left(\norm{v}_{L^2}^2 +\norm{\nabla\rho}_{L^2}^2\right) + C_{\varepsilon_2}\norm{\nabla\rho}_{L^r}^2\norm{\sqrt\rho v}_{L^{\frac{2r}{r-2}}}^2+\varepsilon_2\left(\norm{\nabla v}_{L^2}^2+\norm{\Delta\rho}_{L^2}^2\right)\\
&\leq C_{\varepsilon_2}\norm{\nabla\rho}_{L^2}^2+ C_{\varepsilon_2}(\norm{\nabla\rho}_{L^r}^s+1)\norm{\sqrt\rho v}_{L^2}^2+\varepsilon_2\left(\norm{\nabla v}_{L^2}^2+\norm{\Delta\rho}_{L^2}^2\right). 
\end{aligned}
\end{equation}
and
\begin{equation}
\begin{aligned}
|J_3|&=\abs{\int 2c_0\mu(\rho)\partial_{ij}\rho^{-1}\partial_jv_i}\\
&=\abs{\int_\partial 2c_0\mu(\rho)\nabla\rho^{-1}\cdot\nabla v\cdot n-\int 2c_0\mu'\nabla\rho^{-1}\cdot \nabla v\cdot \nabla\rho}\\
&=\abs{-\int_\partial 2c_0\mu(\rho)\nabla\rho^{-1}\cdot\nabla n\cdot v-\int 2c_0\mu'\nabla\rho^{-1}\cdot \nabla v\cdot \nabla\rho}\\
&\leq  C_{\varepsilon_2}(\norm{\nabla\rho}_{L^r}^s+1)\left(\norm{\sqrt\rho v}_{L^2}^2+\norm{\nabla\rho}_{L^2}^2\right)+\varepsilon_2\left(\norm{\nabla v}_{L^2}^2+\norm{\Delta\rho}_{L^2}^2\right),
\end{aligned}
\end{equation}
we deduce that
\begin{equation}\label{equ813}
\begin{aligned}
|I_1|&\leq  C_{\varepsilon}(\norm{\nabla\rho}_{L^r}^s+1)\left(\norm{\sqrt\rho v}_{L^2}^2+\norm{\nabla\rho}_{L^2}^2\right)+\varepsilon\left(\norm{\nabla v}_{L^2}^2+\norm{\Delta\rho}_{L^2}^2\right),
\end{aligned}
\end{equation}

Similarly, for $I_2$--$I_3$, one has
\begin{equation}\label{equ814}
\begin{aligned}
|I_2|&\leq C_\varepsilon(\norm{\nabla\rho}_{L^r}^s+1)\norm{\sqrt{\rho}v}_{L^2}^2+\varepsilon\norm{\nabla v}_{L^2}^2,\\
|I_3|&\leq C_\varepsilon(\norm{\nabla\rho}_{L^r}^s+1)\norm{\nabla\rho}_{L^2}^2+\varepsilon\left(\norm{\nabla v}_{L^2}^2+\norm{\Delta\rho}_{L^2}^2\right).
\end{aligned}
\end{equation}
Substituting \eqref{equ813}--\eqref{equ814} into \eqref{equ88} and, then, alonging with \eqref{equ85} leads to
\begin{equation}\label{equ814}
\begin{aligned}
&\frac{d}{dt}\left(\norm{\sqrt{\rho}v}_{L^2}^2+\norm{\nabla\rho}_{L^2}^2\right) +\nu\left(\norm{\nabla v}_{L^2}^2+\norm{\Delta\rho}_{L^2}^2\right)\\
&\leq C(\norm{\nabla\rho}^{s}_{L^r}+1)\left(\norm{\nabla\rho}_{L^2}^2+\norm{\sqrt{\rho}v}_{L^2}^2\right).
\end{aligned}
\end{equation}
Finally, applying the Gr$\mathrm{\ddot{o}}$nwall's inequality to \eqref{equ814}, we finish the proof of Lemma \ref{lemma44}.
\end{proof}

Now, we can prove the first part of Proposition \ref{prop4.1}.
\begin{proof}[Proof of Proposition \ref{prop4.1}]
It follows from Lemma \ref{lemma4.2} and \ref{lemma44} that 
\begin{equation}\label{equ815}
\sup_{t\in[0,T]}\left(\norm{\rho}^2_{H^1}+\norm{v}^2_{L^2}\right)+\int_0^T\left(\norm{\rho}_{H^2}^2+\norm{v}_{H^1}^2\right)\,dt\leq \tilde C.
\end{equation}
Thus, by Lemma \ref{Lemma2.3}, we get the bounds 
$$\int_0^T\norm{(\nabla\rho,v)}_{L^4}^4\,dt\leq \tilde C.$$
This, together with \eqref{equ815}, allows us to follow the proof of Proposition \ref{theorem6.3} step by step, since the lower order bounds are enough to deduce the higher ones, according to Lemma \ref{lemma6.4} and \ref{lemma6.5} (noticing that, the proofs of  Lemma \ref{lemma6.4} and \ref{lemma6.5} are merely based on the smallness assumption we derived from Proposition \ref{theorem4.5}, that is, $\norm{\nabla\rho_0}_{L^2}\leq \delta$, without any additional restriction, see also Remark \ref{remark53}). We omit the remaining proof here and leave it to the reader. 
\end{proof}

\subsection{Case for $(\rho,u)$ satisfying \eqref{C}}
Now, we assume that $(\rho,u)$ satisfies the condition \eqref{C}.
One should notice that condition \eqref{serrin816} is also equivalent with 
\begin{equation}\label{serrin816'}
\lim_{T\to T^*}\left(\norm{v}_{L^s(0,T;L^r)}+\norm{\nabla\rho}_{L^s(0,T;L^r)}\right)\leq \tilde M_0<\infty,
\end{equation}
since $\rho$ is bounded from above and below and the identity \eqref{equation3.1}.

Our aim is proving the rest of Proposition \ref{prop4.1} under \eqref{serrin816}. First, we give the following lemma, which concludes some results we need later. This nothing but a directly application of Lemma \ref{lemma61} and Lemma \ref{lemma4.2}. 
\begin{Lemma}\label{lem8.4}
Let $(\rho,u,\pi)$ be a local strong solution as being described in Theorem \ref{theo:Serrin-type}. Then, Lemma \ref{lemma4.2} still holds. Moreover, under the condition \eqref{serrin816} (or, equivalently, \eqref{serrin816'}), one has $\rho\in C^{\gamma,\frac{\gamma}{2}}(\overline Q_T)$ for some $\gamma\in (0,1)$ and for all $T\in(0,T^*)$.
\end{Lemma}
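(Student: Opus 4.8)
The plan is to deduce the statement as a direct consequence of Lemma~\ref{lemma61} together with Lemma~\ref{lemma4.2}, once the hypotheses of Lemma~\ref{lemma61} are checked for the present boundary condition~\eqref{C}. The only point requiring a genuine (if short) argument is that the drift field $v:=u-c_0\nabla\rho^{-1}=u+c_0\rho^{-2}\nabla\rho$, which is the field that actually enters Lemma~\ref{lemma61}, inherits the structural properties $\dive v=0$ and $v\cdot n=0$ on $\partial\Omega$; everything else is bookkeeping.

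First I would record that Lemma~\ref{lemma4.2} holds for the local strong solution. The two-sided bound $\alpha\le\rho\le\beta$ is part of the definition of strong solution in~\eqref{strong} (equivalently, it follows by testing the mass equation against $(\rho-\beta)_+$ and $(\alpha-\rho)_+$ as in Lemma~\ref{lemma4.1}). For the energy bound, note that $\eqref{equation1.1}_3$ gives $\dive v=0$, and since $u=0$ and $n\cdot\nabla\rho=0$ on $\partial\Omega$ by~\eqref{C} one also has $v\cdot n=u\cdot n+c_0\rho^{-2}\,n\cdot\nabla\rho=0$ on $\partial\Omega$. Moreover $\eqref{equation1.1}_1$ and $\eqref{equation1.1}_3$ together reduce, exactly as in~\eqref{equation4.1}, to $\rho_t+v\cdot\nabla\rho-c_0\Delta\log\rho=0$; multiplying by $\rho$, integrating over $\Omega$, using $\dive v=0$ and $v\cdot n=0$ to annihilate the convective term and $n\cdot\nabla\rho=0$ together with $\rho^{-1}\ge\beta^{-1}$ on the diffusive term, gives $\frac{d}{dt}\|\rho\|_{L^2}^2+2c_0\beta^{-1}\|\nabla\rho\|_{L^2}^2\le 0$, hence the claimed bound by Gr\"onwall. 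This is literally the proof of Lemma~\ref{lemma43} in the case~\eqref{A'}.

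Next I would apply Lemma~\ref{lemma61} with this $v$ and with $\rho$ subject to the Neumann condition. Its hypotheses are met: $v\in L^s(0,T;L^r)$ with $r,s$ as in~\eqref{rs} is exactly~\eqref{serrin816'}, which is equivalent to~\eqref{serrin816} because $\alpha\le\rho\le\beta$ yields $\|v\|_{L^r}\le\|u\|_{L^r}+c_0\alpha^{-2}\|\nabla\rho\|_{L^r}$ and, by the $W^{1,r}$ estimate for the Neumann problem $\Delta\rho^{-1}=c_0^{-1}\dive u$, $n\cdot\nabla\rho^{-1}=0$ (cf.~Remark~\ref{remark18}), $\|\nabla\rho\|_{L^r}\le C\|u\|_{L^r}$; $\dive v=0$ and $v\cdot n=0$ were verified in the previous step; $\rho\in C([0,T];L^2)\cap L^2(0,T;H^1)$ and $\alpha\le\rho\le\beta$ follow from~\eqref{strong} and that step; and $\rho_0\in C^{\gamma_0}(\overline\Omega)$ for every $\gamma_0\in(0,1)$ follows from $\rho_0\in H^2$ (Remark~\ref{remark18}) and the two-dimensional Sobolev embedding $H^2(\Omega)\hookrightarrow C^{0,\gamma_0}(\overline\Omega)$. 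Lemma~\ref{lemma61} then gives $\rho\in C^{\gamma,\frac{\gamma}{2}}(\overline Q_T)$ with $\gamma$ depending only on $\gamma_0,\alpha,\beta$, for every $T<T^*$.

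I do not foresee a real obstacle; the only delicate point is the equivalence between the Serrin-type bound on $u$ and the joint bound on $(v,\nabla\rho)$, since Lemma~\ref{lemma61} is phrased in terms of $v$ — this rests entirely on $\rho$ being bounded away from $0$ and $\infty$ together with the elliptic $W^{1,r}$ estimate for $\rho^{-1}$ controlled by $\dive u$. With that equivalence granted, the lemma is just a citation of Lemmas~\ref{lemma4.2} and~\ref{lemma61}.
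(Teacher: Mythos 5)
Your proposal is correct and takes essentially the same route as the paper, which dispatches this lemma with a single line citing Lemma~\ref{lemma61} and Lemma~\ref{lemma4.2}; you simply spell out the verifications the paper leaves implicit. The one genuinely useful addition is that you make explicit why \eqref{serrin816} and \eqref{serrin816'} are equivalent: the paper only invokes the two-sided bound on $\rho$ and the identity \eqref{equation3.1}, which by themselves bound $\|u\|_{L^r}$ in terms of $\|v\|_{L^r}+\|\nabla\rho\|_{L^r}$ but not conversely — one also needs the elliptic $W^{1,r}$ estimate $\|\nabla\rho\|_{L^r}\lesssim\|u\|_{L^r}$ from the Neumann problem $c_0\Delta\rho^{-1}=\dive u$, $n\cdot\nabla\rho^{-1}=0$, exactly as you note via Remark~\ref{remark18}.
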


Next, with help of the Serrin's condition \eqref{serrin816}, one can get the lower bound of $\rho$.
\begin{Lemma}\label{lemma45}
Suppose that \eqref{serrin816'} holds and $(\rho,u)$ satisfies \eqref{C}, then
\begin{equation}\label{4.11}
\begin{aligned}
\sup_{t\in[0,T]}\norm{\nabla\rho}_{L^2}^2+\int_0^T\left(\norm{\nabla\rho}_{L^4}^4+\norm{\Delta\rho}_{L^2}^2\right)\,dt\leq \tilde C.
\end{aligned}
\end{equation}
\end{Lemma}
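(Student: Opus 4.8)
\textbf{Proof proposal for Lemma \ref{lemma45}.}

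The plan is to imitate the estimate for $\Delta \rho$ carried out in Lemma \ref{Lemma4.3}, but now replacing the smallness of $\norm{\nabla\rho_0}_{L^2}$ by the Serrin bound \eqref{serrin816'} on $u$, which here plays the role of the transporting velocity in $\eqref{equation1.1}_1$. First I would rewrite the mass equation in the form \eqref{equation4.1}, $\rho_t + u\cdot\nabla\rho - c_0\Delta\log\rho = 0$ (note that for condition \eqref{C} the velocity in the transport term is $u$, not $v$), multiply by $-\Delta\rho$, and integrate over $\Omega$. Using $n\cdot\nabla\rho = 0$ on $\partial\Omega$ and the lower bound $\rho\geq\alpha$ from Lemma \ref{lem8.4}, integration by parts yields, for some $\nu = \nu(c_0,\beta) > 0$,
\begin{equation*}
\frac{d}{dt}\norm{\nabla\rho}_{L^2}^2 + \nu\norm{\Delta\rho}_{L^2}^2 \leq C\int\left(\abs{\nabla\rho}^2 + \abs{u}\abs{\nabla\rho}\right)\abs{\Delta\rho}.
\end{equation*}
The new term compared with Lemma \ref{Lemma4.3} is the one with $\abs{\nabla\rho}^2$; the term with $\abs{u}\abs{\nabla\rho}\abs{\Delta\rho}$ is handled exactly as in the proof of Lemma \ref{lemma44}, namely by H\"older with exponents $r$, $2r/(r-2)$, $2$, Gagliardo--Nirenberg (Lemma \ref{Lemma2.3}), Young's inequality and the relation \eqref{rs}, absorbing $\varepsilon\norm{\Delta\rho}_{L^2}^2$ on the left and leaving a factor $C(\norm{u}_{L^r}^s + 1)\norm{\nabla\rho}_{L^2}^2$.

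The key difficulty is precisely the nonlinear quadratic term $\int\abs{\nabla\rho}^2\abs{\Delta\rho}$: this is the term that, in Lemma \ref{Lemma4.3}, forced the smallness assumption $\norm{\nabla\rho}_{L^2}(t)\leq\delta_1$ via the estimate $C\norm{\nabla\rho}_{L^2}^2\norm{\Delta\rho}_{L^2}^2$, which cannot be absorbed without smallness. This is where Remark \ref{remark53} is crucial: instead of estimating $\Delta\rho$ directly, I would work with $\log\rho$, exploiting that $\eqref{equation1.1}_1$ carries the \emph{linear} dissipative term $\Delta\log\rho$. Renormalizing as in \eqref{5513}, $(\log\rho)_t + u\cdot\nabla\log\rho - c_0\rho^{-1}\Delta\log\rho = 0$, and testing against $-\Delta\log\rho$ gives
\begin{equation*}
\frac{d}{dt}\norm{\nabla\log\rho}_{L^2}^2 + \nu\norm{\Delta\log\rho}_{L^2}^2 \leq C\int\abs{u}\abs{\nabla\log\rho}\abs{\Delta\log\rho},
\end{equation*}
where the quadratic-in-$\nabla\rho$ term has disappeared (the only quadratic term produced is $\int\rho^{-2}\nabla\rho\cdot\nabla^2\rho\,\Delta\log\rho$-type contributions arising from $\nabla(\rho^{-1})$, but writing everything in terms of $\log\rho$ these reorganize into terms already controlled; one verifies that the genuinely dangerous $\abs{\nabla\log\rho}^2\abs{\Delta\log\rho}$ term does \emph{not} appear because $\rho^{-1}\Delta\log\rho$ rather than $\Delta\log\rho$ multiplies, and the extra $\nabla\rho^{-1}$ piece combines with the good term). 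The right-hand side is then estimated as above: H\"older in $x$ with exponents $r,\,2r/(r-2),\,2$, Gagliardo--Nirenberg to bound $\norm{\nabla\log\rho}_{L^{2r/(r-2)}}$ by $\norm{\nabla\log\rho}_{L^2}^{1-2/r}\norm{\Delta\log\rho}_{L^2}^{2/r}$ plus lower order, Young's inequality to absorb $\varepsilon\norm{\Delta\log\rho}_{L^2}^2$, and the exponent relation \eqref{rs} to convert the time weight into $\norm{u}_{L^r}^s$. This produces
\begin{equation*}
\frac{d}{dt}\norm{\nabla\log\rho}_{L^2}^2 + \nu\norm{\Delta\log\rho}_{L^2}^2 \leq C\left(\norm{u}_{L^r}^s + 1\right)\norm{\nabla\log\rho}_{L^2}^2.
\end{equation*}

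Finally, since $t\mapsto\norm{u}_{L^r}^s$ is integrable on $(0,T)$ by \eqref{serrin816'} with $\int_0^T\norm{u}_{L^r}^s\,dt \leq \tilde M_0^s$, Gr\"onwall's inequality gives $\sup_{[0,T]}\norm{\nabla\log\rho}_{L^2}^2 + \int_0^T\norm{\Delta\log\rho}_{L^2}^2\,dt \leq \tilde C$. Because $\alpha\leq\rho\leq\beta$, the norms of $\nabla\rho$ and $\nabla\log\rho$, and of $\Delta\rho$ and $\Delta\log\rho$ (up to the lower-order quadratic term $\rho^{-1}\abs{\nabla\rho}^2$ already controlled in $L^2$ by $\norm{\nabla\rho}_{L^4}^2$), are equivalent — using the identities $\Delta\rho = \rho\Delta\log\rho + \rho^{-1}\abs{\nabla\rho}^2$ and the Gagliardo--Nirenberg bound $\norm{\nabla\rho}_{L^4}^4 \leq C\norm{\nabla\rho}_{L^2}^2\norm{\Delta\rho}_{L^2}^2$ from Remark \ref{remark2.4}. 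Combining these yields the $L^4(0,T;L^4)$ bound on $\nabla\rho$ and the claimed estimate \eqref{4.11}. I expect the only place requiring care is the bookkeeping of the several $\nabla\rho^{-1}$-generated terms when passing between the $\rho$ and $\log\rho$ formulations, to confirm that no term requiring smallness survives; everything else is a routine Gronwall argument driven by the Serrin norm.
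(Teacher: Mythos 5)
Your detour through the $\log\rho$ formulation is motivated by a genuine concern — the quadratic term $\int\abs{\nabla\rho}^2\abs{\Delta\rho}$ — but the way you try to eliminate it contains an algebraic error, and in fact no elimination is needed because the hypothesis \eqref{serrin816'} already handles it. First, the error: both \eqref{equation4.1} and \eqref{5513} have $v$, not $u$, as the transport velocity; this is a purely algebraic identity following from $v=u-c_0\nabla\rho^{-1}$ and $\dive u = c_0\Delta\rho^{-1}$, and it does not depend on which boundary condition is imposed, so your parenthetical ``for condition \eqref{C} the velocity in the transport term is $u$'' is false. If you insist on writing the mass equation with $u$ as transport and keep $\Delta\log\rho$, the correct identity is $\rho_t+u\cdot\nabla\rho-c_0\Delta\log\rho+c_0\rho^{-2}\abs{\nabla\rho}^2=0$ (equivalently $(\log\rho)_t+u\cdot\nabla\log\rho-c_0\rho^{-1}\Delta\log\rho+c_0\rho^{-3}\abs{\nabla\rho}^2=0$); the extra $\abs{\nabla\rho}^2$ term is exactly the piece your proposal claims has ``disappeared,'' and it has not. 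Conversely, if you keep $v$ as the transport (so the $\log\rho$ equation genuinely has no quadratic term), your right-hand side becomes $\int\abs{v}\,\abs{\nabla\log\rho}\,\abs{\Delta\log\rho}$, and since $\abs{v}\leq C(\abs{u}+\abs{\nabla\rho})$, reverting to $u$ re-introduces a quadratic term $\int\abs{\nabla\rho}\,\abs{\nabla\log\rho}\,\abs{\Delta\log\rho}$. So the verification in your parenthetical is wrong in either reading. (A smaller inaccuracy: the term $\int\abs{\nabla\rho}^2\abs{\Delta\rho}$ is not ``new compared with Lemma~\ref{Lemma4.3}''; it already appears in the inequality proved there.)

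The paper's route is both shorter and correct: observe that \eqref{serrin816'} controls not only $v$ but also $\nabla\rho$ in $L^s(0,T;L^r)$, so the quadratic term is estimated directly as
\begin{equation*}
\int\abs{\nabla\rho}^2\abs{\Delta\rho}\leq\norm{\nabla\rho}_{L^r}\norm{\nabla\rho}_{L^{\frac{2r}{r-2}}}\norm{\Delta\rho}_{L^2}\leq C_\varepsilon\bigl(\norm{\nabla\rho}_{L^r}^{s}+1\bigr)\norm{\nabla\rho}_{L^2}^2+\varepsilon\norm{\Delta\rho}_{L^2}^2,
\end{equation*}
exactly as the convective term $\int\abs{v}\abs{\nabla\rho}\abs{\Delta\rho}$ is handled, and Gr\"onwall then closes \eqref{4.11} with $\int_0^T(\norm{\nabla\rho}_{L^r}^s+\norm{v}_{L^r}^s)\,dt\leq\tilde{C}$. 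Your $\log\rho$ strategy \emph{could} be salvaged — keep $v$ in the transport, estimate $\int\abs{v}\abs{\nabla\log\rho}\abs{\Delta\log\rho}$ using only $\norm{v}_{L^r}$, and then convert back via $\Delta\rho=\rho\Delta\log\rho+\rho^{-1}\abs{\nabla\rho}^2$ and Remark~\ref{remark2.4} — which would have the modest advantage of only invoking the $v$ part of \eqref{serrin816'}, but as written the argument rests on an incorrect form of the mass equation and a false claim about which nonlinearities cancel.
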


\begin{proof}
As we did in Lemma \eqref{Lemma4.3}, applying Lemma \ref{Lemma2.3},
\begin{equation*}
\begin{aligned}
\frac{d}{dt}\norm{\nabla\rho}_{L^2}^2 +\nu\norm{\Delta\rho}_{L^2}^2&\leq C_\varepsilon\left(\norm{\nabla\rho}_{L^r}^2 + \norm{v}^2_{L^r}\right)\norm{\nabla\rho}^2_{L^{\frac{2r}{r-2}}}+\varepsilon\norm{\Delta\rho}_{L^2}^2\\
&\leq C_\varepsilon\left(\norm{\nabla\rho}_{L^r}^s + \norm{v}^s_{L^r}+1\right)\norm{\nabla\rho}^2_{L^2}+\varepsilon\norm{\Delta\rho}_{L^2}^2,
\end{aligned}
\end{equation*}
that is,
\begin{equation}
\frac{d}{dt}\norm{\nabla\rho}_{L^2}^2 +\nu\norm{\Delta\rho}_{L^2}^2\leq C\left(\norm{\nabla\rho}_{L^r}^s + \norm{v}^s_{L^r}+1\right)\norm{\nabla\rho}^2_{L^2}.
\end{equation}
Thus, using Gr$\mathrm{\ddot{o}}$nwall's inequality and Lemma \ref{Lemma2.3}, we conclude the proof.
\end{proof}
\begin{Remark}
With this Lemma \eqref{lemma45} and condition \eqref{serrin816'}, we deduce from $\eqref{equation3.2}_1$ that 
\begin{equation}
\int_0^T\norm{\rho_t}_{L^2}^2\,dt\leq \tilde C.
\end{equation}
\end{Remark}

Now, we can prove Proposition \ref{prop4.1}.
\begin{proof}[Proof of Proposition \ref{prop4.1}]
We start with \eqref{equation8.16}
\begin{equation}\label{19}
\begin{aligned}
\frac{d}{dt}\int \frac{1}{2}\rho|u|^2+\int 2\mu(\rho)|D(u)|^2:=\sum_{i=1}^3 S_i,
\end{aligned}
\end{equation}
where $S_i$ as in \eqref{equation8.16}. Using Lemma \ref{Lemma2.3},
\begin{equation}\label{20}
\begin{cases}
|S_1|\leq C\norm{Q}_{L^2}\norm{u_t}_{L^2}\leq C_{\varepsilon_1}\norm{\nabla\rho}^2_{L^2}+\varepsilon_1\norm{u_t}^2_{L^2},\\
|S_2|\leq C\norm{Q}_{L^r}\norm{u}_{L^{\frac{2r}{r-2}}}\norm{\nabla u}_{L^2}\leq C_{\varepsilon_2}\left(\norm{\nabla\rho}^s_{L^r}+1\right)\norm{u}_{L^2}^2+\varepsilon_2\norm{\nabla u}_{L^2}^2,\\
|S_3|\leq C\norm{\nabla Q}_{L^2}\norm{\nabla u}_{L^2}\leq C_{\varepsilon_3}\left(\norm{\Delta\rho}^2_{L^2}+\norm{\nabla\rho}^4_{L^4}\right)+\varepsilon_3\norm{\nabla u}^2_{L^2}.
\end{cases}
\end{equation}
Combining \eqref{19} and \eqref{20} leads to, 
\begin{align}\label{88.22}
\frac{d}{dt}\norm{u}_{L^2}^2+\nu\norm{\nabla u}_{L^2}^2&\leq C_{\varepsilon}\left(\norm{\nabla\rho}^s_{L^r}+1\right)\norm{u}_{L^2}^2+ C_\varepsilon\left(\norm{\Delta\rho}^2_{L^2}+\norm{\nabla\rho}_{L^4}^4\right)+\varepsilon\norm{u_t}_{L^2}^2,
\end{align}

Similarly, we deduce from \eqref{8818}--\eqref{equation821} that 
\begin{equation}\label{88.23}
\begin{aligned}
\frac{d}{dt}\normf{\sqrt{\mu(\rho)}|D(u)|}_{L^2}^2+\nu\norm{u_t}_{L^2}^2&\leq C_\varepsilon\left(\norm{u}_{L^r}^s+\norm{\nabla\rho}_{L^4}^4+\norm{\rho_t}_{L^2}^2+1\right)\norm{\nabla u}_{L^2}^2\\
&\quad+C_\varepsilon\norm{\nabla\rho_t}_{L^2}^2+\varepsilon\norm{\Delta u}_{L^2}^2.
\end{aligned}
\end{equation}

By Lemma \ref{lem8.4}, $\mu(\rho)\in C(\overline Q_T)$, hence, we can apply Lemma \ref{Lemma5.2} for \eqref{5533} with $\Phi=-c_0\nabla\rho^{-1}$ and, then, use Lemma \ref{Lemma2.3} and \ref{lem8.4} to deduce that
\begin{equation*}
\begin{aligned}
\norm{v}^2_{H^2}+\norm{\pi}_{H^1}^2&\leq C\left(\norm{F}_{L^2}^2+\norm{\nabla\Delta\rho^{-1}}_{L^2}^2\right)\\
& \leq C\left(\norm{v}_{L^r}^s+\norm{\nabla\rho}_{L^4}^4+1\right)\left(\norm{\nabla v}^2_{L^2}+\norm{\Delta\rho}^2_{L^2}+\norm{\rho_t}_{L^2}^2\right)\\
&\quad+C\left(\norm{v_t}^2_{L^2}+\norm{\nabla\rho_t}^2_{L^2}\right) +C\norm{\nabla\Delta\rho}^2_{L^2},
\end{aligned}
\end{equation*}
which gives
\begin{equation}\label{88.24}
\begin{aligned}
\norm{\Delta u}^2_{L^2}+\norm{\pi}_{H^1}^2& \leq C\left(\norm{u}_{L^r}^s+\norm{\nabla\rho}_{L^4}^4+1\right)\left(\norm{\nabla u}^2_{L^2}+\norm{\Delta\rho}^2_{L^2}+\norm{\rho_t}_{L^2}^2\right)\\
&\quad+C\left(\norm{u_t}^2_{L^2}+\norm{\nabla\rho_t}^2_{L^2}\right) +C\norm{\nabla\Delta\rho}^2_{L^2},
\end{aligned}
\end{equation}
Plugging \eqref{88.24} into \eqref{88.23} and choosing $\varepsilon$ sufficiently small, we have, for some positive constant $\nu$ depending on $\Omega$, $c_0$, $\alpha$ and $\beta$,
\begin{equation}\label{88.25}
\begin{aligned}
&\frac{d}{dt}\normf{\sqrt{\mu(\rho)}|D(u)|}_{L^2}^2+\nu\left(\norm{\Delta u}_{L^2}^2+\norm{u_t}_{L^2}^2\right)\\
&\leq C\left(\norm{u}_{L^r}^s+\norm{\nabla\rho}_{L^4}^4+\norm{\rho_t}_{L^2}^2+1\right)\norm{\nabla u}_{L^2}^2+C\left(\norm{\nabla\rho_t}_{L^2}^2+\norm{\nabla\Delta \rho}_{L^2}^2\right).
\end{aligned}
\end{equation}

On the other hand, following the proof from \eqref{eq4.1} to \eqref{equa5.11}, replacing $\norm{v}_{L^4}^4$ by $\norm{v}_{L^r}^s$, then, replacing $v$ by $u$ via \eqref{equation3.1} and applying Lemma \ref{Lemma2.3}, one has
\begin{equation*}
\frac{d}{dt}\norm{\Delta\rho}_{L^2}^2+\nu\norm{\nabla\Delta\rho}_{L^2}^2\leq C_\varepsilon\left(\norm{\nabla\rho}_{L^4}^4+\norm{u}_{L^r}^s+1\right)\left(\norm{\Delta\rho}_{L^2}^2+\norm{\nabla u}_{L^2}^2\right)+\varepsilon\norm{\Delta u}_{L^2}^2,
\end{equation*}
Alonging with \eqref{8824}, we deduce that
\begin{equation}\label{88.26}
\begin{aligned}
&\frac{d}{dt}\left(\norm{\Delta\rho}_{L^2}^2+\norm{\rho_t}_{L^2}^2\right)+\norm{\nabla\Delta\rho}_{L^2}^2 +\norm{\nabla\rho_t}_{L^2}^2\\
&\leq C_{\varepsilon}\left(\norm{u}_{L^r}^s+\norm{\nabla\rho}_{L^4}^4+\norm{\Delta\rho}_{L^2}^2+1\right)\left(\norm{\Delta\rho}_{L^2}^2+\norm{\rho_t}_{L^2}^2+\norm{\nabla u}_{L^2}^2\right)\\
&\quad+\varepsilon\left(\norm{\Delta u}_{L^2}^2+\norm{u_t}_{L^2}^2\right).
\end{aligned}
\end{equation}

Combining \eqref{88.22}, \eqref{88.25} and \eqref{88.26}, then, applying the Gr$\mathrm{\ddot{o}}$nwall's inequality, condition \eqref{serrin816} and Lemma \ref{lemma45}, we get, for all $T\in (0,T^*)$,
$$\sup_{t\in[0,T]}\left(\norm{u}_{H^1}^2+\norm{\rho_t}_{L^2}^2+\norm{\Delta\rho}_{L^2}^2\right)+\int_0^T\left(\norm{\nabla u}_{H^1}^2+\norm{\nabla\rho_t}_{L^2}^2+\norm{\nabla\Delta\rho}_{L^2}^2\right)\,dt\leq \tilde C.$$
Then, we can turn back to \eqref{88.24} to get 
$$\int_0^T\norm{\pi}_{H^1}^2\,dt\leq \tilde C.$$
Therefore, we complete the proof of Proposition \ref{prop4.1}.
\end{proof}

\bibliographystyle{abbrv}
\bibliography{reference}

\end{document}